\numberwithin{equation}{section}
\numberwithin{equation}{subsection}
\renewcommand*{\theequation}{%
  \ifnum\value{subsection}=0 %
    \thesection
  \else
    \thesubsection
  \fi
  .\arabic{equation}%
}
\let\oldtocsection=\tocsection
\let\oldtocsubsection=\tocsubsection
\let\oldtocsubsubsection=\tocsubsubsection
\renewcommand{\tocsection}[2]{\hspace{0em}\oldtocsection{#1}{#2}}
\renewcommand{\tocsubsection}[2]{\hspace{1em}\oldtocsubsection{#1}{#2}}
\renewcommand{\tocsubsubsection}[2]{\hspace{2em}\oldtocsubsubsection{#1}{#2}}
\title[Twisted $2k$th moments of primitive Dirichlet $L$-functions]{Twisted $2k$th moments of primitive Dirichlet $L$-functions: beyond the diagonal}
\author{Siegfred Baluyot}
\address{American Institute of Mathematics \\
600 East Brokaw Road San Jose, CA 95112}
\email{\href{mailto:sbaluyot@aimath.org}{sbaluyot@aimath.org}}
\author{Caroline L. Turnage-Butterbaugh}
\address{Carleton College \\ 1 North College Street Northfield, MN 57707}
\email{\href{mailto:cturnageb@carleton.edu}{cturnageb@carleton.edu}}
\subjclass[2010]{11M06}
\newcommand{\ordp}{\text{ord}_p}
\DeclareMathOperator{\re}{\text{\upshape{Re}}}
\newcolumntype{C}{>{$}c<{$}} 
\newtheorem{theorem}{Theorem}[section]
\newtheorem{conjecture}[theorem]{Conjecture}
\newtheorem{lemma}[theorem]{Lemma}
\newtheorem{prop}[theorem]{Proposition}
\newtheorem*{glh}{Generalized Lindel\"{o}f Hypothesis (GLH)}
\begin{document}

\begin{abstract}
We study the family of Dirichlet $L$-functions of all even primitive characters of conductor at most $Q$, where $Q$ is a parameter tending to $\infty$. For an arbitrary positive integer $k$, we approximate the twisted $2k$th moment of this family by using Dirichlet polynomial approximations of $L^k(s,\chi)$ of length $X$, with $Q<X<Q^2$. Assuming the Generalized Lindel\"{o}f Hypothesis, we prove an asymptotic formula for these approximations of the twisted moments. Our result agrees with the prediction of Conrey, Farmer, Keating, Rubinstein, and Snaith for this family of $L$-functions, and provides the first rigorous evidence beyond the diagonal terms for their conjectured asymptotic formula for the general $2k$th moment of this family.
\end{abstract}

\maketitle

\vspace{-0.1in}
\begingroup
  \hypersetup{hidelinks}
  \tableofcontents
\endgroup
\newpage

\section{Historical overview and motivation}\label{sec: HistoricalOverview}

In recent decades, there has been much interest and measured progress in the study of moments of $L$-functions. The program has its beginnings in the study of the $2k$th moment
\[
M_k(T):=\int_{0}^{T}\left|\zeta\left(\tfrac{1}{2}+it\right) \right|^{2k}\,dt
\]
of the Riemann zeta-function $\zeta(s)$, where $k$ is any positive real number. A great deal of effort has been made to understand $M_k(T)$ for different values of $k$ as $T\to \infty$, yet asymptotic formulas for $M_k(T)$ have remained stubbornly out of reach in all but a few cases. In 1918, Hardy and Littlewood \cite{HL} showed that $M_1(T)\sim T\log T$ as $T\to\infty$, and in 1926 Ingham \cite{Ingham} showed that $M_2(T) \sim (2\pi^2)^{-1}T\log^4T$ as $T\to\infty$. To date, an asymptotic formula is not known to hold for any other $M_k(T)$. Historically, the original motivation for studying $M_k(T)$ has been to prove the Lindel\"{o}f Hypothesis (LH), which asserts that\footnote{Here and throughout this paper, we employ Vinogradov notation and use $f \ll g$ to mean $f=O(g)$.} for any $\varepsilon>0$, $\zeta(1/2+it)\ll t^\varepsilon$ as $t\rightarrow \infty$. In fact, if one could show that $M_k(T)\ll T^{1+\varepsilon}$ for all positive integers $k$ and arbitrarily small $\varepsilon>0$, then LH would follow~\cite[Theorem~13.2]{Titchmarsh}. Proving an asymptotic formula for $M_k(T)$ for any integer $k\ge 3$ is now considered an important problem in its own right.

A folklore conjecture predicts that if $k$ is a positive real number, then, for some unspecified constant $c_k$, we have $M_k(T) \sim c_kT(\log T)^{k^2}$ as $T \to \infty$. In support of this conjecture, it is now known due to the work of many authors that
\[
T(\log T)^{k^2} \ll M_k(T) \ll T(\log T)^{k^2} ,
\]
where the lower bound holds for any real $k\ge 0$, and the upper bound holds unconditionally for $0\le k \le 2$ and conditionally on the Riemann Hypothesis for $k>2$ (see \cite{Ram2}, \cite{Ram1}, \cite{HBlower}, \cite{Soundzeta}, \cite{RadSound}, \cite{Harperzeta}, \cite{BettinChandeeRadziwill}, \cite{BettinChandeeRadziwill}, \cite{HeapRadziwillSound}), and \cite{SoundHeap}). The problem of finding an asymptotic formula for $M_k(T)$ for $k\geq 3$ is so intractable that, up until recently, there had been no viable guess for the exact value of the coefficient $c_k$ in the conjecture $M_k(T) \sim c_kT(\log T)^{k^2}$ for any integer $k\geq 3$. In 1993, Conrey and Ghosh~\cite{ConreyGhoshtalk,ConreyGhosh} predicted the exact value of $c_3$. Later, Conrey and Gonek~\cite{ConreyGonek} used a different approach to conjecture the exact values of both $c_3$ and $c_4$. Both approaches involve heuristic number-theoretic arguments, and the predicted values of $c_3$ agree. Recently, Ng~\cite{Ng} has made the heuristic argument of Conrey and Gonek rigorous, and used it to prove an asymptotic formula for $M_3(T)$ under the assumption of an additive divisor conjecture.

A breakthrough was made in the late 90's when Keating and Snaith \cite{KeatingSnaithRMTzeta} modeled $M_k(T)$ via characteristic polynomials of large random matrices. Doing so allowed them to conjecture the exact value of $c_k$ for all complex $k$ with $\re(k)\ge-1/2$. Remarkably, their predictions agree with the Conrey-Ghosh-Gonek conjectures for $c_3$ and $c_4$ . Later, Diaconu, Goldfeld, and Hoffstein~\cite{DGH} used the theory of multiple Dirichlet series to conjecture the value of $c_k$ for all natural numbers $k$. Despite the differences between these approaches, all the conjectures agree.

Keating and Snaith \cite{KeatingSnaithRMTLfunctions2, KeatingSnaithRMTLfunctions} have made analogous predictions for various families of $L$-functions. One family that has received much attention in the literature is the family of all primitive Dirichlet $L$-functions of modulus $q$. Precisely, let $\chi\bmod q$ be a primitive Dirichlet character, and let
\[
L(s,\chi) = \sum_{n=1}^{\infty}\frac{\chi(n)}{n^s}=\prod_{p}\left(1-\frac{\chi(p)}{p^s}\right)^{-1}, \quad \re(s)>1
\]
be its associated Dirichlet $L$-function. In 1931, Paley \cite{Paley} showed that $\sum_{\chi}|L(1/2,\chi)|^2\sim (\phi^2(q)/q)\log q $ as $q\to \infty$, where the sum is over all characters modulo $q$. The work of Heath-Brown \cite{HB4th} shows 
$$
\sideset{}{^*}\sum_{\chi \bmod q}|L(\tfrac{1}{2},\chi)|^4\sim \frac{\phi^*(q)}{2\pi^2}\prod_{p|q}\frac{(1-\tfrac{1}{p})^3}{(1+\tfrac{1}{p})}(\log q)^4, \quad q \to \infty
$$
with some restrictions on $q$, where $*$ is used to indicate that the sum is over primitive characters and $\phi^*(q)$ is the number of primitive characters modulo $q$. Soundararajan \cite{Sound4th} improved the result to hold for all $q$. Young  \cite{Young4th} showed that this asymptotic formula holds with a power savings error term when the modulus $q$ is prime. Progress for this family is at the same level as that of the zeta-function, and asymptotic expressions have only been obtained for the second and fourth moments. Likewise, sharp lower and upper bounds for the $2k$th moments can be computed; see \cite{RSlower},  \cite{Soundzeta}, \cite{HBupper}, \cite{Harperzeta}, and \cite{SoundHeap}.

By averaging over all $q\le Q$, Huxley \cite{Huxley} used the large sieve inequality to obtain upper bounds of the predicted order of magnitude for $\sum_{q\le Q}\sum_{\chi\bmod q}^*|L(1/2,\chi)|^{2k}$ with $k=3,4$. A recent innovation of Conrey, Iwaniec, and Soundararajan \cite{CISAsymptoticLargeSieve} allowed them to prove an asymptotic formula for the sixth moment averaged over all $q$, albeit with an additional small averaging over the critical line \cite{CIS6th}. Their method, called the \textit{asymptotic large sieve}, was later refined by Chandee and Li \cite{ChandeeLi8Dirichlet} in the context of the eighth moment with the same additional averaging. The asymptotic large sieve has also been used to study the zeros of primitive Dirichlet $L$-functions (see \cite{CISGaps}, \cite{CISCriticalZeros}, \cite{ChandeeLeeLiuRadziwill}) and the twisted second moment \cite{CIS}. (See Section~\ref{sec: outline} for a more detailed discussion on the asymptotic large sieve.)

Inspired by the discovery of Keating and Snaith, Conrey, Farmer, Keating, Rubinstein, and Snaith \cite{CFKRS} used random matrix theory as a guide to formulate a heuristic, which we refer to as ``the CFKRS recipe" or simply ``the recipe," that predicts precise asymptotic formulas for integral moments of various families of $L$-functions. For the family of primitive Dirichlet $L$-functions, the CFKRS recipe leads to the conjecture
\[
\sum_{q\le Q}\,\sideset{}{^*}\sum_{\chi \bmod q}\left|L\left(\tfrac{1}{2},\chi \right)\right|^{2k} \sim c_k\sum_{q\le Q} \,\sideset{}{^*}\sum_{\chi \bmod q}\prod_{p| q}\Bigg(\sum_{m=0}^{\infty}\frac{\binom{m+k-1}{k-1}^2}{p^m}\Bigg)^{-1}(\log q)^{k^2}, \qquad Q \to \infty
\]
for all positive integers $k$, with an explicit value of $c_k$. More generally, the CFKRS recipe predicts an asymptotic formula for 
\begin{equation}\label{eqn: moment}
\sum_{q\le Q}\,\sideset{}{^*}\sum_{\chi \bmod q}\prod_{\alpha \in A}L\left(\tfrac{1}{2}+\alpha,\chi \right)\prod_{\beta \in B}L\left(\tfrac{1}{2}+\beta,\overline{\chi} \right),
\end{equation}
where $A,B$ are finite multisets of small complex numbers, which we refer to as ``shifts." These shifts allowed Conrey et al.~\cite{CFKRS} to write the conjecture as a combinatorial sum that reveals some underlying structure in the asymptotic formula. Within each term in the sum, the shifts appear in an arrangement that involves element exchanges between the multisets $A$ and $B$. Thus each term in the conjectured asymptotic formula can be described as having $\ell$ ``swaps," where $\ell$ is the number of elements exchanged by each multiset with the other. Each $\ell$-swap term may contain  leading order terms, lower order terms, or both. We precisely state the conjecture in the context of our main theorem in Conjecture \ref{con: conjecture} below.

The CFKRS recipe arrives at the conjecture by assuming that certain terms are negligible in the calculation of the moment. While this leads to the  ``final simple answer that should emerge" \cite[page 35]{CFKRS}, the heuristic does not indicate how or why those terms can be ignored. Recently, Conrey and Keating~\cite{CK1}, \cite{CK2}, \cite{CK3}, \cite{CK4}, \cite{CK5} have developed a new approach to this problem for $\zeta(s)$ using Dirichlet polynomial approximations. They estimate the moments 
\begin{equation*}
\int_{T}^{2T}\prod_{\alpha \in A} \zeta(\tfrac{1}{2}+\alpha+it)\prod_{\beta \in B} \zeta(\tfrac{1}{2}+\beta-it)\, dt
\end{equation*}
by approximating the product over $\alpha \in A$ by a Dirichlet polynomial of length $X$ and doing the same for the product over $\beta \in B$. One of their early observations suggests that the size of $X$ determines the values of $\ell$ for which the $\ell$-swap terms contribute at most $o(T)$ to the conjectured asymptotic formula. In particular, they predict that if $X < T/(2\pi)$ then all but the zero-swap term contribute $o(T)$. Similarly, if $T/\pi < X< T^2/(4\pi^2)$ then all but the zero- and one-swap terms should contribute $o(T)$,  if $T^2/\pi^2 < X < T^3/(8\pi^3)$ then all but the zero-, one-, and two-swap terms should contribute $o(T)$, and so on.

This prediction reveals the difficulty in obtaining asymptotic formulas for higher moments of $L$-functions. Historically, the approach to calculating moments has been to  use the approximate functional equation, and this is in fact the approach used in the CFKRS recipe.  For low moments (with $k=1,2$, say), only the so-called ``diagonal" terms from the approximate functional equation contribute to the main term. On the other hand, the previously mentioned conjectures of Conrey et al.~ and Conrey and Keating indicate that high moments have the more delicate and challenging feature that some of the ``off-diagonal" terms actually contribute to the main term. In order to extract these contributions, more sophisticated techniques are needed.

\section{Main result}
We are interested in understanding the twisted $2k$th moment of all primitive Dirichlet $L$-functions of modulus $q$, averaged over all moduli $q\le Q$. To state the result precisely, we must introduce a bit of notation. In Section \ref{sec: notation}, we give a more comprehensive overview of the notation used in this article, with clarifying examples. 

For a finite multiset $A=\{\alpha_1,\alpha_2,\dots,\alpha_r\}$ of complex numbers $\alpha_i$, we define $\tau_A(m)$ for positive integers $m$ by
\begin{equation*}
\tau_A(m) := \sum_{m_1\cdots m_r =m}m_1^{-\alpha_1}\cdots m_r^{-\alpha_r},
\end{equation*}
where the sum is over all positive integers $m_1,\dots,m_r$ such that $m_1\cdots m_r =m$. Thus, if $\chi$ is a Dirichlet character, then 
$$
\sum_{m=1}^{\infty} \frac{\tau_A(m)\chi(m)}{m^s} = \prod_{\alpha\in A }L(s+\alpha,\chi)
$$
for all $s$ such that the left-hand side converges absolutely, where the product on the right-hand side is over all $\alpha \in A$, counted with multiplicity. For any multiset $A$ and  $s\in \mathbb{C}$,  we define $A_s$ to be the multiset $A$ with $s$ added to each element. In other words, if $A=\{\alpha_1,\alpha_2,\dots,\alpha_r\}$, then
\begin{equation*}
A_s := \{\alpha_1+s,\alpha_2+s,\dots,\alpha_r+s\}.
\end{equation*}
If $A$ and $B$ are multisets, then we let $A\cup B$ denote the multiset sum of $A$ and $B$ and $A\smallsetminus B$ denote the multiset difference. We write $A^{-}$ to denote the multiset $A$ with each element multiplied by $-1$.

In this paper, we study the moments \eqref{eqn: moment} with twists $\chi(h)\overline{\chi}(k)$ using Dirichlet polynomial approximations. Thus the main object that we are interested in is
\begin{equation}\label{eqn: S(h,k)def}
\begin{split}
\mathcal{S}(h,k):=\sum_{q=1}^{\infty}W\left(\frac{q}{Q}\right) \sideset{}{^\flat}\sum_{\chi \bmod q} \chi(h)\overline{\chi}(k) \sum_{m=1}^{\infty}\frac{\tau_A(m)\chi(m)}{\sqrt{m}}V\left(\frac{m}{X}\right) \sum_{n=1}^{\infty}\frac{\tau_B(n)\overline{\chi}(n)}{\sqrt{n}}V\left(\frac{n}{X}\right),
\end{split}
\end{equation}
where $W$ is a smooth, nonnegative function that is compactly supported on $(0,\infty)$, the symbol $\flat$ denotes that the sum is over all even, primitive characters modulo $q$, and $V$ is a smooth, nonnegative function that is compactly supported on $[0,\infty)$ and satisfies $V(0)>0$. Note that the length of the $m$-sum, as well as the $n$-sum,  is of the same order of magnitude as $X$. Note also that we use the symbol $k$ in \eqref{eqn: S(h,k)def} for the twist $\overline{\chi}(k)$. This $k$ should not be interpreted as the same $k$ we use when we refer to the $2k$th moment.

In order to  state the asymptotic formula for $\mathcal{S}(h,k)$ that is predicted by the CFKRS recipe, we define
\begin{equation}\label{eqn: I_l(h,k)def}
\begin{split}
\mathcal{I}_\ell(h,k) &:= \sum_{\substack{q=1 \\ (q,hk)=1} }^{\infty} W\left( \frac{q}{Q}\right)\sideset{}{^\flat}\sum_{\chi \bmod q} \frac{1}{(2\pi i )^2} \int_{(\varepsilon)} \int_{(\varepsilon)} X^{s_1+s_2} \widetilde{V}(s_1) \widetilde{V}(s_2)   \\
&\hspace{.5in}\times \sum_{\substack{U\subseteq A, V\subseteq B \\ |U|=|V|=\ell}} \prod_{\alpha\in U}  \frac{\mathscr{X} (\tfrac{1}{2}+\alpha +s_1 )}{ q^{\alpha+s_1} } \prod_{\beta\in V}  \frac{\mathscr{X} (\tfrac{1}{2}+\beta+s_2 )}{ q^{\beta+s_2} }    \\
&\hspace{.5in}\times   \sum_{\substack{1\leq m,n<\infty \\ mh=nk\\ (mn,q)=1}} \frac{\tau_{A_{s_1}\smallsetminus U_{s_1} \cup (V_{s_2})^{-}} (m) \tau_{B_{s_2}\smallsetminus V_{s_2} \cup (U_{s_1})^{-} } (n)  }{\sqrt{mn}} \,ds_2\,ds_1 ,
\end{split}
\end{equation}
where $\varepsilon>0$ is an arbitrarily small constant, 
\begin{equation*}
\widetilde{V}(s) : = \int_0^{\infty} V(x) x^{s-1}\,dx
\end{equation*}
is the Mellin transform of $V$, and 
\begin{equation*}
\mathscr{X} (s) := {\pi}^{s-\frac{1}{2}} \frac{\Gamma(\frac{1}{2}-\frac{1}{2}s)}{\Gamma(\frac{1}{2}s)}.
\end{equation*}
Here, the sum over $U,V$ should be interpreted as taking into account the multiplicity of the elements in $A$ and $B$. The sum $\mathcal{I}_\ell(h,k)$ is precisely the sum of all the $\ell$-swap terms from the recipe prediction. We call these terms the ``$\ell$-swap terms" because the multiset $A_{s_1}\smallsetminus U_{s_1} \cup (V_{s_2})^{-}$ results from taking the set $A_{s_1}$ and replacing the $\ell$ elements of $U_{s_1}$ with the negatives of the $\ell$ elements in $V_{s_2}$. Similarly, $B_{s_2}\smallsetminus V_{s_2} \cup (U_{s_1})^{-}$ results from taking the multiset $B_{s_2}$ and replacing the $\ell$ elements of $V_{s_2}$ with the negatives of the $\ell$ elements in $U_{s_1}$. Thus, we are swapping $\ell$ elements from $A_{s_1}$ with $\ell$ elements from $(B_{s_2})^-$. In particular, $\mathcal{I}_0(h,k)$ is the zero-swap term, $\mathcal{I}_1(h,k)$ is the sum of the one-swap terms, and so on.  We remark that the $m,n$-sum should be interpreted as its analytic continuation, which we write explicitly in \eqref{eqn: I_l(h,k)def2} below.

In Section \ref{sec: recipe}, we show how to derive the following conjecture for the asymptotic behavior of  $\mathcal{S}(h,k)$ using the CFKRS recipe.
\begin{conjecture}\label{con: conjecture}
Let $A$ and $B$ be finite multisets of complex numbers $\ll 1/\log Q$, where $Q$ is a large parameter. Define $\mathcal{S}(h,k)$ by \eqref{eqn: S(h,k)def}. Then, for all $X>0$,
\[
\mathcal{S}(h,k) \sim \sum_{\ell=0}^{\min\{|A|,|B|\}}\mathcal{I}_\ell(h,k), \qquad \text{as } Q \to \infty.
\]
\end{conjecture}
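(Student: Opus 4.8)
The statement is a prediction of the CFKRS recipe, so the plan is not to prove it rigorously but to run the recipe in the present setting and check that its output reassembles into $\sum_\ell\mathcal{I}_\ell(h,k)$; the derivation is heuristic, and its hard part is exactly the one the recipe sidesteps. The first step is to turn the Dirichlet polynomials in \eqref{eqn: S(h,k)def} back into products of $L$-functions: since $\sum_m\tau_A(m)\chi(m)m^{-1/2-s}=\prod_{\alpha\in A}L(\tfrac12+\alpha+s,\chi)$ and $\widetilde V$ is the Mellin transform of $V$, for $\Re(s)$ large we have $\sum_m\tau_A(m)\chi(m)m^{-1/2}V(m/X)=\tfrac1{2\pi i}\int_{(c)}X^{s}\widetilde V(s)\prod_{\alpha\in A}L(\tfrac12+\alpha+s,\chi)\,ds$, and likewise for the $n$-sum. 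Because $W$ is compactly supported away from $0$, only $q\asymp Q$ contribute, so every $\chi$ that occurs is primitive of conductor $>1$ and the $L$-functions are entire; the two contours can therefore be moved to $\Re(s_1)=\Re(s_2)=\varepsilon$ without crossing poles. This rewrites $\mathcal{S}(h,k)$ as $\sum_qW(q/Q)\sideset{}{^\flat}\sum_{\chi\bmod q}\chi(h)\overline\chi(k)\,\tfrac1{(2\pi i)^2}\iint_{(\varepsilon)}X^{s_1+s_2}\widetilde V(s_1)\widetilde V(s_2)\prod_{\alpha\in A}L(\tfrac12+\alpha+s_1,\chi)\prod_{\beta\in B}L(\tfrac12+\beta+s_2,\overline\chi)\,ds_2\,ds_1$, an average over the family of a shifted product of $L$-functions.

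The second step is the recipe proper, applied to the inner product. Replace each $L(\tfrac12+\alpha+s_1,\chi)$ by its approximate functional equation --- its own Dirichlet series plus the dual coming from $L(\tfrac12+\gamma,\chi)=\epsilon_\chi q^{-\gamma}\mathscr{X}(\tfrac12+\gamma)L(\tfrac12-\gamma,\overline\chi)$, valid for even primitive $\chi$ with $\epsilon_\chi=\tau(\chi)/\sqrt q$ the normalized Gauss sum --- and do the same for the factors $L(\tfrac12+\beta+s_2,\overline\chi)$, then multiply everything out. The resulting terms are indexed by the sub-multisets $U\subseteq A$ and $V\subseteq B$ of arguments for which the dual was taken (multiplicities respected). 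Dualizing $\alpha\in U$ emits the factor $\mathscr{X}(\tfrac12+\alpha+s_1)q^{-\alpha-s_1}$ and one power of $\epsilon_\chi$, and moves that Euler factor onto the $\overline\chi$-side as $L(\tfrac12-\alpha-s_1,\overline\chi)$; dualizing $\beta\in V$ is symmetric, with $\overline\epsilon_\chi$ and $L(\tfrac12-\beta-s_2,\chi)$. Hence the $\chi$-side of the $(U,V)$ term is $\prod_{\alpha\in A\smallsetminus U}L(\tfrac12+\alpha+s_1,\chi)\prod_{\beta\in V}L(\tfrac12-\beta-s_2,\chi)$, whose Dirichlet coefficients are $\tau_{A_{s_1}\smallsetminus U_{s_1}\cup(V_{s_2})^{-}}(m)$; symmetrically the $\overline\chi$-side has coefficients $\tau_{B_{s_2}\smallsetminus V_{s_2}\cup(U_{s_1})^{-}}(n)$; and the term carries $\epsilon_\chi^{|U|}\overline\epsilon_\chi^{|V|}$ together with the twist $\chi(h)\overline\chi(k)$ --- precisely the combination appearing in \eqref{eqn: I_l(h,k)def}.

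The recipe then discards every term with $|U|\neq|V|$, declaring negligible on average the uncancelled Gauss sum $\tau(\chi)^{|U|-|V|}$ it carries; for a balanced term $|U|=|V|=\ell$ one has $\epsilon_\chi^\ell\overline\epsilon_\chi^\ell=|\epsilon_\chi|^{2\ell}=1$ (since $|\tau(\chi)|^2=q$ for primitive $\chi$). The surviving character sum is then $\sideset{}{^\flat}\sum_{\chi\bmod q}\chi(mh)\overline\chi(nk)$ over the two Dirichlet-series variables $m,n$; averaging over even primitive $\chi$ and keeping only the main, diagonal contribution imposes $mh=nk$ with $(mn,q)=1$ (equivalently $(q,hk)=1$) and leaves the count $\sideset{}{^\flat}\sum_{\chi\bmod q}1$, the $\chi(-1)$ even-projection piece and the genuine off-diagonal terms being dropped as lower order. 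Putting this back under the sum and integral of the first step, and summing over all balanced $(U,V)$ with $|U|=|V|=\ell$, reproduces exactly the integrand of $\mathcal{I}_\ell(h,k)$; summing over $0\le\ell\le\min\{|A|,|B|\}$ gives $\mathcal{S}(h,k)\sim\sum_\ell\mathcal{I}_\ell(h,k)$, with no restriction on $X$, since the recipe produces every swap term irrespective of the polynomial length.

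The genuine obstacle is that all of this is heuristic: nothing internal to the recipe justifies dropping the unbalanced Gauss-sum terms, the genuine off-diagonal congruence terms, or the truncation and error terms buried in the approximate functional equations, and proving that (enough of) these are actually negligible --- under GLH and in the range $Q<X<Q^2$ --- is exactly what the paper's main theorem accomplishes. Everything else is bookkeeping; the one place that needs care is tracking the shifts $s_1,s_2$ through the functional equation and the multiset operations $A_{s_1}\smallsetminus U_{s_1}\cup(V_{s_2})^{-}$, and checking that the gamma factors $\mathscr{X}(\tfrac12+\alpha+s_1)q^{-\alpha-s_1}$ land in the arrangement of \eqref{eqn: I_l(h,k)def}.
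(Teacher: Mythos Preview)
Your derivation is correct and follows essentially the same approach as the paper's Section~\ref{sec: recipe}: Mellin-invert to express $\mathcal{S}(h,k)$ as in \eqref{eqn: towardstherecipe1}, move the contours to $\Re(s_1)=\Re(s_2)=\varepsilon$, replace each $L$-factor by its approximate functional equation, expand and discard the unbalanced Gauss-sum terms, use $G(\chi)G(\overline\chi)=q$ on the balanced ones, and apply diagonal orthogonality to obtain exactly the $\mathcal{I}_\ell(h,k)$ of \eqref{eqn: I_l(h,k)def}. Your tracking of the shifts through the multiset operations and of the $q$-powers via $\epsilon_\chi=G(\chi)/\sqrt q$ is accurate, and you rightly flag that the entire argument is heuristic---the rigorous justification of the discards is the content of Theorem~\ref{thm: main}.
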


Towards this conjecture, we prove the following theorem. 
\begin{theorem}\label{thm: main}
Let $Q$ be a large parameter and $X=Q^{\eta}$ with $1<\eta<2$. Let $A$ and $B$ be finite multisets of complex numbers $\ll 1/\log Q$, and define $\mathcal{S}(h,k)$ by \eqref{eqn: S(h,k)def}. Then, assuming the Generalized Lindel\"{o}f Hypothesis, we have
\begin{equation}\label{eqn: main}
\mathcal{S}(h,k) = \mathcal{I}_0(h,k) + \mathcal{I}_1(h,k) + \mathcal{E}(h,k),
\end{equation}
where the error term $\mathcal{E}(h,k)$ satisfies, for arbitrarily small $\epsilon>0$,
\begin{equation}\label{eqn: mainbound}
\sum_{h,k\leq Q^{\vartheta}} \frac{\lambda_h \overline{\lambda}_k}{\sqrt{hk}} \mathcal{E}(h,k) \ll_{\epsilon,|A|,|B|,V,W} Q^{1+\frac{\vartheta}{2}+\frac{\eta}{2}+\epsilon} + Q^{\frac{5}{2}-\frac{\eta}{2}+\vartheta+\epsilon}
\end{equation}
uniformly for $0<\vartheta < 2-\eta$ and arbitrary complex numbers $\lambda_h$ such that $\lambda_h\ll_{\varepsilon} h^{\varepsilon}$ for arbitrarily small $\varepsilon>0$.
\end{theorem}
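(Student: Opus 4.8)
The plan is to open the character sum by orthogonality, strip off the diagonal to produce $\mathcal{I}_0$, and then extract $\mathcal{I}_1$ from the off-diagonal while bounding the remainder under GLH.

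First I would express the inner sum over even primitive characters in closed form: selecting even characters by $\tfrac12(1+\chi(-1))$ and primitive ones by M\"obius inversion over the conductor, $\sideset{}{^\flat}\sum_{\chi \bmod q}\chi(mh)\overline{\chi}(nk)$ becomes an arithmetic weight supported on $(q,mnhk)=1$ times $\tfrac12\big(\mathbf{1}_{\,mh\equiv nk\ (f)}+\mathbf{1}_{\,mh\equiv -nk\ (f)}\big)$ summed over $f\mid q$. Inserting this into \eqref{eqn: S(h,k)def} decomposes $\mathcal{S}(h,k)=\mathcal{S}_{\mathrm{diag}}+\mathcal{S}_{\mathrm{off}}$, where $\mathcal{S}_{\mathrm{diag}}$ collects the terms with $mh=nk$. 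For $\mathcal{S}_{\mathrm{diag}}$ I would replace each $V$ by its Mellin transform, identify the $m,n$-sum as the analytic continuation of an Euler product that factors as $\prod_{\alpha\in A,\beta\in B}\zeta(1+s_1+s_2+\alpha+\beta)$ times an absolutely convergent correction, carry out the $q$-sum (the arithmetic weight collapsing to the number of even primitive characters coprime to $hk$), and move the $s_1,s_2$-contours to $\re(s_i)=\varepsilon$. Comparison with the $\ell=0$ term of \eqref{eqn: I_l(h,k)def} then gives $\mathcal{S}_{\mathrm{diag}}=\mathcal{I}_0(h,k)$ up to the contribution of the contour tails and of small $q$, which is $\ll Q^{1+\varepsilon}$ and is absorbed into $\mathcal{E}(h,k)$.

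The heart of the matter is $\mathcal{S}_{\mathrm{off}}$. Writing the congruence as $mh\mp nk=\ell f$ with $\ell\neq 0$, detecting the leftover divisibility by additive characters modulo $f$, and isolating the zero frequency, I would apply the functional equation of $L(s,\chi)$ --- equivalently Voronoi summation --- to the $m$- and $n$-variables. The Gamma-factors that appear are exactly $\mathscr{X}(\tfrac12+\alpha+s_1)\,q^{-\alpha-s_1}$ and $\mathscr{X}(\tfrac12+\beta+s_2)\,q^{-\beta-s_2}$; the Gauss-sum phases multiply to $\chi(-1)=1$ because $\chi$ is even and hence disappear; and, after re-expanding as Dirichlet series and a second appeal to orthogonality, the resulting main terms take the shape of a new ``dual diagonal'' $mh=nk$ in which one shift of $A$ has been replaced by the negative of one shift of $B$ and vice versa. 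Summing over which shifts are exchanged assembles these main terms into precisely $\mathcal{I}_1(h,k)$. Everything left over --- the nonzero frequencies and the dual sums --- forms the error. It is exactly here that $1<\eta<2$ and GLH are essential: the dual sums have effective length a power of $Q$ strictly below $X=Q^{\eta}$ precisely because $\eta<2$, so no further functional equation fires and no $\ell\ge 2$ swap terms are produced; and GLH, in the form $L(\tfrac12+it,\chi)\ll(q(1+|t|))^{\varepsilon}$, bounds the sums of twisted central $L$-values that remain after a final orthogonality step or an application of the large sieve. Tracking the ranges $X=Q^{\eta}$, $h,k\le Q^{\vartheta}$, $q\asymp Q$ and the bound $\lambda_h\ll h^{\varepsilon}$ through these estimates yields the two terms $Q^{1+\vartheta/2+\eta/2+\varepsilon}$ and $Q^{5/2-\eta/2+\vartheta+\varepsilon}$ of \eqref{eqn: mainbound}.

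Throughout, the hypothesis that the shifts are $\ll 1/\log Q$ lets me treat quantities such as $q^{\alpha}$ and $m^{\alpha}$ as $O(1)$ and keep all Euler products convergent after shifting contours to $\re(s_i)=\varepsilon$, and the conditions $(mn,q)=(hk,q)=1$ are removed by routine M\"obius inversion. The step I expect to be the main obstacle is the off-diagonal analysis: normalizing the functional-equation/Voronoi main term so that it matches the exact combinatorial form of $\mathcal{I}_1(h,k)$ --- all swapped multisets, both $\mathscr{X}$-factors, and the $q$-powers --- and bounding the dual and nonzero-frequency sums \emph{uniformly} in $h$, $k$, $\ell$, and $f$, extracting from $\eta<2$ simultaneously the power saving that makes \eqref{eqn: mainbound} meaningful and the structural fact that higher swaps are genuinely negligible.
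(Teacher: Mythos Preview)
Your diagonal analysis is sound and matches the paper's treatment of the zero-swap term. The off-diagonal proposal, however, has a genuine gap. After opening the character sum via orthogonality (Lemma~\ref{lem: Lemma2ofCIS}), there is no $\chi$ left to which one could apply ``the functional equation of $L(s,\chi)$''; what remains is a congruence $d\mid(mh\pm nk)$ with $d\mid q$. Detecting this with additive characters and applying Voronoi to the $\tau_A$-weighted $m$-sum would invoke a $GL(|A|)$-type Voronoi formula, which does not produce a single factor $\mathscr{X}(\tfrac12+\alpha+s_1)q^{-\alpha-s_1}$ but a product over all of $A$; your sketch has no mechanism for isolating exactly one swap. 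More seriously, the moduli $d$ range up to $Q$ while the Dirichlet polynomials have length $X=Q^\eta$ with $\eta>1$, so neither a direct large sieve nor the Voronoi dual length yields the required saving, and the talk of Gauss sums and ``a second appeal to orthogonality'' does not correspond to any well-defined step once $\chi$ has been summed out.

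The missing ingredient is the \emph{asymptotic large sieve}: one passes from $d$ to the complementary divisor $\ell=|mh\pm nk|/d$, which is $\ll XCQ^{\vartheta-1}$ precisely because $\eta<2$ (see \eqref{eqn: switchtocomplementary} and the surrounding discussion), and this is what makes the large sieve effective on the non-principal piece $\mathcal{U}^r$. The principal-character piece $\mathcal{U}^0$ then carries the one-swap terms, but extracting them is far more delicate than ``re-expanding as Dirichlet series'': it requires separating $m$ and $n$ in $|mh\pm nk|^w$ via Proposition~\ref{prop: CISProp2}, the multi-contour residue analysis of Section~\ref{sec: U2}, and matching the resulting Euler products to those of $\mathcal{I}_1^*(h,k)$ through the combinatorial identities of Lemmas~\ref{lem: CK3identity}--\ref{lem: GKeuleridentity2}. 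You correctly identify where GLH enters and why $1<\eta<2$ is the natural range, but the mechanism you describe for producing $\mathcal{I}_1$ is essentially the heuristic CFKRS recipe rather than a rigorous extraction.
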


Theorem \ref{thm: main} proves that, under GLH, the zero- and one-swap terms conjectured by the CFKRS recipe are correct. This provides the first rigorous evidence beyond the diagonal terms for the conjecture of Conrey et al.~\cite{CFKRS} for the general $2k$th moment of this family. 

While the recipe provides a detailed prediction for the asymptotic formula satisfied by \eqref{eqn: S(h,k)def}, at present it seems difficult to rigorously prove all the steps involved. We thus approach the problem in a different way using the asymptotic large sieve, which in recent years has become one of the primary tools for studying moments of primitive Dirichlet $L$-functions. Our general strategy in proving Theorem~\ref{thm: main} is based on the approach of Conrey, Iwaniec, and Soundararajan~\cite{CIS}, who applied the asymptotic large sieve to study the twisted second moment. Thus, our work is similar to theirs in many respects. However, there are crucial differences due to the generality of our situation and the intricacy of the predicted asymptotic formula that we aim to prove.
 
The crux of the proof is to uncover the one-swap terms and then show that they match the prediction in Conjecture~\ref{con: conjecture}. The difficulty here is that while Conjecture~\ref{con: conjecture} tells us what the one-swap terms should look like, and the asymptotic large sieve gives us a general idea of where we might find them, neither gives any indication on how to extract the one-swap terms from the asymptotic formula that results from using the asymptotic large sieve. We achieve this through delicate and deliberate contour integration by breaking the predicted one-swap terms into several residues (Section~\ref{subsec: one-swap1}), doing the same for one of the main terms brought about by the use of the asymptotic large sieve (Section~\ref{subsec: one-swap2}), and then matching these residues to show that they are asymptotically equal via Euler product identities (Section~\ref{sec: matchresidues}).

\

\noindent{\bf Remarks} 
\begin{itemize}
\item The main terms in \eqref{eqn: main} are of size about $Q^2$. If we also assume that $\vartheta < (\eta-1)/2$, then the right hand side of \eqref{eqn: mainbound} is $\ll Q^{2-\delta}$ for some $\delta>0$.
\item It can be shown using \eqref{eqn: mellinrapiddecay}, \eqref{eqn: Stirlingchi}, and \eqref{eqn: I_l(h,k)def2} below that, if $A,B$ are as in Theorem~\ref{thm: main}, then $\mathcal{I}_{\ell}(h,k)\ll Q^{2-2\ell\varepsilon+\delta} X^{2\varepsilon} (hk)^{\delta}$ for arbitrarily small $\delta>0$. Hence, if $X=Q^{\eta}$ with $\eta<\ell$, then $\mathcal{I}_{\ell}(h,k) \ll Q^{2-\delta}(hk)^{\delta}$ for some $\delta>0$. Thus Theorem~\ref{thm: main} is consistent with Conjecture~\ref{con: conjecture} when $X=Q^{\eta}$ with $1<\eta<2$.
\item We assume the Generalized Lindel\"{o}f Hypothesis (GLH) in a few key places, which we identify throughout the course of the proof. In each of these instances, there may be a large number of zeta-functions or $L$-functions that we need to bound. If the cardinalities of $A$ and $B$ are not too large, then it may be possible to carry out these estimations unconditionally. 
\item For convenience, we have only considered even primitive characters. For odd characters, some parts of the arguments are simpler, while in other parts only small changes are needed. The conclusion of the theorem for odd primitive characters is the same except that we must replace the function $\mathscr{X}(s)$ with $\pi^{s-\frac{1}{2}} \Gamma(\frac{2-s}{2})/\Gamma(\frac{s+1}{2})$ in the definition of $\mathcal{I}_\ell(h,k)$. We describe the changes to the proof carefully in Section~\ref{sec: outline}.
\item The terms $\mathcal{I}_0(h,k)$ and $\mathcal{I}_1(h,k)$ are both holomorphic functions of the shifts $\alpha \in A$ and $\beta \in B$. We prove this fact at the end of Section \ref{sec: proofofthm}. We may use  \eqref{eqn: vandermonde1swap} as a convenient way to evaluate $\mathcal{I}_1(h,k)$ when some of the elements in $A\cup B$ have multiplicity greater than 1. In particular, we can use \eqref{eqn: vandermonde1swap} to evaluate $\mathcal{I}_1(h,k)$ when all the shifts $\alpha \in A$ and $\beta \in B$ are 0.
\end{itemize}

The one-swap terms have also been found for other families of $L$-functions. Hamieh and Ng \cite{HamiehNg} do this for the $2k$th moments of $\zeta(s)$ under the assumption of an additive divisor conjecture by making some of the arguments in the work of Conrey and Keating \cite{CK3} rigorous. In our situation, we do not need to assume an analogous divisor conjecture because we are able to leverage the asymptotic large sieve. On the other hand, we must assume GLH because the factors $\tau_A$ and $\tau_B$ are unchanged when applying the asymptotic large sieve and thus give rise to a potentially large number of $L$-functions.  Conrey and Rodgers \cite{ConreyRodgers} have found the one-swap terms for the family of quadratic Dirichlet $L$-functions. They also do not need to assume any divisor conjecture because they are able to use the Poisson summation method of Soundararajan \cite{SoundPoisson}. As in our situation, they also need to assume GLH to bound large numbers of $L$-function factors.

Analogous results have been proved unconditionally in the function field setting. Andrade and Keating~\cite{AndradeKeating} used the CFKRS recipe to predict the asymptotic formulas for moments of $L$-functions associated with hyperelliptic curves of genus $g$ over a fixed finite field, where $g$ is a parameter going to infinity. Florea~\cite{FloreaThesis} has recovered the one-swap terms for this family. Moreover, Bui, Florea, and Keating~\cite{BuiFloreaKeating} have found the one-swap terms for the 2-level density of zeros of this family. In this setting, the Poisson summation method is the primary tool for studying moments of $L$-functions  (see also \cite{Florea4th}, \cite{Florea23}, \cite{BuiFloreaKeatingRoditty-Gershon}, and \cite{BuiFloreaKeating2}). For a different family over function fields, Sawin~\cite{Sawin} has formulated a heuristic that recovers the CFKRS prediction, which he then confirms under the assumption of a conjecture on the vanishing of certain cohomology groups.

In order to extract the two-swap terms predicted by Conjecture~\ref{con: conjecture}, the discussion at the end of Section~\ref{sec: HistoricalOverview} suggests that we must work with a Dirichlet polynomial approximation of length $X>Q^2$. In this situation, the predicted two-swap terms are of size about $Q^2$. Without any additional input, the asymptotic large sieve does not seem effective when $X>Q^2$ because it no longer reduces the moduli of the character sums for such $X$ (see Section \ref{sec: outline} for more details). In fact, the predicted two-swap terms should be hidden inside the term $\mathcal{E}(h,k)$ in \eqref{eqn: main}, and thus we no longer expect the left-hand side of \eqref{eqn: mainbound} to be $\ll Q^{2-\delta}$ when $X>Q^2$. This limitation of the asymptotic large sieve is analogous to the limitation of the Poisson summation method in evaluating high moments of the family of quadratic Dirichlet $L$-functions.

With some additional work, we may be able to use our result to study the sixth moment of primitive Dirichlet $L$-functions. There could also be potential applications to studying gaps between zeros of Dirichlet $L$-functions.

\

\noindent\emph{Outline of the article.} In Section \ref{sec: notation}, we give a comprehensive list of all the notation used in the article. In Section \ref{sec: recipe}, we use the CFKRS recipe to derive Conjecture~\ref{con: conjecture}. We give a detailed outline of the proof of Theorem~\ref{thm: main} in Section \ref{sec: outline}. The remaining sections are devoted to proving the theorem. In Section~\ref{sec: diagonal}, we examine the diagonal terms to extract the zero-swap term. We study the off-diagonal terms in Sections~\ref{sec: Lsum}-\ref{sec: Ur}, where we extract the one-swap terms. Finally, in Section \ref{sec: proofofthm}, we complete the proof of Theorem \ref{thm: main} and prove the holomorphy of $\mathcal{I}_0(h,k)$ and $\mathcal{I}_1(h,k)$.

\

\noindent{\bf Acknowledgments.}\, Work on this project began in the summer of 2020 at the American Institute of Mathematics as part of the NSF Focused Research Group ``Averages of $L$-functions and Arithmetic Stratification"  supported by NSF DMS-1854398 FRG. We are grateful to  Brian Conrey for suggesting this problem, for many helpful discussions, and for all the support and encouragement. We also thank David Farmer, Alexandra Florea, and Brad Rodgers for a number of useful comments that improved the exposition. The second author thanks the American Institute of Mathematics for providing a focused research environment in February and March 2022, during which the manuscript was prepared. The first author is supported by NSF DMS-1854398 FRG, and the second author is partially supported by NSF DMS-1902193 and NSF DMS-1854398 FRG.

\section{Notation, conventions, and preliminaries}\label{sec: notation}
In this section, we collect our commonly used notation for the reader's convenience. We also list a number of technical assumptions and basic facts that we use throughout the paper. The reader may choose to skip this section and only refer to it when needed.

We employ standard notation in analytic number theory and use $\int_{(c)}$ to denote integrals along the line from $c-i\infty$ to $c+i\infty$. We let $\varepsilon>0$ denote an arbitrarily small constant whose value may change from one line to the next. We also sometimes use $\epsilon>0$ to denote an arbitrarily small constant, except that the value of $\epsilon$ remains the same all throughout. This distinction between $\varepsilon$ and $\epsilon$ will often be harmless, and we will use $\epsilon$ only when the situation requires more concreteness, such as when dealing with integrals like
$$
\int_{(\epsilon)}\int_{(\epsilon/2)} \Gamma(w)\Gamma(z)\Gamma(w-z)\,dz\,dw.
$$
The symbol $\varepsilon$ may sometimes depend on $\epsilon$, but only when the concreteness of $\epsilon$ is no longer required. When at least one of $\varepsilon$ or $\epsilon$ is present, in some fashion, in an inequality or error term, we allow implied constants to depend on $\varepsilon$ or $\epsilon$ without necessarily indicating so in the notation. We sometimes indicate the dependence of implied constants on variables by the use of subscripts: for example, $Y\ll_b Z$ or $Y=O_b(Z)$ means that the implied constant may depend on $b$.

The symbol $p$ always denotes a prime number. We use $\ordp(m)$ to denote the exponent of $p$ in the prime factorization of $m$. For example, ord$_3(72)=2$ and ord$_5(84)=0$. We let $\phi$ be the Euler totient function, and $\mu$ the M\"{o}bius function. If $h$ and $k$ are positive integers that are present in some form in an equation or inequality, then we use $H$ to denote $h/(h,k)$ and $K$ to denote $k/(h,k)$.

For a multiset $E=\{\xi_1,\xi_2,\dots,\xi_j\}$ of complex numbers, we define $\tau_E(m)$ for positive integers $m$ by
\begin{equation}\label{eqn: taudef}
\tau_E(m) := \sum_{m_1\cdots m_j =m}m_1^{-\xi_1}\cdots m_j^{-\xi_j},
\end{equation}
where the sum is over all positive integers $m_1,\dots,m_j$ such that $m_1\cdots m_j =m$. Thus, for example, if $\xi_1=\cdots=\xi_j=0$, then $\tau_E(m)$ is the $j$-fold divisor function. If $E$ is empty, then we define $\tau_E(1)=1$ and $\tau_E(m)=0$ for all other $m$. It follows that if $E$ is a finite multiset of complex numbers, then
$$
\sum_{m=1}^{\infty} \frac{\tau_E(m)}{m^s} = \prod_{\xi\in E }\zeta(s+\xi)
$$
for all $s$ such that the left-hand side converges absolutely, where $\zeta(s)$ is the Riemann zeta-function and the product on the right-hand side is over all $\xi\in E$, counted with multiplicity. We define $\tau_E(p^{-1})$ to be zero for every multiset $E$. If $r$ is a real number such that each element of $E$ has real part $\geq r$, then \eqref{eqn: taudef} and the divisor bound imply
\begin{equation}\label{eqn: divisorbound}
\tau_E(m)\ll_{\varepsilon} m^{-r+\varepsilon}.
\end{equation}

If $E$ is a multiset of complex numbers and $s\in \mathbb{C}$, then we define $E_s$ to be the multiset $E$ with $s$ added to each element. In other words, if $E=\{\xi_1,\xi_2,\dots,\xi_j\}$, then
\begin{equation*}
E_s := \{\xi_1+s,\xi_2+s,\dots,\xi_j+s\}.
\end{equation*}
It follows immediately from this definition and \eqref{eqn: taudef} that
\begin{equation}\label{eqn: taufactoringidentity}
\tau_{E_s}(m) = m^{-s}\tau_E(m).
\end{equation}
If $E$ is a multiset, then we let $|E|$ denote its cardinality, counting multiplicity. If $D$ and $E$ are multisets, then we let $D\cup E$ denote the multiset sum of $D$ and $E$, which means that the multiplicity of each element in $D\cup E$ is exactly the sum of the multiplicity of the element in $D$ and its multiplicity in $E$. Similarly, we define $D\smallsetminus E$ to be the multiset difference, which is the multiset with each element having multiplicity equal to its multiplicity as an element of $D$ minus its multiplicity as an element of $E$ if this difference is nonnegative, and equal to zero otherwise. Thus, for example, if $A=\{\alpha_1,\alpha_2,\dots,\alpha_j\}$ is a multiset of complex numbers, $\alpha=\alpha_1$, and $\beta$ and $s$ are complex numbers, then \eqref{eqn: taudef} implies
\begin{equation*}
\tau_{A_s \smallsetminus \{\alpha+s\} \cup \{-\beta-s\}} (m) = \sum_{m_1\cdots m_j=m} m_1^{\beta+s}m_2^{-\alpha_2-s}m_3^{-\alpha_3-s}\cdots m_j^{-\alpha_j-s}
\end{equation*}
for every positive integer $m$. For most of our proofs, we will be dealing with sets instead of multisets, and in most cases $D\smallsetminus E$ and $D\cup E$ reduce to ordinary set difference and set union, respectively.

The letter $Q$ denotes a parameter tending to $\infty$, and $\vartheta \in (0,1)$ is a parameter. We define $X=Q^{\eta}$ with $\eta$ a parameter satisfying $1<\eta<2$. The quantities $C$ and $Y$, which satisfy $C\geq 1$ and $Y\geq XQ^{\vartheta}$ and are introduced in Sections~\ref{sec: outline} and \ref{sec: Ur}, respectively, are positive parameters that we will choose to be powers of $Q$ at the end of the proof of Theorem~\ref{thm: main}. The sequence $\lambda_1,\lambda_2,\dots$ is an arbitrary sequence of complex numbers such that $\lambda_h \ll_{\varepsilon} h^{\varepsilon}$ for all positive integers $h$. We use this sequence only to prove the property \eqref{eqn: mainbound} of $\mathcal{E}(h,k)$. In Section~\ref{sec: U2}, we use the symbol $\delta$ to denote the reciprocal of an arbitrarily large power of $Q$, say
\begin{equation}\label{eqn: deltadef}
\delta=Q^{-99}.
\end{equation}
In many places in the same section and in other sections, we also use the symbol $\delta$ as an index of a product, but this will not cause confusion.

We let $A$ and $B$ be arbitrary fixed finite multisets of complex numbers. We usually denote elements of $A$ by $\alpha$ and elements of $B$ by $\beta$. We assume that $\alpha,\beta\ll 1/\log Q$ for all $\alpha\in A$ and $\beta\in B$, with the implied constant arbitrary but fixed. For convenience, we let $C_0>0$ be a fixed arbitrary constant and assume all throughout our proof of Theorem~\ref{thm: main} that if $A=\{\alpha_1,\alpha_2,\dots,\alpha_j\}$ and $B=\{\beta_1,\beta_2,\dots,\beta_{\ell}\}$, then
\begin{equation}\label{eqn: orbitals}
\begin{split}
|\alpha_{\nu}| &= \frac{2^{\nu}C_0}{\log Q} \ \ \ \text{for }\nu=1,2,\dots,j, \text{ and}\\
|\beta_{\nu}| &= \frac{2^{j+\nu}C_0}{\log Q} \ \ \ \text{for }\nu=1,2,\dots,\ell.
\end{split}
\end{equation}
This ensures that we do not encounter double poles when dealing with expressions such as $\prod_{\alpha\in A,\beta\in B}\zeta(\alpha+\beta+s)$. A consequence of \eqref{eqn: orbitals} is that if $J_1,J_2$ are subsets of $\{1,2,\dots,j\}$ and $L_1,L_2$ are subsets of $\{1,2,\dots,\ell\}$ such that either $J_1\neq J_2$ or $L_1\neq L_2$, then
\begin{equation}\label{eqn: zetaalphaalphabound}
\zeta\bigg( 1 + \sum_{\nu\in J_1} \alpha_{\nu} +\sum_{\nu\in L_1}\beta_{\nu}  - \sum_{\nu\in J_2} \alpha_{\nu} - \sum_{\nu\in L_2}\beta_{\nu}\bigg) \ll \log Q.
\end{equation}
We will eliminate the assumption \eqref{eqn: orbitals} in Section~\ref{sec: proofofthm} and show that Theorem~\ref{thm: main} holds for arbitrary finite multisets $A$ and $B$ such that $\alpha,\beta\ll 1/\log Q$ for all $\alpha\in A$ and $\beta\in B$. The assumption \eqref{eqn: orbitals} is unnecessary in carrying out the Euler product evaluations in Lemmas~\ref{lem: 1swapeulerbound}, \ref{lem: Kfunctionaleqn}, and \ref{lem: U2eulerbound} and Subsection~\ref{sec: matchresidues}. For those calculations, we only need the elements of $A$ and $B$ to be arbitrarily small, and so the assumption that $\alpha,\beta\ll 1/\log Q$ for all $\alpha\in A$ and $\beta\in B$ suffices.

We define the Mellin transform of a function $f$ by
\begin{equation}\label{eqn: mellindef}
\widetilde{f}(s) : = \int_0^{\infty} f(x) x^{s-1}\,dx.
\end{equation}
We assume that $V$ is a fixed smooth function from $[0,\infty)$ to $[0,\infty)$ that has compact support. We suppose that $V(0)>0$, since otherwise the $m$-sum (or $n$-sum) in \eqref{eqn: S(h,k)def} tends to $0$ as $X\rightarrow \infty$ and is thus an invalid approximation of the product of $L$-functions. Without loss of generality, we may assume that $V(0)=1$ since we may normalize by dividing $V(x)$ by $V(0)$. Integrating by parts, we see from the definition \eqref{eqn: mellindef} of $\widetilde{V}$ that if $\re(s)>0$, then
\begin{equation}\label{eqn: mellinVIBP}
\widetilde{V}(s) = -\frac{1}{s}\int_0^{\infty}V'(x) x^s\,dx.
\end{equation}
The latter integral is holomorphic for $\re(s)>-1$ since $V'$ is bounded and compactly supported. It thus follows from \eqref{eqn: mellinVIBP} that $s=0$ is a simple pole of $\widetilde{V}$ and
\begin{equation}\label{eqn: mellinVresidue}
\underset{s=0}{\text{Res}}\ \widetilde{V}(s) = \lim_{s\rightarrow 0} s\widetilde{V}(s) = 1
\end{equation}
because $V(0)=1$. We may apply integration by parts again to the right-hand side of \eqref{eqn: mellinVIBP} to analytically continue $\widetilde{V}(s)$ to $\re(s)>-2$. Repeating this process indefinitely, we see that $\widetilde{V}(s)$ is meromorphic on all of $\mathbb{C}$ with possible poles only at the non-positive integers.

We assume that $W$ is a fixed smooth function from $(0,\infty)$ to $[0,\infty)$ that has compact support. This means that the support of $W$ is bounded away from $0$, and it follows immediately from \eqref{eqn: mellindef} and Morera's theorem that $\widetilde{W}(s)$ is an entire function. The definition \eqref{eqn: mellindef} and a repeated application of integration by parts shows that if $n$ is a positive integer, then
\begin{equation}\label{eqn: mellinrapiddecay}
\widetilde{V}(s),\widetilde{W}(s) \ll_n \frac{1}{|s|^n}
\end{equation}
as $s\rightarrow \infty$. We will repeatedly use this fact without mention to justify moving lines of integration.

We allow implied constants to depend on $\varepsilon$, $\epsilon$, the cardinalities $|A|$ and $|B|$, the implied constant in the assumption $\alpha,\beta\ll 1/{\log Q}$, or the functions $V$ and $W$ without necessarily indicating so in the notation. The implied constants never depend on the actual values of $\alpha,\beta$ nor on any of $Q,X,C,Y,h,k,\lambda_h,\lambda_k, \vartheta,\eta$.

We define $\mathscr{X}(s)$ by
\begin{equation}\label{eqn: scriptXdef}
\mathscr{X} (s) = {\pi}^{s-\frac{1}{2}} \frac{\Gamma(\frac{1}{2}-\frac{1}{2}s)}{\Gamma(\frac{1}{2}s)}.
\end{equation}
In other words, we write the functional equation of $\zeta(s)$ as $\zeta(s)=\mathscr{X}(s)\zeta(1-s)$. The poles of $\mathscr{X}$ are at the odd positive integers, and Stirling's formula implies \cite[(4.12.3)]{Titchmarsh}
\begin{equation}\label{eqn: Stirlingchi}
\mathscr{X}(s) \asymp (1+|s|)^{\frac{1}{2}-\re(s)}
\end{equation}
for $s$ in any fixed vertical strip such that $s$ is bounded away from the poles of $\mathscr{X}$. The relation $f\asymp g$ means $f \ll g$ and $f \gg g$. We will use \eqref{eqn: Stirlingchi} repeatedly without mention. We define $\mathcal{H}(z,w)$ by
\begin{equation}\label{eqn: Hdef}
\mathcal{H}(z,w)=\sqrt{\pi}\frac{\Gamma(\tfrac{1-w}{2})\Gamma(\tfrac{z}{2})\Gamma(\tfrac{w-z}{2})}{\Gamma(\tfrac{w}{2})\Gamma(\tfrac{1-z}{2})\Gamma(\tfrac{1-w+z}{2})}.
\end{equation}
It follows from this and the definition \eqref{eqn: scriptXdef} of $\mathscr{X}$ that
\begin{equation}\label{eqn: Hintermsofchifactor}
\mathcal{H}(z,w)=\mathscr{X}(w)\mathscr{X}(1-z)\mathscr{X}(1-w+z).
\end{equation}
This and \eqref{eqn: Stirlingchi} imply
\begin{equation}\label{eqn: Hbound}
\mathcal{H}(z,w) \asymp |w|^{\frac{1}{2}-\re(w)}|z|^{\re(z)-\frac{1}{2}} |w-z|^{\re(w-z)-\frac{1}{2}}
\end{equation}
for $w,z$ in any fixed vertical strip such that $w$, $z$, and $w-z$ are bounded away from the integers.

We will repeatedly use without mention the well-known fact that $\zeta(s)$ and the Dirichlet $L$-functions each have at most polynomial growth in fixed vertical strips. Oftentimes, this polynomial growth is offset by the rapid decay \eqref{eqn: mellinrapiddecay} of the Mellin transforms. However, there are certain points in our argument, particularly when estimating integrals involving a large number of zeta or $L(s,\chi)$ factors, where we will need to assume the following.
\begin{glh}
    The Lindel\"{o}f Hypothesis for $\zeta(s)$ holds and 
$$
L(\tfrac{1}{2}+it,\psi) \ll_{\varepsilon} (q(1+|t|))^{\varepsilon}
$$
for all real $t$ and all non-principal Dirichlet characters $\psi$ modulo $q$, where the implied constant depends only on $\varepsilon$.
\end{glh}
The Generalized Riemann Hypothesis implies GLH~\cite{ConreyGhoshLindeloff}. We will explicitly mention our assumption of GLH each time we use it.

For conciseness, we adopt the convention that any expression of a sum in $\Sigma$-notation that contains the symbol $\pm$ means a sum of two copies of that expression: one with the symbol $\pm$ replaced by $+$, the other with $\pm$ replaced by $-$, and both with $\mp$ replaced by the sign opposite that replacing $\pm$. For example,
$$
\sum_{\substack{d|q \\ d|(m\pm n)}} \psi(\mp d) f(\pm d) g(d)
$$
means the same as
$$
\sum_{\substack{d|q \\ d|(m+ n)}} \psi(- d)f( d) g(d) + \sum_{\substack{d|q \\ d|(m- n)}} \psi( d)f(-d) g(d).
$$
and
$\sum_{a} h(\pm a)$ means the same as $\sum_{a} h(a) +\sum_{a} h(-a)$. On the other hand, we use the typical interpretation of $\pm$ in expressions like
\[
\int_{0}^{\infty }  \frac{c |mh\pm e^{\xi}nk|}{gx Q} W\left( \frac{c |mh\pm e^{\xi}nk|}{gx Q}\right) x^{w-1} \,dx 
\]
and in definitions such as
\[
\ell := \frac{|mh\pm nk|}{d}.
\]

We end this section with two lemmas that we will apply in various sections.
\begin{lemma}\label{lem: Lemma2ofCIS}\cite[Lemma 2]{CIS}
If $(mn,q)=1$, then
\[
\sideset{}{^{\flat}}\sum_{\chi \bmod q} \chi(m)\overline{\chi(n)} = \frac{1}{2}\Bigg(\sum_{\substack{d|q\\d|(m\pm n)}}\phi(d)\mu\left(\frac{q}{d}\right) \Bigg),
\]
where the $\flat$ indicates that the sum is over all the even primitive characters. Here, we have adopted the previously mentioned convention that the right-hand side means a sum of two copies of itself: one with $\pm$ replaced by $+$, and the other with $\pm$ replaced by $-$.
\end{lemma}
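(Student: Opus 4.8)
\emph{Proof strategy.} The plan is to reduce the left-hand side to the classical orthogonality relation for Dirichlet characters in two steps: first restrict to even characters by inserting the projector $\tfrac12\bigl(1+\chi(-1)\bigr)$, and then sieve out the primitive characters by M\"obius inversion over the divisors of $q$.

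I would begin with the standard primitive-character orthogonality identity. Every character $\chi \bmod q$ is induced by a unique primitive character of conductor $f\mid q$, and this induced character agrees with the primitive one on every integer coprime to $q$; since $(mn,q)=1$, this gives
\[
\sum_{\chi \bmod q}\chi(m)\overline{\chi(n)} = \sum_{d\mid q}\ \sideset{}{^*}\sum_{\chi \bmod d}\chi(m)\overline{\chi(n)},
\]
where $*$ denotes a sum over primitive characters. The left-hand side is $\phi(q)$ when $m\equiv n\pmod q$ and $0$ otherwise, so M\"obius inversion on the divisor lattice of $q$ yields
\[
\sideset{}{^*}\sum_{\chi \bmod q}\chi(m)\overline{\chi(n)} = \sum_{\substack{d\mid q\\ d\mid(m-n)}}\phi(d)\,\mu\!\left(\frac{q}{d}\right)
\]
for all $m,n$ with $(mn,q)=1$.

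Next I would write $\sideset{}{^\flat}\sum = \sideset{}{^*}\sum\,\tfrac12\bigl(1+\chi(-1)\bigr)$, valid because $\tfrac12\bigl(1+\chi(-1)\bigr)$ equals $1$ on even characters and $0$ on odd ones. Using $\chi(-1)\chi(m)=\chi(-m)$, this splits the target sum as
\[
\sideset{}{^\flat}\sum_{\chi \bmod q}\chi(m)\overline{\chi(n)} = \frac12\sideset{}{^*}\sum_{\chi \bmod q}\chi(m)\overline{\chi(n)} + \frac12\sideset{}{^*}\sum_{\chi \bmod q}\chi(-m)\overline{\chi(n)}.
\]
Applying the primitive-character identity above to the first term (with the pair $m,n$) and to the second term (with the pair $-m,n$, noting $((-m)n,q)=1$ as well) produces $\tfrac12\sum_{d\mid q,\,d\mid(m-n)}\phi(d)\mu(q/d)$ and $\tfrac12\sum_{d\mid q,\,d\mid(m+n)}\phi(d)\mu(q/d)$ respectively, since $-m\equiv n\pmod d$ is equivalent to $d\mid(m+n)$. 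Summing the two gives exactly the right-hand side of the lemma under the stated $\pm$ convention.

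There is no real obstacle in this argument; the only points needing a little care are that the reduction to primitive characters uses $(mn,q)=1$ so that an induced character and the primitive character inducing it take the same value at $m$ and at $n$, and the sign bookkeeping when the even-character projector turns the congruence $m\equiv n$ into the pair $m\equiv\pm n\pmod d$.
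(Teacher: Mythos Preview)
Your proof is correct and is the standard argument for this identity. The paper itself does not prove this lemma at all: it is simply quoted as \cite[Lemma~2]{CIS} and used as a black box, so there is no ``paper's own proof'' to compare against beyond the citation. Your derivation via M\"obius inversion of the orthogonality relation over conductors, followed by the even-character projector $\tfrac12(1+\chi(-1))$, is exactly the kind of short verification one would supply if writing out the details.
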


\begin{lemma}\label{lem: sumstoEulerproducts}
If $f(m_1,m_2,\dots,m_j;p)$ is a complex-valued function such that
$$
f(m_1,m_2,\dots,m_j;p) = f(p^{\ordp(m_1)},p^{\ordp(m_2)},\dots,p^{\ordp(m_j)};p)
$$
for all positive integers $m_1,m_2,\dots,m_j$ and primes $p$, then
$$
\sum_{1 \leq m_1 ,m_2 ,\dots,m_j<\infty} \prod_p f(m_1,m_2,\dots,m_j;p) = \prod_p \sum_{0 \leq b_1 , b_2 ,\dots,b_j<\infty} f(p^{b_1},p^{b_2},\dots,p^{b_j};p)
$$
if absolute convergence holds for both sides.
\end{lemma}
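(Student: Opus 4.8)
The statement is the standard ``sum-to-Euler-product'' factorization, so the plan is to verify it by a direct manipulation of the multiplicative structure, being careful only about the bookkeeping of indices. First I would observe that, by unique factorization, each positive integer $m_i$ is determined by its sequence of prime exponents $(\ordp(m_i))_p$, and conversely any choice of finitely-supported exponent sequences yields a unique tuple $(m_1,\dots,m_j)$. Thus the outer sum $\sum_{1\le m_1,\dots,m_j<\infty}$ can be rewritten as a sum over all tuples of finitely-supported functions $p\mapsto b_i(p)\in\mathbb{Z}_{\ge 0}$, $i=1,\dots,j$. Under this reindexing, the summand $\prod_p f(m_1,\dots,m_j;p)$ becomes $\prod_p f(p^{b_1(p)},\dots,p^{b_j(p)};p)$, using precisely the hypothesis that $f(m_1,\dots,m_j;p)$ depends only on $(p^{\ordp(m_1)},\dots,p^{\ordp(m_j)})$.

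Next I would interchange the (now justified, by the assumed absolute convergence) order of the sum over exponent tuples and the product over $p$. The key point is that a finitely-supported tuple of exponent sequences is the same data as an independent choice, \emph{for each prime $p$}, of a tuple $(b_1(p),\dots,b_j(p))\in\mathbb{Z}_{\ge 0}^j$, with all but finitely many equal to $(0,\dots,0)$; and since $f(p^0,\dots,p^0;p)$ is the ``trivial'' factor that one hopes contributes $1$ — or rather, since the product $\prod_p$ over the full summand converges absolutely, the factor for each $p$ is $\sum_{0\le b_1,\dots,b_j<\infty} f(p^{b_1},\dots,p^{b_j};p)$, which automatically includes the all-zero term. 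Therefore the distributive law for absolutely convergent products (the analogue of $\sum_n a_n b_n \cdots = \prod(\sum\cdots)$ for Euler products) gives
\[
\sum_{\text{exponent tuples}} \prod_p f(p^{b_1(p)},\dots,p^{b_j(p)};p) = \prod_p \sum_{0\le b_1,\dots,b_j<\infty} f(p^{b_1},\dots,p^{b_j};p),
\]
which is the desired identity.

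The only genuine content beyond formal manipulation is the justification of rearranging an infinite sum of infinite products into an infinite product of sums; this is exactly what the hypothesis ``absolute convergence holds for both sides'' is there to license, via the standard theorem on the equivalence of a Dirichlet-type series and its Euler product under absolute convergence (applied here in $j$ variables simultaneously rather than one). I expect this interchange — i.e.\ making the reindexing ``tuple of exponent sequences $\leftrightarrow$ choice per prime'' rigorous and invoking absolute convergence to permute the order of summation and multiplication — to be the main (and essentially the only) obstacle, and it is a routine one. No analytic input is needed; the argument is purely combinatorial/measure-theoretic once absolute convergence is granted.
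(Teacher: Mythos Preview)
Your approach is correct and is essentially the same standard argument the paper invokes (it cites Apostol, Theorem~11.7). The one technical point the paper makes explicit and you leave slightly fuzzy is that $f(1,\dots,1;p)$ need not equal $1$, so matching the finite truncations of the product with the partial sums on the left requires the observation that $\prod_{p>y} f(1,\dots,1;p)\to 1$ as $y\to\infty$; this follows from the assumed absolute convergence (the $m_1=\cdots=m_j=1$ term of the left side is exactly $\prod_p f(1,\dots,1;p)$), and is the only ingredient beyond the routine reindexing you describe.
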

\begin{proof}[Proof sketch]
This can be proved using a standard argument (see, for example, \cite[Theorem~11.7]{Apostol}) together with the fact that $\prod_{p>y} f(1,\dots,1;p)\rightarrow 1$ as $y\rightarrow \infty$.
\end{proof}

\section{The CFKRS recipe for conjecturing asymptotic formulas for moments}\label{sec: recipe}

In this section, we apply the heuristic of Conrey et~al.~\cite{CFKRS} to conjecture the asymptotic formula for the sum $\mathcal{S}(h,k)$ defined by \eqref{eqn: S(h,k)def}. We also make the definition \eqref{eqn: I_l(h,k)def} of $\mathcal{I}_{\ell}(h,k)$ more explicit by writing out the analytic continuation of the $m,n$-sum. Furthermore, we write the $q$-sum in \eqref{eqn: I_l(h,k)def} in terms of an integral in order to facilitate subsequent calculations. For a more detailed discussion on the CFKRS recipe and its applications to other families of $L$-functions, see \cite{CFKRS}.

We first apply Mellin inversion, interchange the order of summation, and observe that
\begin{equation*}
\sum_{m=1}^{\infty}\frac{\tau_A(m)\chi(m)}{m^{\frac{1}{2}+s_1}} \sum_{n=1}^{\infty}\frac{\tau_B(n)\bar\chi(n)}{n^{\frac{1}{2}+s_2}} = \prod_{\alpha\in A} L(\tfrac{1}{2}+\alpha+s_1,\chi) \prod_{\beta\in B} L(\tfrac{1}{2}+\beta+s_2,\overline{\chi})
\end{equation*}
by the definition \eqref{eqn: taudef} of $\tau_E$ to deduce from \eqref{eqn: S(h,k)def} that
\begin{equation}\label{eqn: towardstherecipe1}
\begin{split}
\mathcal{S}(h,k) = \frac{1}{(2\pi i )^2} \int_{(2)} \int_{(2)} X^{s_1+s_2} \widetilde{V}(s_1) \widetilde{V}(s_2) \sum_{q=1}^{\infty} W\left( \frac{q}{Q}\right) \sideset{}{^\flat}\sum_{\chi \bmod q} \chi(h) \overline{\chi}(k)\\
\times  \prod_{\alpha\in A} L(\tfrac{1}{2}+\alpha+s_1,\chi) \prod_{\beta\in B} L(\tfrac{1}{2}+\beta+s_2,\overline{\chi}) \,ds_2\,ds_1,
\end{split}
\end{equation}
where $\widetilde{V}$ is defined by~\eqref{eqn: mellindef}. We may move the lines of integration to $\re(s_1)=\re(s_2)=\varepsilon$ because of the rapid decay of $\widetilde{V}$ and the fact that $L(s,\chi)$ is entire for non-principal $\chi$. Now recall that if $\chi$ is an even primitive character of conductor $q$, then $L(s,\chi)$ satisfies the functional equation \cite[\S9]{Davenport}
\begin{equation*}
L(s,\chi) =  G(\chi)q^{-s} \mathscr{X} (s) L(1-s,\overline{\chi}),
\end{equation*}
where $G(\chi)= \sum_{n\bmod{q}}\chi(n) \exp(2\pi i n/q)$ is the Gauss sum and $\mathscr{X} (s)$ is defined by \eqref{eqn: scriptXdef}. Then we have the approximate functional equation
\begin{equation*}
L(s,\chi) \approx \sum_n \frac{\chi(n)}{n^s} + G(\chi)q^{-s} \mathscr{X} (s) \sum_n \frac{\overline{\chi}(n)}{n^{1-s}}.
\end{equation*}
We replace each $L(s,\chi)$ factor in \eqref{eqn: towardstherecipe1} with the right-hand side of its approximate functional equation, and then multiply out the resulting product. We formally discard all the resulting terms except for those that have the same number of $G(\chi)$ factors as $G(\overline{\chi})$ factors. For the remaining terms, we use the fact that $G(\chi)G(\overline{\chi})=q$ \cite[\S9]{Davenport}, and formally extend the sums from the approximate functional equations to $\infty$. We then write the sums in terms of the function $\tau_E$ defined by \eqref{eqn: taudef}, and use the approximation \cite[(4.3.4)]{CFKRS}
\begin{equation*}
\sideset{}{^\flat}\sum_{\chi \bmod q} \chi(hm) \overline{\chi}(kn) \approx \left\{ \begin{array}{cl} \displaystyle \sideset{}{^\flat}\sum_{\chi \bmod q} 1 & \text{if }hm=kn \text{ and } (hkmn,q)=1\\ \\ 0 & \text{else}, \end{array}\right.
\end{equation*}
which we expect to follow from the orthogonality of Dirichlet characters (see also Lemma~\ref{lem: Lemma2ofCIS}). This leads us to conjecture Conjecture~\ref{con: conjecture}.

We may put Conjecture \ref{con: conjecture} into a more explicit form by writing out the analytic continuation of the $m,n$-sum in \eqref{eqn: I_l(h,k)def}. We do this by formally writing it as an Euler product, multiplying it by
\begin{equation}\label{eqn: CFKRSzetafactors}
\prod_{\substack{ \gamma\in A_{s_1}\smallsetminus U_{s_1} \cup (V_{s_2})^{-} \\ \delta \in B_{s_2}\smallsetminus V_{s_2} \cup (U_{s_1})^{-} } } \zeta(1+\gamma+\delta),
\end{equation}
and then dividing it by the Euler product of \eqref{eqn: CFKRSzetafactors}. In other words, we claim that the definition \eqref{eqn: I_l(h,k)def} of $\mathcal{I}_{\ell}(h,k)$ with the $m,n$-sum written explicitly as its analytic continuation is
\begin{align}
\mathcal{I}_{\ell}(h,k) &= \sum_{\substack{U\subseteq A, V\subseteq B \\ |U|=|V|=\ell}} \sum_{\substack{q=1 \\ (q,hk)=1} }^{\infty} W\left( \frac{q}{Q}\right)\sideset{}{^\flat}\sum_{\chi \bmod q} \frac{1}{(2\pi i )^2} \int_{(\varepsilon)} \int_{(\varepsilon)} X^{s_1+s_2} \widetilde{V}(s_1) \widetilde{V}(s_2)   \notag\\
&\hspace{.25in}\times  \prod_{\alpha\in U}  \frac{\mathscr{X} (\tfrac{1}{2}+\alpha +s_1 )}{ q^{\alpha+s_1} } \prod_{\beta\in V}  \frac{\mathscr{X} (\tfrac{1}{2}+\beta+s_2 )}{ q^{\beta+s_2} } \prod_{\substack{\gamma \in A_{s_1}\smallsetminus U_{s_1} \cup (V_{s_2})^{-} \\ \delta \in B_{s_2}\smallsetminus V_{s_2} \cup (U_{s_1})^{-}  }} \zeta(1+\gamma+\delta)   \notag\\
&\hspace{.25in}\times   \prod_{p|q}\Bigg\{ \prod_{\substack{\gamma \in A_{s_1}\smallsetminus U_{s_1} \cup (V_{s_2})^{-} \\ \delta \in B_{s_2}\smallsetminus V_{s_2} \cup (U_{s_1})^{-}  }}\left( 1- \frac{1}{p^{1+\gamma+\delta}}\right) \Bigg\} \prod_{p | hk}\Bigg\{ \prod_{\substack{\gamma \in A_{s_1}\smallsetminus U_{s_1} \cup (V_{s_2})^{-} \\ \delta \in B_{s_2}\smallsetminus V_{s_2} \cup (U_{s_1})^{-}  }}\left( 1- \frac{1}{p^{1+\gamma+\delta}}\right) \notag\\
&\hspace{.5in}\times\sum_{ \substack{0\leq m,n<\infty\\ m+\ordp(h) = n+\ordp(k)}}\frac{\tau_{A_{s_1}\smallsetminus U_{s_1} \cup (V_{s_2})^{-}} (p^m) \tau_{B_{s_2}\smallsetminus V_{s_2} \cup (U_{s_1})^{-} } (p^n)  }{p^{m/2}p^{n/2} }\Bigg\} \notag\\
&\hspace{.25in}\times   \prod_{p\nmid qhk}\Bigg\{ \prod_{\substack{\gamma \in A_{s_1}\smallsetminus U_{s_1} \cup (V_{s_2})^{-} \\ \delta \in B_{s_2}\smallsetminus V_{s_2} \cup (U_{s_1})^{-}  }}\left( 1- \frac{1}{p^{1+\gamma+\delta}}\right) \notag\\
&\hspace{.5in}\times\sum_{m=0}^{\infty}\frac{\tau_{A_{s_1}\smallsetminus U_{s_1} \cup (V_{s_2})^{-}} (p^m) \tau_{B_{s_2}\smallsetminus V_{s_2} \cup (U_{s_1})^{-} } (p^m)  }{p^{m} }\Bigg\} \,ds_2\,ds_1. \label{eqn: I_l(h,k)def2}
\end{align}

We now prove our claim by showing that the Euler product in \eqref{eqn: I_l(h,k)def2} converges absolutely for $A,B$ satisfying $\alpha,\beta\ll 1/\log Q$ for all $\alpha\in A$ and $\beta\in B$. To do this, we make the following observations for such $A,B$. If Re$(s_1)=$Re$(s_2)=\varepsilon$, then
\begin{equation}\label{eqn: CFKRSeuler1}
\begin{split}
\prod_{\substack{\gamma \in A_{s_1}\smallsetminus U_{s_1} \cup (V_{s_2})^{-} \\ \delta \in B_{s_2}\smallsetminus V_{s_2} \cup (U_{s_1})^{-}  }}\left( 1- \frac{1}{p^{1+\gamma+\delta}}\right)
& = 1- \sum_{\substack{ \gamma\in A_{s_1}\smallsetminus U_{s_1} \cup (V_{s_2})^{-} \\ \delta \in B_{s_2}\smallsetminus V_{s_2} \cup (U_{s_1})^{-} } } \frac{1}{p^{1+\gamma+\delta}}  + O\left( \frac{1}{p^{1+\varepsilon}}\right) \\
& = 1 -\frac{\tau_{A_{s_1}\smallsetminus U_{s_1} \cup (V_{s_2})^{-}} (p ) \tau_{B_{s_2}\smallsetminus V_{s_2} \cup (U_{s_1})^{-} } (p )  }{ p } + O\left( \frac{1}{p^{1+\varepsilon}}\right),
\end{split}
\end{equation}
where the last equality follows from the definition \eqref{eqn: taudef} of $\tau_E$. Furthermore, \eqref{eqn: divisorbound} implies
\begin{equation}\label{eqn: CFKRSeuler2}
\sum_{m=2}^{\infty} \frac{\tau_{A_{s_1}\smallsetminus U_{s_1} \cup (V_{s_2})^{-}} (p^{m}) \tau_{B_{s_2}\smallsetminus V_{s_2} \cup (U_{s_1})^{-} } (p^{m})  }{p^{m}} \ll \frac{1}{p^{2-\varepsilon}}
\end{equation}
for Re$(s_1)=$Re$(s_2)=\varepsilon$. From this and \eqref{eqn: CFKRSeuler1}, we deduce that if $p\nmid qhk$, then the local factor in \eqref{eqn: I_l(h,k)def2} corresponding to $p$ is $1+O(p^{-1-\varepsilon})$. Hence the Euler product in \eqref{eqn: I_l(h,k)def2} converges absolutely.

We next prove an integral expression for the $q$-sum in \eqref{eqn: I_l(h,k)def2} in order to facilitate the proof of Theorem~\ref{thm: main}. We first observe that if Re$(s_1)=$Re$(s_2)=\varepsilon$ and $\alpha,\beta\ll 1/\log Q$ for all $\alpha\in A$ and $\beta\in B$, then
\begin{align}
& \prod_{p|q}\Bigg\{ \prod_{\substack{\gamma \in A_{s_1}\smallsetminus U_{s_1} \cup (V_{s_2})^{-} \\ \delta \in B_{s_2}\smallsetminus V_{s_2} \cup (U_{s_1})^{-}  }}\left( 1- \frac{1}{p^{1+\gamma+\delta}}\right) \Bigg\} \prod_{p | hk}\Bigg\{ \prod_{\substack{\gamma \in A_{s_1}\smallsetminus U_{s_1} \cup (V_{s_2})^{-} \\ \delta \in B_{s_2}\smallsetminus V_{s_2} \cup (U_{s_1})^{-}  }}\left( 1- \frac{1}{p^{1+\gamma+\delta}}\right)\notag \\
&\hspace{.5in}\times\sum_{ \substack{0\leq m,n<\infty\\ m+\ordp(h) = n+\ordp(k)}}\frac{\tau_{A_{s_1}\smallsetminus U_{s_1} \cup (V_{s_2})^{-}} (p^m) \tau_{B_{s_2}\smallsetminus V_{s_2} \cup (U_{s_1})^{-} } (p^n)  }{p^{m/2}p^{n/2} }\Bigg\} \notag \\
& = \prod_{p|qhk} O(1) \ll (qhk)^{\varepsilon}. \label{eqn: CFKRSeuler3}
\end{align}
Now Lemma~\ref{lem: Lemma2ofCIS} with $m=n=1$ implies
\begin{equation*}
\sideset{}{^\flat}\sum_{\chi \bmod q}1 = \frac{1}{2}\sum_{d|q} \phi(d) \mu \left( \frac{q}{d}\right) +O(1).
\end{equation*}
We insert this into \eqref{eqn: I_l(h,k)def2}. The total contribution of the $O(1)$ error term is at most $\ll_{\varepsilon} X^{\varepsilon}Q^{1+\varepsilon}(hk)^{\varepsilon}$ if we assume \eqref{eqn: orbitals}, since we have \eqref{eqn: zetaalphaalphabound}, \eqref{eqn: CFKRSeuler1}, \eqref{eqn: CFKRSeuler2}, and \eqref{eqn: CFKRSeuler3}. We then write $W(q/Q)$ as an integral using its Mellin transform. We take this integral to be along Re$(w)=2+\varepsilon$ to keep the $q$-sum absolutely convergent. Expressing the $q$-sum as an Euler product using Lemma~\ref{lem: sumstoEulerproducts}, we then deduce from \eqref{eqn: I_l(h,k)def2} that, if \eqref{eqn: orbitals} holds, then
\begin{equation}\label{eqn: IandIstar}
\mathcal{I}_{\ell}(h,k) = \mathcal{I}_{\ell}^*(h,k) + O(X^{\varepsilon}Q^{1+\varepsilon}(hk)^{\varepsilon}),
\end{equation}
where $\mathcal{I}_{\ell}^*(h,k)$ is defined by
\begin{align}
\mathcal{I}_{\ell}^*(h,k)
& = \sum_{\substack{U\subseteq A, V\subseteq B \\ |U|=|V|={\ell}}} \frac{1}{2(2\pi i )^3} \int_{(\varepsilon)} \int_{(\varepsilon)} \int_{(2+\varepsilon)} X^{s_1+s_2} Q^w \widetilde{V}(s_1) \widetilde{V}(s_2)\widetilde{W}(w)  \notag\\
&\hspace{.25in} \times \prod_{\alpha\in U}  \mathscr{X} (\tfrac{1}{2}+\alpha +s_1 ) \prod_{\beta\in V}  \mathscr{X} (\tfrac{1}{2}+\beta+s_2 ) \prod_{\substack{\gamma \in A_{s_1}\smallsetminus U_{s_1} \cup (V_{s_2})^{-} \\ \delta \in B_{s_2}\smallsetminus V_{s_2} \cup (U_{s_1})^{-}  }} \zeta(1+\gamma+\delta)  \notag \\
&\hspace{.25in}  \times \prod_{p|hk} \prod_{\substack{\gamma \in A_{s_1}\smallsetminus U_{s_1} \cup (V_{s_2})^{-} \\ \delta \in B_{s_2}\smallsetminus V_{s_2} \cup (U_{s_1})^{-}  }}\left( 1- \frac{1}{p^{1+\gamma+\delta}}\right) \notag\\
&\hspace{.5in}\times \sum_{ \substack{0\leq m,n<\infty\\ m+\ordp(h) = n+\ordp(k)}}\frac{\tau_{A_{s_1}\smallsetminus U_{s_1} \cup (V_{s_2})^{-}} (p^m) \tau_{B_{s_2}\smallsetminus V_{s_2} \cup (U_{s_1})^{-} } (p^n)  }{p^{m/2}p^{n/2} } \notag\\
&\hspace{.25in}  \times \prod_{p\nmid hk} \Bigg\{ \prod_{\substack{\gamma \in A_{s_1}\smallsetminus U_{s_1} \cup (V_{s_2})^{-} \\ \delta \in B_{s_2}\smallsetminus V_{s_2} \cup (U_{s_1})^{-}  }}\left( 1- \frac{1}{p^{1+\gamma+\delta}}\right) \times \bigg( 1 + \frac{p-2}{p^{w+\sum_{\alpha\in U} (\alpha+s_1) +\sum_{\beta\in V} (\beta + s_2)} }  \notag\\
&\hspace{.5in}  + \left( 1-\frac{1}{p}\right)^2 \frac{p^{2(1-w-\sum_{\alpha\in U} (\alpha+s_1) -\sum_{\beta\in V} (\beta + s_2))}}{1-p^{1-w-\sum_{\alpha\in U} (\alpha+s_1) -\sum_{\beta\in V} (\beta + s_2)}} \bigg) \notag\\
&\hspace{.5in}  + \prod_{\substack{\gamma \in A_{s_1}\smallsetminus U_{s_1} \cup (V_{s_2})^{-} \\ \delta \in B_{s_2}\smallsetminus V_{s_2} \cup (U_{s_1})^{-}  }} \left( 1- \frac{1}{p^{1+\gamma+\delta}}\right) \notag\\
&\hspace{.75in}\times \sum_{m=1}^{\infty} \frac{\tau_{A_{s_1}\smallsetminus U_{s_1} \cup (V_{s_2})^{-}} (p^m) \tau_{B_{s_2}\smallsetminus V_{s_2} \cup (U_{s_1})^{-} } (p^m)  }{p^m}   \Bigg\} \,dw \,ds_2\,ds_1. \label{eqn: Istardef}
\end{align}

\section{Initial setup and outline of the proof of Theorem~\ref{thm: main}}\label{sec: outline}

We may assume that $(q,mnhk)=1$ in the definition \eqref{eqn: S(h,k)def} of $\mathcal{S}(h,k)$ since otherwise the summand is zero. We may thus apply Lemma~\ref{lem: Lemma2ofCIS} to deduce from \eqref{eqn: S(h,k)def} that
\begin{equation}\label{eqn: applyLemma2ofCIS}
\mathcal{S}(h,k) = \frac{1}{2} \sum_{\substack{1\leq q<\infty \\ (q,hk)=1}}W\left(\frac{q}{Q}\right) \sum_{\substack{1\leq m,n<\infty \\ (mn,q)=1}} \frac{\tau_A(m) \tau_B(n) }{\sqrt{mn}} V\left( \frac{m}{X} \right)  V\left( \frac{n}{X} \right) \sum_{\substack{c,d\geq 1 \\ cd=q \\ d|mh\pm nk }} \phi(d)\mu(c).
\end{equation}
Let $C>0$ be a parameter that we will choose to be some power of $Q$ at the end of our proof of Theorem~\ref{thm: main}. We use the notation of \cite{CIS} and split the right-hand side of \eqref{eqn: applyLemma2ofCIS} to write
\begin{equation}\label{eqn: Ssplit}
\mathcal{S}(h,k)=\mathcal{L}(h,k)+\mathcal{D}(h,k)+\mathcal{U}(h,k),
\end{equation}
where $\mathcal{L}(h,k)$ is the sum of the terms with $c>C$, $\mathcal{D}(h,k)$ is the sum of the ``diagonal'' terms with $c\leq C$ and $mh=nk$,  and $\mathcal{U}(h,k)$ is the sum of the ``off-diagonal'' terms with $c\leq C$ and $mh\neq nk$. In other words, $\mathcal{L}(h,k)$, $\mathcal{D}(h,k)$, and $\mathcal{U}(h,k)$ are defined by
\begin{equation}\label{eqn: Lsum}
\mathcal{L}(h,k):= \frac{1}{2}\sum_{\substack{1\leq q<\infty \\ (q,hk)=1}} W\left( \frac{q}{Q}\right)  \sum_{\substack{1\leq m,n<\infty \\ (mn,q)=1}} \frac{\tau_A(m) \tau_B(n) }{\sqrt{mn}} V\left( \frac{m}{X} \right)  V\left( \frac{n}{X} \right)  \sum_{\substack{c>C, d \geq 1 \\ cd=q \\ d|mh\pm nk}}\phi(d)\mu(c),
\end{equation}
\begin{equation}\label{eqn: Dsum}
\mathcal{D}(h,k):= \frac{1}{2}\sum_{\substack{1\leq q<\infty \\ (q,hk)=1}} W\left( \frac{q}{Q}\right)  \sum_{\substack{1\leq m,n<\infty \\ (mn,q)=1\\ mh=nk}} \frac{\tau_A(m) \tau_B(n) }{\sqrt{mn}}V\left( \frac{m}{X} \right)  V\left( \frac{n}{X} \right)\sum_{\substack{1\leq c\leq C, d \geq 1 \\ cd=q \\ d|mh\pm nk}}\phi(d)\mu(c),
\end{equation}
and
\begin{equation}\label{eqn: Usum}
\mathcal{U}(h,k):= \frac{1}{2}\sum_{\substack{1\leq q<\infty \\ (q,hk)=1}} W\left( \frac{q}{Q}\right)  \sum_{\substack{1\leq m,n<\infty \\ (mn,q)=1\\ mh\neq nk}} \frac{\tau_A(m) \tau_B(n) }{\sqrt{mn}} V\left( \frac{m}{X} \right)  V\left( \frac{n}{X} \right)  \sum_{\substack{1\leq c\leq C, d \geq 1 \\ cd=q \\ d|mh\pm nk}}\phi(d)\mu(c),
\end{equation}
respectively. The purpose of splitting the $c$-sum this way is that we need the $c$-sum to be finite when we apply the asymptotic large sieve. 

For the rest of this section, we outline our strategy for estimating each of $\mathcal{L}(h,k)$, $\mathcal{D}(h,k)$, and $\mathcal{U}(h,k)$. The presentation in this section will be terse in comparison to the actual arguments.

We treat $\mathcal{D}(h,k)$ in Section~\ref{sec: diagonal}. There, we extend the $c$-sum in \eqref{eqn: Dsum} to $\infty$, apply Mellin inversion, and then write sums in terms of an Euler product to show that, up to an admissible error term, $\mathcal{D}(h,k)$ equals the zero-swap term $\mathcal{I}_0(h,k)$, which is defined by \eqref{eqn: I_l(h,k)def} with $\ell=0$.

We evaluate $\mathcal{L}(h,k)$ in Section~\ref{sec: Lsum}. As in the approach of \cite{CIS}, we detect the divisibility condition $d|mh\pm nk$ using character sums and split $\mathcal{L}(h,k)$ into
$$
\mathcal{L}^0(h,k) + \mathcal{L}^r(h,k),
$$
where $\mathcal{L}^0(h,k)$ is the contribution of the principal characters while $\mathcal{L}^r(h,k)$ is the rest of the sum. We use M\"{o}bius inversion to convert $\mathcal{L}^0(h,k)$ into a sum over $c\leq C$ and show later that it cancels with a term from our analysis of $\mathcal{U}(h,k)$. We bound
$$
\sum_{h,k\leq Q^{\vartheta}} \frac{\lambda_h\overline{\lambda_k}}{\sqrt{hk}} \mathcal{L}^r(h,k)
$$
by applying Mellin inversion and writing the $m,n$-sum in terms of Dirichlet $L$-functions. We use GLH to bound these $L$-functions, and then apply the large sieve. The role of $C$ here is to make the bound from applying the large sieve $\ll Q^{2-\varepsilon}$. Our use of GLH differs from the approach in \cite{CIS}, where they are able to apply the bound for the fourth moment because they have only a few $L$-functions in their setting. 

The analysis of $\mathcal{U}(h,k)$ forms the most difficult part of the proof, and is done in Sections \ref{sec: U(h,k)split}, \ref{sec: U2}, and \ref{sec: Ur}. The first step in our analysis of $\mathcal{U}(h,k)$ is to make a change of variables and switch from the divisor $d$ of $mh\pm nk$ to the ``complementary modulus'' $\ell$ given by
\begin{equation}\label{eqn: switchtocomplementary}
\ell = \frac{|mh\pm nk|}{d}.
\end{equation}
We then use character sums to detect the condition $\ell | mh\pm nk$ and arrive at (essentially)
\begin{equation*}
\begin{split}
\mathcal{U}(h,k)&\approx \frac{1}{2} \sum_{c=1}^C \mu(c)   \sum_{\substack{1\leq m,n<\infty \\ (mn,c )=1\\ mh\neq nk}} \frac{\tau_A(m) \tau_B(n) }{\sqrt{mn}} V\left( \frac{m}{X} \right)  V\left( \frac{n}{X} \right)\\
&\hspace{.25in} \times\sum_{\ell=1}^{\infty} \frac{1}{ \ell} \sum_{\psi \bmod \ell} \psi (mh) \overline{\psi}( \mp nk) \frac{|mh\pm nk|}{ \ell} W\left( \frac{c|mh\pm nk|}{\ell Q}\right)
\end{split}
\end{equation*}
(the unabridged version of this is \eqref{eqn: Uafterswitch} in Section~\ref{sec: U(h,k)split}). This technique of switching to the complementary modulus is at the heart of the \textit{asymptotic large sieve} due to Conrey, Iwaniec, and Soundararajan \cite{CISAsymptoticLargeSieve}; see also \cite{CIS6th} and \cite{ChandeeLi8Dirichlet}. The purpose of switching from the divisor $d$ to the complementary modulus \eqref{eqn: switchtocomplementary} is to reduce the moduli of the characters we use to detect the divisibility condition. This, in turn, leads to a tighter upper bound when applying the large sieve inequality. Indeed, the variable $d$ in \eqref{eqn: Usum} satisfies $d\asymp Q/c$ because $cd=q$ and $q\asymp Q$ by the support of $W$. Thus, $d$ can be of size $\asymp Q$ since $c$ may be $1$. On the other hand, the variable $\ell$ in \eqref{eqn: switchtocomplementary} can only be at most $\ll XCQ^{\vartheta-1}$ for $h,k\leq Q^{\vartheta}$ since $d\asymp Q/c$, $c\leq C$, and $m,n\ll X$ in \eqref{eqn: Usum} by the support of $V$. If $X\ll Q^{2-\varepsilon}$, then $XCQ^{\vartheta-1}$ is a factor of $Q^{\varepsilon}$ smaller than $Q$ for suitably small $C$ and $\vartheta$. This technique and the asymptotic large sieve have proven to be extremely useful in the study of the family of primitive Dirichlet $L$-functions (see, for example, \cite{CIS6th}, \cite{CISCriticalZeros}, \cite{ChandeeLeeLiuRadziwill}, and \cite{ChandeeLi8Dirichlet}).

After expressing $\mathcal{U}(h,k)$ in terms of character sums, we may split $\mathcal{U}(h,k)$ into
$$
\mathcal{U}^0(h,k) + \mathcal{U}^r(h,k),
$$
where $\mathcal{U}^0(h,k)$ is the contribution of the principal characters while $\mathcal{U}^r(h,k)$ is the rest of the sum. We bound
\begin{equation}\label{eqn: Ursumtobound}
\sum_{h,k\leq Q^{\vartheta}} \frac{\lambda_h\overline{\lambda_k}}{\sqrt{hk}} \mathcal{U}^r(h,k)
\end{equation}
in Section~\ref{sec: Ur} through a procedure similar to that in \cite{CIS}. In this method, we first make a change of variables to remove some of the dependencies of the summation variables $m,n,h,k$ on each other. We then apply Mellin inversion, write the sum in terms of an Euler product, and then move the lines of integration closer to zero so that the resulting exponent of $X$ in the integrand has small real part. The Euler product contains a potentially large number of $L$-function factors, and we use GLH to bound these $L$-functions. We split the integrals into dyadic parts, and bound the Mellin transforms carefully by treating each dyadic part differently. This technical step, which we carry out explicitly in \eqref{eqn: beforelargesieve}, is a bit more delicate than the estimations in \cite{CIS} because there are more variables of integration after we apply Mellin inversion. Finally, we apply the large sieve inequality to estimate the character sums. It is at this point that we see the effectiveness of using the complementary modulus \eqref{eqn: switchtocomplementary}. If the character sums involve characters of modulus $Q$, then the large sieve inequality alone may not be enough to show that \eqref{eqn: Ursumtobound} has order of magnitude smaller than that of the main term in the predicted asymptotic formula for $\mathcal{S}(h,k)$.

To evaluate the contribution $\mathcal{U}^0(h,k)$ of the principal characters, we first apply Mellin inversion on the function $W$ and write the $\ell$-sum as an Euler product using Lemma 6 of \cite{CIS} (Lemma~\ref{lem: Lemma6ofCIS} in Section~\ref{sec: U(h,k)split}). We then move the line of integration to write
$$
\mathcal{U}^0(h,k) = \mathcal{U}^1(h,k) + \mathcal{U}^2(h,k),
$$
where $\mathcal{U}^1(h,k)$ is the residue from the pole of the (analytic continuation of the) Euler product, while $\mathcal{U}^2(h,k)$ is the integral along the new line. The residue $\mathcal{U}^1(h,k)$ is equal to the negative of $\mathcal{L}^0(h,k)$ plus an admissible error term, and thus cancels $\mathcal{L}^0(h,k)$.

We analyze the integral $\mathcal{U}^2(h,k)$ in Section~\ref{sec: U2} to uncover the predicted one-swap terms. This is where we carry out the delicate contour integration mentioned below Theorem~\ref{thm: main}. To begin, we apply Proposition~2 of \cite{CIS} (stated as Proposition~\ref{prop: CISProp2} in Section~\ref{sec: U2}) and separate the variables $m$ and $n$ in $|mh\pm nk|$ by writing $|mh\pm nk|^w$ in terms of an integral of a meromorphic function. We then apply Mellin inversion on the function $V$ and express the sum as an Euler product. We determine the analytic continuation of this Euler product, and then move the lines of integration to suitable locations to express $\mathcal{U}^2(h,k)$ as a sum of several residues and error terms. We use the Lindel\"{o}f Hypothesis for $\zeta(s)$ to justify moving some of the lines of integration and to bound one of the error terms. We also carry out a similar analysis of the sum $\mathcal{I}_1(h,k)$ of the one-swap terms from Conjecture~\ref{con: conjecture}. We then find that each residue in the expression for $\mathcal{U}^2(h,k)$ can be matched with a residue in the expression for $\mathcal{I}_1(h,k)$ in such a way that corresponding residues are equal up to a negligible error term. This step requires proving identities involving several Euler products. These Euler product identities, in turn, are consequences of certain properties of the function $\tau_E$, the chief one being
\begin{equation*}
\begin{split}
\tau_{A\smallsetminus\{\alpha\}\cup \{-\beta\}} (p^j) \tau_{B\smallsetminus\{\beta\}} (p^{\ell}) + \tau_{A\smallsetminus\{\alpha\}} (p^j) \tau_{B\smallsetminus\{\beta\}\cup \{-\alpha\}} (p^{\ell}) -  \tau_{A\smallsetminus\{\alpha\}} (p^j) \tau_{B\smallsetminus\{\beta\}} (p^{\ell}) \\
= \tau_{A\smallsetminus\{\alpha\}\cup \{-\beta\}} (p^j) \tau_{B\smallsetminus\{\beta\} \cup\{-\alpha\}} (p^{\ell}) -p^{\alpha+\beta} \tau_{A\smallsetminus\{\alpha\}\cup \{-\beta\}}(p^{j-1}) \tau_{B\smallsetminus\{\beta\} \cup\{-\alpha\}} (p^{\ell-1})
\end{split}
\end{equation*}
(Lemma~\ref{lem: CK3identity} in Section~\ref{sec: U2}), which stems from the work of Conrey and Keating \cite{CK3} on moments of zeta. Conjecture \ref{con: conjecture}, predicted by the CFKRS recipe, plays a crucial role in the analysis of $\mathcal{U}^2(h,k)$, as it provides a clear answer to aim for in untangling $\mathcal{U}^2(h,k)$.

\

\noindent{\it Changes in the proof for the odd case.} We now describe the changes we need to make in our proof in order to handle the odd primitive characters. The version of Lemma~\ref{lem: Lemma2ofCIS} for odd primitive characters states that if $(mn,q)=1$, then
\begin{equation*}
\sideset{}{^{\text{odd}}}\sum_{\chi \bmod q}\chi(m)\overline{\chi(n)} = \frac{1}{2}\sum_{\substack{d|q \\ d|m-n}} \phi(d)\mu\left( \frac{q}{d}\right) - \frac{1}{2}\sum_{\substack{d|q \\ d|m+n}} \phi(d)\mu\left( \frac{q}{d}\right),
\end{equation*}
where the superscript ``odd'' indicates that the sum is over all the odd primitive characters. Thus, to handle the sum over the odd primitive characters, we change our convention about the symbol $\pm$ and have $-1$ multiplied to the copy that has $\pm$ replaced by $+$. A consequence of this sign change is that the analogues of $\mathcal{L}^0(h,k)$ and $\mathcal{U}^0(h,k)$ for odd primitive characters are zero. The main term in the asymptotic formula for the analogue of $\mathcal{D}(h,k)$ is unaffected by the sign change, and so \eqref{eqn: Dis0swap} still holds with $\mathcal{D}(h,k)$ replaced by its analogue. The sign change does not affect the other bounds in our proof. In evaluating the analogue of $\mathcal{U}^2(h,k)$, instead of using Proposition~\ref{prop: CISProp2}, we use the version of it for
$$
|1-r|^{-\omega} - |1+r|^{-\omega}.
$$
This version has the function
$$
\mathscr{X}(\omega)\mathscr{Y}(1-z)\mathscr{Y}(1-\omega+z)
$$
in place of $\mathcal{H}(z,\omega)$, where $\mathscr{Y}(s)$ is defined by
$$
\mathscr{Y}(s) = \pi^{s-\frac{1}{2}} \frac{\Gamma( 1-\frac{1}{2}s)}{\Gamma(\frac{1}{2}+\frac{1}{2}s)}.
$$

\section{The diagonal terms \texorpdfstring{$\mathcal{D}(h,k)$}{D(h,k)}}\label{sec: diagonal}
In this section, we focus on the sum $\mathcal{D}(h,k)$ of the diagonal terms, defined by \eqref{eqn: Dsum}. We first perform a short analysis of the main contribution $\mathcal{I}^*_0(h,k)$ of the zero-swap term. We will then see that $\mathcal{I}^*_0(h,k)$ coincides exactly with the main contribution of $\mathcal{D}(h,k)$.

\subsection{The prediction for the zero-swap term}
We may simplify $\mathcal{I}^*_0(h,k)$, defined by \eqref{eqn: Istardef} with $\ell=0$, by cancelling the zeta-function factors $\zeta(1+\alpha+\beta+s_1+s_2)$ with the convergent products of the corresponding local factors. We also apply \eqref{eqn: taufactoringidentity}. The result is
\begin{align*}
\begin{split}
\mathcal{I}^*_0(h,k)=  \frac{1}{2(2\pi i )^3} &\int_{(\varepsilon)} \int_{(\varepsilon)} \int_{(2+\varepsilon)} X^{s_1+s_2} Q^w \widetilde{V}(s_1) \widetilde{V}(s_2)\widetilde{W}(w)\\
   &\hspace{.25in}\times\prod_{p|hk} \sum_{ \substack{0\leq m,n<\infty\\ m+\ordp(h) = n+\ordp(k)}}\frac{\tau_{A}(p^m)\tau_{B} (p^n)}{p^{m(1/2+s_1)}p^{n(1/2+s_2)} } \\
&\hspace{.75in}\times \prod_{p \nmid hk} \left( \frac{(1-p^{-w})^2}{1-p^{1-w}}+ \sum_{\ell=1}^{\infty} \frac{\tau_{A} (p^\ell) \tau_{B } (p^{\ell} ) }{p^{\ell(1+s_1+s_2)}}   \right) \,dw \,ds_2\,ds_1.
\end{split}
\end{align*}
To simplify the latter $m,n$-sum, define $H:=h/(h,k)$ and $K:=k/(h,k)$. A given pair $m,n$ is a pair of nonnegative integers with $m+\ordp(h) = n+\ordp(k)$ if and only if there is a nonnegative integer $\ell$ such that $m=\ell + \ordp(K)$ and $n= \ell + \ordp(H)$. Hence we may write the $m,n$ sum as 
\begin{align*}
\frac{1}{p^{\ordp(K)(1/2+s_1)+\ordp(H)(1/2+s_2)}} \sum_{\ell=0}^{\infty}\frac{\tau_A(p^{\ordp(K)+\ell})\tau_B(p^{\ordp(H)+\ell})}{p^{\ell(1+s_1+s_2)}}.
\end{align*}
Thus we predict that
\begin{equation}\label{eqn: zero-swap}
\begin{split}
\mathcal{I}^*_0(h,k)=\frac{1}{2(2\pi i )^3} &\int_{(\varepsilon)} \int_{(\varepsilon)} \int_{(2+\varepsilon)} \frac{X^{s_1+s_2}}{H^{1/2+s_2}K^{1/2+s_1}} Q^w \widetilde{V}(s_1) \widetilde{V}(s_2)\widetilde{W}(w)\\
&\times \prod_{p|hk} \sum_{\ell=0}^{\infty}\frac{\tau_A(p^{\ordp(K)+\ell})\tau_B(p^{\ordp(H)+\ell})}{p^{\ell(1+s_1+s_2)}}\\
&\hspace{.5in}\times\prod_{p\nmid hk} \left( \frac{(1-p^{-w})^2}{1-p^{1-w}}+ \sum_{\ell=1}^{\infty} \frac{\tau_{A} (p^\ell) \tau_{B } (p^{\ell} ) }{p^{\ell(1+s_1+s_2)}}   \right) \,dw \,ds_2\,ds_1.
\end{split}
\end{equation}

\subsection{\texorpdfstring{$\mathcal{D}(h,k)$}{D(h,k)} coincides with the prediction for the zero-swap term}
In this subsection, we show that $\mathcal{D}(h,k)$, defined by \eqref{eqn: Dsum}, is equal to the right-hand side of \eqref{eqn: zero-swap} plus an admissible error term. To this end, we first make a change of variables in the $m,n$ sum. Since  $H:=h/(h,k)$ and $K:=k/(h,k),$ the condition $mh=nk$ is equivalent to the condition that $m=K\ell \text{ and } n=H\ell$ for some positive integer $\ell$. We thus arrive at
\begin{equation*}
\begin{split}
\mathcal{D}(h,k)= \frac{1}{2}\sum_{\substack{1\leq q<\infty \\ (q,hk)=1}}& W\left( \frac{q}{Q}\right)  \sum_{\substack{1\leq \ell <\infty \\ (\ell,q)=1}} \frac{\tau_A(K\ell) \tau_B(H\ell) }{\ell\sqrt{HK}} V\left( \frac{K\ell}{X} \right) 
V\left( \frac{H\ell}{X} \right)  \sum_{\substack{1\leq c\leq C, d \geq 1 \\ cd=q \\ d|K\ell h\pm H\ell k}}\phi(d)\mu(c).
\end{split}
\end{equation*}
Recall that we use the notation $d|K\ell h\pm H\ell k$ to signify that we are adding two copies of the sum: one with $d|K\ell h- H\ell k$ and the other with $d|K\ell h+ H\ell k$.  In the first copy, we are summing over all $d$ because $K h= H k$. In the second copy, the condition that $d$ divides $K\ell h+ H\ell k$  is equivalent to the condition that $d|2$ because $K h= H k$ and $(q,hk\ell)=1$. Thus, the $c,d$-sum in the second copy has at most two terms, and so the second copy is bounded by 
\[
\ll Q\sum_{\ell \ll X} \frac{(HK\ell)^\varepsilon}{\ell \sqrt{HK}} \ll Q\frac{(XHK)^\varepsilon}{\sqrt{HK}}.
\]
Hence
\begin{align*}
\mathcal{D}(h,k)&=\frac{1}{2}\sum_{\substack{1\leq q<\infty \\ (q,hk)=1}} W\left( \frac{q}{Q}\right) \sum_{\substack{1\le \ell < \infty\\(\ell,q)=1}}\frac{\tau_A(K\ell)\tau_B(H\ell)}{\ell\sqrt{HK}}V\left(\frac{K\ell}{X}\right)V\left(\frac{H\ell}{X}\right)\\
&\hspace{.25in}\times\sum_{\substack{1\leq c\leq C, d \geq 1 \\ cd=q}}\phi(d)\mu(c)+ O\left( Q\frac{(XHK)^\varepsilon}{\sqrt{HK}} \right).
\end{align*}
We next extend the $c$-sum to $\infty$.  The error introduced in doing so is
\begin{align*}
&\ll \sum_{q \ll Q}\sum_{\ell \ll X} \frac{(HK\ell)^\varepsilon}{\ell \sqrt{HK}}\sum_{\substack{c>C , d \geq 1 \\ cd=q}}\phi(d) \ \ll \frac{Q^2}{C} \frac{(XHK)^{\varepsilon}}{\sqrt{HK}}.
\end{align*}
Note that we are careful to estimate the $c$-sum in terms of $C$, which is necessary because the main term in Theorem~\ref{thm: main} is of size about $Q^2$. Later, we will choose $C$ as a specific positive power of $Q$ to control this error term. Setting $\phi^\star(q):=\sum_{cd=q}\phi(d)\mu(c)$, we now have
\begin{align*}
    \mathcal{D}(h,k)&=\frac{1}{2}\sum_{\substack{1\le q <\infty \\ (q,hk)=1}}W\left(\frac{q}{Q}\right)\phi^\star(q)\sum_{\substack{1\le \ell < \infty \\ (\ell,q)=1}}\frac{\tau_A(K\ell)\tau_B(H\ell)}{\ell \sqrt{HK}}V\left(\frac{K\ell}{X}\right)V\left(\frac{H\ell}{X}\right)\\
    &\hspace{.25in}+ O\left(\left(Q+\frac{Q^2}{C}\right)\frac{(XHK)^\varepsilon}{\sqrt{HK}}\right).
\end{align*}
Next, write $V,W$ in terms of their Mellin transforms using Mellin inversion to find
\begin{align}\label{eqn: diagonalapplyMellin}
\begin{split}
    \mathcal{D}(h,k)&=\frac{1}{2(2\pi i)^3}\int_{(\varepsilon)}\int_{(\varepsilon)}\frac{X^{s_1+s_2}}{H^{1/2+s_1}K^{1/2+s_2}}\widetilde{V}(s_1)\widetilde{V}(s_2)\int_{(2+\varepsilon)}Q^w\widetilde{W}(w)\\
    &\hspace{.25in}\times\sum_{\substack{1\le q <\infty \\ (q,hk)=1}}q^{-w}\phi^\star(q)\sum_{\substack{1\le \ell < \infty \\ (\ell,q)=1}}\frac{\tau_A(K\ell)\tau_B(H\ell)}{\ell^{1+s_1+s_2}}\,dw\,ds_2\,ds_1\\
    &\hspace{.5in}+O\left(\left(Q+\frac{Q^2}{C}\right)\frac{(XHK)^\varepsilon}{\sqrt{HK}}\right),
\end{split}
\end{align}
where we have chosen the location of the $w$-line to be along Re$(w)=2 + \varepsilon$ to ensure that the $q$-sum is absolutely convergent. We may then rewrite the $q,\ell$-sum in \eqref{eqn: diagonalapplyMellin} as the Euler product
\begin{equation}\label{eqn: diagonalqlsumeuler}
    \prod_{p}\Bigg(\sum_{\substack{0\le q < \infty\\ \min\{ q,\ordp(h)+\ordp(k)\}=0}}\!\!\!\!\!\!\!\!p^{-qw}\phi^{\star}(p^q)\sum_{\substack{0\le \ell < \infty\\\min\{\ell,q\}=0}}\frac{\tau_A(p^{\ordp(K)+\ell})\tau_B(p^{\ordp(H)+\ell})}{p^{\ell(1+s_1+s_2)}}\Bigg).
\end{equation}
If $p|hk$, then $\ordp(h)+\ordp(k)\ge 1$. In this case, for the condition $\min\{ q,\ordp(h)+\ordp(k)\}=0$ to hold, we must have $q=0$. Since $\phi^{\star}(p^0) =1$, it follows that the contribution to the Euler product from the primes dividing $hk$ is
\begin{align*}
    \prod_{p|hk} \Bigg(\sum_{\ell = 0}^{\infty}\frac{\tau_A(p^{\ordp(K)+\ell})\tau_B(p^{\ordp(H)+\ell})}{p^{\ell(1+s_1+s_2)}} \Bigg),
\end{align*}
which we note has no dependence on $w$. Now suppose that $p\nmid hk$. Then $\ordp(h)+\ordp(k)=0$, which means we may drop the condition that $\min\{ q,\ordp(h)+\ordp(k)\}=0$. The contribution to the Euler product from primes not dividing $hk$ is thus
\begin{align*}
    \prod_{p\nmid hk}\sum_{\substack{0\le q,\ell < \infty\\ \min(\ell,q)=0}}p^{-qw}\phi^{\star}(p^q) \frac{\tau_A(p^{\ell})\tau_B(p^{\ell})}{p^{\ell(1+s_1+s_2)}} = \prod_{p\nmid hk}\left( 1 + \sum_{q=1}^{\infty}p^{-qw}\phi^{\star}(p^q)+ \sum_{\ell=1}^{\infty}\frac{\tau_A(p^{\ell})\tau_B(p^{\ell})}{p^{\ell(1+s_1+s_2)}}\right)
\end{align*}
Inserting the definition of $\phi^{\star}$ into the $q$-sum, we directly calculate the $q=1$ term and realize the sum of the terms with $q>1$ as a geometric series to find, after a short calculation, that
\begin{align*}
    1 + \sum_{q=1}^{\infty}p^{-qw}\phi^{\star}(p^q) &=1 + \sum_{q=1}^{\infty}p^{-qw} \sum_{cd=p^q}\phi(d)\mu(c)
    =\left(\frac{1}{1-p^{1-w}}\right)\left( 1-p^{-w}\right)^2.
\end{align*}
Hence, writing the $q,\ell$-sum in \eqref{eqn: diagonalapplyMellin} as the Euler product \eqref{eqn: diagonalqlsumeuler} and applying the above simplifications, we arrive at
\begin{align*}
     \mathcal{D}(h,k)&=\frac{1}{2(2\pi i)^3}\int_{(\varepsilon)}\int_{(\varepsilon)}\int_{(2+\varepsilon)}\frac{X^{s_1+s_2}}{H^{1/2+s_1}K^{1/2+s_2}}Q^w\widetilde{V}(s_1)\widetilde{V}(s_2)\widetilde{W}(w)\\
     &\hspace{.25in}\times\prod_{p|hk} \left(\sum_{\ell = 0}^{\infty}\frac{\tau_A(p^{\ordp(K)+\ell})\tau_B(p^{\ordp(H)+\ell})}{p^{\ell(1+s_1+s_2)}} \right)\\
     &\hspace{1in}\times\prod_{p\nmid hk}\left( \frac{(1-p^{-w})^2}{1-p^{1-w}}  + \sum_{\ell=1}^{\infty}\frac{\tau_A(p^{\ell})\tau_B(p^{\ell})}{p^{\ell(1+s_1+s_2)}}\right)\,dw\,ds_1\,ds_2\\
     &\hspace{1.5in}+ O\left(\left(Q+\frac{Q^2}{C}\right)\frac{(XHK)^\varepsilon}{\sqrt{HK}}\right).
\end{align*}
After relabeling $s_1$ as $s_2$ and vice versa, we see that the integral above exactly matches the right-hand side of \eqref{eqn: zero-swap}. In other words,
\begin{equation}\label{eqn: Dis0swap}
\mathcal{D}(h,k)=\mathcal{I}^*_0(h,k) + O\left(\left(Q+\frac{Q^2}{C}\right)\frac{(XHK)^\varepsilon}{\sqrt{HK}}\right).
\end{equation}

\section{The term \texorpdfstring{$\mathcal{L}(h,k)$}{L(h,k)}}\label{sec: Lsum}
Recall the definition \eqref{eqn: Lsum} of $\mathcal{L}(h,k)$, and recall that we interpret the $d$-sum therein as two sums: one with the condition $d|mh-nk$ and the other with the condition $d|mh+nk$. We first show how to re-express $\mathcal{L}(h,k)$ in terms of characters modulo $d$. For $(mnhk,d)=1$, the orthogonality of character sums implies
\begin{align*}
\frac{1}{\phi(d)}\sum_{\psi \bmod d} \psi(mh)\overline{\psi}(nk) = \left\{\begin{array}{cl} 1 & \text{if } d|mh-nk \\ \\ 0 & \text{else} \end{array}\right.
\end{align*}
and
\begin{align*}
\frac{1}{\phi(d)}\sum_{\psi \bmod d} \psi(mh)\overline{\psi}(-nk) = \left\{\begin{array}{cl} 1 & \text{if } d|mh+nk \\ \\ 0 & \text{else}. \end{array}\right.
\end{align*}
Since $\overline{\psi}(1)+\overline{\psi}(-1)=2$ if $\psi$ is even and $0$ if $\psi$ is odd, it follows that the sum of these two character sums is
$$
\frac{2}{\phi(d)}\sum_{\substack{\psi \bmod d \\ \psi \text{ even}}} \psi(mh)\overline{\psi}( nk).
$$
Therefore, we may recast $\mathcal{L}(h,k)$ as
\begin{align*}
\mathcal{L}(h,k) &=  \sum_{\substack{1\leq q<\infty \\ (q,hk)=1}} W\left( \frac{q}{Q}\right)  \sum_{\substack{1\leq m,n<\infty \\ (mn,q)=1}} \frac{\tau_A(m) \tau_B(n) }{\sqrt{mn}} V\left( \frac{m}{X} \right)  V\left( \frac{n}{X} \right)\\
&\hspace{.5in}\times\sum_{\substack{c>C, d \geq 1 \\ cd=q  }} \mu(c)\sum_{\substack{\psi \bmod d \\ \psi \text{ even}}} \psi(mh)\overline{\psi}( nk).
\end{align*}
Split the right-hand side to write
\begin{equation}\label{eqn: Lsplit}
\mathcal{L}(h,k) = \mathcal{L}^0(h,k)+\mathcal{L}^r(h,k),
\end{equation}
where $\mathcal{L}^0(h,k)$ is the contribution of the principal character modulo $d$ and $\mathcal{L}^r(h,k)$ is the rest. In other words,
\begin{equation}\label{eqn: L0}
\begin{split}
\mathcal{L}^0(h,k) := \sum_{\substack{1\leq q<\infty \\ (q,hk)=1}} W\left( \frac{q}{Q}\right)  \sum_{\substack{1\leq m,n<\infty \\ (mn,q)=1}} \frac{\tau_A(m) \tau_B(n) }{\sqrt{mn}} V\left( \frac{m}{X} \right)  V\left( \frac{n}{X} \right)
\sum_{\substack{c>C, d \geq 1 \\ cd=q  }} \mu(c)
\end{split}
\end{equation}
and
\begin{equation*}
\begin{split}
\mathcal{L}^r(h,k) :&= \sum_{\substack{1\leq q<\infty \\ (q,hk)=1}} W\left( \frac{q}{Q}\right)  \sum_{\substack{1\leq m,n<\infty \\ (mn,q)=1}} \frac{\tau_A(m) \tau_B(n) }{\sqrt{mn}} V\left( \frac{m}{X} \right)  V\left( \frac{n}{X} \right)\\
&\hspace{.5in}\times  \sum_{\substack{c>C, d \geq 1 \\ cd=q  }} \mu(c)\sum_{\substack{\psi \bmod d \\ \psi \text{ even} \\ \psi\neq \psi_0}} \psi(mh)\overline{\psi}( nk),
\end{split}
\end{equation*}
where $\psi_0$ denotes the principal character modulo $d$. 

In this section, we have two goals. First, we will bound the contribution of $\mathcal{L}^r(h,k)$ and show, on average over $h,k$, that it is an acceptable error term. Second, we will rework $\mathcal{L}^0(h,k)$ in preparation to show (later, in Section \ref{sec: U1}) that $\mathcal{L}^0(h,k)$ cancels with a term arising during the analysis of  $\mathcal{U}(h,k)$.

\subsection{Bounding the contribution of \texorpdfstring{$\mathcal{L}^r(h,k)$}{Lr(h,k)} }
We may freely interchange the order of summation because each of $W$ and $V$ has compact support, forcing the sums to be finite. We bring the $m,n$-sum inside and then use Mellin inversion to write
\begin{equation*}
\begin{split}
\mathcal{L}^r(h,k)  &=  \sum_{\substack{1\leq q<\infty \\ (q,hk)=1}} W\left( \frac{q}{Q}\right)   \sum_{\substack{c>C, d \geq 1 \\ cd=q  }} \mu(c)\sum_{\substack{\psi \bmod d \\ \psi \text{ even} \\ \psi\neq \psi_0}} \psi( h)\overline{\psi}(  k)  \sum_{\substack{1\leq m,n<\infty \\ (mn,q)=1}} \frac{\tau_A(m) \tau_B(n) \psi(m )\overline{\psi}( n ) }{\sqrt{mn}}\\
&\hspace{.5in}\times\frac{1}{(2\pi i)^2}\int_{(\frac{1}{2}+\varepsilon)} \int_{(\frac{1}{2}+\varepsilon)}  \frac{X^{s_1+s_2}}{m^{s_1}n^{s_2}}\widetilde{V}(s_1)\widetilde{V}(s_2) \,ds_2 \,ds_1,
\end{split}
\end{equation*}
where we have chosen the lines of integration to be at $\re(s_1)=\re(s_2)=\frac{1}{2}+\varepsilon$ so that in the next step we can interchange the $m,n$-sum and the integrals. Since $q=cd$ and $\psi(\nu)=0$ for $(\nu,d)>1$, the $m,n$-sum is the same as
\begin{equation*}
\begin{split}
\sum_{\substack{1\leq m,n<\infty \\ (mn,c)=1}} \frac{\tau_A(m) \tau_B(n) \psi(m )\overline{\psi}( n ) }{m^{\frac{1}{2}+s_1}n^{\frac{1}{2}+s_2}} &= \prod_{\alpha\in A} L(\tfrac{1}{2}+s_1+\alpha,\psi) \prod_{\beta\in B} L(\tfrac{1}{2}+s_2+\beta,\overline{\psi})\\
&\hspace{.25in}\times \prod_{\alpha\in A}\Bigg(\prod_{p|c}\bigg(1- \frac{\psi(p)}{p^{\frac{1}{2}+s_1+\alpha}} \bigg) \Bigg) \prod_{\beta\in B}\Bigg(\prod_{p|c}\bigg(1- \frac{\overline{\psi}(p)}{p^{\frac{1}{2}+s_2+\beta}} \bigg) \Bigg).
\end{split}
\end{equation*}
Therefore, we have 
\begin{equation*}
\begin{split}
\mathcal{L}^r(h,k)=&\sum_{\substack{1\leq q<\infty \\ (q,hk)=1}} W\left( \frac{q}{Q}\right)   \sum_{\substack{c>C, d \geq 1 \\ cd=q  }} \mu(c)\sum_{\substack{\psi \bmod d \\ \psi \text{ even} \\ \psi\neq \psi_0}} \psi( h)\overline{\psi}(  k)\frac{1}{(2\pi i)^2}\int_{(\frac{1}{2}+\varepsilon)} \int_{(\frac{1}{2}+\varepsilon)}  X^{s_1+s_2} \\
&\hspace{.25in}\times \widetilde{V}(s_1)\widetilde{V}(s_2) 
\prod_{\alpha\in A} L(\tfrac{1}{2}+s_1+\alpha,\psi) \prod_{\beta\in B} L(\tfrac{1}{2}+s_2+\beta,\overline{\psi})\\
&\hspace{.5in} \times    \prod_{\alpha\in A}\Bigg(\prod_{p|c}\bigg(1- \frac{\psi(p)}{p^{\frac{1}{2}+s_1+\alpha}} \bigg) \Bigg) \prod_{\beta\in B}\Bigg(\prod_{p|c}\bigg(1- \frac{\overline{\psi}(p)}{p^{\frac{1}{2}+s_2+\beta}} \bigg) \Bigg) \,ds_2 \,ds_1.
\end{split}
\end{equation*}
We may now move the lines of integration to $\re(s_1)=\re(s_2)=\varepsilon$ by the rapid decay of $\widetilde{V}(s_1)$ and $\widetilde{V}(s_2)$ and the fact that $L(s,\psi)$ has no pole whenever $\psi$ is non-principal. We multiply both sides of the above equation by $ \lambda_h \overline{\lambda_k}(hk)^{-1/2}$, and then sum over all positive integers $h,k\leq Q^{\vartheta}$ to arrive at the quantity we aim to bound:
\begin{equation}\label{eqn: Lrweaimtobound}
\begin{split}
&\sum_{h,k\leq Q^{\vartheta}} \frac{\lambda_h \overline{\lambda_k}}{\sqrt{hk}} \mathcal{L}^r(h,k)  =  \sum_{h,k\leq Q^{\vartheta}} \frac{\lambda_h \overline{\lambda_k}}{\sqrt{hk}}\sum_{\substack{1\leq q<\infty \\ (q,hk)=1}} W\left( \frac{q}{Q}\right)   \sum_{\substack{c>C, d \geq 1 \\ cd=q  }} \mu(c)\sum_{\substack{\psi \bmod d \\ \psi \text{ even} \\ \psi\neq \psi_0}} \psi( h)\overline{\psi}(  k)\\
&\hspace{.25in}\times \frac{1}{(2\pi i)^2}\int_{( \varepsilon)} \int_{( \varepsilon)}  X^{s_1+s_2} \widetilde{V}(s_1)\widetilde{V}(s_2)\prod_{\alpha\in A} L(\tfrac{1}{2}+s_1+\alpha,\psi) \prod_{\beta\in B} L(\tfrac{1}{2}+s_2+\beta,\overline{\psi})  \\
&\hspace{.5in}\times    \prod_{\alpha\in A}\Bigg(\prod_{p|c}\bigg(1- \frac{\psi(p)}{p^{\frac{1}{2}+s_1+\alpha}} \bigg) \Bigg) \prod_{\beta\in B}\Bigg(\prod_{p|c}\bigg(1- \frac{\overline{\psi}(p)}{p^{\frac{1}{2}+s_2+\beta}} \bigg) \Bigg) \,ds_2 \,ds_1.
\end{split}
\end{equation}
Now observe that
$$
\prod_{p|c}\bigg|1- \frac{\psi(p)}{p^{\frac{1}{2}+z}} \bigg| \leq \prod_{p|c}(2) \ll_{\varepsilon} c^{\varepsilon}
$$
for any complex number $z$ with $|z|<1/2$. Moreover, it holds that
$$
\sum_{\substack{h,k\leq Q^{\vartheta} \\ (hk,q)=1} } \frac{\lambda_h \overline{\lambda_k} \psi( h)\overline{\psi}(  k) }{\sqrt{hk}} = \Bigg|\sum_{\substack{h \leq Q^{\vartheta} \\ (h ,q)=1} } \frac{\lambda_h  \psi( h)  }{\sqrt{h }} \Bigg|^2.
$$
We bound the $L$-functions in \eqref{eqn: Lrweaimtobound} by assuming GLH\footnote{We must assume GLH in this step because of the potentially large number of $L(s,\psi)$ factors. This differs from the argument in Conrey et~al.~\cite{CIS}, where they bound the size of the square of the $L$-function using the large sieve and the approximate functional equation (see the argument following equation (4.6) in \cite{CIS}).}. It follows from these and the triangle inequality that
\begin{equation*}
\begin{split}
\sum_{h,k\leq Q^{\vartheta}} \frac{\lambda_h \overline{\lambda_k}}{\sqrt{hk}} \mathcal{L}^r(h,k)  &\ll_{\varepsilon} X^{\varepsilon}  \sum_{ 1\leq q<\infty  } W\left( \frac{q}{Q}\right)   \sum_{\substack{c>C, d \geq 1 \\ cd=q  }} (cd)^{\varepsilon} \sum_{\substack{\psi \bmod d \\ \psi \text{ even} \\ \psi\neq \psi_0}}  \Bigg|\sum_{\substack{h \leq Q^{\vartheta} \\ (h ,q)=1} } \frac{\lambda_h  \psi( h)  }{\sqrt{h }} \Bigg|^2   \\
&\hspace{.5in}\times   \int_{( \varepsilon)} \int_{( \varepsilon)}   |s_1s_2|^{\varepsilon} |\widetilde{V}(s_1)||\widetilde{V}(s_2)|  \,|ds_2 \,ds_1|.
\end{split}
\end{equation*}
The rapid decay of $\widetilde{V}$ implies that the latter double integral is $\ll 1$. We substitute $q=cd$ and write the $q$-sum as a double sum over $c$ and $d$. Furthermore, in preparation to use the large sieve, we express each $\psi$ in terms of the primitive character it is induced by to deduce the upper bound
\begin{equation*}
\begin{split}
\sum_{h,k\leq Q^{\vartheta}} \frac{\lambda_h \overline{\lambda_k}}{\sqrt{hk}} \mathcal{L}^r(h,k)  \ll_{\varepsilon} (XQ)^{\varepsilon} \sum_{c>C} \sum_{d=1}^{\infty} W\left( \frac{cd}{Q}\right) \sum_{u|d} \,\sideset{}{^\flat}\sum_{\substack{\psi \bmod u \\ \psi\neq \psi_0}}\Bigg|\sum_{\substack{h \leq Q^{\vartheta} \\ (h ,q)=1} } \frac{\lambda_h  \psi( h)  }{\sqrt{h }} \Bigg|^2,
\end{split}
\end{equation*}
where we again use $\flat$ to denote that the sum is over even primitive characters. We substitute $d=ru$ and write the $d$-sum as a double sum over $r$ and $u$ to arrive at
\begin{equation*}
\begin{split}
\sum_{h,k\leq Q^{\vartheta}} \frac{\lambda_h \overline{\lambda_k}}{\sqrt{hk}} \mathcal{L}^r(h,k)  \ll_{\varepsilon} (XQ)^{\varepsilon} \sum_{c>C} \sum_{r=1}^{\infty} \sum_{u=1}^{\infty} W\left( \frac{cru}{Q}\right)  \,\sideset{}{^\flat}\sum_{\substack{\psi \bmod u \\ \psi\neq \psi_0}}  \Bigg|\sum_{\substack{h \leq Q^{\vartheta} \\ (h ,q)=1} } \frac{\lambda_h  \psi( h)  }{\sqrt{h }} \Bigg|^2 .
\end{split}
\end{equation*}
Since $W$ is bounded and compactly supported, it follows that
\begin{equation*}
\begin{split}
\sum_{h,k\leq Q^{\vartheta}} \frac{\lambda_h \overline{\lambda_k}}{\sqrt{hk}} \mathcal{L}^r(h,k)  \ll_{\varepsilon} (XQ)^{\varepsilon} \sum_{C<c\ll Q} \sum_{r\ll \frac{Q}{c}} \sum_{u \ll \frac{Q}{cr}}   \,\sideset{}{^\flat}\sum_{\substack{\psi \bmod u \\ \psi\neq \psi_0}}  \Bigg|\sum_{\substack{h \leq Q^{\vartheta} \\ (h ,q)=1} } \frac{\lambda_h  \psi( h)  }{\sqrt{h }} \Bigg|^2 .
\end{split}
\end{equation*}
The large sieve (see, for example, \cite[\S27, Theorem 4]{Davenport}) implies that
\begin{equation*}
\sum_{u \ll \frac{Q}{cr}}   \,\sideset{}{^\flat}\sum_{\substack{\psi \bmod u \\  \psi\neq \psi_0}}  \Bigg|\sum_{\substack{h \leq Q^{\vartheta} \\ (h ,q)=1} } \frac{\lambda_h  \psi( h)  }{\sqrt{h }} \Bigg|^2 \ll \bigg(Q^{\vartheta} + \frac{Q^2}{c^2r^2} \bigg)\sum_{\substack{h \leq Q^{\vartheta} \\ (h ,q)=1} } \frac{|\lambda_h|^2  }{h}.
\end{equation*}
Hence, since $\lambda_h \ll_{\varepsilon} h^{\varepsilon}$, it follows that
\begin{equation}\label{eqn: Lrbound}
\begin{split}
\sum_{h,k\leq Q^{\vartheta}} \frac{\lambda_h \overline{\lambda_k}}{\sqrt{hk}} \mathcal{L}^r(h,k)
& \ll_{\varepsilon} (XQ)^{\varepsilon} \sum_{C<c\ll Q} \sum_{r\ll \frac{Q}{c}}  \bigg(Q^{\vartheta} + \frac{Q^2}{c^2r^2} \bigg)Q^{\varepsilon} \\
& \ll_{\varepsilon} (XQ)^{\varepsilon} \sum_{C<c\ll Q} \bigg(\frac{Q^{1+\vartheta}}{c} + \frac{Q^2}{c^2} \bigg) \\
& \ll_{\varepsilon} (XQ)^{\varepsilon} \bigg(Q^{1+\vartheta+\varepsilon} + \frac{Q^2}{C} \bigg).
\end{split}
\end{equation}
As mentioned in Section~\ref{sec: outline}, we will eventually choose $C$ as a specific positive power of $Q$ to control this error term.

\subsection{Preparing \texorpdfstring{$\mathcal{L}^0(h,k)$}{L0(h,k)} for eventual cancellation}
The goal of this subsection is to put $\mathcal{L}^0(h,k)$ into a form that, as we will eventually see in Section \ref{sec: U1}, cancels with a term arising from our analysis of $\mathcal{U}(h,k)$. To this end, let us first focus on the $c,d$-sum in the definition \eqref{eqn: L0} of $\mathcal{L}^0(h,k)$. We complete the $c$-sum by writing
\[
 \sum_{\substack{c>C, d \geq 1 \\ cd=q  }} \mu(c) = \sum_{c|q}\mu(c) -  \sum_{\substack{c\le C, d \geq 1 \\ cd=q  }} \mu(c) = \left\lfloor \frac{1}{q} \right\rfloor -  \sum_{\substack{c\le C, d \geq 1 \\ cd=q  }} \mu(c).
\]
The latter $c,d$-sum equals $1$ if $q=1$, and so it follows that
\begin{align*}
      \sum_{\substack{c>C, d \geq 1 \\ cd=q  }} \mu(c) 
= \begin{cases}
        \displaystyle -\sum_{\substack{c\leq C, d \geq 1 \\ cd=q  }} \mu(c) & \text{if } q>1\\ \\
        \hphantom{---} 0 &\text{if } q=1.
      \end{cases}
\end{align*}
From this and the definition \eqref{eqn: L0} of $\mathcal{L}^0(h,k)$, we arrive at
\begin{equation*}
    \mathcal{L}^0(h,k) = -\sum_{\substack{1< q<\infty \\ (q,hk)=1}} W\left( \frac{q}{Q}\right)  \sum_{\substack{1\leq m,n<\infty \\ (mn,q)=1}} \frac{\tau_A(m) \tau_B(n) }{\sqrt{mn}} V\left( \frac{m}{X} \right)  V\left( \frac{n}{X} \right)  \sum_{\substack{c\leq C, d \geq 1 \\ cd=q  }} \mu(c).
\end{equation*}
Without loss of generality, we may ignore the condition $q>1$ and simply sum over all $1\leq q<\infty$ because the $q=1$ term is zero for large enough $Q$, as $W$ is supported away from $0$. We substitute $q=cd$ and interchange the order of summation to deduce that
\begin{equation}\label{eqn: L0cancelformalmost}
\mathcal{L}^0(h,k) = - \sum_{\substack{1\leq c\le C\\ (c,hk)=1}} \mu(c)\sum_{\substack{1\leq m,n<\infty \\ (mn,c)=1}} \frac{\tau_A(m) \tau_B(n) }{\sqrt{mn}} V\left( \frac{m}{X} \right)  V\left( \frac{n}{X} \right)\sum_{\substack{1\le d < \infty \\ (d,mhnk)=1}}W\left( \frac{cd}{Q}\right).
\end{equation}
To evaluate the latter $d$-sum, we use Stieltjes integration and the fact that
$$
\sum_{\substack{d\le x \\ (d,m)=1}}1 = x\frac{\phi(m)}{m} + E(x,m)
$$
for some function $E(x,m)$ such that $E(x,m)=O(m^\varepsilon)$ uniformly for all $x>0$ and positive integers $m$. This results to
\begin{align*}
\sum_{\substack{1\le d < \infty\\ (d,mnhk)=1}}W\left( \frac{cd}{Q}\right) = W\left(\frac{c}{Q}\right) +\frac{\phi(mnhk)}{mnhk}\int_{1}^{\infty}W\left( \frac{cx}{Q}\right)\,dx + \int_{1}^{\infty}W\left(\frac{cx}{Q}\right)\,dE.
\end{align*}
Note that $W(c/Q)\ll 1$. Moreover, we may integrate by parts to see that the last integral is $O((mnhk)^\varepsilon)$ by the bound on $E(x,m)$ and the fact that $W$ is compactly supported. By a change of variables, we have
\[
\frac{c}{Q}\int_{1}^{\infty}W\left( \frac{cx}{Q}\right)\,dx = \int_{0}^{\infty}W(x)\,dx - \int_{0}^{c/Q}W(x)\,dx = \int_{0}^{\infty}W(x)\,dx + O\left(\frac{c}{Q} \right).
\]
Combining these estimates with \eqref{eqn: L0cancelformalmost}, we find that
\begin{equation}\label{eqn: L0cancelform}
\begin{split}
\mathcal{L}^0(h,k) &=-Q\sum_{\substack{1\le c\le C\\ (c,hk)=1}} \frac{\mu(c)}{c}\sum_{\substack{1\leq m,n<\infty \\ (mn,c)=1}} \frac{\tau_A(m) \tau_B(n) }{\sqrt{mn}} V\left( \frac{m}{X} \right)  V\left( \frac{n}{X} \right)\frac{\phi(mnhk)}{mnhk}
\int_{0}^{\infty}W(x)\,dx\\
&\hspace{1.5in}+ O\big( (Xhk)^\varepsilon XC \big).
\end{split}
\end{equation}
In Section \ref{sec: U1}, we will show that a part of $\mathcal{U}(h,k)$ cancels with the main term above.

\section{Preparing the term \texorpdfstring{$\mathcal{U}(h,k)$}{U(h,k)} for analysis}\label{sec: U(h,k)split}
There are two goals for this section. The first is to switch to the complementary modulus by making a change of variables in the definition \eqref{eqn: Usum} of $\mathcal{U}(h,k)$ and then express the divisibility condition in terms of character sums. The second goal is to dissect the contribution of the principal characters in order to isolate the part of it containing the predicted one-swap terms.

\subsection{\texorpdfstring{$\mathcal{U}(h,k)$}{U(h,k)}: Switching to the complementary modulus}
Recall the definition \eqref{eqn: Usum} of $\mathcal{U}(h,k)$. We substitute $q=cd$ and rearrange the sum to deduce that
\begin{equation}\label{eqn: Ubeforedsum}
\mathcal{U}(h,k)= \frac{1}{2}\sum_{\substack{1\leq c \leq C  \\ (c ,hk)=1 }} \mu(c)   \sum_{\substack{1\leq m,n<\infty \\ (mn,c )=1\\ mh\neq nk}} \frac{\tau_A(m) \tau_B(n) }{\sqrt{mn}} V\left( \frac{m}{X} \right)  V\left( \frac{n}{X} \right)\sum_{\substack{ 1\leq d<\infty \\ ( d,mhnk)=1\\ d|mh\pm nk}}\phi(d) W\left( \frac{cd}{Q}\right).
\end{equation}
Let $g=(mh,nk)$. Then the condition that $d|mh\pm nk$ and $( d,mhnk)=1$ is equivalent to the condition that $d|\frac{mh}{g}\pm \frac{nk}{g}$ and $(d,g)=1$. From this and the fact that $\phi(d)= \sum_{ef=d} \mu(e)f$, we see that the $d$-sum in \eqref{eqn: Ubeforedsum} equals
\begin{equation*}
\sum_{\substack{ 1\leq d<\infty \\ ( d,g)=1\\ d|\frac{mh}{g}\pm \frac{nk}{g}}}\phi(d) W\left( \frac{cd}{Q}\right) = \sum_{\substack{ 1\leq e <\infty \\ ( e ,g)=1 }} \mu(e)\sum_{\substack{ 1\leq  f<\infty \\ (  f,g)=1\\ ef|\frac{mh}{g}\pm \frac{nk}{g}}} f W\left( \frac{cef}{Q}\right).
\end{equation*}
Use M\"{o}bius inversion to detect the condition $(f,g)=1$ and write the above as 
\begin{equation*}
\sum_{\substack{ 1\leq e <\infty \\ ( e ,g)=1 }} \mu(e)\sum_{\substack{ 1\leq  f<\infty \\   ef|\frac{mh}{g}\pm \frac{nk}{g}}} \sum_{\substack{a|f \\ a|g}} \mu(a)f W\left( \frac{cef}{Q}\right) =\sum_{\substack{ 1\leq e <\infty \\ ( e ,g)=1 }} \mu(e) \sum_{a|g} \mu(a) \sum_{\substack{ 1\leq  f<\infty \\  a|f \\  ef|\frac{mh}{g}\pm \frac{nk}{g}}} f W\left( \frac{cef}{Q}\right).
\end{equation*}
Make a change of variables $f=ab$ in the $f$-sum to see that this equals
\begin{equation}\label{eqn: Udsum}
 \sum_{\substack{ 1\leq e <\infty \\ ( e ,g)=1 }} \mu(e) \sum_{a|g} a\mu(a) \sum_{\substack{ 1\leq  b<\infty   \\  eab|\frac{mh}{g}\pm \frac{nk}{g}}} b W\left( \frac{ceab}{Q}\right).
\end{equation}
Now define the ``complementary modulus'' $\ell$ by
\begin{equation*}
|mh\pm nk| =geab\ell,
\end{equation*}
and use it to make a change of variables in the $b$-sum to write \eqref{eqn: Udsum} as
\begin{equation}\label{eqn: Udsum2}
\begin{split}
 \sum_{\substack{ 1\leq e <\infty \\ ( e ,g)=1 }}& \mu(e) \sum_{a|g} a\mu(a) \sum_{\substack{ 1\leq  \ell <\infty   \\  ea\ell |\frac{mh}{g}\pm \frac{nk}{g}}} \frac{|mh\pm nk|}{gea\ell} W\left( \frac{c|mh\pm nk|}{g\ell Q}\right)\\
&\hspace{.25in}  = \sum_{\substack{ 1\leq e <\infty \\ ( e ,g)=1 }} \frac{\mu(e)}{e} \sum_{a|g}  \mu(a) \sum_{\substack{ 1\leq  \ell <\infty   \\  ea\ell |\frac{mh}{g}\pm \frac{nk}{g}}} \frac{|mh\pm nk|}{g \ell} W\left( \frac{c|mh\pm nk|}{g\ell Q}\right).
\end{split}
\end{equation}
Since $g$ is defined by $g=(mh,nk)$, we must have that $ea\ell$ is coprime to each of $mh/g$ and $nk/g$, because if not then the condition $ea\ell |(mh\pm nk)/g$ would imply that $mh/g$ and $nk/g$ are not coprime, contradicting the definition of $g$. Thus the orthogonality of character sums implies
\begin{equation*}
\frac{1}{\phi(ea \ell)} \sum_{\psi \bmod ea\ell} \psi \left( \frac{mh}{g} \right) \overline{\psi}\left( \mp \frac{nk}{g} \right) = \left\{ \begin{array}{cl} 1 & \text{if } ea\ell |\frac{mh}{g}\pm \frac{nk}{g} \\ \\ 0 & \text{else}. \end{array} \right.
\end{equation*}
Hence, we may replace the condition $ea\ell |(mh\pm nk)/g$ in \eqref{eqn: Udsum2} with the above multiplier to conclude that the $d$-sum appearing in \eqref{eqn: Ubeforedsum} is equal to
\begin{equation*}
\begin{split}
 \sum_{\substack{ 1\leq e <\infty \\ ( e ,g)=1 }}& \frac{\mu(e)}{e} \sum_{a|g}  \mu(a) \sum_{\substack{ 1\leq  \ell <\infty \\ (ea\ell, \frac{mh}{g}\cdot\frac{nk}{g})=1    }} \frac{1}{\phi(ea \ell)} \sum_{\psi \bmod ea\ell} \psi \left( \frac{mh}{g} \right) \overline{\psi}\left( \mp \frac{nk}{g} \right)\\
 &\hspace{.5in}\times\frac{|mh\pm nk|}{g \ell} W\left( \frac{c|mh\pm nk|}{g\ell Q}\right).
\end{split}
\end{equation*}
It follows that
\begin{equation}\label{eqn: Uafterswitch}
\begin{split}
\mathcal{U}(h,k)&= \frac{1}{2} \sum_{\substack{1\leq c \leq C  \\ (c ,hk)=1 }} \mu(c)   \sum_{\substack{1\leq m,n<\infty \\ (mn,c )=1\\ mh\neq nk}} \frac{\tau_A(m) \tau_B(n) }{\sqrt{mn}} V\left( \frac{m}{X} \right)  V\left( \frac{n}{X} \right)\sum_{\substack{ 1\leq e <\infty \\ ( e ,g)=1 }} \frac{\mu(e)}{e}   \sum_{a|g}  \mu(a)\\
&\hspace{.25in} \times\sum_{\substack{ 1\leq  \ell <\infty \\ (ea\ell, \frac{mh}{g}\cdot\frac{nk}{g})=1    }} \frac{1}{\phi(ea \ell)} \sum_{\psi \bmod ea\ell} \psi \left( \frac{mh}{g} \right) \overline{\psi}\left( \mp \frac{nk}{g} \right)\frac{|mh\pm nk|}{g \ell} W\left( \frac{c|mh\pm nk|}{g\ell Q}\right).
\end{split}
\end{equation}
Write this as
\begin{equation}\label{eqn: Usplit00}
\mathcal{U}(h,k)= \mathcal{U}^0(h,k) + \mathcal{U}^r(h,k),
\end{equation}
where $\mathcal{U}^0(h,k)$ is the contribution of the principal character in the $\psi$-sum, and $\mathcal{U}^r(h,k)$ is the contribution of the non-principal characters. In other words, $\mathcal{U}^0(h,k)$ and $\mathcal{U}^r(h,k)$ are defined by
\begin{equation}\label{eqn: U0}
\begin{split}
 \mathcal{U}^0(h,k)&:= \frac{1}{2} \sum_{\substack{1\leq c \leq C  \\ (c ,hk)=1 }} \mu(c)   \sum_{\substack{1\leq m,n<\infty \\ (mn,c )=1\\ mh\neq nk}} \frac{\tau_A(m) \tau_B(n) }{\sqrt{mn}} V\left( \frac{m}{X} \right)  V\left(\frac{n}{X}\right)\sum_{\substack{ 1\leq e <\infty \\ ( e ,g)=1 }} \frac{\mu(e)}{e} \sum_{a|g}  \mu(a)  \\
&\hspace{.25in}  \times  \sum_{\substack{ 1\leq  \ell <\infty \\ (ea\ell, \frac{mh}{g}\cdot\frac{nk}{g})=1    }}   \frac{|mh\pm nk|}{g \ell\phi(ea \ell)} W\left( \frac{c|mh\pm nk|}{g\ell Q}\right)
\end{split}
\end{equation}
and
\begin{equation}\label{eqn: Ur}
\begin{split}
 \mathcal{U}^r(h,k)&:= \frac{1}{2} \sum_{\substack{1\leq c \leq C  \\ (c ,hk)=1 }} \mu(c)   \sum_{\substack{1\leq m,n<\infty \\ (mn,c )=1\\ mh\neq nk}} \frac{\tau_A(m) \tau_B(n) }{\sqrt{mn}} V\left( \frac{m}{X} \right)  V\left( \frac{n}{X} \right)\sum_{\substack{ 1\leq e <\infty \\ ( e ,g)=1 }} \frac{\mu(e)}{e} \sum_{a|g}  \mu(a)  \\
&\hspace{.25in} \times  \sum_{\substack{ 1\leq  \ell <\infty \\ (ea\ell, \frac{mh}{g}\cdot\frac{nk}{g})=1    }} \frac{1}{\phi(ea \ell)} \sum_{\substack{\psi \bmod ea\ell\\ \psi\neq \psi_0}} \psi \left( \frac{mh}{g} \right) \overline{\psi}\left( \mp \frac{nk}{g} \right)\frac{|mh\pm nk|}{g \ell} W\left( \frac{c|mh\pm nk|}{g\ell Q}\right),
\end{split}
\end{equation}
respectively, where $\psi_0$ denotes the principal character mod $ea\ell$.

\subsection{The principal contribution \texorpdfstring{$\mathcal{U}^0(h,k)$}{U0(h,k)}}
Our goal in this subsection is to separate out a part of $\mathcal{U}^0(h,k)$ that we will eventually prove contains the one-swap terms that are predicted by the recipe. We apply Mellin inversion to write
\begin{equation}\label{eqn: mellinW}
\frac{|mh\pm nk|}{g \ell} W\left( \frac{c|mh\pm nk|}{g\ell Q}\right) = \frac{Q}{2\pi i c} \int_{(\varepsilon)} \ell^{-w} \Upsilon_{\pm}(w;mh,nk) \,dw,
\end{equation}
where 
\begin{equation}\label{eqn: Upsilondef}
    \Upsilon_{\pm}(w;mh,nk)= \Upsilon_{\pm}(w;mh,nk;c,Q):=\int_{0}^{\infty}\frac{c|mh\pm nk|}{g x Q}W\left(\frac{c|mh\pm nk|}{g x Q} \right)x^{w-1}\,dx.
\end{equation}
We insert \eqref{eqn: mellinW} into \eqref{eqn: U0}, then interchange the order of summation and write the $\ell$-sum as an Euler product using the following lemma.
\begin{lemma}\label{lem: Lemma6ofCIS}\cite[Lemma 6]{CIS}
Let $s$ be a complex number with $\re(s)>0$, and let $u$ and $v$ be coprime natural numbers. Then
\[
\sum_{\substack{\ell=1 \\ (\ell, v)=1}}^{\infty}\frac{1}{\phi(u\ell) \ell^s} = \frac{1}{\phi(u)}\zeta(1+s)R(s;u,v),
\]
where
\begin{equation}\label{eqn: Rdef}
R(s;u,v) = \prod_{p|v}\left(1-\frac{1}{p^{s+1}} \right)\prod_{p\nmid uv}\left(1+\frac{1}{p^{s+1}(p-1)}\right)
\end{equation}
converges absolutely in $\re(s)>-1$.
\end{lemma}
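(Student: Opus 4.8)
The plan is to decompose each $\ell$ according to how it interacts with $u$ and then recognize the resulting sums as Euler products. First I would record that for $\re(s)>0$ the series on the left converges absolutely: since $\ell\mid u\ell$ we have $\phi(u\ell)\geq\phi(\ell)\gg_{\varepsilon}\ell^{1-\varepsilon}$, so the general term is $\ll\ell^{-1-\re(s)+\varepsilon}$, and this justifies all the rearrangements below. Next, writing each $\ell$ with $(\ell,v)=1$ uniquely as $\ell=\ell_1\ell_2$, where every prime dividing $\ell_1$ divides $u$ and $(\ell_2,u)=1$, I would use multiplicativity of $\phi$ together with the elementary identity $\phi(u\ell_1)=\ell_1\phi(u)$ — valid because $u\ell_1$ has the same set of prime divisors as $u$ — to obtain $\phi(u\ell)=\ell_1\,\phi(u)\,\phi(\ell_2)$. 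Since $(u,v)=1$, the condition $(\ell,v)=1$ holds automatically on $\ell_1$ and is equivalent to $(\ell_2,v)=1$ on $\ell_2$.

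This lets me factor the left-hand side as $\phi(u)^{-1}$ times the product of $\sum_{\ell_1}\ell_1^{-1-s}$, where $\ell_1$ runs over integers all of whose prime factors divide $u$, and $\sum_{(\ell_2,uv)=1}\phi(\ell_2)^{-1}\ell_2^{-s}$. The first sum is the geometric product $\prod_{p\mid u}(1-p^{-1-s})^{-1}$. The second is an Euler product $\prod_{p\nmid uv}\bigl(1+\sum_{j\geq1}\phi(p^j)^{-1}p^{-js}\bigr)$; using $\phi(p^j)=p^{j-1}(p-1)$ and summing the geometric series in each local factor, a short computation gives the local factor $(1-p^{-1-s})^{-1}\bigl(1+\tfrac{1}{p^{1+s}(p-1)}\bigr)$.

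Multiplying these and inserting $\zeta(1+s)=\prod_p(1-p^{-1-s})^{-1}$, the factors $\prod_{p\mid u}(1-p^{-1-s})^{-1}\prod_{p\nmid uv}(1-p^{-1-s})^{-1}$ combine to $\zeta(1+s)\prod_{p\mid v}(1-p^{-1-s})$, since the only primes absent from the two products are exactly those dividing $v$ (here $(u,v)=1$ is used). Collecting terms yields $\phi(u)^{-1}\zeta(1+s)R(s;u,v)$ with $R$ as in \eqref{eqn: Rdef}, and the absolute convergence of $R(s;u,v)$ for $\re(s)>-1$ follows from $\tfrac{1}{p^{1+s}(p-1)}\ll p^{-2-\re(s)}$. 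The only point requiring a little care is the bookkeeping of prime supports when recombining the local factors with $\zeta(1+s)$, but this is a routine manipulation of multiplicative functions and presents no genuine obstacle.
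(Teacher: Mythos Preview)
Your proof is correct; the paper does not supply its own argument but simply cites this as Lemma~6 of \cite{CIS}, so there is nothing to compare against. The decomposition $\ell=\ell_1\ell_2$ according to prime support relative to $u$, the identity $\phi(u\ell_1)=\ell_1\phi(u)$, and the subsequent Euler-product bookkeeping are all sound, and this is the standard route to the identity.
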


The result is
\begin{equation}\label{eqn: U0beforesplit}
\begin{split}
\mathcal{U}^0(h,k)&= \frac{Q}{2} \sum_{\substack{1\leq c \leq C  \\ (c ,hk)=1 }} \frac{\mu(c)}{c}   \sum_{\substack{1\leq m,n<\infty \\ (mn,c )=1\\ mh\neq nk}} \frac{\tau_A(m) \tau_B(n) }{\sqrt{mn}} V\left( \frac{m}{X} \right)  V\left( \frac{n}{X} \right)  \sum_{\substack{ 1\leq e <\infty \\ ( e ,g)=1 }} \frac{\mu(e)}{e} \sum_{\substack{ a|g \\ (ea, \frac{mh}{g}\cdot\frac{nk}{g})=1}}  \frac{\mu(a)}{\phi(ea)}\\
&\hspace{.25in} \times  \frac{1}{2\pi i}\int_{(\varepsilon)} \Upsilon_\pm(w;mh,nk)  \zeta(1+w) R(w;ea, mhnk/g^2)\,dw.
\end{split}
\end{equation}
Note that $\Upsilon_\pm(w;mh,nk)$ has rapid decay as $|w|\rightarrow \infty$ by \eqref{eqn: Upsilondef} and a repeated application of integration by parts. Hence, we may move the line of integration in \eqref{eqn: U0beforesplit} to Re$(w)=-\epsilon$. Doing so leaves a residue at $w=0$ from the pole of $\zeta(1+w)$, and we arrive at
\begin{equation}\label{eqn: U0split}
\mathcal{U}^0(h,k) = \mathcal{U}^1(h,k)+\mathcal{U}^2(h,k),
\end{equation}
where $\mathcal{U}^1(h,k)$ is the residue, i.e.,
\begin{equation}\label{eqn: U1}
\begin{split}
\mathcal{U}^1(h,k):=
& \frac{Q}{2} \sum_{\substack{1\leq c \leq C  \\ (c ,hk)=1 }} \frac{\mu(c)}{c}   \sum_{\substack{1\leq m,n<\infty \\ (mn,c )=1\\ mh\neq nk}} \frac{\tau_A(m) \tau_B(n) }{\sqrt{mn}} V\left( \frac{m}{X} \right)  V\left( \frac{n}{X} \right) \sum_{\substack{ 1\leq e <\infty \\ ( e ,g)=1 }} \frac{\mu(e)}{e}  \\
& \times \sum_{\substack{ a|g \\ (ea, \frac{mh}{g}\cdot\frac{nk}{g})=1}}  \frac{\mu(a)}{\phi(ea)} \Upsilon_\pm(0;mh,nk)      R(0;ea, mhnk/g^2),
\end{split}
\end{equation}
and $\mathcal{U}^2(h,k)$ is defined by
\begin{equation}\label{eqn: U2}
\begin{split}
& \mathcal{U}^2(h,k):= \frac{Q}{2} \sum_{\substack{1\leq c \leq C  \\ (c ,hk)=1 }} \frac{\mu(c)}{c}  \sum_{\substack{1\leq m,n<\infty \\ (mn,c )=1\\ mh\neq nk}} \frac{\tau_A(m) \tau_B(n) }{\sqrt{mn}} V\left( \frac{m}{X} \right)  V\left( \frac{n}{X} \right)\sum_{\substack{ 1\leq e <\infty \\ ( e ,g)=1 }} \frac{\mu(e)}{e}   \\
& \times \sum_{\substack{ a|g \\ (ea, \frac{mh}{g}\cdot\frac{nk}{g})=1}}  \frac{\mu(a)}{\phi(ea)} \cdot \frac{1}{2\pi i}\int_{(-\epsilon)} \Upsilon_\pm(w;mh,nk)  \zeta(1+w) R(w;ea, mhnk/g^2)\,dw.
\end{split}
\end{equation}

\subsection{The term \texorpdfstring{$\mathcal{U}^1(h,k)$}{U1(h,k)} approximately cancels with \texorpdfstring{$\mathcal{L}^0(h,k)$}{L0(h,k)}}\label{sec: U1}
In this subsection, we show that the term $\mathcal{U}^1(h,k)$ defined by \eqref{eqn: U1} cancels with the main contribution of $\mathcal{L}^0(h,k)$, which we have evaluated in \eqref{eqn: L0cancelform}. We first focus on the $e,a$-sum in $\eqref{eqn: U1}$. To express it as an Euler product, we observe that Lemma~\ref{lem: sumstoEulerproducts} and the definition \eqref{eqn: Rdef} of $R$ implies for $\re(w)>-1$ that
\begin{equation}\label{eqn: easum}
\begin{split}
&\sum_{\substack{1\leq e<\infty \\ (e, g)=1}}\frac{\mu(e)}{e} \sum_{\substack{a|g \\ (ea,\frac{mh}{g}\cdot \frac{nk}{g})=1}} \frac{\mu(a)}{\phi(ea)}R(w;ae,mnhk/g^2)\\
&\hspace{1in}=\prod_{p|mnhk/g^2}\left(1-\frac{1}{p^{1+w}} \right)\prod_{\substack{p|g \\ p\nmid mnhk/g^2}}\left(1+\frac{1}{p^{w+1}(p-1)} -\frac{1}{p-1} \right)\\
&\hspace{1.5in}\times \prod_{\substack{p\nmid g \\ p\nmid mnhk/g^2}}\left(1+\frac{p^{-w}-1}{p(p-1)}\right)
\end{split}
\end{equation}
(this is the same as (7.7) of \cite{CIS}). It follows from this with $w=0$ that
\begin{equation}\label{eqn: easumw0}
\sum_{\substack{1\leq e<\infty \\ (e, g)=1}}\frac{\mu(e)}{e} \sum_{\substack{a|g \\ (ea,\frac{mh}{g}\cdot \frac{nk}{g})=1}} \frac{\mu(a)}{\phi(ea)}R(0;ae,mnhk/g^2) = \frac{\phi(mnhk)}{mnhk}.
\end{equation}
Now the definition \eqref{eqn: Upsilondef} of $\Upsilon_{\pm}$ and a change of variables gives
\begin{equation*}
\Upsilon_{+}(0;mh,nk) + \Upsilon_{-}(0;mh,nk) = 2\int_0^{\infty} W(u)\,du.
\end{equation*}
From this, \eqref{eqn: U1}, and \eqref{eqn: easumw0}, we deduce that
\begin{equation}\label{eqn: U1simplify}
\mathcal{U}^1(h,k)=
 Q \sum_{\substack{1\leq c \leq C  \\ (c ,hk)=1 }} \frac{\mu(c)}{c}   \sum_{\substack{1\leq m,n<\infty \\ (mn,c )=1\\ mh\neq nk}} \frac{\tau_A(m) \tau_B(n) }{\sqrt{mn}} V\left( \frac{m}{X} \right)  V\left( \frac{n}{X} \right) \frac{\phi(mnhk)}{mnhk} \int_0^{\infty} W(u)\,du.
\end{equation}
In order to show that $\mathcal{U}^1(h,k)$ cancels with the main term of $\mathcal{L}^0(h,k)$ given in \eqref{eqn: L0cancelform}, we must complete the sum above to include the terms $mh=nk$. In order to do this successfully, we must show that the total contribution of the terms with $mh=nk$ is small. By \eqref{eqn: divisorbound} and our assumption that $V$ and $W$ have compact support, the sum of the terms with $mh=nk$ is at most
\begin{equation}\label{eqn: U1boundstep1}
\ll Q\sum_{1\leq c \leq C} \frac{1}{c}   \sum_{\substack{1\leq m,n \ll X \\ mh=nk}} \frac{(mn)^\varepsilon  }{\sqrt{mn}}.
\end{equation}
Observe that $mh=nk$ if and only if there is an integer $\ell$ such that $m=\ell K$ and $n=\ell H$, where, as before, $H$ and $K$ are defined by $H:=h/(h,k)$ and $K:=k/(h,k)$. Thus \eqref{eqn: U1boundstep1} is
\begin{align*}
\ll (HK)^{-1/2+\varepsilon}Q(\log C) \sum_{1\le \ell \ll X}\frac{1}{\ell^{1-\varepsilon}}
 \ll X^{\varepsilon} (HK)^{-1/2+\varepsilon}Q\log C.
\end{align*}
Hence, including the $mh=nk$ terms in \eqref{eqn: U1simplify} gives
\begin{equation}\label{eqn: U1ready}
\begin{split}
\mathcal{U}^1(h,k)&=
 Q \sum_{\substack{1\leq c \leq C  \\ (c ,hk)=1 }} \frac{\mu(c)}{c}   \sum_{\substack{1\leq m,n<\infty \\ (mn,c )=1}} \frac{\tau_A(m) \tau_B(n) }{\sqrt{mn}} V\left( \frac{m}{X} \right)  V\left( \frac{n}{X} \right) \frac{\phi(mnhk)}{mnhk} \int_0^{\infty} W(u)\,du\\
 &\hspace{.5in}+O\bigg(Q\frac{(XCHK)^{\varepsilon}}{\sqrt{HK}} \bigg).
\end{split}
\end{equation}
The main term here cancels with the main term from our analysis of $\mathcal{L}^0(h,k)$, given in \eqref{eqn: L0cancelform}. More precisely, it follows from \eqref{eqn: L0cancelform} and \eqref{eqn: U1ready} that
\begin{equation}\label{eqn: U1ready2}
\mathcal{U}^1(h,k) = -\mathcal{L}^0(h,k) + O \big( (Xhk)^\varepsilon XC \big) + O\bigg(Q\frac{(XCHK)^{\varepsilon}}{\sqrt{HK}} \bigg).
\end{equation}

Summarizing this section, we deduce from \eqref{eqn: Usplit00}, \eqref{eqn: U0split}, and \eqref{eqn: U1ready2} that
\begin{equation}\label{eqn: Uready}
\mathcal{U}(h,k) = -\mathcal{L}^0(h,k)+\mathcal{U}^2(h,k)+\mathcal{U}^r(h,k)+ O \big( (Xhk)^\varepsilon XC \big) + O\bigg(Q\frac{(XCHK)^{\varepsilon}}{\sqrt{HK}} \bigg).
\end{equation}
Looking forward, we show in Section~\ref{sec: U2} that $\mathcal{U}^2(h,k)$ is, up to an admissible error term, equal to the one-swap terms $\mathcal{I}_1(h,k)$ predicted by the recipe. In Section~\ref{sec: Ur}, we bound the average of $(hk)^{-1/2}\mathcal{U}^r(h,k)$ over $h,k$ and show that $\mathcal{U}^r(h,k)$ is an acceptable error term.

\section{The term \texorpdfstring{$\mathcal{U}^2(h,k)$}{U2(h,k)}: extracting the one-swap terms}\label{sec: U2}
Recall that $\mathcal{U}^2(h,k)$, defined by \eqref{eqn: U2}, does not include the diagonal terms $mh=nk$. As in the analysis of $\mathcal{U}^1(h,k)$, we will find it advantageous to add these terms back in, and so we must show that the total contribution of these terms is acceptably small. The analysis that follows is similar to that of $\mathcal{U}^1(h,k)$ in Subsection~\ref{sec: U1}. However, the treatment of $\Upsilon_{\pm}(w;mk,nk)$ is more delicate because the variables $m$ and $n$ are entangled in the factor $|mh\pm nk|$. To ameliorate this challenge, we first introduce a bit of averaging as in Section~7 of \cite{CIS}. This averaging will lead to expressions with absolutely convergent integrals after separating the variables $m$ and $n$ in $\Upsilon_{\pm}(w;mk,nk)$ (Proposition~\ref{prop: CISProp2} below). The absolute convergence, in turn, will allow us to interchange the order of summation in our analysis of $\mathcal{U}^2(h,k)$ and extract the predicted one-swap terms in the subsections that follow.

To begin, we state and prove the averaging result that we will apply as just described.
\begin{lemma}\label{lem: smoothing}
Let $f:[0,\infty)\rightarrow \mathbb{C}$ be a continuously differentiable function of compact support such that $f$ is zero in a neighborhood of zero. Let $x,y,v\in \mathbb{R}$, with $v>0$. Then the function
$$
t\longmapsto f(v|x-t y|)
$$
is continuously differentiable on $\mathbb{R}$. Moreover, if $\,0<\delta<1$, then
\begin{equation*}
 f(v|x-y|) = \frac{1}{2\delta}\int_{-\delta}^{\delta} f(v| x-e^{\xi} y|)\,d\xi +O(|vy|\delta),
\end{equation*}
where the implied constant depends only on $f$.
\end{lemma}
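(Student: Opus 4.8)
The plan is to treat the two assertions separately: the first is a regularity statement whose only delicate point is the corner of the absolute value, and the second is a first‑order Taylor estimate built on top of it.

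For the claim that $t\mapsto f(v|x-ty|)$ is continuously differentiable on $\mathbb{R}$, note first that if $y=0$ the function is constant, so assume $y\neq 0$ and let $t_0=x/y$ be the unique point where $x-ty=0$. The key observation is that $f$ vanishes on some interval $[0,\rho)$ with $\rho>0$; hence $f(v|x-ty|)=0$ for all $t$ with $|t-t_0|<\rho/(v|y|)$, so the function is identically zero in a neighborhood of $t_0$ and in particular $C^1$ there with derivative $0$. Away from $t_0$ the quantity $x-ty$ is nonzero, so $|x-ty|$ is smooth and the chain rule gives that $t\mapsto f(v|x-ty|)$ is $C^1$ with derivative $-vy\,\mathrm{sgn}(x-ty)\,f'(v|x-ty|)$; since $f'$ is continuous this expression tends to $0$ as $t\to t_0$, matching the value there, so the derivative is continuous on all of $\mathbb{R}$. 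The upshot is the explicit formula
\[
\frac{d}{dt}f(v|x-ty|) = -vy\,\mathrm{sgn}(x-ty)\,f'(v|x-ty|),
\]
understood as $0$ when $x-ty=0$.

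For the averaging identity I would set $\phi(\xi):=f(v|x-e^{\xi}y|)$, so that $\phi(0)=f(v|x-y|)$ and the assertion becomes that $\phi(0)$ equals its average over $[-\delta,\delta]$ up to $O(|vy|\delta)$. Since $\xi\mapsto e^{\xi}$ is smooth and $t\mapsto f(v|x-ty|)$ is $C^1$ by the first part, $\phi$ is $C^1$ on $\mathbb{R}$, and composing with the derivative formula above gives $\phi'(u)=-ve^{u}y\,\mathrm{sgn}(x-e^{u}y)\,f'(v|x-e^{u}y|)$. Because $f'$ is continuous with compact support it is bounded, and for $|u|\le\delta<1$ we have $e^{u}\le e$, so $|\phi'(u)|\ll|vy|$ uniformly, with an implied constant depending only on $f$. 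Writing $\phi(\xi)-\phi(0)=\int_0^{\xi}\phi'(u)\,du$ yields $|\phi(\xi)-\phi(0)|\ll|vy|\,\delta$ for $|\xi|\le\delta$, and therefore
\[
\frac{1}{2\delta}\int_{-\delta}^{\delta}\phi(\xi)\,d\xi-\phi(0)=\frac{1}{2\delta}\int_{-\delta}^{\delta}\big(\phi(\xi)-\phi(0)\big)\,d\xi\ll|vy|\,\delta,
\]
which is exactly the claimed formula. No step is genuinely hard; the only point requiring care is the $C^1$‑regularity at the crossing point $t_0$, and the correct way to see it is precisely that $f$ is flat near the origin, so the composite never sees the corner of $|\cdot|$. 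Everything else is the fundamental theorem of calculus together with the boundedness of $f'$.
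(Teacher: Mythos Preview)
Your proof is correct and follows essentially the same approach as the paper: both use that $f$ vanishes near zero to handle the corner of $|\cdot|$, bound the derivative by $O(|vy|)$ via the compact support of $f'$, and conclude by the fundamental theorem of calculus. The only cosmetic difference is that the paper differentiates $t\mapsto f(v|x-ty|)$ and integrates $t$ from $1$ to $e^{\xi}$, whereas you differentiate directly in $\xi$; these are related by the substitution $t=e^{u}$ and yield the same estimate.
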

\begin{proof}
That the function $t\longmapsto f(v|x-t y|)$ is continuously differentiable on $\mathbb{R}$ follows by the chain rule and the assumption that $f$ is zero in a neighborhood of zero. Moreover, $f'(x)=O(1)$ uniformly on $\mathbb{R}$ because $f$ has compact support, and so
$$
\frac{d}{dt} f(v|x-t y|)=\pm vy f'(v|x-t y|)  \ll |vy|.
$$
It follows from this and the fundamental theorem of calculus that, for $0<\delta<1$,
\begin{align*}
\int_{-\delta}^{\delta} f(v| x-e^{\xi} y|)\,d\xi -\int_{-\delta}^{\delta} f(v|x-y|)\,d\xi
& = \int_{-\delta}^{\delta} \int_1^{e^{\xi}} \frac{d}{dt} f(v|x-t y|)\,dt\,d\xi \\
& \ll |vy| \int_{-\delta}^{\delta} |\xi|\,d\xi 
 \ll |vy|\delta^2.
\end{align*}
Rearranging the terms gives the lemma.
\end{proof}

Before we apply Lemma~\ref{lem: smoothing} to the sum $\mathcal{U}^2(h,k)$ defined by \eqref{eqn: U2}, we first truncate the $w$-integral in \eqref{eqn: U2}. Doing so will enable us to easily deal with the error term arising from the application of Lemma~\ref{lem: smoothing}. To this end, observe that if $\xi\in \mathbb{R}$, then a change of variables implies
\begin{equation}\label{eqn: Upsilonchangeofvar}
\int_{0}^{\infty }  \frac{c |mh\pm e^{\xi}nk|}{gx Q} W\left( \frac{c |mh\pm e^{\xi}nk|}{gx Q}\right) x^{w-1} \,dx =  \left( \frac{c |mh\pm e^{\xi}nk|}{g Q}\right)^{w} \widetilde{W}(1-w).
\end{equation}
If $w,c,m,h,n,k$ are as in \eqref{eqn: U2}, then $|mh\pm nk|\geq 1$ since $mh\neq nk$, and so the definition \eqref{eqn: Upsilondef} of $\Upsilon_{\pm}(w;mh,nk)$, \eqref{eqn: Upsilonchangeofvar} with $\xi=0$, and \eqref{eqn: mellinrapiddecay} imply that
\begin{equation}\label{eqn: Upsilonrapiddecay}
\Upsilon_{\pm}(w;mh,nk) \ll_{\nu} \frac{(gQ)^{\varepsilon}}{|w|^{\nu}}
\end{equation}
for any positive integer $\nu$. Now the definition \eqref{eqn: Rdef} of $R$ implies that if $\re(w)=-\varepsilon$, then
\begin{equation}\label{eqn: Rbound}
R(w;ea, mhnk/g^2) \ll (mhnk)^{\varepsilon}.
\end{equation}
From this and \eqref{eqn: Upsilonrapiddecay}, we see that the part of the integral in \eqref{eqn: U2} that has $|\text{Im}(w)|\geq (XQ)^{\varepsilon}$ is negligible. Thus, using also \eqref{eqn: divisorbound}, the definition $g=(mh,nk)$, and the assumption that $V$ has compact support, we deduce that
\begin{equation}\label{eqn: U2truncated}
\begin{split}
& \mathcal{U}^2(h,k)= \frac{Q}{2} \sum_{\substack{1\leq c \leq C  \\ (c ,hk)=1 }} \frac{\mu(c)}{c}  \sum_{\substack{1\leq m,n<\infty \\ (mn,c )=1\\ mh\neq nk}} \frac{\tau_A(m) \tau_B(n) }{\sqrt{mn}} V\left( \frac{m}{X} \right)  V\left( \frac{n}{X} \right)\sum_{\substack{ 1\leq e <\infty \\ ( e ,g)=1 }} \frac{\mu(e)}{e}   \\
& \times \sum_{\substack{ a|g \\ (ea, \frac{mh}{g}\cdot\frac{nk}{g})=1}}  \frac{\mu(a)}{\phi(ea)} \cdot \frac{1}{2\pi i}\int_{-\epsilon -i(XQ)^{\varepsilon}}^{-\epsilon +i(XQ)^{\varepsilon}} \Upsilon_\pm(w;mh,nk)  \zeta(1+w) R(w;ea, mhnk/g^2)\,dw \\
& \hspace{1.5in} + O\big( (Chk)^{\varepsilon} Q^{-99}\big).
\end{split}
\end{equation}

Having truncated the integral in \eqref{eqn: U2}, we now apply Lemma~\ref{lem: smoothing}. Recall that the support of $W$ is a compact subset of $(0,\infty)$. Use Lemma~\ref{lem: smoothing} with $f(u)=uW(u)$ and $\delta$ defined by \eqref{eqn: deltadef} to deduce that the integrand in \eqref{eqn: Upsilondef} satisfies
\begin{align*}
\frac{c |mh\pm nk|}{gx Q} W\left( \frac{c |mh\pm nk|}{gx Q}\right) = \frac{1}{2\delta}\int_{-\delta}^{\delta} \frac{c |mh\pm e^{\xi}nk|}{gx Q} W\left( \frac{c |mh\pm e^{\xi}nk|}{gx Q}\right)\,d\xi +O\left(\frac{c nk \delta}{gx Q}\right).
\end{align*}
We insert this into the definition \eqref{eqn: Upsilondef} of $\Upsilon_{\pm}(w;mh,nk)$. The contribution of the error term is
\begin{align*}
\ll \frac{c nk \delta}{g Q} \left(  \frac{c |mh\pm nk|}{g  Q}\right)^{-\varepsilon-1} \ll X k \delta (g Q)^{\varepsilon}
\end{align*}
for $w,c,m,h,n,k$ satisfying the conditions in \eqref{eqn: U2truncated}, because $|mh\pm nk|\geq 1$, $c\geq 1$, $n\ll X$, and, by the support of $W$, the integrand in \eqref{eqn: Upsilondef} is zero unless $x\asymp c|mh\pm nk| /(gQ)$. We arrive at
\begin{equation*}
\Upsilon_{\pm}(w;mh,nk) = \frac{1}{2\delta}\int_0^{\infty}\int_{-\delta}^{\delta} \frac{c |mh\pm e^{\xi}nk|}{gx Q} W\left( \frac{c |mh\pm e^{\xi}nk|}{gx Q}\right)x^{w-1}\,d\xi\,dx +O\big( X k \delta (g Q)^{\varepsilon}\big).
\end{equation*}
This and \eqref{eqn: Upsilonchangeofvar} imply
\begin{equation*}
\Upsilon_{\pm}(w;mh,nk) = \frac{1}{2\delta} \int_{-\delta}^{\delta} \left( \frac{c |mh\pm e^{\xi}nk|}{g Q}\right)^{w} \widetilde{W}(1-w)\,d\xi +O\big( X k \delta (g Q)^{\varepsilon}\big). 
\end{equation*}
We insert this into \eqref{eqn: U2truncated} to deduce that
\begin{equation}\label{eqn: U2before2}
\begin{split}
\mathcal{U}^2(h,k) &= \frac{Q}{2} \sum_{\substack{1\leq c \leq C  \\ (c ,hk)=1 }} \frac{\mu(c)}{c}  \sum_{\substack{1\leq m,n<\infty \\ (mn,c )=1\\ mh\neq nk}} \frac{\tau_A(m) \tau_B(n) }{\sqrt{mn}} V\left( \frac{m}{X} \right)  V\left( \frac{n}{X} \right)  \sum_{\substack{ 1\leq e <\infty \\ ( e ,g)=1 }} \frac{\mu(e)}{e} \sum_{\substack{ a|g \\ (ea, \frac{mh}{g}\cdot\frac{nk}{g})=1}}  \frac{\mu(a)}{\phi(ea)} \\
&\hspace{.25in} \times  \frac{1}{2\pi i}\int_{-\epsilon -i(XQ)^{\varepsilon}}^{-\epsilon +i(XQ)^{\varepsilon}} \zeta(1+w) R(w;ea, mhnk/g^2) \left( \frac{c  }{g Q}\right)^{w} \widetilde{W}(1-w)  \\
&\hspace{.5in}\times \frac{1}{2\delta}\int_{-\delta}^{\delta} |mh\pm e^{\xi}nk|^w \,d\xi\,dw + O\big((XChk)^{\varepsilon} k X^2 Q^{-97}\big),
\end{split}
\end{equation}
where, to bound the error term, we have used \eqref{eqn: divisorbound}, \eqref{eqn: Rbound}, the definition $g=(mh,nk)$, the definition \eqref{eqn: deltadef} of $\delta$, and the assumption that $V$ has compact support.

The following proposition, which is Proposition 2 in \cite{CIS}, enables us to separate the variables $m$ and $n$ in the expression $|mh\pm e^{\xi} nk|$ and thus write the $m,n,e,a$-sum in \eqref{eqn: U2before2} in terms of an Euler product.
\begin{prop}[Proposition 2 of \cite{CIS}]\label{prop: CISProp2}
Let $\omega$ be a complex number with $\re(\omega)>0$. Then for any $0<c<\re(\omega)$, and $r>0$ with $r\ne 1$, we have
\[
|1+r|^{-\omega}+|1-r|^{-\omega}=\frac{1}{2\pi i}\int_{(c)}\mathcal{H}(z,\omega)r^{-z}\,dz.
\]
Therefore, for any $\delta>0$,
\begin{equation}\label{eqn: Prop2Integral}
\frac{1}{2\delta}\int_{-\delta}^{\delta}|1+ e^{\xi}r|^{-\omega} + |1- e^{\xi}r|^{-\omega}\,d\xi = \frac{1}{2\pi i}\int_{(c)}\mathcal{H}(z,\omega)r^{-z}\frac{e^{\delta z}-e^{-\delta z}}{2\delta z}\, dz,
\end{equation}
where $\mathcal{H}(z,\omega)$ is defined by \eqref{eqn: Hdef}. The $z$-integral in \eqref{eqn: Prop2Integral} converges absolutely for $\re(\omega)<1$. 
\end{prop}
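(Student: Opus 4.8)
The plan is to first prove the single-$r$ identity as a Mellin-inversion statement, and then obtain \eqref{eqn: Prop2Integral} by a change of variables under the integral sign.

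First I would compute the Mellin transform of $r\mapsto |1+r|^{-\omega}+|1-r|^{-\omega}$, working temporarily in the range $0<\re(z)<\re(\omega)<1$. Since $|1+r|=1+r$ for $r>0$ one has $\int_0^\infty (1+r)^{-\omega}r^{z-1}\,dr=\Gamma(z)\Gamma(\omega-z)/\Gamma(\omega)$, while splitting $\int_0^\infty|1-r|^{-\omega}r^{z-1}\,dr$ at $r=1$ and substituting $r\mapsto1/r$ on $(1,\infty)$ produces two further beta integrals, so that
\[
\int_0^{\infty}\bigl(|1+r|^{-\omega}+|1-r|^{-\omega}\bigr)r^{z-1}\,dr
=\frac{\Gamma(z)\Gamma(\omega-z)}{\Gamma(\omega)}+\Gamma(1-\omega)\left(\frac{\Gamma(z)}{\Gamma(1+z-\omega)}+\frac{\Gamma(\omega-z)}{\Gamma(1-z)}\right).
\]
Call the right-hand side $M(z)$. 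The key algebraic step is $M(z)=\mathcal{H}(z,\omega)$: factor out $\Gamma(z)\Gamma(\omega-z)$, use the reflection formula to turn $1/\Gamma(\omega)$, $1/\bigl(\Gamma(z)\Gamma(1-z)\bigr)$ and $1/\bigl(\Gamma(\omega-z)\Gamma(1+z-\omega)\bigr)$ into sines, and collapse the resulting combination $\sin\pi\omega+\sin\pi(\omega-z)+\sin\pi z$ by elementary trigonometric identities to reach $M(z)=\Gamma(z)\Gamma(\omega-z)\cdot\frac{4\Gamma(1-\omega)}{\pi}\sin\tfrac{\pi\omega}{2}\cos\tfrac{\pi(\omega-z)}{2}\cos\tfrac{\pi z}{2}$. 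Expanding $\Gamma(z)$, $\Gamma(\omega-z)$ and $\Gamma(1-\omega)$ by Legendre's duplication formula and rewriting the three trigonometric factors again as gamma quotients, all powers of $2$ and $\pi$ collapse to $\sqrt{\pi}$ and the surviving gammas are exactly those in the definition \eqref{eqn: Hdef} of $\mathcal{H}(z,\omega)$ (equivalently $\mathscr{X}(\omega)\mathscr{X}(1-z)\mathscr{X}(1-\omega+z)$ by \eqref{eqn: Hintermsofchifactor}).

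Next I would invoke Mellin inversion. The function $r\mapsto|1+r|^{-\omega}+|1-r|^{-\omega}$ is continuous on $(0,\infty)\setminus\{1\}$, of bounded variation on compact subsets, and has an integrable singularity at $r=1$ since $\re(\omega)<1$; by \eqref{eqn: Hbound}, $\mathcal{H}(c+it,\omega)\asymp(1+|t|)^{\re(\omega)-1}$ on the line $\re(z)=c$ for $0<c<\re(\omega)$, so for $r\ne1$ the integral $\tfrac{1}{2\pi i}\int_{(c)}\mathcal{H}(z,\omega)r^{-z}\,dz$ converges, conditionally, via the oscillation of $r^{-it}$ (one integration by parts in $t$ suffices). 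Mellin inversion then yields the first displayed identity for $0<\re(\omega)<1$, and the range $\re(\omega)\ge1$, insofar as it is needed, follows by analytic continuation in $\omega$. To pass to \eqref{eqn: Prop2Integral}, replace $r$ by $e^{\xi}r$ and average over $\xi\in[-\delta,\delta]$: working first with the truncated integral $\tfrac{1}{2\pi i}\int_{c-iT}^{c+iT}$, where Fubini is immediate, and using $\tfrac{1}{2\delta}\int_{-\delta}^{\delta}(e^{\xi}r)^{-z}\,d\xi=r^{-z}\cdot\tfrac{e^{\delta z}-e^{-\delta z}}{2\delta z}$, one lets $T\to\infty$; the factor $\tfrac{\sinh(\delta z)}{\delta z}=O(1/|t|)$ then makes the $z$-integrand $O\bigl((1+|t|)^{\re(\omega)-2}\bigr)$, hence absolutely convergent precisely when $\re(\omega)<1$, while on the other side dominated convergence applies because the truncated inversions converge for every $\xi$ with $e^\xi r\ne 1$ and are dominated by an $\xi$-integrable majorant near $e^{\xi}r=1$.

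The main obstacle is not the beta-integral bookkeeping of Step~1 but the convergence management: justifying Mellin inversion for a function that is only conditionally Mellin-invertible and singular at $r=1$, and legitimizing the interchange of the $\xi$- and $z$-integrations before absolute convergence is available — both handled by truncating the $z$-integral first and taking the limit afterward. The gamma-function identity $M(z)=\mathcal{H}(z,\omega)$ is the one genuinely clever step, but it is essentially forced: the reflection formula together with Legendre's duplication formula are the only tools that can reconcile the integer-shift gammas of $M(z)$ with the half-integer gammas of $\mathcal{H}(z,\omega)$.
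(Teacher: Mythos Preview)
The paper does not prove this proposition at all: it is quoted verbatim as Proposition~2 of \cite{CIS} and used as a black box. Your argument---compute the Mellin transform of $r\mapsto |1+r|^{-\omega}+|1-r|^{-\omega}$ via beta integrals, identify it with $\mathcal{H}(z,\omega)$ through reflection and duplication, then invert and average in $\xi$---is the natural proof and is correct; in particular your trigonometric collapse $\sin\pi\omega+\sin\pi(\omega-z)+\sin\pi z=4\sin\tfrac{\pi\omega}{2}\cos\tfrac{\pi(\omega-z)}{2}\cos\tfrac{\pi z}{2}$ and the subsequent gamma bookkeeping check out, and your remark that one integration by parts in $t$ suffices for conditional convergence is justified because $\partial_z\log\mathcal{H}(z,\omega)=O(1/|z|)$ (the leading $\log$ terms from the digamma contributions cancel in pairs), so $\partial_t\mathcal{H}(c+it,\omega)=O(|t|^{\re(\omega)-2})$ is absolutely integrable for $\re(\omega)<1$.
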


We apply Proposition~\ref{prop: CISProp2} with $\omega=-w$, $\re(w)=-\epsilon$, $c=\epsilon/2$, and $r=nk/(mh)$, which is $\neq 1$ in \eqref{eqn: U2before2}, to deduce that
$$
\frac{1}{2\delta}\int_{-\delta}^{\delta} \left|1+  e^{\xi} \frac{nk}{mh}\right|^{w} + \left|1-  e^{\xi} \frac{nk}{mh}\right|^{w}\,d\xi  =\frac{1}{2\pi i} \int_{(\epsilon/2)} \mathcal{H}(z,-w) \left(\frac{nk}{mh}\right)^{-z} \frac{e^{\delta z} - e^{-\delta z}}{2\delta z}\,dz.
$$
Multiply both sides by $(mh)^w$ to find that
$$
\frac{1}{2\delta}\int_{-\delta}^{\delta} |mh +  e^{\xi} nk|^{w} + |mh -  e^{\xi} nk|^{w} \,d\xi  =\frac{1}{2\pi i} \int_{(\epsilon/2)} \mathcal{H}(z,-w) (mh)^{w+z}(nk)^{-z} \frac{e^{\delta z} - e^{-\delta z}}{2\delta z}\,dz.
$$
We insert this into \eqref{eqn: U2before2} and arrive at
\begin{equation}\label{eqn: U2before}
\begin{split}
 \mathcal{U}^2(h,k)&= \frac{Q}{2} \sum_{\substack{1\leq c \leq C  \\ (c ,hk)=1 }} \frac{\mu(c)}{c}  \sum_{\substack{1\leq m,n<\infty \\ (mn,c )=1\\ mh\neq nk}} \frac{\tau_A(m) \tau_B(n) }{\sqrt{mn}} V\left( \frac{m}{X} \right)  V\left( \frac{n}{X} \right)  \sum_{\substack{ 1\leq e <\infty \\ ( e ,g)=1 }} \frac{\mu(e)}{e} \sum_{\substack{ a|g \\ (ea, \frac{mh}{g}\cdot\frac{nk}{g})=1}}  \frac{\mu(a)}{\phi(ea)}\\
&\hspace{.15in}\times \frac{1}{2\pi i}\int_{-\epsilon -i(XQ)^{\varepsilon}}^{ -\epsilon +i(XQ)^{\varepsilon} } \zeta(1+w)R(w;ea, mhnk/g^2) \left( \frac{c}{gQ}\right)^w  \widetilde{W}(1-w)\\
&\hspace{.25in}\times\frac{1}{2\pi i} \int_{(\epsilon/2)} \mathcal{H}(z,-w) (mh)^{w+z}(nk)^{-z} \frac{e^{\delta z} - e^{-\delta z}}{2\delta z}\,dz  \,dw + O\big((XChk)^{\varepsilon} k X^2 Q^{-97}\big).
\end{split}
\end{equation}
By \eqref{eqn: divisorbound}, \eqref{eqn: Hbound}, \eqref{eqn: mellinrapiddecay},  \eqref{eqn: Rbound}, and the assumption that $V$ has compact support, we may extend the $w$-integral in \eqref{eqn: U2before} to infinity by introducing a negligible error. We then insert \eqref{eqn: easum} to deduce that
\begin{align}
 \mathcal{U}^2(h,k) & = \frac{Q}{2} \sum_{\substack{1\leq c \leq C  \\ (c ,hk)=1 }}\frac{\mu(c)}{c} \sum_{\substack{1\le m,n<\infty\\ (mn,c)=1\\mh\neq nk}}\frac{\tau_A(m)\tau_B(n)}{\sqrt{mn}}V\left(\frac{m}{X}\right)V\left(\frac{n}{X}\right)\frac{1}{2\pi i}\int_{(-\epsilon)}\zeta(1+w) \notag\\
 &\hspace{.25in}\times\widetilde{W}(1-w)\left(\frac{c}{gQ}\right)^w \frac{1}{2\pi i}\int_{(\epsilon/2)}\mathcal{H}(z,-w)(mh)^{w+z}(nk)^{-z}\frac{e^{\delta z}-e^{-\delta z}}{2\delta z} \notag\\
 &\hspace{.5in}\times \prod_{p|mnhk/g^2}\left(1-\frac{1}{p^{1+w}}\right) \prod_{\substack{p|g\\p\nmid mnhk/g^2}}\left(1+\frac{1}{p^{w+1}(p-1)}-\frac{1}{p-1} \right) \notag\\
 &\hspace{.75in}\times \prod_{\substack{p\nmid g\\p\nmid mnhk/g^2}}\left(1+\frac{p^{-w}-1}{p(p-1)}\right)\, dz\, dw + O\big((XChk)^{\varepsilon} k X^2 Q^{-97}\big). \label{eqn: U2before3}
\end{align}

We next add the $mh=nk$ terms to complete the $m,n$-sum. Let us first show that their total, which is the above main term expression with the condition $mh\ne nk$ replaced with $mh=nk$, is acceptably small. As we have seen in the discussion below \eqref{eqn: U1boundstep1}, $mh=nk$ if and only if $m=\ell K$ and $n=\ell H$ for some integer $\ell$. For such an $\ell$, the condition $(mn,c)=1$ is equivalent to $(\ell,c)=1$ because $(c,hk)=1$. Moreover, if $mh=nk$, then the definition $g=(mh,nk)$ implies $g=mh=nk$. Thus the total contribution of the $mh=nk$ terms is
\begin{align}
\frac{Q}{2} \sum_{\substack{1\leq c \leq C  \\ (c ,hk)=1 }} \frac{\mu(c)}{c} & \sum_{\substack{1\le \ell <\infty\\ (\ell,c)=1}}\frac{\tau_A(\ell K)\tau_B(\ell H)}{\ell\sqrt{HK}}V\left(\frac{\ell K}{X}\right)V\left(\frac{\ell H}{X}\right) \frac{1}{2\pi i}\int_{(-\epsilon)}\zeta(1+w) \notag\\
&\times \widetilde{W}(1-w)\left(\frac{c}{Q}\right)^w\frac{1}{2\pi i}\int_{(\epsilon/2)}\mathcal{H}(z,-w)\frac{e^{\delta z}-e^{-\delta z}}{2\delta z} \notag\\
&\hspace{.5in}\times\prod_{p|\ell hk}\left(1+\frac{1}{p^{w+1}(p-1)}-\frac{1}{p-1} \right)\prod_{p\nmid \ell hk}\left(1+\frac{p^{-w}-1}{p(p-1)}\right)\, dz\, dw. \label{eqn: U2diagonalHK}
\end{align}
We may restrict the $\ell$ sum to $1\le \ell \ll X$ because $V$ is compactly supported. The product over $p|\ell hk$ is bounded by $(hk\ell)^\varepsilon$, and the infinite product over $p\nmid \ell hk$ is absolutely convergent since $\re(w)=-\epsilon$. Thus \eqref{eqn: U2diagonalHK} is
\begin{equation}\label{eqn: U2diagonalHK2}
\begin{split}
    &\ll Q^{1+\varepsilon}\sum_{1\leq c \leq C}\frac{1}{c^{1+\varepsilon}}\sum_{1\le \ell \ll X}\frac{(hk\ell)^{\varepsilon}}{\ell\sqrt{HK}}\\
    & \hspace{.5in} \times \int_{(-\epsilon)}\int_{(\epsilon/2)}\left|\zeta(1+w)\widetilde{W}(1-w)\mathcal{H}(z,-w)\frac{e^{\delta z}-e^{-\delta z}}{2\delta z}\right|\,|dz|\,|dw|
\end{split}
\end{equation}
To bound the latter $w,z$-integral, observe that if $\re(w)=-\epsilon$ and $\re(z)=\epsilon/2$, then \eqref{eqn: mellinrapiddecay} and \eqref{eqn: Hbound} imply that $\widetilde{W}(1-w)\mathcal{H}(z,-w)$ is $O(|w|^{-99}|z|^{\varepsilon-1})$ for $|w-z|\geq |z|/2$, and is $O(|w|^{-99}|z|^{-99})$ for $|w-z|\leq |z|/2$ since $|w|\asymp |z|$ and $|w-z|\geq \epsilon/2$ in this case. Hence
\begin{equation}\label{eqn: zwbound}
\begin{split}
\int_{(-\epsilon)} & \int_{(\epsilon/2)}\left|\zeta(1+w)\widetilde{W}(1-w)\mathcal{H}(z,-w)\frac{e^{\delta z}-e^{-\delta z}}{2\delta z}\right|\,|dz|\,|dw| \\
& \ll \int_{(-\epsilon)}\int_{(\epsilon/2)} |w|^{-98}|z|^{\varepsilon-1}\min\left\{ 1, \frac{1}{\delta|z|}\right\} \,|dz|\,|dw| \ll \left( \frac{1}{\delta}\right)^{\varepsilon}.
\end{split}
\end{equation}
From this, \eqref{eqn: U2diagonalHK2}, and the definition \eqref{eqn: deltadef} of $\delta$, we deduce that the total contribution of the $mh=nk$ terms is
\begin{equation}\label{eqn: U2diagonalbound}
\ll Q^{1+\varepsilon}\sum_{1\leq c \leq C}\frac{1}{c^{1+\varepsilon}}\sum_{1\le \ell \ll X}\frac{(hk\ell)^{\varepsilon}}{\ell\sqrt{HK}} \ll X^{\varepsilon} Q^{1+\varepsilon}\frac{(hk)^\varepsilon(h,k)}{(hk)^{1/2}}.
\end{equation}

We now complete the $m,n$-sum in \eqref{eqn: U2before3} by including the $mh=nk$ terms. As we have just shown, this introduces an error of size \eqref{eqn: U2diagonalbound}. Then, we apply Mellin inversion to $V(m/X)$ and $V(n/X)$ and arrive at
\begin{equation}\label{eqn: U2before4}
\begin{split}
 \mathcal{U}^2(h,k)&= \frac{Q}{2} \sum_{\substack{1\leq c \leq C  \\ (c ,hk)=1 }} \frac{\mu(c)}{c} \cdot  \frac{1}{(2\pi i)^2}\int_{(2)}\int_{(2)} X^{s_1+s_2}
 \sum_{\substack{1\leq m,n<\infty \\ (mn,c )=1}} \frac{\tau_A(m) \tau_B(n) }{m^{\frac{1}{2}+s_1}n^{\frac{1}{2}+s_2}} \widetilde{V}(s_1)\widetilde{V}(s_2)\\
&\hspace{.25in}\times \frac{1}{2\pi i}\int_{(-\epsilon)} \zeta(1+w) \widetilde{W}(1-w) \left( \frac{c}{gQ}\right)^w \frac{1}{2\pi i} \int_{(\epsilon/2)} \mathcal{H}(z,-w) (mh)^{w+z}(nk)^{-z} \\
&\hspace{.5in}\times \frac{e^{\delta z} - e^{-\delta z}}{2\delta z} \prod_{p|mnhk/g^2}\left(1-\frac{1}{p^{1+w}}\right) \prod_{\substack{p|g\\p\nmid mnhk/g^2}}\left(1+\frac{1}{p^{w+1}(p-1)}-\frac{1}{p-1} \right) \\
&\hspace{.75in}\times \prod_{\substack{p\nmid g\\p\nmid mnhk/g^2}}\left(1+\frac{p^{-w}-1}{p(p-1)}\right) \,dz \,dw \,ds_2 \,ds_1 \\
& \hspace{1in} + O\left(X^{\varepsilon}Q^{1+\varepsilon}\frac{(hk)^\varepsilon(h,k)}{(hk)^{1/2}} + (XChk)^{\varepsilon} k X^2 Q^{-97}\right).
\end{split}
\end{equation}
We have chosen the $s_1$- and $s_2$-lines to be at $\re(s_1)=\re(s_2)=2$ to ensure the absolute convergence of the $m,n$-sum.

Our next task is to express the $m,n$-sum in \eqref{eqn: U2before4} as an Euler product. This sum is
\begin{equation}\label{eqn: U2mnea}
\begin{split}
&  \sum_{\substack{1\leq m,n<\infty \\ (mn,c )=1 }} \frac{\tau_A(m) \tau_B(n) }{ m^{\frac{1}{2}+s_1}n^{\frac{1}{2}+s_2}}    g^{-w} m^{w+z}n^{-z}  \prod_{p| mnhk/g^2} \left( 1-\frac{1}{p^{1+w}}\right)\\
&\hspace{.5in}\times \prod_{\substack{ p|g \\ p\nmid  mnhk/g^2 }} \left( 1+\frac{1}{p^{1+w}(p-1)}-\frac{1}{p-1}\right) \prod_{\substack{ p\nmid g \\ p\nmid  mnhk/g^2} } \left(1+ \frac{p^{-w}-1}{p(p-1)} \right) \\
& = \sum_{ 1\leq m,n<\infty} \prod_p f(m,n,p),
\end{split}
\end{equation}
where $f(m,n,p)$ is defined by
\begin{equation*}
f(m,n,p) := F_1(m,n,p)F_2(m,n,p)F_3(m,n,p)
\end{equation*}
with $F_1,F_2,F_3$ defined by
\begin{equation*}
F_1(m,n,p) := \begin{cases} 1 & \text{if } p\nmid c \\  \\ 1 & \text{if } p|c \text{ and }  \ordp(m)=\ordp(n)=0 \\ \\ 0 & \text{if } p|c \text{ and } \ordp(mn)>0,
\end{cases}
\end{equation*}
\begin{equation*}
F_2(m,n,p) := \frac{\tau_A(p^{\ordp(m)}) \tau_B(p^{\ordp(n)}) }{ p^{(\frac{1}{2}+s_1-w-z)\ordp(m)} p^{(\frac{1}{2}+s_2+z)\ordp(n)} p^{w \min\{\ordp(m)+\ordp(h), \ordp(n) + \ordp(k) \} }}, 
\end{equation*}
and
\begin{equation*}
F_3(m,n,p) := \begin{cases} \displaystyle 1-\frac{1}{p^{1+w}} & \text{if } p|\frac{mnhk}{g^2} \\  \\  \displaystyle 1+\frac{1}{p^{1+w}(p-1)}-\frac{1}{p-1} & \text{if } p|g \text{ and }  p\nmid \frac{mnhk}{g^2} \\ \\ \displaystyle 1+ \frac{p^{-w}-1}{p(p-1)} & \text{if } p\nmid \frac{mnhk}{g}  ,\end{cases}
\end{equation*}
respectively. We can rewrite the conditions in $F_3(m,n,p)$ in terms of $\ordp(m)$, $\ordp(n)$, $\ordp(h)$, and $\ordp(k)$, as follows. Since $g=(mh,nk)$, a prime $p$ divides $mnhk/g^2$ if and only if
$$
\ordp(m)+\ordp(h)+\ordp(n)+\ordp(k)-2\min\{\ordp(m)+\ordp(h), \ordp(n)+\ordp(k)\}>0.
$$
Since two real numbers $x,y$ satisfy $x+y-2\min\{x,y\}>0$ if and only if $x\neq y$, it follows that $p| mnhk/g^2$ if and only if  $\ordp(m)+\ordp(h)\neq \ordp(n)+\ordp(k)$. A similar argument shows that a prime $p$ satisfies $p\nmid mnhk/g$ if and only if $p\nmid mhnk$. Thus the definition of $F_3(m,n,p)$ is equivalent to
\begin{equation*}
F_3(m,n,p) = \begin{cases} \displaystyle 1-\frac{1}{p^{1+w}} & \text{if } \ordp(m)+\ordp(h)\neq \ordp(n)+\ordp(k) \\  \\  \displaystyle 1+\frac{1}{p^{1+w}(p-1)}-\frac{1}{p-1} & \text{if } \ordp(m)+\ordp(h)= \ordp(n)+\ordp(k) >0 \\ \\ \displaystyle 1+ \frac{p^{-w}-1}{p(p-1)} & \text{if } \ordp(m)+\ordp(h)= \ordp(n)+\ordp(k) =0 .\end{cases}
\end{equation*}
If $p|c$, then $F_1(m,n,p)=0$ unless $m=n=1$, in which case
\begin{equation*}
f(1,1,p) = F_1(1,1,p)F_2(1,1,p)F_3(1,1,p) = 1+ \frac{p^{-w}-1}{p(p-1)}
\end{equation*}
because $(c,hk)=1$. Thus, from \eqref{eqn: U2mnea} and Lemma~\ref{lem: sumstoEulerproducts}, we deduce that the $m,n$-sum in \eqref{eqn: U2before4} equals
\begin{align*}
& \prod_p \sum_{0\leq m,n<\infty} f(p^m,p^n,p)  \\
& = \prod_{p|c} \left( 1+ \frac{p^{-w}-1}{p(p-1)} \right) \prod_{p\nmid c} \sum_{0\leq m,n<\infty} F_2(m,n,p) F_3(m,n,p) \\
& = \prod_{p|c} \left( 1+ \frac{p^{-w}-1}{p(p-1)} \right)\\
& \hspace{.25in}\times \prod_{\substack{p\nmid c \\ p|hk}} \Bigg( \sum_{\substack{0\leq m,n<\infty \\ m+\ordp(h)= n + \ordp(k) } }\frac{\tau_A(p^{m}) \tau_B(p^{n}) \left( 1+\frac{1}{p^{1+w}(p-1)}-\frac{1}{p-1} \right)}{ p^{m(\frac{1}{2}+s_1-w-z)} p^{n(\frac{1}{2}+s_2+z)} p^{w \min\{m+\ordp(h), n + \ordp(k) \} }}  \\
& \hspace{.75in} + \sum_{\substack{0\leq m,n<\infty \\ m+\ordp(h)\neq  n + \ordp(k) } } \frac{\tau_A(p^{m}) \tau_B(p^{n}) \left( 1-\frac{1}{p^{1+w}}  \right)}{ p^{m(\frac{1}{2}+s_1-w-z)} p^{n(\frac{1}{2}+s_2+z)} p^{w \min\{m+\ordp(h), n + \ordp(k) \} }}  \Bigg) \\
& \hspace{.25in} \times \prod_{\substack{p\nmid c \\ p \nmid hk}} \Bigg( 1+ \frac{p^{-w}-1}{p(p-1)} + \sum_{m=1 }^{\infty} \frac{\tau_A(p^{m}) \tau_B(p^{m}) \left( 1+\frac{1}{p^{1+w}(p-1)}-\frac{1}{p-1} \right)}{ p^{m(1+s_1+s_2)} }  \\
&  \hspace{.75in} + \sum_{\substack{0\leq m,n<\infty \\ m\neq n  } } \frac{\tau_A(p^{m}) \tau_B(p^{n}) \left( 1-\frac{1}{p^{1+w}}  \right)}{ p^{m(\frac{1}{2}+s_1-w-z)} p^{n(\frac{1}{2}+s_2+z)} p^{w \min\{m, n \} }}  \Bigg).
\end{align*}
We substitute this for the $m,n$-sum in \eqref{eqn: U2before4}. For convenience, we also make a change of variables $w\mapsto 1-w$. The result is
\begin{align}
 \mathcal{U}^2(h,k)&= \frac{Q}{2} \sum_{\substack{1\leq c \leq C  \\ (c ,hk)=1 }} \frac{\mu(c)}{c}  \cdot \frac{1}{(2\pi i)^4 } \int_{(2)} \int_{(2)} X^{s_1+s_2}  \widetilde{V}(s_1)\widetilde{V}(s_2)  \int_{(1+\epsilon)} \zeta(2-w)  \widetilde{W}(w) \left( \frac{c}{ Q}\right)^{1-w}  \notag\\
 &\hspace{.25in}\times \int_{(\epsilon/2)} \mathcal{H}(z,w-1) \frac{e^{\delta z} - e^{-\delta z}}{2\delta z}h^{1-w+z}k^{-z} \prod_{p|c} \left( 1+ \frac{p^{w-1}-1}{p(p-1)} \right)  \notag\\
&\times \prod_{\substack{p\nmid c \\ p|hk}} \Bigg( \sum_{\substack{0\leq m,n<\infty \\ m+\ordp(h)= n + \ordp(k) } }\frac{\tau_A(p^{m}) \tau_B(p^{n}) \left( 1+\frac{p^w}{p^{2}(p-1)}-\frac{1}{p-1} \right)}{ p^{m(-\frac{1}{2}+s_1+w-z)} p^{n(\frac{1}{2}+s_2+z)} p^{(1-w) \min\{m+\ordp(h), n + \ordp(k) \} }}  \notag\\
&\hspace{.5in}+ \sum_{\substack{0\leq m,n<\infty \\ m+\ordp(h)\neq  n + \ordp(k) } } \frac{\tau_A(p^{m}) \tau_B(p^{n}) \left( 1-\frac{p^w}{p^{2}}  \right)}{ p^{m(-\frac{1}{2}+s_1+w-z)} p^{n(\frac{1}{2}+s_2+z)} p^{(1-w) \min\{m+\ordp(h), n + \ordp(k) \} }}  \Bigg) \notag\\
&\times \prod_{\substack{p\nmid c \\ p \nmid hk}} \Bigg(
1+ \frac{p^{w-1}-1}{p(p-1)} + \sum_{m=1}^{\infty} \frac{\tau_A(p^{m}) \tau_B(p^{m}) \left( 1+\frac{p^w}{p^{2}(p-1)}-\frac{1}{p-1} \right)}{ p^{m(1+s_1+s_2  )} } \notag\\
&\hspace{.5in}+ \sum_{\substack{0\leq m,n<\infty \\ m\neq n  } } \frac{\tau_A(p^{m}) \tau_B(p^{n}) \left( 1-\frac{p^w}{p^{2}}  \right)}{ p^{m(-\frac{1}{2}+s_1+w-z)} p^{n(\frac{1}{2}+s_2+z)} p^{(1-w) \min\{m, n \} }}  \Bigg)\,dz  \,dw \,ds_2\,ds_1 \notag\\
&+ O\left(X^{\varepsilon}Q^{1+\varepsilon}\frac{(hk)^\varepsilon(h,k)}{(hk)^{1/2}} + (XChk)^{\varepsilon} k X^2 Q^{-97}\right). \label{eqn: U2ready}
\end{align}

\subsection{Analysis of the predicted one-swap terms from the recipe}\label{subsec: one-swap1}
Before we continue our treatment of $\mathcal{U}^2(h,k)$, we first break down the predicted one-swap terms into several parts via the residue theorem. Afterward, we will show that $\mathcal{U}^2(h,k)$ is equal to the sum of the same parts plus admissible error terms.

Recall that the definition of $\mathcal{I}_1^*(h,k)$ is given by \eqref{eqn: Istardef} with $\ell=1$. For each term in the definition of $\mathcal{I}_1^*(h,k)$, we denote the element of $U$ by $\alpha$ and the element of $V$ by $\beta$, and we multiply the integrand by $ \zeta(w-1+\alpha+s_1+\beta+s_2) $ and divide it by the Euler product of $\zeta(w-1+\alpha+s_1+\beta+s_2)$. This ``factoring out'' of the zeta-function gives us a further analytic continuation of the integrand and allows us to evaluate its residues when shifting contours. With these notations and factorization, we thus write $\mathcal{I}_1^*(h,k)$ as
\begin{equation}\label{eqn: 1swapterms0}
\begin{split}
\mathcal{I}_1^*(h,k) = \sum_{\substack{\alpha\in A \\ \beta\in B}} \frac{1}{2(2\pi i )^3} \int_{(\varepsilon)} \int_{(\varepsilon)} \int_{(2+\varepsilon)} X^{s_1+s_2} Q^w \widetilde{V}(s_1) \widetilde{V}(s_2)\widetilde{W}(w) \\
\times \mathscr{X} (\tfrac{1}{2}+\alpha +s_1 ) \mathscr{X} (\tfrac{1}{2}+\beta+s_2 ) \prod_{\substack{\gamma \in A_{s_1}\smallsetminus \{\alpha+s_1\} \cup \{-\beta-s_2\} \\ \delta \in B_{s_2}\smallsetminus \{\beta+s_2\} \cup \{-\alpha-s_1\}  }} \zeta(1+\gamma+\delta)  \\
\times \zeta(w-1+\alpha+s_1+\beta+s_2) \mathcal{K}(s_1,s_2,w) \,dw \,ds_2\,ds_1,
\end{split}
\end{equation}
where $\mathcal{K}(s_1,s_2,w)$ is defined by
\begin{align}
\mathcal{K}(s_1,s_2,w) =
& \mathcal{K}(s_1,s_2,w;A,B,\alpha,\beta,h,k) \notag\\
: = &  \prod_{p|hk} \Bigg\{ \left( 1 - \frac{1}{p^{w-1+\alpha+s_1+\beta+s_2}}\right) \prod_{\substack{\gamma \in A_{s_1}\smallsetminus \{\alpha+s_1\} \cup \{-\beta-s_2\} \\ \delta \in B_{s_2}\smallsetminus \{\beta+s_2\} \cup \{-\alpha-s_1\}  }}\left( 1- \frac{1}{p^{1+\gamma+\delta}}\right) \notag\\
& \hspace{.5in} \times \sum_{ \substack{0\leq m,n<\infty\\ m+\ordp(h) = n+\ordp(k)}}\frac{\tau_{A_{s_1}\smallsetminus \{\alpha+s_1\} \cup \{-\beta-s_2\}} (p^m) \tau_{B_{s_2}\smallsetminus \{\beta+s_2\} \cup \{-\alpha-s_1\} } (p^n)  }{p^{m/2}p^{n/2} }\Bigg\} \notag\\
&  \times \prod_{p\nmid hk} \Bigg\{ \left( 1 - \frac{1}{p^{w-1+\alpha+s_1+\beta+s_2}}\right)\prod_{\substack{\gamma \in A_{s_1}\smallsetminus \{\alpha+s_1\} \cup \{-\beta-s_2\} \\ \delta \in B_{s_2}\smallsetminus \{\beta+s_2\} \cup \{-\alpha-s_1\}  }}\left( 1- \frac{1}{p^{1+\gamma+\delta}}\right) \notag\\
& \hspace{.5in}  \times \Bigg(1 + \frac{p-2}{p^{w+ \alpha+s_1  + \beta + s_2 } }   + \left( 1-\frac{1}{p}\right)^2 \frac{p^{2(1-w- \alpha-s_1  - \beta - s_2 )}}{1-p^{1-w- \alpha-s_1  - \beta - s_2 }}  \notag\\
& \hspace{.75in}  + \sum_{m=1}^{\infty} \frac{\tau_{A_{s_1}\smallsetminus \{\alpha+s_1\} \cup \{-\beta-s_2\}} (p^m) \tau_{B_{s_2}\smallsetminus \{\beta+s_2\} \cup \{-\alpha-s_1\}} (p^m)  }{p^m}   \Bigg)\Bigg\}. \label{eqn: 1swapeulerKdef}
\end{align}

To facilitate our estimations, we first prove the following lemma, which will allow us to move lines of integration and bound the integrals that remain after applying the residue theorem.
\begin{lemma}\label{lem: 1swapeulerbound}
Suppose that $\epsilon>0$ is arbitrarily small. Let $\alpha\in A$ and $\beta\in B$, and let $h$ and $k$ be positive integers. If $s_1,s_2,w$ are complex numbers such that
\begin{enumerate}
\item[\upshape{(i)}] $\re(w-1+\alpha+s_1+\beta+s_2)\geq \frac{1}{2}+\varepsilon$, \item[\upshape{(ii)}] $-\frac{1}{2}+5\epsilon\leq \re(s_1+s_2) \leq 2\epsilon$, and
\item[\upshape{(iii)}] either $|\re(s_1)|\leq \epsilon$ or $|\re(s_2)|\leq \epsilon$,
\end{enumerate}
then the product \eqref{eqn: 1swapeulerKdef} defining $\mathcal{K}(s_1,s_2,w;A,B,\alpha,\beta,h,k)$ converges absolutely and we have
$$
\mathcal{K}(s_1,s_2,w;A,B,\alpha,\beta,h,k) \ll_{\varepsilon} (hk)^{\varepsilon}.
$$
\end{lemma}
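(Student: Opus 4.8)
The plan is to estimate the product \eqref{eqn: 1swapeulerKdef} defining $\mathcal{K}$ prime by prime, separating the ``bad'' primes dividing $hk$ from the generic primes. For a generic prime $p\nmid hk$, I would expand the local factor and show it equals $1+O(p^{-1-\delta})$ for some fixed $\delta>0$ depending only on $\epsilon$; this gives absolute convergence of the Euler product over such primes and a contribution that is $O(1)$. For the finitely many primes $p\mid hk$, the local factor is bounded by a constant (uniform in $p$), so that part contributes $\ll \prod_{p\mid hk}O(1)\ll (hk)^{\varepsilon}$ by the divisor bound. Multiplying the two estimates yields $\mathcal{K}\ll_{\varepsilon}(hk)^{\varepsilon}$.

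More concretely, first I would record the consequences of the hypotheses. The shifts $\alpha,\beta\ll 1/\log Q$ are tiny, so on the relevant contours the real parts of the exponents $1+\gamma+\delta$ for $\gamma\in A_{s_1}\smallsetminus\{\alpha+s_1\}\cup\{-\beta-s_2\}$ and $\delta\in B_{s_2}\smallsetminus\{\beta+s_2\}\cup\{-\alpha-s_1\}$ are all of the form $1+(\text{small})+(\text{combination of }s_1,s_2\text{ real parts})$. Using hypotheses (ii) and (iii), each such exponent has real part $\geq 1-\frac12+5\epsilon+o(1)>\frac12$, except possibly for those few $\gamma+\delta$ that happen to telescope to something with real part near $s_1+s_2$; but even then hypothesis (ii) forces $\re(s_1+s_2)\geq -\frac12+5\epsilon$, so every exponent $1+\gamma+\delta$ has real part bounded below by $\frac12+4\epsilon$, say. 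Similarly the factor $1-p^{-(w-1+\alpha+s_1+\beta+s_2)}$ is controlled by hypothesis (i), which gives $\re(w-1+\alpha+s_1+\beta+s_2)\geq\frac12+\varepsilon$, so this factor is $1+O(p^{-1/2-\varepsilon})$, and likewise the ``$p^{2(1-w-\cdots)}/(1-p^{1-w-\cdots})$'' term is a convergent geometric-type expression since $\re(1-w-\alpha-s_1-\beta-s_2)\leq -\frac12-\varepsilon<0$.

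The key computation is then the cancellation in the generic local factor. Writing out the product $\prod_{\gamma,\delta}(1-p^{-1-\gamma-\delta})$ and the sum $\sum_{m\geq0}\tau_{A_{s_1}\smallsetminus\{\alpha+s_1\}\cup\{-\beta-s_2\}}(p^m)\tau_{B_{s_2}\smallsetminus\{\beta+s_2\}\cup\{-\alpha-s_1\}}(p^m)p^{-m}$, the $m=0$ term is $1$ and the $m=1$ terms of this sum exactly cancel the $-\sum p^{-1-\gamma-\delta}$ from expanding the product, by the defining identity \eqref{eqn: taudef} for $\tau_E(p)$ — this is exactly the mechanism of \eqref{eqn: CFKRSeuler1}. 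What remains is: the $m\geq2$ tail of the $\tau\tau$-sum, which is $O(p^{-2+\varepsilon})$ by \eqref{eqn: divisorbound} (here I use that every element of the relevant multisets has real part $\geq-\frac12+O(\epsilon)$ on these contours, so $\tau_E(p^m)\ll p^{m(1/2-5\epsilon)+\varepsilon}$ and the $p^{-m}$ wins); the $\frac{p-2}{p^{w+\alpha+s_1+\beta+s_2}}$ term, which is $O(p^{-1/2-\varepsilon})$; the geometric term $O(p^{-1-2\varepsilon})$; and cross terms among all of these and the extra factor $1-p^{-(w-1+\alpha+s_1+\beta+s_2)}$, all of which are $O(p^{-1-\delta})$. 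The one term that looks dangerous is $\frac{p-2}{p^{w+\alpha+s_1+\beta+s_2}}$, of size $p^{1-\re(w+\alpha+s_1+\beta+s_2)}$; but hypothesis (i) says $\re(w+\alpha+s_1+\beta+s_2)\geq\frac32+\varepsilon$, so this is $O(p^{-1/2-\varepsilon})$, as claimed. Hence each generic local factor is $1+O(p^{-1-\delta})$ for a fixed $\delta>0$, giving absolute convergence.

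The main obstacle I anticipate is the bookkeeping in hypothesis (iii): the clause ``either $|\re(s_1)|\le\epsilon$ or $|\re(s_2)|\le\epsilon$'' is needed precisely because without it one of the individual exponents, e.g. $1+(-\beta-s_2)+(-\alpha-s_1)$ paired up, or an exponent involving $+s_1-s_2$, could have real part as large as $1+|\re(s_1)|+|\re(s_2)|$ or as small as something that breaks the divisor-bound input to the $\tau\tau$-tail. I would need to check carefully that, under (ii) and (iii), every element appearing in $A_{s_1}\smallsetminus\{\alpha+s_1\}\cup\{-\beta-s_2\}$ and in $B_{s_2}\smallsetminus\{\beta+s_2\}\cup\{-\alpha-s_1\}$ has real part $\geq -\frac12+4\epsilon$ (so that \eqref{eqn: divisorbound} applies with $r=-\frac12+4\epsilon$ and the $m\geq2$ tail is genuinely $O(p^{-2+\varepsilon})$), and that every pairwise sum $1+\gamma+\delta$ has real part $\geq\frac12+4\epsilon$. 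Once that case analysis is done — it is a finite check over which of $s_1,s_2$ is small and which elements of $A,B$ are involved — the rest is the routine Euler-product estimate above, and the bound $\mathcal{K}\ll_{\varepsilon}(hk)^{\varepsilon}$ follows by multiplying the $O(1)$ generic part by the $\prod_{p\mid hk}O(1)\ll(hk)^{\varepsilon}$ contribution of the bad primes.
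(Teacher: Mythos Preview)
Your overall strategy matches the paper's: show the local factor at $p\nmid hk$ is $1+O(p^{-1-\delta})$ via the $\tau\tau$-cancellation, and bound the $p\mid hk$ factors by $O(1)$. But there are two genuine gaps in the execution.

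\textbf{The tail bound is wrong as stated.} You claim the $m\ge 2$ tail of $\sum_m \tau_{E}(p^m)\tau_{F}(p^m)p^{-m}$ is $O(p^{-2+\varepsilon})$, using that every element of $E=A_{s_1}\smallsetminus\{\alpha+s_1\}\cup\{-\beta-s_2\}$ and $F=B_{s_2}\smallsetminus\{\beta+s_2\}\cup\{-\alpha-s_1\}$ has real part $\geq -\tfrac12+4\epsilon$. But this only gives $\tau_E(p^m),\tau_F(p^m)\ll p^{m(1/2-4\epsilon+\varepsilon)}$, hence $\tau_E\tau_F/p^m\ll p^{-m(8\epsilon-2\varepsilon)}$, which for $m\ge 2$ is merely $O(p^{-c\epsilon})$ --- nowhere near $O(p^{-1-\delta})$, and the Euler product would diverge. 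The actual role of hypothesis~(iii) is not the weak lower bound you record, but an \emph{asymmetry}: if, say, $|\re(s_2)|\le\epsilon$, then every element of $F$ has real part $\ge -3\epsilon+o(1)$, so $\tau_F(p^m)\ll p^{m(3\epsilon+\varepsilon)}$, while $\tau_E(p^m)\ll p^{m(1/2-4\epsilon+\varepsilon)}$. Their product divided by $p^m$ is then $\ll p^{-m(1/2+\epsilon-2\varepsilon)}$, and the $m\ge 2$ tail is genuinely $O(p^{-1-\varepsilon})$. This asymmetric bound (the paper's (9.12)--(9.14)) is the point you are missing, and it is also what makes the $p\mid hk$ sum $\sum_{m+\mathrm{ord}_p(h)=n+\mathrm{ord}_p(k)}\tau_E(p^m)\tau_F(p^n)p^{-m/2-n/2}$ converge to $O(1)$.

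\textbf{A second cancellation is needed.} You correctly note that $(p-2)/p^{w+\alpha+s_1+\beta+s_2}$ is only $O(p^{-1/2-\varepsilon})$, but then conclude the local factor is $1+O(p^{-1-\delta})$ without explaining where this term goes. It must cancel against the $-p^{-(w-1+\alpha+s_1+\beta+s_2)}$ coming from the factor $(1-p^{-(w-1+\alpha+s_1+\beta+s_2)})$: indeed $(p-2)p^{-(w+\cdots)}=p^{-(w-1+\cdots)}-2p^{-(w+\cdots)}$, so after multiplication the two $O(p^{-1/2-\varepsilon})$ terms combine to leave only $O(p^{-3/2-\varepsilon})$. (Equivalently, one checks that $(1-p^{1-w-\cdots})$ times the bracket $1+(p-2)p^{-(w+\cdots)}+(1-1/p)^2p^{2(1-w-\cdots)}/(1-p^{1-w-\cdots})$ collapses to $(1-p^{-(w+\cdots)})^2=1+O(p^{-3/2-\varepsilon})$.) Without recording this, your argument leaves an uncontrolled $O(p^{-1/2-\varepsilon})$ contribution at every prime.
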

\begin{proof}
Since $\re(w-1+\alpha+s_1+\beta+s_2)\geq \frac{1}{2}+\varepsilon$, we have
\begin{equation}\label{eqn: 1swapeulerbound1}
\frac{1}{p^{w-1+\alpha+s_1+\beta+s_2} }\ll \frac{1}{p^{\frac{1}{2}+\varepsilon}}.
\end{equation}
Moreover, each term of the form $p^{-1-\gamma-\delta}$ in the definition \eqref{eqn: 1swapeulerKdef} of $\mathcal{K}(s_1,s_2,w)$ satisfies $p^{-1-\gamma-\delta}\ll p^{-\frac{1}{2}-\varepsilon}$ because $-\frac{1}{2}+5\epsilon\leq \re(s_1+s_2) \leq 2\epsilon$ and each element of $A\cup B$ is $\ll 1/\log Q$. We may thus multiply out the product and apply the definition \eqref{eqn: taudef} of $\tau_E$ to deduce that
\begin{equation}\label{eqn: 1swapeulerbound2}
\begin{split}
& \left( 1 - \frac{1}{p^{w-1+\alpha+s_1+\beta+s_2}}\right)  \prod_{\substack{\gamma \in A_{s_1}\smallsetminus \{\alpha+s_1\} \cup \{-\beta-s_2\} \\ \delta \in B_{s_2}\smallsetminus \{\beta+s_2\} \cup \{-\alpha-s_1\}  }}\left( 1- \frac{1}{p^{1+\gamma+\delta}}\right)\\
& = 1 - \frac{1}{p^{w-1+\alpha+s_1+\beta+s_2}} - \sum_{\substack{\gamma \in A_{s_1}\smallsetminus \{\alpha+s_1\} \cup \{-\beta-s_2\} \\ \delta \in B_{s_2}\smallsetminus \{\beta+s_2\} \cup \{-\alpha-s_1\}  }}\frac{1}{p^{1+\gamma+\delta}} + O\left( \frac{1}{p^{1+\varepsilon}}\right) \\
& = 1 - \frac{1}{p^{w-1+\alpha+s_1+\beta+s_2}}  - \frac{\tau_{A_{s_1}\smallsetminus \{\alpha+s_1\} \cup \{-\beta-s_2\}} (p ) \tau_{B_{s_2}\smallsetminus \{\beta+s_2\} \cup \{-\alpha-s_1\}} (p )  }{p } + O\left( \frac{1}{p^{1+\varepsilon}}\right).
\end{split}
\end{equation}
We may assume that $|\re(s_2)|\leq \epsilon$ as the proof for the case with $|\re(s_1)|\leq \epsilon$ is similar. Since $ -\frac{1}{2}+5\epsilon \leq \re(s_1+s_2) \leq 2\epsilon$, it then follows that $-\frac{1}{2}+4\epsilon\leq \re(s_1) \leq 3\epsilon $. This, the inequality $|\re(s_2)|\leq \epsilon$, and the bound \eqref{eqn: divisorbound} imply
\begin{equation}\label{eqn: 1swapeulerbound3}
\tau_{A_{s_1}\smallsetminus \{\alpha+s_1\} \cup \{-\beta-s_2\}} (p^m) \ll_{\varepsilon} p^{m(\frac{1}{2}-4\epsilon+\varepsilon)},
\end{equation}
and
\begin{equation}\label{eqn: 1swapeulerbound4}
\tau_{B_{s_2}\smallsetminus \{\beta+s_2\} \cup \{-\alpha-s_1\}} (p^n)\ll_{\varepsilon} p^{n(3\epsilon+\varepsilon)}.
\end{equation}
Therefore
\begin{equation}\label{eqn: 1swapeulerbound6}
\sum_{m=2}^{\infty} \frac{\tau_{A_{s_1}\smallsetminus \{\alpha+s_1\} \cup \{-\beta-s_2\}} (p^m) \tau_{B_{s_2}\smallsetminus \{\beta+s_2\} \cup \{-\alpha-s_1\}} (p^m)  }{p^m} \ll \sum_{m=2}^{\infty} \frac{p^{m(\frac{1}{2}- \epsilon+\varepsilon) }   }{p^m} \ll \frac{1}{p^{1+\varepsilon}}
\end{equation}
and
\begin{equation}\label{eqn: 1swapeulerbound5}
\begin{split}
\sum_{ \substack{0\leq m,n<\infty\\ m+\ordp(h) = n+\ordp(k)}}&\frac{\tau_{A_{s_1}\smallsetminus \{\alpha+s_1\} \cup \{-\beta-s_2\}} (p^m) \tau_{B_{s_2}\smallsetminus \{\beta+s_2\} \cup \{-\alpha-s_1\} } (p^n)  }{p^{m/2}p^{n/2} }\\
&\hspace{.5in}\ll \sum_{0\leq m,n<\infty} \frac{p^{m(\frac{1}{2}-4\epsilon+\varepsilon)} p^{n(3\epsilon+\varepsilon)} }{ p^{m/2}p^{n/2} }\\
&\hspace{.5in}\ll 1.
\end{split}
\end{equation}
From \eqref{eqn: 1swapeulerbound1}, \eqref{eqn: 1swapeulerbound2}, \eqref{eqn: 1swapeulerbound3} with $m=1$, \eqref{eqn: 1swapeulerbound4} with $n=1$, and \eqref{eqn: 1swapeulerbound5}, we deduce that if $p|hk$ then the local factor in \eqref{eqn: 1swapeulerKdef} corresponding to $p$ is $O(1)$. On the other hand, from \eqref{eqn: 1swapeulerbound1}, \eqref{eqn: 1swapeulerbound2}, and \eqref{eqn: 1swapeulerbound6}, we deduce that if $p\nmid hk$ then the local factor in \eqref{eqn: 1swapeulerKdef} corresponding to $p$ is $1+O(p^{-1-\varepsilon})$. It follows that the right-hand side of \eqref{eqn: 1swapeulerKdef} converges absolutely, and is $\ll (hk)^{\varepsilon}$ since $\prod_{p|\nu}O(1) \ll {\nu}^{\varepsilon}$ for any positive integer $\nu$.
\end{proof}

We now move the $w$-line in \eqref{eqn: 1swapterms0} to $\re(w)=\frac{3}{2}+\varepsilon$. This leaves a residue from the pole at $w=2-\alpha-s_1-\beta-s_2$. To bound the new integral that has $\re(w)=\frac{3}{2}+\varepsilon$, we use Lemma~\ref{lem: 1swapeulerbound}, \eqref{eqn: zetaalphaalphabound}, and \eqref{eqn: mellinrapiddecay}. Since the residue of $\zeta(s)$ at $s=1$ is $1$, we arrive at
\begin{equation}\label{eqn: 1swapterms1}
\mathcal{I}_1^*(h,k) = \sum_{\substack{\alpha\in A \\ \beta\in B}} \frac{1}{2(2\pi i )^2} \int_{(\epsilon)} \int_{(\epsilon)} \mathcal{J} \,ds_2\,ds_1 + O\big( X^{\varepsilon} Q^{\frac{3}{2}+\varepsilon} (hk)^{\varepsilon}\big),
\end{equation}
where, for brevity, we define $\mathcal{J}$ by
\begin{equation}\label{eqn: integrandJdef}
\begin{split}
\mathcal{J} := X^{s_1+s_2} Q^{2-\alpha-s_1-\beta-s_2} \widetilde{V}(s_1) \widetilde{V}(s_2)\widetilde{W}(2-\alpha-s_1-\beta-s_2) \\
\times \mathscr{X} (\tfrac{1}{2}+\alpha +s_1 ) \mathscr{X} (\tfrac{1}{2}+\beta+s_2 ) \prod_{\substack{\gamma \in A_{s_1}\smallsetminus \{\alpha+s_1\} \cup \{-\beta-s_2\} \\ \delta \in B_{s_2}\smallsetminus \{\beta+s_2\} \cup \{-\alpha-s_1\}  }} \zeta(1+\gamma+\delta)  \\
\times  \mathcal{K}(s_1,s_2,2-\alpha-s_1-\beta-s_2).
\end{split}
\end{equation}
Notice that we have now specified the lines of integration in \eqref{eqn: 1swapterms1} to be $\re(s_1)=\epsilon$ and $\re(s_2)=\epsilon$, with $\epsilon$ fixed and arbitrarily small. The purpose of this is to make the succeeding estimations more explicit.

Next, we move the $s_2$-line in \eqref{eqn: 1swapterms1} to $\re(s_2)=-\frac{1}{2}+5\epsilon$. This leaves residues from the pole at $s_2=0$ due to the factor $\widetilde{V}(s_2)$, the pole at $s_2=-s_1-\alpha-\beta$ due to the factor $\zeta(1-\alpha-s_1-\beta-s_2)$, and the poles at $s_2=-s_1-\alpha'-\beta'$ due to the factors $\zeta(1+\alpha'+s_1+\beta'+s_2)$, where $\alpha'$ runs through the elements of $A\smallsetminus \{\alpha\}$ and $\beta'$ runs through the elements of $B\smallsetminus \{\beta\}$. To bound the new integral that has $\re(s_2)=-\frac{1}{2}+5\epsilon$, we use Lemma~\ref{lem: 1swapeulerbound}, \eqref{eqn: zetaalphaalphabound}, and \eqref{eqn: mellinrapiddecay}. We arrive at
\begin{equation*}
\begin{split}
\mathcal{I}_1^*(h,k) &= \sum_{\substack{\alpha\in A \\ \beta\in B}} \frac{1}{4\pi i } \int_{(\epsilon)} \underset{s_2=0}{\text{Res}}\ \mathcal{J} \,ds_1  +  \sum_{\substack{\alpha\in A \\ \beta\in B}} \frac{1}{4\pi i } \int_{(\epsilon)} \underset{s_2=-s_1-\alpha-\beta}{\text{Res}}\ \mathcal{J} \,ds_1\\
&\hspace{.25in}+ \sum_{\substack{\alpha\in A \\ \beta\in B}} \sum_{\substack{\alpha'\neq \alpha \\ \beta'\neq \beta}} \frac{1}{4\pi i } \int_{(\epsilon)} \underset{s_2=-s_1-\alpha'-\beta'}{\text{Res}}\ \mathcal{J} \,ds_1\\
&\hspace{.5in}+ O\big( X^{-\frac{1}{2}+\varepsilon} Q^{\frac{5}{2}} (hk)^{\varepsilon}\big) + O\big( X^{\varepsilon} Q^{\frac{3}{2}+\varepsilon} (hk)^{\varepsilon}\big).
\end{split}
\end{equation*}
For brevity, write this as
\begin{equation}\label{eqn: 1swapterms2}
\mathcal{I}_1^*(h,k) = J_1+J_2+J_3 + O\big( X^{-\frac{1}{2}+\varepsilon} Q^{\frac{5}{2}} (hk)^{\varepsilon}\big) + O\big( X^{\varepsilon} Q^{\frac{3}{2}+\varepsilon} (hk)^{\varepsilon}\big).
\end{equation}

We first evaluate the contribution $J_1$ of the residue at $s_2=0$. By \eqref{eqn: mellinVresidue} and the definition \eqref{eqn: integrandJdef} of $\mathcal{J}$, we have
\begin{equation}\label{eqn: Ress20J}
\begin{split}
\underset{s_2=0}{\text{Res}}\ \mathcal{J} &= X^{s_1 } Q^{2-\alpha-s_1-\beta } \widetilde{V}(s_1)  \widetilde{W}(2-\alpha-s_1-\beta )\mathscr{X} (\tfrac{1}{2}+\alpha +s_1 ) \mathscr{X} (\tfrac{1}{2}+\beta  )\\
&\hspace{.25in}\times\prod_{\substack{\gamma \in A_{s_1}\smallsetminus \{\alpha+s_1\} \cup \{-\beta \} \\ \delta \in B \smallsetminus \{\beta \} \cup \{-\alpha-s_1\}  }} \zeta(1+\gamma+\delta)\mathcal{K}(s_1,0,2-\alpha-s_1-\beta).
\end{split}
\end{equation}
We move the line of integration in the definition
\begin{equation*}
J_1 := \sum_{\substack{\alpha\in A \\ \beta\in B}} \frac{1}{4\pi i } \int_{(\epsilon)} \underset{s_2=0}{\text{Res}}\ \mathcal{J} \,ds_1
\end{equation*}
to $\re(s_1)=-\frac{1}{2}+5\epsilon$. We find residues from the pole at $s_1=0$ due to the factor $\widetilde{V}(s_1)$ in \eqref{eqn: Ress20J}, the pole at $s_1=-\alpha-\beta$ due to the factor $\zeta(1-\alpha-s_1-\beta)$, and the poles at $s_1=-\alpha'-\beta'$ due to the factors $\zeta(1+\alpha'+s_1+\beta')$, where $\alpha'$ runs through the elements of $A\smallsetminus \{\alpha\}$ and $\beta'$ runs through the elements of $B\smallsetminus \{\beta\}$. To bound the new integral that has $\re(s_1)=-\frac{1}{2}+5\epsilon$, we use Lemma~\ref{lem: 1swapeulerbound}, \eqref{eqn: zetaalphaalphabound}, and \eqref{eqn: mellinrapiddecay}. We deduce that
\begin{equation*}
\begin{split}
J_1 &=  \frac{1}{2}\sum_{\substack{\alpha\in A \\ \beta\in B}}  \underset{s_1=0}{\text{Res}}\  \underset{s_2=0}{\text{Res}}\ \mathcal{J}   +  \frac{1}{2}\sum_{\substack{\alpha\in A \\ \beta\in B}}    \underset{s_1=-\alpha-\beta}{\text{Res}}\ \underset{s_2=0}{\text{Res}}\ \mathcal{J}\\
&\hspace{.25in}+ \frac{1}{2 }\sum_{\substack{\alpha\in A \\ \beta\in B}} \sum_{\substack{\alpha'\neq \alpha \\ \beta'\neq \beta}}  \underset{s_1=-\alpha'-\beta'}{\text{Res}}\ \underset{s_2=0}{\text{Res}}\ \mathcal{J}  + O\big( X^{-\frac{1}{2}+\varepsilon} Q^{\frac{5}{2}} (hk)^{\varepsilon}\big).
\end{split}
\end{equation*}
For brevity, we write this as
\begin{equation}\label{eqn: J1split}
J_1 = J_{11}+J_{12}+J_{13} + O\big( X^{-\frac{1}{2}+\varepsilon} Q^{\frac{5}{2}} (hk)^{\varepsilon}\big).
\end{equation}
We deduce from \eqref{eqn: mellinVresidue} and \eqref{eqn: Ress20J} that
\begin{equation}\label{eqn: J11evaluated}
\begin{split}
J_{11} &= \sum_{\substack{\alpha\in A \\ \beta\in B}} \frac{1}{2}  Q^{2-\alpha-\beta } \widetilde{W}(2-\alpha-\beta )\mathscr{X} (\tfrac{1}{2}+\alpha ) \mathscr{X} (\tfrac{1}{2}+\beta  )\\
&\hspace{.25in}\times\prod_{\substack{\gamma \in A\smallsetminus \{\alpha\} \cup \{-\beta \} \\ \delta \in B \smallsetminus \{\beta \} \cup \{-\alpha\}  }} \zeta(1+\gamma+\delta) \mathcal{K}(0,0,2-\alpha-\beta).
\end{split}
\end{equation}
Since $\mathscr{X} (\tfrac{1}{2}-\beta ) \mathscr{X} (\tfrac{1}{2}+\beta  ) =1$ by the definition \eqref{eqn: scriptXdef} of $\mathscr{X}$ and the residue of $\zeta(1-\alpha-s_1-\beta)$ at $s_1=-\alpha-\beta$ is $-1$, it follows from \eqref{eqn: Ress20J} that term $J_{12}$ in \eqref{eqn: J1split} equals
\begin{equation}\label{eqn: J12evaluated}
\begin{split}
J_{12} &= - \frac{1}{2}\sum_{\substack{\alpha\in A \\ \beta\in B}} X^{-\alpha-\beta } Q^{2} \widetilde{V}(-\alpha-\beta)  \widetilde{W}(2) \prod_{ \substack{\hat{\alpha} \neq \alpha \\ \hat{\beta}\neq \beta}} \zeta(1-\alpha-\beta+\hat{\alpha} +\hat{\beta} )\\
&\times\prod_{ \hat{\alpha} \neq \alpha} \zeta(1+\hat{\alpha}-\alpha)\prod_{\hat{\beta}\neq \beta} \zeta(1+\hat{\beta}-\beta) \mathcal{K}(-\alpha-\beta,0,2).
\end{split}
\end{equation}
Next, since the residue of $\zeta(1+\alpha'+s_1+\beta')$ at $s_1=-\alpha'-\beta'$ is $1$, it follows from \eqref{eqn: Ress20J} that the term $J_{13}$ in \eqref{eqn: J1split} equals
\begin{equation}\label{eqn: J13evaluated}
\begin{split}
& J_{13} = \frac{1}{2} \sum_{\substack{\alpha\in A \\ \beta \in B }}  \sum_{\substack{\alpha'\neq \alpha \\ \beta'\neq \beta}}      X^{-\alpha'-\beta' } Q^{2-\alpha -\beta +\alpha'+\beta' } \widetilde{V}(-\alpha'-\beta')  \widetilde{W}(2-\alpha-\beta + \alpha'+\beta' )\\
& \hspace{.1in}\times \mathscr{X} (\tfrac{1}{2}+\alpha -\alpha'-\beta' ) \mathscr{X} (\tfrac{1}{2}+\beta  ) \zeta(1-\alpha-\beta+\alpha'+\beta' ) \prod_{ \substack{\hat{\alpha} \neq \alpha \\ \hat{\beta}\neq \beta \\ (\hat{\alpha},\hat{\beta})\neq (\alpha',\beta')}} \zeta(1-\alpha'-\beta'+\hat{\alpha}  +\hat{\beta} )\\
&\hspace{.25in}\times \prod_{ \hat{\alpha} \neq \alpha} \zeta(1+\hat{\alpha}-\alpha)\prod_{\hat{\beta}\neq \beta} \zeta(1+\hat{\beta}-\beta) \mathcal{K}(-\alpha'-\beta',0,2-\alpha-\beta+\alpha'+\beta').
\end{split}
\end{equation}
This, \eqref{eqn: J1split}, \eqref{eqn: J11evaluated}, and \eqref{eqn: J12evaluated} complete our evaluation of $J_1$.

Having estimated $J_1$, we next turn to the integral $J_2$ from \eqref{eqn: 1swapterms2}. Recall its definition
\begin{equation}\label{eqn: J2def}
J_2 := \sum_{\substack{\alpha\in A \\ \beta\in B}} \frac{1}{4\pi i } \int_{(\epsilon)} \underset{s_2=-s_1-\alpha-\beta}{\text{Res}}\ \mathcal{J} \,ds_1.
\end{equation}
Since $\mathscr{X} (\tfrac{1}{2}+\alpha +s_1 ) \mathscr{X} (\tfrac{1}{2}-\alpha-s_1 )=1$ by the definition \eqref{eqn: scriptXdef} of $\mathscr{X}$ and the residue of $\zeta(1-\alpha-s_1-\beta-s_2)$ at $s_2=-s_1-\alpha-\beta$ is $-1$, we see from the definition \eqref{eqn: integrandJdef} of $\mathcal{J}$ that
\begin{equation}\label{eqn: Ress2s1abJ}
\begin{split}
\underset{s_2=-s_1-\alpha-\beta}{\text{Res}}\ \mathcal{J} &= -X^{-\alpha-\beta} Q^{2} \widetilde{V}(s_1) \widetilde{V}(-s_1-\alpha-\beta)\widetilde{W}(2) \\
&\hspace{.25in}\times  \prod_{ \substack{\hat{\alpha} \neq \alpha \\ \hat{\beta}\neq \beta}} \zeta(1+\hat{\alpha} -\alpha+\hat{\beta}-\beta) \prod_{ \hat{\alpha} \neq \alpha} \zeta(1+\hat{\alpha}-\alpha)\prod_{\hat{\beta}\neq \beta} \zeta(1+\hat{\beta}-\beta)  \\
&\hspace{.25in}\times  \mathcal{K}(s_1,-s_1-\alpha-\beta,2).
\end{split}
\end{equation}
We move the line of integration in \eqref{eqn: J2def} to $\re(s_1)=-\epsilon-\re(\alpha)-\re(\beta)$. This leaves residues from the poles at $s_1=0$ and $s_1=-\alpha-\beta$ due to the factors $\widetilde{V}(s_1)$ and $\widetilde{V}(-s_1-\alpha-\beta)$ in \eqref{eqn: Ress2s1abJ}, and we arrive at
\begin{equation*}
\begin{split}
J_2 =  \frac{1}{2}\sum_{\substack{\alpha\in A \\ \beta\in B}}  \underset{s_1=0}{\text{Res}}\  \underset{s_2=-s_1-\alpha-\beta}{\text{Res}}\ \mathcal{J}   +  \frac{1}{2}\sum_{\substack{\alpha\in A \\ \beta\in B}}    \underset{s_1=-\alpha-\beta}{\text{Res}}\  \underset{s_2=-s_1-\alpha-\beta}{\text{Res}}\ \mathcal{J}\\
+ \sum_{\substack{\alpha\in A \\ \beta\in B}} \frac{1}{4\pi i } \int_{(-\epsilon-\re(\alpha)-\re(\beta))} \underset{s_2=-s_1-\alpha-\beta}{\text{Res}}\ \mathcal{J} \,ds_1.
\end{split}
\end{equation*}
For brevity, we write this as
\begin{equation}\label{eqn: J2split}
J_{2} =J_{21} +J_{22} +J_{23}.
\end{equation}
Since the residue of $\widetilde{V}(s)$ at $s=0$ is $1$ by \eqref{eqn: mellinVresidue}, it follows from \eqref{eqn: Ress2s1abJ} that
\begin{equation}\label{eqn: J21evaluated}
\begin{split}
J_{21}&=  - \frac{1}{2}\sum_{\substack{\alpha\in A \\ \beta\in B}} X^{-\alpha-\beta} Q^{2} \widetilde{V}(-\alpha-\beta)\widetilde{W}(2) \\
&\hspace{.25in}\times  \prod_{ \substack{\hat{\alpha} \neq \alpha \\ \hat{\beta}\neq \beta}} \zeta(1+\hat{\alpha} -\alpha+\hat{\beta}-\beta) \prod_{ \hat{\alpha} \neq \alpha} \zeta(1+\hat{\alpha}-\alpha)\prod_{\hat{\beta}\neq \beta} \zeta(1+\hat{\beta}-\beta)\mathcal{K}(0,-\alpha-\beta,2).
\end{split}
\end{equation}
Similarly, since the residue of $\widetilde{V}(-s_1-\alpha-\beta)$ at $s_1=-\alpha-\beta$ is $-1$ by \eqref{eqn: mellinVresidue}, we see from \eqref{eqn: Ress2s1abJ} that the term $J_{22}$ in \eqref{eqn: J2split} equals
\begin{equation}\label{eqn: J22evaluated}
\begin{split}
J_{22}  &= \frac{1}{2}\sum_{\substack{\alpha\in A \\ \beta\in B}} X^{-\alpha-\beta} Q^{2} \widetilde{V}(-\alpha-\beta)\widetilde{W}(2) \\
&\hspace{.25in}\times  \prod_{ \substack{\hat{\alpha} \neq \alpha \\ \hat{\beta}\neq \beta}} \zeta(1+\hat{\alpha} -\alpha+\hat{\beta}-\beta) \prod_{ \hat{\alpha} \neq \alpha} \zeta(1+\hat{\alpha}-\alpha)\prod_{\hat{\beta}\neq \beta} \zeta(1+\hat{\beta}-\beta)\mathcal{K}(-\alpha-\beta,0,2).
\end{split}
\end{equation}
To evaluate the integral $J_{23}$ in \eqref{eqn: J2split}, we first prove the following lemma that gives a functional equation for $\mathcal{K}$.
\begin{lemma}\label{lem: Kfunctionaleqn}
Suppose that $\epsilon>0$ is arbitrarily small. Let $\alpha\in A$ and $\beta\in B$, and let $h$ and $k$ be positive integers. If $s_1,s_2,w$ are complex numbers satisfying the conditions (i)--(iii) in Lemma~\ref{lem: 1swapeulerbound},
then
\begin{equation*}
\mathcal{K}(s_1,s_2,w;A,B,\alpha,\beta,h,k) = \left( \frac{h}{k}\right)^{s_1}\mathcal{K}(0,s_1+s_2,w;A,B,\alpha,\beta,h,k).
\end{equation*}
\end{lemma}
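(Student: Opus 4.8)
The plan is to establish the functional equation factor by factor in the Euler product \eqref{eqn: 1swapeulerKdef}, exploiting that the only dependence on $s_1$ (as opposed to $s_1+s_2$) enters through the exponents $p^{-m(\frac12 + s_1 + w - z)}$ in the local sums — wait, more precisely through the combinations $(\frac12 + s_1 - w - z)$ and $(\frac12 + s_2 + z)$ that appear in the definition of the local factors before the change of variables $w \mapsto 1-w$. The cleanest route is to work directly from the definition \eqref{eqn: 1swapeulerKdef}. First I would observe that the arguments of all the $\zeta$-type factors $(1 - p^{-1-\gamma-\delta})$ with $\gamma \in A_{s_1}\smallsetminus\{\alpha+s_1\}\cup\{-\beta-s_2\}$ and $\delta \in B_{s_2}\smallsetminus\{\beta+s_2\}\cup\{-\alpha-s_1\}$ depend only on $s_1+s_2$: indeed a generic such $\gamma+\delta$ is either $\hat\alpha + s_1 + \hat\beta + s_2$, or $\hat\alpha + s_1 - \alpha - s_1 = \hat\alpha - \alpha$, or $-\beta - s_2 + \hat\beta + s_2 = \hat\beta - \beta$, or $-\beta - s_2 - \alpha - s_1$, so in every case the sum $s_1 + s_2$ appears (or neither variable does). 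The same is true of the factor $(1 - p^{-(w-1+\alpha+s_1+\beta+s_2)})$ and of the geometric-series block in the $p\nmid hk$ part, which depends on $w + \alpha + s_1 + \beta + s_2$.

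The only genuinely $s_1$-sensitive pieces are the divisor-sum factors $\tau_{A_{s_1}\smallsetminus\{\alpha+s_1\}\cup\{-\beta-s_2\}}(p^m)$ and $\tau_{B_{s_2}\smallsetminus\{\beta+s_2\}\cup\{-\alpha-s_1\}}(p^n)$. Using the factoring identity \eqref{eqn: taufactoringidentity}, namely $\tau_{E_s}(p^m) = p^{-ms}\tau_E(p^m)$, together with the observation that $A_{s_1}\smallsetminus\{\alpha+s_1\}\cup\{-\beta-s_2\} = \big(A\smallsetminus\{\alpha\}\cup\{-\beta-s_1-s_2\}\big)_{s_1}$ and likewise $B_{s_2}\smallsetminus\{\beta+s_2\}\cup\{-\alpha-s_1\} = \big(B\smallsetminus\{\beta\}\cup\{-\alpha-s_1-s_2\}\big)_{s_2}$, I would peel off the shift: $\tau_{A_{s_1}\smallsetminus\{\alpha+s_1\}\cup\{-\beta-s_2\}}(p^m) = p^{-ms_1}\tau_{A\smallsetminus\{\alpha\}\cup\{-\beta-s_1-s_2\}}(p^m)$ and similarly $\tau_{B_{s_2}\smallsetminus\{\beta+s_2\}\cup\{-\alpha-s_1\}}(p^n) = p^{-ns_2}\tau_{B\smallsetminus\{\beta\}\cup\{-\alpha-s_1-s_2\}}(p^n)$. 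Setting $\sigma := s_1+s_2$, the right-hand multisets are exactly those that appear in $\mathcal{K}(0,\sigma,w)$. Then in each local sum I would trace what happens to the powers of $p$: in the $p|hk$ block the denominator $p^{m/2}p^{n/2}$ combines with the extracted $p^{-ms_1}p^{-ns_2}$ to give $p^{-m(\frac12+s_1)}p^{-n(\frac12+s_2)}$, while the constraint $m + \ordp(h) = n + \ordp(k)$ forces $n - m = \ordp(h) - \ordp(k)$ (independent of $h,k$ only through this difference), so summing over admissible $(m,n)$ the surplus factor is $p^{-m(s_1 + s_2)}\cdot p^{?}$ — here is the only place care is needed: I would substitute $n = m + \ordp(h) - \ordp(k)$ and check that the $s_1$-dependence collapses to the overall $(h/k)^{s_1}$ after taking the product over $p | hk$, because $\prod_{p|hk} p^{(\ordp(k) - \ordp(h)) s_2} \cdot (\text{stuff in } \sigma)$ and $\sum_p (\ordp(h) - \ordp(k))\log p$ rearrange into $(h/k)^{s_1}$ once one uses $s_2 = \sigma - s_1$.

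Concretely, after the substitution the $p|hk$ local factor of $\mathcal{K}(s_1,s_2,w)$ equals $p^{(\ordp(k)-\ordp(h)) s_1}$ times the corresponding $p|hk$ local factor of $\mathcal{K}(0,\sigma,w)$, since writing $n = m + \ordp(h) - \ordp(k)$ turns $p^{-ms_1 - ns_2}p^{-m/2 - n/2}$ into $p^{(\ordp(k)-\ordp(h))s_2}$ times $p^{-m\sigma}$ times the $\sigma$-only denominators, and $p^{(\ordp(k)-\ordp(h))s_2} = p^{(\ordp(k)-\ordp(h))\sigma}\cdot p^{(\ordp(h)-\ordp(k))s_1}$, with the $\sigma$-power absorbed into $\mathcal{K}(0,\sigma,w)$. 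Taking the product over all $p|hk$ yields $\prod_{p|hk} p^{(\ordp(h)-\ordp(k))s_1} = h^{s_1}/k^{s_1} = (h/k)^{s_1}$, since every prime dividing $hk$ is accounted for and $\ordp(h)s_1 - \ordp(k)s_1$ sums to $(\log h - \log k)s_1/\log p$ in the exponent. The $p\nmid hk$ factors carry no net $s_1$-dependence: there $m = n$ in the diagonal sum (so $p^{-m(s_1+s_2)}= p^{-m\sigma}$ directly) and the off-diagonal double sum has its $p$-powers depending on $s_1$ only through $(w + s_1)$ and $-(s_1+s_2)$ combinations which I would verify reduce to $\sigma$ and $w$ after the $\tau$-shift extraction. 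I expect the main obstacle to be precisely this off-diagonal $m \ne n$ block in the $p \nmid hk$ part and the analogous $m + \ordp(h) \ne n + \ordp(k)$ block in the $p | hk$ part: one must check that the $\min\{m,n\}$ (resp. $\min\{m+\ordp(h), n+\ordp(k)\}$) terms — which in the form \eqref{eqn: 1swapeulerKdef} have been hidden by the earlier simplification into the $p-2$ and geometric pieces, but in the pre-simplification form of \eqref{eqn: Istardef} carry a $p^{w\min(\cdots)}$ — do not reintroduce a spurious $s_1$-dependence; this works out because $w$ itself is $s_1$-independent and the $\tau$-factors absorb the only $s_1$ that was present. Absolute convergence throughout is guaranteed by Lemma~\ref{lem: 1swapeulerbound}, whose hypotheses (i)--(iii) are assumed, so all rearrangements of the (absolutely convergent) Euler product are legitimate.
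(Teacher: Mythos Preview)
Your approach is essentially the paper's: use the shift identity \eqref{eqn: taufactoringidentity} on the two $\tau$-factors, note that every $\gamma+\delta$ and every occurrence of $w-1+\alpha+s_1+\beta+s_2$ depends only on $s_1+s_2$, and then for $p\mid hk$ use the constraint $m+\ordp(h)=n+\ordp(k)$ to turn the leftover $p^{s_1(n-m)}$ into $p^{s_1(\ordp(h)-\ordp(k))}$, whose product over $p\mid hk$ is $(h/k)^{s_1}$. The one place you go astray is in anticipating an ``off-diagonal $m\neq n$ block'' and $\min\{m,n\}$ terms: these do not exist in $\mathcal{K}$ (you are thinking of $\mathcal{P}$ or $\mathcal{G}$); in \eqref{eqn: 1swapeulerKdef} the $p\nmid hk$ sum is purely diagonal ($m=n$) and the $p\mid hk$ sum has only the single constraint $m+\ordp(h)=n+\ordp(k)$, while the $(p-2)$ and geometric pieces come from the $q$-sum over the family, not from any $m,n$-sum. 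With that confusion removed, the argument is exactly the short computation the paper gives in \eqref{eqn: Kfunctional1}--\eqref{eqn: Kfunctional2}.
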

\begin{proof}
Lemma~\ref{lem: 1swapeulerbound} guarantees that the product \eqref{eqn: 1swapeulerKdef}  defining $\mathcal{K}(s_1,s_2,w)$ converges absolutely and thus $\mathcal{K}(s_1,s_2,w)$ is well-defined. Now \eqref{eqn: taufactoringidentity} implies that
\begin{equation}\label{eqn: Kfunctional1}
\begin{split}
& \tau_{A_{s_1}\smallsetminus \{\alpha+s_1\} \cup \{-\beta-s_2\}} (p^m) \tau_{B_{s_2}\smallsetminus \{\beta+s_2\} \cup \{-\alpha-s_1\}} (p^m) \\
& = \tau_{A \smallsetminus \{\alpha \} \cup \{-\beta-s_1-s_2\}} (p^m) \tau_{B_{s_1+s_2}\smallsetminus \{\beta+s_1+s_2\} \cup \{-\alpha\}} (p^m).
\end{split}
\end{equation}
Similarly, if $m+\ordp(h)=n+\ordp(k)$, then \eqref{eqn: taufactoringidentity} implies
\begin{equation}\label{eqn: Kfunctional2}
\begin{split}
\tau_{A_{s_1}\smallsetminus \{\alpha+s_1\} \cup \{-\beta-s_2\}} (p^m) &\tau_{B_{s_2}\smallsetminus \{\beta+s_2\} \cup \{-\alpha-s_1\}} (p^n) \\
&= p^{s_1(n-m)}\tau_{A \smallsetminus \{\alpha \} \cup \{-\beta-s_1-s_2\}} (p^m) \tau_{B_{s_1+s_2}\smallsetminus \{\beta+s_1+s_2\} \cup \{-\alpha\}} (p^n) \\
&= p^{s_1(\ordp(h)-\ordp(k))}\tau_{A \smallsetminus \{\alpha \} \cup \{-\beta-s_1-s_2\}} (p^m) \tau_{B_{s_1+s_2}\smallsetminus \{\beta+s_1+s_2\} \cup \{-\alpha\}} (p^n).
\end{split}
\end{equation}
Also, we have
$$
\gamma+\delta = (\gamma-s_1) + (\delta+s_1).
$$
Lemma~\ref{lem: Kfunctionaleqn} follows from this, \eqref{eqn: Kfunctional1}, \eqref{eqn: Kfunctional2}, the definition \eqref{eqn: 1swapeulerKdef} of $\mathcal{K}$, and the fact that
$$
\prod_{p| hk} p^{s_1(\ordp(h)-\ordp(k))} = \left( \frac{h}{k}\right)^{s_1}.
$$
\end{proof}

We now evaluate the integral $J_{23}$ in \eqref{eqn: J2split}. Lemma~\ref{lem: Kfunctionaleqn} implies for $\re(s_1)=-\epsilon-\re(\alpha)-\re(\beta)$ that
\begin{equation}\label{eqn: KfunctionaleqnJ23}
\mathcal{K}(s_1,-s_1-\alpha-\beta,2) = \left( \frac{h}{k}\right)^{s_1}\mathcal{K}(0,-\alpha-\beta,2).
\end{equation}
Moreover, a change of variables $s_1\mapsto -s-\alpha-\beta$ gives
$$
\int_{(-\epsilon-\re(\alpha)-\re(\beta))} \widetilde{V}(s_1)\widetilde{V}(-s_1-\alpha-\beta) \left( \frac{h}{k}\right)^{s_1}\,ds_1 = \int_{(\epsilon)} \widetilde{V}(-s-\alpha-\beta) \widetilde{V}(s) \left( \frac{h}{k}\right)^{-s-\alpha-\beta} \,ds.
$$
From this, \eqref{eqn: KfunctionaleqnJ23}, and \eqref{eqn: Ress2s1abJ}, we deduce that the integral $J_{23}$ in \eqref{eqn: J2split} equals
\begin{equation}\label{eqn: J23evaluated}
\begin{split}
J_{23}  & = -\sum_{\substack{\alpha\in A \\ \beta\in B}}\frac{1}{4\pi i} \int_{(\epsilon)} X^{-\alpha-\beta} Q^{2}  \widetilde{V}(s) \widetilde{V}(-s-\alpha-\beta)\widetilde{W}(2) \\
& \hspace{.25in}\times  \prod_{ \substack{\hat{\alpha} \neq \alpha \\ \hat{\beta}\neq \beta}} \zeta(1+\hat{\alpha} -\alpha+\hat{\beta}-\beta) \prod_{ \hat{\alpha} \neq \alpha} \zeta(1+\hat{\alpha}-\alpha)\prod_{\hat{\beta}\neq \beta} \zeta(1+\hat{\beta}-\beta)  \\
& \hspace{.25in}\times \left( \frac{h}{k}\right)^{-s-\alpha-\beta} \mathcal{K}(0,-\alpha-\beta,2)\,ds.
\end{split}
\end{equation}
This, \eqref{eqn: J2split}, \eqref{eqn: J21evaluated}, and \eqref{eqn: J22evaluated} complete our calculation of $J_2$.

Now that we have evaluated $J_2$, we next turn our attention to the term $J_3$ in \eqref{eqn: 1swapterms2}. Recall its definition
\begin{equation}\label{eqn: J3def}
J_3 := \sum_{\substack{\alpha\in A \\ \beta\in B}} \sum_{\substack{\alpha'\neq \alpha \\ \beta'\neq \beta}} \frac{1}{4\pi i } \int_{(\epsilon)} \underset{s_2=-s_1-\alpha'-\beta'}{\text{Res}}\ \mathcal{J} \,ds_1.
\end{equation}
Since the residue of $\zeta(1+\alpha'+s_1+\beta'+s_2)$ at $s_2=-s_1-\alpha'-\beta'$ is $1$, it follows from the definition \eqref{eqn: integrandJdef} of $\mathcal{J}$ that if $\alpha'\in A\smallsetminus\{\alpha\}$ and $\beta'\in B\smallsetminus\{\beta\}$, then
\begin{equation}\label{eqn: Ress2s1a'b'J}
\begin{split}
\underset{s_2=-s_1-\alpha'-\beta'}{\text{Res}}\ \mathcal{J} = X^{-\alpha'-\beta'} Q^{2-\alpha-\beta+\alpha'+\beta'} \widetilde{V}(s_1) \widetilde{V}(-s_1-\alpha'-\beta')\widetilde{W}(2-\alpha-\beta+\alpha'+\beta') \\
\times \mathscr{X} (\tfrac{1}{2}+\alpha +s_1 ) \mathscr{X} (\tfrac{1}{2}+\beta-s_1-\alpha'-\beta' ) \zeta(1-\alpha-\beta+\alpha'+\beta') \\
\prod_{ \substack{\hat{\alpha} \neq \alpha \\ \hat{\beta}\neq \beta \\ (\hat{\alpha},\hat{\beta})\neq (\alpha',\beta')}} \zeta(1+\hat{\alpha}  +\hat{\beta} -\alpha'-\beta') \prod_{ \hat{\alpha} \neq \alpha} \zeta(1+\hat{\alpha}-\alpha) \prod_{\hat{\beta}\neq \beta} \zeta(1+\hat{\beta}-\beta)  \\
\times  \mathcal{K}(s_1,-s_1-\alpha'-\beta',2-\alpha-\beta+\alpha'+\beta').
\end{split}
\end{equation}
We move the line of integration in \eqref{eqn: J3def} to $\re(s_1)=-\epsilon$. We find residues from the poles at $s_1=0$ and $s_1=-\alpha'-\beta'$ due to the factors $\widetilde{V}(s_1)$ and $\widetilde{V}(-s_1-\alpha'-\beta')$ in \eqref{eqn: Ress2s1a'b'J}, and thus deduce that
\begin{equation*}
\begin{split}
J_3 &=  \frac{1}{2}\sum_{\substack{\alpha\in A \\ \beta\in B}} \sum_{\substack{\alpha'\neq \alpha \\ \beta'\neq \beta}}  \underset{s_1=0}{\text{Res}}\  \underset{s_2=-s_1-\alpha'-\beta'}{\text{Res}}\ \mathcal{J}   +  \frac{1}{2}\sum_{\substack{\alpha\in A \\ \beta\in B}}  \sum_{\substack{\alpha'\neq \alpha \\ \beta'\neq \beta}}  \underset{s_1=-\alpha'-\beta'}{\text{Res}}\  \underset{s_2=-s_1-\alpha'-\beta'}{\text{Res}}\ \mathcal{J}\\
&\hspace{.25in}+ \sum_{\substack{\alpha\in A \\ \beta\in B}} \sum_{\substack{\alpha'\neq \alpha \\ \beta'\neq \beta}} \frac{1}{4\pi i } \int_{(-\epsilon)} \underset{s_2=-s_1-\alpha'-\beta'}{\text{Res}}\ \mathcal{J} \,ds_1.
\end{split}
\end{equation*}
For brevity, write this as
\begin{equation}\label{eqn: J3split}
J_{3} =J_{31} +J_{32} +J_{33}.
\end{equation}
We see from \eqref{eqn: mellinVresidue} and \eqref{eqn: Ress2s1a'b'J} that
\begin{equation}\label{eqn: J31evaluated}
\begin{split}
J_{31} &= \frac{1}{2}\sum_{\substack{\alpha\in A \\ \beta\in B}} \sum_{\substack{\alpha'\neq \alpha \\ \beta'\neq \beta}} X^{-\alpha'-\beta'} Q^{2-\alpha-\beta+\alpha'+\beta'} \widetilde{V}(-\alpha'-\beta')\widetilde{W}(2-\alpha-\beta+\alpha'+\beta') \\
&\hspace{.25in}\times \mathscr{X} (\tfrac{1}{2}+\alpha ) \mathscr{X} (\tfrac{1}{2}+\beta-\alpha'-\beta' ) \zeta(1-\alpha-\beta+\alpha'+\beta') \\
&\hspace{.25in}\times\prod_{ \substack{\hat{\alpha} \neq \alpha \\ \hat{\beta}\neq \beta \\ (\hat{\alpha},\hat{\beta})\neq (\alpha',\beta')}} \zeta(1+\hat{\alpha}  +\hat{\beta} -\alpha'-\beta') \prod_{ \hat{\alpha} \neq \alpha} \zeta(1+\hat{\alpha}-\alpha) \prod_{\hat{\beta}\neq \beta} \zeta(1+\hat{\beta}-\beta)  \\
&\hspace{.25in}\times  \mathcal{K}(0,-\alpha'-\beta',2-\alpha-\beta+\alpha'+\beta').
\end{split}
\end{equation}
Similarly, since the residue of $\widetilde{V}(-s_1-\alpha'-\beta')$ at $s_1=-\alpha'-\beta'$ is $-1$ by \eqref{eqn: mellinVresidue}, we deduce from \eqref{eqn: Ress2s1a'b'J} that the term $J_{32}$ in \eqref{eqn: J3split} equals
\begin{align}
J_{32} &= -\frac{1}{2}\sum_{\substack{\alpha\in A \\ \beta\in B}} \sum_{\substack{\alpha'\neq \alpha \\ \beta'\neq \beta}} X^{-\alpha'-\beta'} Q^{2-\alpha-\beta+\alpha'+\beta'} \widetilde{V}(-\alpha'-\beta')\widetilde{W}(2-\alpha-\beta+\alpha'+\beta') \notag\\
&\hspace{.25in}\times \mathscr{X} (\tfrac{1}{2}+\alpha -\alpha'-\beta' ) \mathscr{X} (\tfrac{1}{2}+\beta ) \zeta(1-\alpha-\beta+\alpha'+\beta') \notag\\
&\hspace{.25in}\times\prod_{ \substack{\hat{\alpha} \neq \alpha \\ \hat{\beta}\neq \beta \\ (\hat{\alpha},\hat{\beta})\neq (\alpha',\beta')}} \zeta(1+\hat{\alpha}  +\hat{\beta} -\alpha'-\beta') \prod_{ \hat{\alpha} \neq \alpha} \zeta(1+\hat{\alpha}-\alpha) \prod_{\hat{\beta}\neq \beta} \zeta(1+\hat{\beta}-\beta) \notag \\
&\hspace{.25in}\times  \mathcal{K}(-\alpha'-\beta',0,2-\alpha-\beta+\alpha'+\beta'). \label{eqn: J32evaluated}
\end{align}
To simplify the integral $J_{33}$ in \eqref{eqn: J3split}, we apply Lemma~\ref{lem: Kfunctionaleqn} to deduce that
\begin{equation*}
\mathcal{K}(s_1,-s_1-\alpha'-\beta',2-\alpha-\beta+\alpha'+\beta') = \left(\frac{h}{k}\right)^{s_1} \mathcal{K}(0,-\alpha'-\beta',2-\alpha-\beta+\alpha'+\beta').
\end{equation*}
It follows from this and \eqref{eqn: Ress2s1a'b'J} that the integral $J_{33}$ in \eqref{eqn: J3split} equals
\begin{equation}\label{eqn: J33evaluated}
\begin{split}
J_{33} &= \sum_{\substack{\alpha\in A \\ \beta\in B}} \sum_{\substack{\alpha'\neq \alpha \\ \beta'\neq \beta}} \frac{1}{4\pi i } \int_{(-\epsilon)} 
X^{-\alpha'-\beta'} Q^{2-\alpha-\beta+\alpha'+\beta'} \widetilde{V}(s_1) \widetilde{V}(-s_1-\alpha'-\beta')\widetilde{W}(2-\alpha-\beta+\alpha'+\beta') \\
&\hspace{.25in}\times \mathscr{X} (\tfrac{1}{2}+\alpha +s_1 ) \mathscr{X} (\tfrac{1}{2}+\beta-s_1-\alpha'-\beta' ) \zeta(1-\alpha-\beta+\alpha'+\beta') \\
&\hspace{.25in}\times\prod_{ \substack{\hat{\alpha} \neq \alpha \\ \hat{\beta}\neq \beta \\ (\hat{\alpha},\hat{\beta})\neq (\alpha',\beta')}} \zeta(1+\hat{\alpha}  +\hat{\beta} -\alpha'-\beta') \prod_{ \hat{\alpha} \neq \alpha} \zeta(1+\hat{\alpha}-\alpha) \prod_{\hat{\beta}\neq \beta} \zeta(1+\hat{\beta}-\beta)  \\
&\hspace{.25in}\times \left( \frac{h}{k}\right)^{s_1} \mathcal{K}(0,-\alpha'-\beta',2-\alpha-\beta+\alpha'+\beta') \,ds_1.
\end{split}
\end{equation}
This, \eqref{eqn: J3split}, \eqref{eqn: J31evaluated}, and \eqref{eqn: J32evaluated} complete our evaluation of $J_3$.

Putting together our calculations, we deduce from \eqref{eqn: 1swapterms2}, \eqref{eqn: J1split}, \eqref{eqn: J2split}, and \eqref{eqn: J3split} that
\begin{equation*}
\begin{split}
\mathcal{I}_1^*(h,k) 
& = J_{11}+J_{12}+J_{13} + J_{21}+J_{22}+J_{23} + J_{31}+J_{32}+J_{33} \\
& \hspace{.25in}+ O\big( X^{-\frac{1}{2}+\varepsilon} Q^{\frac{5}{2}} (hk)^{\varepsilon}\big) + O\big( X^{\varepsilon} Q^{\frac{3}{2}+\varepsilon} (hk)^{\varepsilon}\big).
\end{split}
\end{equation*}
The terms $J_{12}$ and $J_{22}$ cancel each other by \eqref{eqn: J12evaluated} and \eqref{eqn: J22evaluated}, while $J_{13}$ cancels with $J_{32}$ by \eqref{eqn: J13evaluated} and \eqref{eqn: J32evaluated}. Therefore
\begin{equation}\label{eqn: 1swapsready}
\mathcal{I}_1^*(h,k) = J_{11}   + J_{21} +J_{23} + J_{31} +J_{33} + O\big( X^{-\frac{1}{2}+\varepsilon} Q^{\frac{5}{2}} (hk)^{\varepsilon}\big) + O\big( X^{\varepsilon} Q^{\frac{3}{2}+\varepsilon} (hk)^{\varepsilon}\big),
\end{equation}
and we have evaluated $J_{11}$ in \eqref{eqn: J11evaluated}, $J_{21}$ in \eqref{eqn: J21evaluated}, $J_{23}$ in \eqref{eqn: J23evaluated}, $J_{31}$ in \eqref{eqn: J31evaluated}, and $J_{33}$ in \eqref{eqn: J33evaluated}. Our goal for the rest of this section is to show that $\mathcal{U}^2(h,k)$ is equal to the right-hand side of \eqref{eqn: 1swapsready} up to an admissible error term.

\subsection{Analysis of \texorpdfstring{$\mathcal{U}^2(h,k)$}{U2(h,k)}}\label{subsec: one-swap2}

We now continue our analysis of $\mathcal{U}^2(h,k)$. Our goal for this subsection and the next is to show that $\mathcal{U}^2(h,k)$ is equal to the right-hand side of \eqref{eqn: 1swapsready} up to an admissible error term. We multiply the integrand in \eqref{eqn: U2ready} by
\begin{equation}\label{eqn: U2factoroutzetas}
\prod_{\alpha\in A} \zeta(-\tfrac{1}{2}+\alpha+s_1+w-z)\prod_{\beta\in B} \zeta(\tfrac{1}{2}+\beta+s_2+z)
\end{equation}
and divide it by the Euler product of \eqref{eqn: U2factoroutzetas}. The result is
\begin{equation}\label{eqn: U2analysis0}
\begin{split}
 \mathcal{U}^2(h,k) &=  \sum_{\substack{1\leq c \leq C  \\ (c ,hk)=1 }}     \frac{\mu(c)}{2(2\pi i)^4 } \int_{(2)} \int_{(2)} \int_{(1+\epsilon)}\int_{(\epsilon/2)} X^{s_1+s_2}Q^{w} c^{ -w} \\
&\hspace{.25in} \times \widetilde{V}(s_1)\widetilde{V}(s_2) \widetilde{W}(w)  \mathcal{H}(z,w-1) \frac{e^{\delta z} - e^{-\delta z}}{2\delta z}\\
&\hspace{.25in}\times \zeta(2-w)  \prod_{\alpha\in A} \zeta(-\tfrac{1}{2}+\alpha+s_1+w-z)\prod_{\beta\in B} \zeta(\tfrac{1}{2}+\beta+s_2+z)\\
&\hspace{.25in}\times h^{1-w+z}k^{-z} \mathcal{P}(s_1,s_2,w,z) \,dz  \,dw \,ds_2\,ds_1\\
&\hspace{.25in}+ O\left(X^{\varepsilon}Q^{1+\varepsilon}\frac{(hk)^\varepsilon(h,k)}{(hk)^{1/2}} + (XChk)^{\varepsilon} k X^2 Q^{-97}\right),
\end{split}
\end{equation}
where $\mathcal{P}(s_1,s_2,w,z)$ is defined by
\begin{equation}\label{eqn: U2eulerPdef}
\begin{split}
&\mathcal{P}(s_1,s_2,w,z) = \mathcal{P}(s_1,s_2,w,z;A,B,h,k,c) \\
&\hspace{.25in}: = \prod_{p|c} \Bigg\{ \left( 1+\frac{p^{w-1}-1}{p(p-1)} \right) \prod_{\alpha\in A}\left(1-\frac{1}{p^{-\frac{1}{2}+\alpha+s_1+w-z}} \right) \prod_{\beta\in B} \left(1-\frac{1}{p^{\frac{1}{2}+\beta+s_2+z}} \right) \Bigg\}\\
&\hspace{.25in}\times \prod_{\substack{p\nmid c\\ p|hk}} \Bigg\{ \prod_{\alpha\in A}\left(1-\frac{1}{p^{-\frac{1}{2}+\alpha+s_1+w-z}} \right) \prod_{\beta\in B} \left(1-\frac{1}{p^{\frac{1}{2}+\beta+s_2+z}} \right) \\
&\hspace{.25in}\times \Bigg( \sum_{\substack{0\leq m,n<\infty \\ m+\ordp(h)= n + \ordp(k) } }\frac{\tau_A(p^{m}) \tau_B(p^{n}) \left( 1+\frac{p^w}{p^{2}(p-1)}-\frac{1}{p-1} \right)}{ p^{m(-\frac{1}{2}+s_1+w-z)} p^{n(\frac{1}{2}+s_2+z)} p^{(1-w) \min\{m+\ordp(h), n + \ordp(k) \} }}   \\
&\hspace{.25in}+ \sum_{\substack{0\leq m,n<\infty \\ m+\ordp(h)\neq  n + \ordp(k) } } \frac{\tau_A(p^{m}) \tau_B(p^{n}) \left( 1-\frac{p^w}{p^{2}}  \right)}{ p^{m(-\frac{1}{2}+s_1+w-z)} p^{n(\frac{1}{2}+s_2+z)} p^{(1-w) \min\{m+\ordp(h), n + \ordp(k) \} }} \Bigg)\Bigg\} \\
&\hspace{.25in}\times \prod_{\substack{p\nmid c\\ p\nmid hk}} \Bigg\{ \prod_{\alpha\in A}\left(1-\frac{1}{p^{-\frac{1}{2}+\alpha+s_1+w-z}} \right) \prod_{\beta\in B} \left(1-\frac{1}{p^{\frac{1}{2}+\beta+s_2+z}} \right)  \\ 
&\hspace{.25in}\times \Bigg(  1+ \frac{p^{w-1}-1}{p(p-1)}   + \sum_{m=1}^{\infty} \frac{\tau_A(p^{m}) \tau_B(p^{m}) \left( 1+\frac{p^w}{p^{2}(p-1)}-\frac{1}{p-1} \right)}{ p^{m(1+s_1+s_2  )} }   \\
&\hspace{.25in}+ \sum_{ 0\leq m<n<\infty  } \frac{\tau_A(p^{m}) \tau_B(p^{n}) \left( 1-\frac{p^w}{p^{2}}  \right)}{ p^{m(\frac{1}{2}+s_1 -z)} p^{n(\frac{1}{2}+s_2+z)} } + \sum_{ 0\leq n<m <\infty   } \frac{\tau_A(p^{m}) \tau_B(p^{n}) \left( 1-\frac{p^w}{p^{2}}  \right)}{ p^{m(-\frac{1}{2}+s_1+w-z)} p^{n(\frac{3}{2}+s_2-w+z)}  } \Bigg)\Bigg\}.
\end{split}
\end{equation}
As in our analysis of $\mathcal{I}_1^*(h,k)$ in the previous subsection, this ``factoring out'' of the zeta-functions \eqref{eqn: U2factoroutzetas} gives us the analytic continuation of the integrand and allows us to evaluate its residues when shifting contours.

To facilitate our estimations, we first prove the following lemma, which will allow us to move some lines of integration and bound integrals that remain after applying the residue theorem.
\begin{lemma}\label{lem: U2eulerbound}
Suppose that $\epsilon>0$ is arbitrarily small. Let $h,k,c$ be positive integers with $(c,hk)=1$. If $s_1,s_2,w$ are complex numbers such that
\begin{enumerate}
\item[\upshape{(i)}] $\re(w)\leq 2-\epsilon$,
\item[\upshape{(ii)}] $\re(-\frac{1}{2}+s_1+w-z) \geq \frac{1}{2} +\epsilon$,
\item[\upshape{(iii)}] $\re(\frac{1}{2}+s_2+z) \geq \frac{1}{2}+\epsilon $
\item[\upshape{(iv)}] $\re(1+s_1+s_2)\geq 1+\epsilon$
\item[\upshape{(v)}] $\re(\frac{1}{2}+s_1-z)\geq \epsilon$, and
\item[\upshape{(vi)}] $\re(\frac{3}{2}+ s_2 - w + z)\geq \epsilon$.
\end{enumerate}
then the product \eqref{eqn: U2eulerPdef} defining $\mathcal{P}(s_1,s_2,w,z;A,B,h,k,c)$ converges absolutely and we have
$$
\mathcal{P}(s_1,s_2,w,z;A,B,h,k,c) \ll_{\varepsilon} (chk)^{\varepsilon} (h,k)^{\re(w)-1}.
$$
\end{lemma}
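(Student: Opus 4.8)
The plan is to estimate the Euler product \eqref{eqn: U2eulerPdef} defining $\mathcal{P}(s_1,s_2,w,z)$ by splitting it according to the three disjoint classes of primes that occur there --- those with $p\mid c$, those with $p\nmid c$ but $p\mid hk$, and those with $p\nmid c$ and $p\nmid hk$ (these classes are genuinely disjoint and exhaustive because $(c,hk)=1$) --- exactly as in the proof of Lemma~\ref{lem: 1swapeulerbound}. I would show that the local factor at a prime of the first class is $\ll 1$, so that the finite product over such primes is $\ll c^{\varepsilon}$; that the local factor at a prime of the third class is $1+O(p^{-1-\epsilon})$, so that the product over such primes converges absolutely and is $\ll 1$; and that the local factor at a prime $p$ of the second class is $\ll_{|A|,|B|}p^{(\re(w)-1)\min\{\ordp(h),\ordp(k)\}}$, uniformly in $p$, so that the product over such primes is $\ll(hk)^{\varepsilon}(h,k)^{\re(w)-1}$. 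Multiplying the three contributions yields both the asserted absolute convergence and the bound $\mathcal{P}\ll_{\varepsilon}(chk)^{\varepsilon}(h,k)^{\re(w)-1}$. Throughout, I would use the divisor bound \eqref{eqn: divisorbound} (which gives $\tau_A(p^m)\ll_{\varepsilon}p^{m\varepsilon}$ and $\tau_B(p^n)\ll_{\varepsilon}p^{n\varepsilon}$ here, since every element of $A\cup B$ is $\ll 1/\log Q$), and condition (i) to see that $(p^{w-1}-1)/(p(p-1))\ll p^{-1-\epsilon}$, that $1-p^{w-2}\ll 1$, and that $1+p^{w}/(p^{2}(p-1))-1/(p-1)\ll 1$; conditions (ii) and (iii) make $\prod_{\alpha\in A}(1-p^{-(-\frac12+\alpha+s_1+w-z)})$ and $\prod_{\beta\in B}(1-p^{-(\frac12+\beta+s_2+z)})$ each $\ll 1$ and also give $p^{-(-\frac12+\alpha+s_1+w-z)},\,p^{-(\frac12+\beta+s_2+z)}\ll p^{-1/2-\epsilon}$.

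The first class is immediate from the bounds just recorded. For the third class I would expand the large parenthesis in \eqref{eqn: U2eulerPdef}, separating: the $m=n=0$ term (which is $1+O(p^{-1-\epsilon})$); the terms with $n=0$, $m\geq 1$, which by the identity $\sum_{m\geq 0}\tau_A(p^m)t^m=\prod_{\alpha\in A}(1-tp^{-\alpha})^{-1}$ sum to $(1-p^{w-2})\big(\prod_{\alpha\in A}(1-p^{-(-\frac12+\alpha+s_1+w-z)})^{-1}-1\big)$; the symmetric terms with $m=0$, $n\geq 1$; the diagonal terms $m=n\geq 1$, bounded by $O(p^{-1-\epsilon})$ using (iv); and the mixed terms $m,n\geq 1$ with $m\neq n$, bounded by $O(p^{-1-\epsilon})$ using (ii),(iii),(v),(vi). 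Multiplying by $\prod_{\alpha\in A}(1-p^{-(-\frac12+\alpha+s_1+w-z)})\prod_{\beta\in B}(1-p^{-(\frac12+\beta+s_2+z)})$ and expanding these to first order, the $O(p^{-1/2-\epsilon})$ contributions cancel and one is left with $1$ plus defect terms, the largest of which are $p^{w-2}\sum_{\alpha\in A}p^{-(-\frac12+\alpha+s_1+w-z)}$ and $p^{w-2}\sum_{\beta\in B}p^{-(\frac12+\beta+s_2+z)}$. The key point is that these are $O(p^{-1-\epsilon})$: combining (ii) with (v) gives $\re(w)-2-\re(-\tfrac12+s_1+w-z)=-1-\re(\tfrac12+s_1-z)\leq-1-\epsilon$, and combining (iii) with (vi) gives $\re(w)-2-\re(\tfrac12+s_2+z)=-1-\re(\tfrac32+s_2-w+z)\leq-1-\epsilon$. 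Hence the third-class local factor is $1+O(p^{-1-\epsilon})$.

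The crux is the second class, and it is where the factor $(h,k)^{\re(w)-1}$ is produced. Write $a=\ordp(h)$, $b=\ordp(k)$. After bounding the bounded factors and the $\tau$'s as above, every term of the relevant sum in \eqref{eqn: U2eulerPdef} has $p$-exponent at most
\[
-m\,\re(-\tfrac12+s_1+w-z)-n\,\re(\tfrac12+s_2+z)-(1-\re(w))\min\{m+a,\,n+b\}+(m+n)\varepsilon .
\]
When $\re(w)\leq 1$ one uses $\min\{m+a,n+b\}\geq\min\{a,b\}$ to pull out $p^{(\re(w)-1)\min\{a,b\}}$, the remaining double sum converging geometrically by (ii) and (iii). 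When $1<\re(w)\leq 2-\epsilon$ I would split according to which of $m+a$, $n+b$ attains the minimum and whether $a\leq b$ or $a>b$; in the subcase $a>b$ with $m+a\leq n+b$, say, the constraint forces $n\geq m+a-b$, and at the extreme $n=m+a-b$ the identity $\re(-\tfrac12+s_1+w-z)+\re(\tfrac12+s_2+z)=\re(s_1+s_2+w)$ turns the $m$-dependent part into $-m\,\re(1+s_1+s_2)\leq-m(1+\epsilon)$ via (iv), while the $a,b$-dependent part equals $(\re(w)-1)b+(a-b)\big(\re(w)-1-\re(\tfrac12+s_2+z)\big)\leq(\re(w)-1)b$ by (vi), so that this term and the geometric tail in $m$ and in larger $n$ sum to $\ll_{|A|,|B|}p^{(\re(w)-1)\min\{a,b\}}p^{(a-b)\varepsilon}$; the remaining subcases are handled symmetrically, using (v) and (vi) in place of (iv). Thus the second-class local factor is $\ll_{|A|,|B|,\epsilon}p^{(\re(w)-1)\min\{\ordp(h),\ordp(k)\}}$ with an implied constant independent of $p$; taking the product over $p\mid hk$ gives $\prod_{p\mid hk}p^{(\re(w)-1)\min\{\ordp(h),\ordp(k)\}}=(h,k)^{\re(w)-1}$, and the accumulated per-prime constants together with the stray $p^{(a-b)\varepsilon}$ factors are absorbed into $(hk)^{\varepsilon}$ by the usual divisor-function estimate. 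Combining the three classes completes the proof.

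The hard part will be this last step: extracting \emph{precisely} the exponent $(\re(w)-1)\min\{\ordp(h),\ordp(k)\}$ --- and not, say, $(\re(w)-1)\cdot\tfrac12(\ordp(h)+\ordp(k))$, which would be too large when $\re(w)>1$ --- requires a careful case analysis of $\min\{m+\ordp(h),n+\ordp(k)\}$, invoking precisely whichever of conditions (iv), (v), (vi) makes the leftover series over $m$ and $n$ converge in each subcase. The verification that the generic local factors collapse to $1+O(p^{-1-\epsilon})$ also requires some care, since a naive bound only yields $1+O(p^{-1/2-\epsilon})$ --- which would not give an absolutely convergent product --- and the improvement relies on the cancellation produced by factoring out the zeta-functions \eqref{eqn: U2factoroutzetas}, together with the pairings (ii)\,+\,(v) and (iii)\,+\,(vi).
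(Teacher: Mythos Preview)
Your proposal is correct and follows essentially the same approach as the paper: the same three-way split of primes, the same target bounds for each class, and the same underlying combinatorics for extracting the exponent $(\re(w)-1)\min\{\ordp(h),\ordp(k)\}$ at primes $p\mid hk$. The minor differences are organizational --- for $p\nmid chk$ you sum the $n=0$ (resp.\ $m=0$) tail exactly via $\sum_{m\ge0}\tau_A(p^m)t^m=\prod_{\alpha}(1-tp^{-\alpha})^{-1}$ and then cancel against the prefactor, while the paper writes each off-diagonal sum as $\tau_A(p)/p^{-\frac12+s_1+w-z}+O(p^{-1-\varepsilon})$ (resp.\ $\tau_B(p)/p^{\frac12+s_2+z}+O(p^{-1-\varepsilon})$) and cancels that against the first-order expansion \eqref{eqn: U2eulerbound2}; and for $p\mid hk$ you split according to $\re(w)\le 1$ versus $\re(w)>1$, whereas the paper handles both regimes uniformly by writing the off-diagonal sum as $\Sigma_1+\Sigma_2$ (splitting at $m=k_p-h_p$) and bounding each piece directly. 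Your pairings (ii)+(v) and (iii)+(vi) to kill the $p^{w-2}$ corrections are exactly what the paper uses implicitly in deriving \eqref{eqn: U2eulerbound7} and \eqref{eqn: U2eulerbound8}, so the two arguments are really the same proof in slightly different dress.
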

\begin{proof}
For brevity, in this proof we will refer to the conditions in the hypothesis by their respective labels (i), (ii), $\dots$, (vi). We will also repeatedly apply without mention the bounds $\tau_A(m)\ll m^{\varepsilon}$ and $\tau_B(n)\ll n^{\varepsilon}$, which follow from \eqref{eqn: divisorbound} and our assumption that $\alpha, \beta \ll 1/\log Q$ for all $\alpha \in A$ and $\beta \in B$. The condition (i) implies
\begin{equation}\label{eqn: U2eulerbound1}
1+\frac{p^{w-1}-1}{p(p-1)} = 1 + O\bigg( \frac{1}{p^{1+\varepsilon}}\bigg).
\end{equation}
From (ii), (iii), and our assumption that $\alpha, \beta \ll 1/\log Q$ for all $\alpha \in A$ and $\beta \in B$, we see that
$$
\frac{1}{p^{-\frac{1}{2}+\alpha+s_1+w-z}} \ll \frac{1}{p^{\frac{1}{2}+\varepsilon}}
$$
for all $\alpha\in A$ and
$$
\frac{1}{p^{\frac{1}{2}+\beta+s_2+z}} \ll \frac{1}{p^{\frac{1}{2}+\varepsilon}}
$$
for all $\beta\in B$. Thus, multiplying out the product and applying the definition \eqref{eqn: taudef} gives
\begin{equation}\label{eqn: U2eulerbound2}
\begin{split}
& \prod_{\alpha\in A}\left(1-\frac{1}{p^{-\frac{1}{2}+\alpha+s_1+w-z}} \right) \prod_{\beta\in B} \left(1-\frac{1}{p^{\frac{1}{2}+\beta+s_2+z}} \right) \\
& = 1- \sum_{\alpha\in A } \frac{1}{p^{-\frac{1}{2}+\alpha+s_1+w-z}} -\sum_{\beta\in B} \frac{1}{p^{\frac{1}{2}+\beta+s_2+z}} + O\bigg( \frac{1}{p^{1+\varepsilon}}\bigg) \\
& = 1 - \frac{\tau_A(p)}{p^{-\frac{1}{2}+s_1+w-z}} - \frac{\tau_B(p)}{p^{\frac{1}{2}+s_2+z}} + O\bigg( \frac{1}{p^{1+\varepsilon}}\bigg).
\end{split}
\end{equation}
By (i) and (iv), we have
\begin{equation}\label{eqn: U2eulerbound6}
\sum_{m=1}^{\infty} \frac{\tau_A(p^{m}) \tau_B(p^{m}) \left( 1+\frac{p^w}{p^{2}(p-1)}-\frac{1}{p-1} \right)}{ p^{m(1+s_1+s_2  )} } \ll \sum_{m=1}^{\infty} \frac{1}{p^{m(1+\varepsilon)}} \ll \frac{1}{p^{1+\varepsilon}}.
\end{equation}
Next, to estimate the sum
\begin{equation*}
\sum_{ 0\leq m<n<\infty  } \frac{\tau_A(p^{m}) \tau_B(p^{n}) \left( 1-\frac{p^w}{p^{2}}  \right)}{ p^{m(\frac{1}{2}+s_1 -z)} p^{n(\frac{1}{2}+s_2+z)} },
\end{equation*}
we separate it into three parts: the term with $m=0$ and $n=1$, the sum of the terms with $m=0$ and $n\geq 2$, and the sum of the terms with $m\geq 1$. The part with $m=0$ and $n\geq 2$ is at most $O(p^{-1-\varepsilon})$ by (i) and (iii). To bound the part with $m\geq 1$, we evaluate the $n$-sum first and use (i), (iii), and (iv) to write
\begin{equation*}
\begin{split}
\sum_{ 1\leq m<n<\infty  } \frac{\tau_A(p^{m}) \tau_B(p^{n}) \left( 1-\frac{p^w}{p^{2}}  \right)}{ p^{m(\frac{1}{2}+s_1 -z)} p^{n(\frac{1}{2}+s_2+z)} }
& \ll \sum_{m=1}^{\infty} \frac{p^{(m+1)\varepsilon}}{p^{m(\re(\frac{1}{2}+s_1 -z))} p^{(m+1)(\re(\frac{1}{2}+s_2+z))} }\\
& = \sum_{m=1}^{\infty} \frac{p^{(m+1)\varepsilon}}{p^{m(\re(1+s_1 +s_2))+\re(\frac{1}{2}+s_2+z)} } \ll \frac{1}{p^{\frac{3}{2}+\varepsilon}}.
\end{split}
\end{equation*}
We thus arrive at
\begin{equation*}
\sum_{ 0\leq m<n<\infty  } \frac{\tau_A(p^{m}) \tau_B(p^{n}) \left( 1-\frac{p^w}{p^{2}}  \right)}{ p^{m(\frac{1}{2}+s_1 -z)} p^{n(\frac{1}{2}+s_2+z)} }  = \frac{ \tau_B(p ) \left( 1-\frac{p^w}{p^{2}}  \right)}{   p^{ \frac{1}{2}+s_2+z } } +O\bigg( \frac{1}{p^{1+\varepsilon}}\bigg).
\end{equation*}
It follows from this and (vi) that
\begin{equation}\label{eqn: U2eulerbound7}
\sum_{ 0\leq m<n<\infty  } \frac{\tau_A(p^{m}) \tau_B(p^{n}) \left( 1-\frac{p^w}{p^{2}}  \right)}{ p^{m(\frac{1}{2}+s_1 -z)} p^{n(\frac{1}{2}+s_2+z)} } = \frac{ \tau_B(p )  }{   p^{ \frac{1}{2}+s_2+z } } +O\bigg( \frac{1}{p^{1+\varepsilon}}\bigg).
\end{equation}
A similar argument using (i), (ii), (iv), and (v) leads to
\begin{equation}\label{eqn: U2eulerbound8}
\sum_{ 0\leq n<m <\infty   } \frac{\tau_A(p^{m}) \tau_B(p^{n}) \left( 1-\frac{p^w}{p^{2}}  \right)}{ p^{m(-\frac{1}{2}+s_1+w-z)} p^{n(\frac{3}{2}+s_2-w+z)}  } = \frac{\tau_A(p)}{p^{ -\frac{1}{2}+s_1+w-z }} + O\bigg( \frac{1}{p^{1+\varepsilon}}\bigg).
\end{equation}
We next bound the sum
\begin{equation}\label{eqn: U2eulerboundfinite1}
\sum_{\substack{0\leq m,n<\infty \\ m+\ordp(h)= n + \ordp(k) } }\frac{\tau_A(p^{m}) \tau_B(p^{n}) \left( 1+\frac{p^w}{p^{2}(p-1)}-\frac{1}{p-1} \right)}{ p^{m(-\frac{1}{2}+s_1+w-z)} p^{n(\frac{1}{2}+s_2+z)} p^{(1-w) \min\{m+\ordp(h), n + \ordp(k) \} }}.
\end{equation}
For brevity, we denote $h_p:=\ordp(h)$ and $k_p:=\ordp(k)$ for the rest of this proof. We make the change of variable $m\mapsto \nu +k_p$ in \eqref{eqn: U2eulerboundfinite1}, so that $n=\nu+h_p$, to write \eqref{eqn: U2eulerboundfinite1} as
\begin{equation*}
\frac{1}{p^{k_p(\frac{1}{2}+s_1-z)} p^{h_p(\frac{3}{2}+s_2-w+z)} } \sum_{\nu=-\min\{h_p,k_p\} }^{\infty} \frac{\tau_A(p^{\nu+k_p}) \tau_B(p^{\nu+h_p}) \left( 1+\frac{p^w}{p^{2}(p-1)}-\frac{1}{p-1} \right)}{ p^{\nu(1+s_1+s_2)} }.
\end{equation*}
Hence, by (i) and (iv), we see that $\eqref{eqn: U2eulerboundfinite1}$ is at most
\begin{equation*}
\ll \frac{p^{\varepsilon h_p +\varepsilon k_p + \min\{h_p,k_p\} \re(1+s_1+s_2)} }{p^{k_p(\re(\frac{1}{2}+s_1-z))} p^{h_p(\re(\frac{3}{2}+s_2-w+z))} }.
\end{equation*}
The denominator of this bound is $\geq p^{\min\{h_p,k_p\} \re(\frac{1}{2}+s_1-z )} p^{\min\{h_p,k_p\} \re(\frac{3}{2}+s_2-w+z )}  $ by (v) and (vi). It follows that
\begin{equation}\label{eqn: U2eulerbound3}
\begin{split}
\sum_{\substack{0\leq m,n<\infty \\ m+\ordp(h)= n + \ordp(k) } }&\frac{\tau_A(p^{m}) \tau_B(p^{n}) \left( 1+\frac{p^w}{p^{2}(p-1)}-\frac{1}{p-1} \right)}{ p^{m(-\frac{1}{2}+s_1+w-z)} p^{n(\frac{1}{2}+s_2+z)} p^{(1-w) \min\{m+\ordp(h), n + \ordp(k) \} }}\\
&\hspace{.5in}\ll p^{\varepsilon h_p +\varepsilon k_p + \min\{h_p,k_p\} (\re(w)-1)}.
\end{split}
\end{equation}
Next, to bound the sum
\begin{equation*}
\sum_{\substack{0\leq m,n<\infty \\ m+\ordp(h)<  n + \ordp(k) } } \frac{\tau_A(p^{m}) \tau_B(p^{n}) \left( 1-\frac{p^w}{p^{2}}  \right)}{ p^{m(-\frac{1}{2}+s_1+w-z)} p^{n(\frac{1}{2}+s_2+z)} p^{(1-w) \min\{m+\ordp(h), n + \ordp(k) \} }},
\end{equation*}
we split it into the part with $m<k_p-h_p$ and the part with $m\geq k_p-h_p$ to deduce that
\begin{equation}\label{eqn: U2eulerboundfinite2}
\sum_{\substack{0\leq m,n<\infty \\ m+\ordp(h)<  n + \ordp(k) } } \frac{\tau_A(p^{m}) \tau_B(p^{n}) \left( 1-\frac{p^w}{p^{2}}  \right)}{ p^{m(-\frac{1}{2}+s_1+w-z)} p^{n(\frac{1}{2}+s_2+z)} p^{(1-w) \min\{m+\ordp(h), n + \ordp(k) \} }} =\Sigma_1 + \Sigma_2,
\end{equation}
where
\begin{equation}\label{eqn: U2eulerboundSigma1def}
\Sigma_1 : = p^{(w-1)  h_p  } \sum_{m=0 }^{k_p-h_p-1}\sum_{n=0}^{\infty} \frac{\tau_A(p^{m}) \tau_B(p^{n}) \left( 1-\frac{p^w}{p^{2}}  \right)}{ p^{m( \frac{1}{2}+s_1 -z)} p^{n(\frac{1}{2}+s_2+z)} }
\end{equation}
and
\begin{equation}\label{eqn: U2eulerboundSigma2def}
\Sigma_2 : = p^{(w-1)  h_p  } \sum_{m=  \max\{0,k_p-h_p\} }^{\infty}\sum_{n=m+h_p-k_p+1}^{\infty} \frac{\tau_A(p^{m}) \tau_B(p^{n}) \left( 1-\frac{p^w}{p^{2}}  \right)}{ p^{m( \frac{1}{2}+s_1 -z)} p^{n(\frac{1}{2}+s_2+z)} }.
\end{equation}
We use (i) to bound $p^w/p^2$ and apply (iii) to estimate the $n$-sums in \eqref{eqn: U2eulerboundSigma1def} and \eqref{eqn: U2eulerboundSigma2def} to see that
\begin{equation}\label{eqn: U2eulerboundSigma1bound1}
\Sigma_1 \ll p^{(\re(w)-1)  h_p  } \sum_{m=0 }^{k_p-h_p-1}  \frac{p^{m\varepsilon}}{ p^{m( \re(\frac{1}{2}+s_1 -z))} }
\end{equation}
and
\begin{equation}\label{eqn: U2eulerboundSigma2bound1}
\Sigma_2 \ll p^{(\re(w)-1)  h_p  } \sum_{m=  \max\{0,k_p-h_p\} }^{\infty} \frac{ 1 }{ p^{m( \re(1+s_1 +s_2)-\varepsilon)} p^{( h_p-k_p+1)(\re(\frac{1}{2}+s_2+z)-\varepsilon)} }.
\end{equation}
If $h_p\geq k_p$, then the $m$-sum on the right-hand side of \eqref{eqn: U2eulerboundSigma1bound1} is zero. Otherwise, it is $O(1)$ by (v). In either case, we have
\begin{equation}\label{eqn: U2eulerboundSigma1bound2}
\Sigma_1 \ll p^{(\re(w)-1)  \min\{h_p,k_p\}  }.
\end{equation}
If $h_p\geq k_p$, then the $m$-sum in \eqref{eqn: U2eulerboundSigma2bound1} starts at $m=0$ and thus (iii), (iv), and (vi) imply
\begin{equation*}
\Sigma_2 \ll \frac{ p^{(\re(w)-1)  h_p  }  }{ p^{( h_p-k_p+1)(\re(\frac{1}{2}+s_2+z)-\varepsilon)}  }= \frac{ p^{(\re(w)-1)  k_p  }    }{ p^{( h_p-k_p )(\re(\frac{3}{2}+s_2-w+z)-\varepsilon)} p^{ \re(\frac{1}{2}+s_2+z)-\varepsilon } } \ll \frac{ p^{(\re(w)-1)  k_p  } }{p^{\frac{1}{2}+\varepsilon}}
\end{equation*}
On the other hand, if $h_p< k_p$, then the $m$-sum in \eqref{eqn: U2eulerboundSigma2bound1} starts at $m=k_p-h_p$ and it follows from (iii), (iv), and (v) that
\begin{equation*}
\begin{split}
\Sigma_2
& \ll \frac{ p^{(\re(w)-1)  h_p  }  }{ p^{(k_p-h_p) (\re(1+s_1 +s_2)-\varepsilon) } p^{( h_p-k_p+1)(\re(\frac{1}{2}+s_2+z)-\varepsilon)}  } \\
& \leq  \frac{ p^{(\re(w)-1)  h_p  } p^{( k_p-h_p+1)\varepsilon}  }{ p^{(k_p-h_p) (\re(\frac{1}{2}+s_1 -z)) }  p^{ \re(\frac{1}{2}+s_2+z) }  } \leq \frac{p^{(\re(w)-1)  h_p  } p^{( k_p-h_p+1)\varepsilon}. }{p^{\frac{1}{2}+\varepsilon}}
\end{split}
\end{equation*}
In either case, we have
\begin{equation*}
\Sigma_2 \ll p^{(\re(w)-1)  \min\{h_p,k_p\}  + \varepsilon h_p +   \varepsilon k_p + \varepsilon -\frac{1}{2} } . 
\end{equation*}
From this, \eqref{eqn: U2eulerboundSigma1bound2}, and \eqref{eqn: U2eulerboundfinite2}, we arrive at
\begin{equation}\label{eqn: U2eulerbound4}
\begin{split}
\sum_{\substack{0\leq m,n<\infty \\ m+\ordp(h)<  n + \ordp(k) } } &\frac{\tau_A(p^{m}) \tau_B(p^{n}) \left( 1-\frac{p^w}{p^{2}}  \right)}{ p^{m(-\frac{1}{2}+s_1+w-z)} p^{n(\frac{1}{2}+s_2+z)} p^{(1-w) \min\{m+\ordp(h), n + \ordp(k) \} }}\\
&\ll p^{ \varepsilon h_p +   \varepsilon k_p + \varepsilon + (\re(w)-1)  \min\{h_p,k_p\}  }.
\end{split}
\end{equation}
A similar argument using (i), (ii), (iv), (v), and (vi) gives
\begin{equation}\label{eqn: U2eulerbound5}
\begin{split}
\sum_{\substack{0\leq m,n<\infty \\ m+\ordp(h) >   n + \ordp(k) } } &\frac{\tau_A(p^{m}) \tau_B(p^{n}) \left( 1-\frac{p^w}{p^{2}}  \right)}{ p^{m(-\frac{1}{2}+s_1+w-z)} p^{n(\frac{1}{2}+s_2+z)} p^{(1-w) \min\{m+\ordp(h), n + \ordp(k) \} }}\\
&\hspace{.25in}\ll p^{ \varepsilon h_p +   \varepsilon k_p + \varepsilon + (\re(w)-1)  \min\{h_p,k_p\}  }.
\end{split}
\end{equation}
From \eqref{eqn: U2eulerbound1}, \eqref{eqn: U2eulerbound2}, (ii), and (iii), we see that if $p|c$ then the local factor in \eqref{eqn: U2eulerPdef} corresponding to $p$ is $O(1)$. Moreover, from \eqref{eqn: U2eulerbound2}, (ii), (iii), \eqref{eqn: U2eulerbound3}, \eqref{eqn: U2eulerbound4}, and \eqref{eqn: U2eulerbound5}, we deduce that if $p\nmid c$ and $p|hk$ then the local factor in \eqref{eqn: U2eulerPdef} corresponding to $p$ is
$$
\ll p^{(\re(w)-1)  \min\{ \ordp(h),\ordp(k)\}  + \varepsilon \ordp(h) +   \varepsilon \ordp(k) + \varepsilon }.
$$
Finally, from \eqref{eqn: U2eulerbound1}, \eqref{eqn: U2eulerbound2}, \eqref{eqn: U2eulerbound6}, \eqref{eqn: U2eulerbound7}, and \eqref{eqn: U2eulerbound8}, we see that if $p\nmid chk$ then the local factor in \eqref{eqn: U2eulerPdef} corresponding to $p$ is $1+O(p^{-1-\varepsilon})$. We conclude that the right-hand side of \eqref{eqn: U2eulerPdef} converges absolutely, and is
$$
\ll (chk)^{\varepsilon} (h,k)^{\re(w)-1}
$$
because $c$ and $hk$ are coprime, $(h,k)=\prod_{p|hk} p^{  \min\{ \ordp(h),\ordp(k)\}}$, and $\prod_{p|\nu}O(1) \ll {\nu}^{\varepsilon}$ for any positive integer $\nu$.
\end{proof}

We move the $s_2$-line in \eqref{eqn: U2analysis0} to $\re(s_2)=\epsilon$. This leaves a residue from the pole at $s_2=\frac{1}{2}-\beta-z$ for each $\beta\in B$ because of the factors \eqref{eqn: U2factoroutzetas}. Note that we need to assume the Lindel\"{o}f Hypothesis to maintain the absolute convergence of the $z$-integral, as there is an arbitrary number of zeta-functions that depend on $z$ and $\mathcal{H}(z,w-1)$ only decays slowly by \eqref{eqn: Hbound}. The result is
\begin{equation}\label{eqn: U2split}
\mathcal{U}^2(h,k) = I_1 + I_2 + O\left(X^{\varepsilon}Q^{1+\varepsilon}\frac{(hk)^\varepsilon(h,k)}{(hk)^{1/2}} + (XChk)^{\varepsilon} k X^2 Q^{-97}\right),
\end{equation}
where $I_1$ is the integral of the residues at the poles $s_2=\frac{1}{2}-\beta-z$ and $I_2$ is the new integral with $\re(s_2)=\epsilon$. More precisely,
\begin{equation}\label{eqn: U2I1def}
\begin{split}
I_1 &:= \sum_{\beta\in B} \sum_{\substack{1\leq c \leq C  \\ (c ,hk)=1 }}     \frac{\mu(c)}{2(2\pi i)^3 } \int_{(2)}  \int_{(1+\epsilon)}\int_{(\epsilon/2)} X^{s_1+\frac{1}{2}-\beta-z}Q^{w} c^{ -w} \\
&\hspace{.25in} \times \widetilde{V}(s_1)\widetilde{V}(\tfrac{1}{2}-\beta-z) \widetilde{W}(w)  \mathcal{H}(z,w-1) \frac{e^{\delta z} - e^{-\delta z}}{2\delta z}\\
&\hspace{.25in}\times \zeta(2-w) \prod_{\alpha\in A} \zeta(-\tfrac{1}{2}+\alpha+s_1+w-z)\prod_{\hat{\beta}\neq \beta} \zeta(1+\hat{\beta}-\beta )\\
&\hspace{.25in}\times h^{1-w+z}k^{-z} \mathcal{P}(s_1,\tfrac{1}{2}-\beta-z,w,z) \,dz  \,dw  \,ds_1
\end{split}
\end{equation}
and
\begin{equation}\label{eqn: U2I2def}
\begin{split}
I_2 &:=  \sum_{\substack{1\leq c \leq C  \\ (c ,hk)=1 }}     \frac{\mu(c)}{2(2\pi i)^4 } \int_{(2)} \int_{(\epsilon)} \int_{(1+\epsilon)}\int_{(\epsilon/2)} X^{s_1+s_2}Q^{w} c^{ -w} \\
 &\hspace{.25in}\times \widetilde{V}(s_1)\widetilde{V}(s_2) \widetilde{W}(w)  \mathcal{H}(z,w-1) \frac{e^{\delta z} - e^{-\delta z}}{2\delta z}\\
&\hspace{.25in}\times \zeta(2-w) \prod_{\alpha\in A} \zeta(-\tfrac{1}{2}+\alpha+s_1+w-z)\prod_{\beta\in B} \zeta(\tfrac{1}{2}+\beta+s_2+z)\\
&\hspace{.25in}\times h^{1-w+z}k^{-z} \mathcal{P}(s_1,s_2,w,z) \,dz  \,dw \,ds_2\,ds_1.
\end{split}
\end{equation}

We first bound $I_2$. We move the $s_1$-line in \eqref{eqn: U2I2def} to $\re(s_1)=\epsilon$ to deduce that
\begin{equation}\label{eqn: U2I2split}
I_2 = I_{21} + I_{22},
\end{equation}
where $I_{21}$ is the integral of the residues at the poles $s_1=\frac{3}{2}-\alpha-w+z$, where $\alpha$ runs through the elements of $A$, and $I_{22}$ is the new integral with $\re(s_1)=\epsilon$. In other words,
\begin{equation}\label{eqn: U2I21def}
\begin{split}
I_{21} &:=  \sum_{\alpha\in A} \sum_{\substack{1\leq c \leq C  \\ (c ,hk)=1 }}     \frac{\mu(c)}{2(2\pi i)^3 } \int_{(\epsilon)} \int_{(1+\epsilon)}\int_{(\epsilon/2)} X^{\frac{3}{2}-\alpha-w+z +s_2}Q^{w} c^{ -w} \\
&\hspace{.25in} \times \widetilde{V}(\tfrac{3}{2}-\alpha-w+z)\widetilde{V}(s_2) \widetilde{W}(w)  \mathcal{H}(z,w-1) \frac{e^{\delta z} - e^{-\delta z}}{2\delta z}\\
&\hspace{.25in}\times \zeta(2-w) \prod_{\hat{\alpha}\neq \alpha} \zeta(1+\hat{\alpha}-\alpha)\prod_{\beta\in B} \zeta(\tfrac{1}{2}+\beta+s_2+z)\\
&\hspace{.25in}\times h^{1-w+z}k^{-z} \mathcal{P}(\tfrac{3}{2}-\alpha-w+z,s_2,w,z) \,dz  \,dw \,ds_2
\end{split}
\end{equation}
and
\begin{align}
I_{22} &:=  \sum_{\substack{1\leq c \leq C  \\ (c ,hk)=1 }}     \frac{\mu(c)}{2(2\pi i)^4 } \int_{(\epsilon)} \int_{(\epsilon)} \int_{(1+\epsilon)}\int_{(\epsilon/2)} X^{s_1+s_2}Q^{w} c^{ -w} \notag\\
&\hspace{.25in} \times \widetilde{V}(s_1)\widetilde{V}(s_2) \widetilde{W}(w)  \mathcal{H}(z,w-1) \frac{e^{\delta z} - e^{-\delta z}}{2\delta z} \notag\\
&\hspace{.25in}\times \zeta(2-w) \prod_{\alpha\in A} \zeta(-\tfrac{1}{2}+\alpha+s_1+w-z)\prod_{\beta\in B} \zeta(\tfrac{1}{2}+\beta+s_2+z) \notag\\
&\hspace{.25in}\times h^{1-w+z}k^{-z} \mathcal{P}(s_1,s_2,w,z) \,dz  \,dw \,ds_2\,ds_1.\label{eqn: U2I22def}
\end{align}
Note that we need to assume the Lindel\"{o}f Hypothesis to justify \eqref{eqn: U2I2split} like we did to validate \eqref{eqn: U2split}. To estimate $I_{22}$, we again assume the Lindel\"{o}f Hypothesis in order to bound the arbitrary number of zeta-functions in \eqref{eqn: U2I22def} that depend on the variable $z$. We apply \eqref{eqn: mellinrapiddecay}, \eqref{eqn: Hbound}, and Lemma~\ref{lem: U2eulerbound}, and argue as in  \eqref{eqn: zwbound} to deduce from \eqref{eqn: U2I22def} and the definition \eqref{eqn: deltadef} of $\delta$ that
\begin{equation}\label{eqn: U2I22bound}
I_{22} \ll X^{\varepsilon} Q^{1+\varepsilon} h^{\varepsilon}k^{\varepsilon}.
\end{equation}
Next, to bound $I_{21}$, we move the $w$-line in \eqref{eqn: U2I21def} to $\re(w)= \frac{3}{2}-\epsilon$. We traverse no poles in doing so. We then bound the resulting expression by applying \eqref{eqn: zetaalphaalphabound}, \eqref{eqn: mellinrapiddecay}, \eqref{eqn: Hbound}, and Lemma~\ref{lem: U2eulerbound}. The result is
\begin{equation*}
I_{21} \ll X^{\varepsilon} Q^{\frac{3}{2}} h^{\varepsilon}k^{\varepsilon}.
\end{equation*}
From this, \eqref{eqn: U2I22bound}, and \eqref{eqn: U2I2split}, we arrive at
\begin{equation}\label{eqn: U2I2bound}
I_{2} \ll X^{\varepsilon} Q^{\frac{3}{2}} h^{\varepsilon}k^{\varepsilon}.
\end{equation}

Having bounded $I_2$, we now turn our attention to the integral $I_1$ defined by \eqref{eqn: U2I1def}. We move the $s_1$-line in \eqref{eqn: U2I1def} to $\re(s_1)=\epsilon$. This leaves a residue from the pole at $s_1=\frac{3}{2}-\alpha-w+z$ for each $\alpha\in A$, and leads to
\begin{equation}\label{eqn: U2I1split}
I_1=I_{11}+I_{12},
\end{equation}
where
\begin{equation}\label{eqn: U2I11def}
\begin{split}
I_{11} &:= \sum_{\substack{\alpha\in A \\ \beta\in B}} \sum_{\substack{1\leq c \leq C  \\ (c ,hk)=1 }}     \frac{\mu(c)}{2(2\pi i)^2 }  \int_{(1+\epsilon)}\int_{(\epsilon/2)} X^{2-\alpha-\beta-w}Q^{w} c^{ -w} \\
&\hspace{.25in} \times \widetilde{V}(\tfrac{3}{2}-\alpha-w+z)\widetilde{V}(\tfrac{1}{2}-\beta-z) \widetilde{W}(w)  \mathcal{H}(z,w-1) \frac{e^{\delta z} - e^{-\delta z}}{2\delta z}\\
&\hspace{.25in}\times \zeta(2-w)  \prod_{\hat{\alpha}\neq \alpha} \zeta(1+\hat{\alpha}-\alpha)\prod_{\hat{\beta}\neq \beta} \zeta(1+\hat{\beta}-\beta )\\
&\hspace{.25in}\times h^{1-w+z}k^{-z} \mathcal{P}(\tfrac{3}{2}-\alpha-w+z,\tfrac{1}{2}-\beta-z,w,z) \,dz  \,dw 
\end{split}
\end{equation}
and
\begin{equation}\label{eqn: U2I12def}
\begin{split}
I_{12} &:= \sum_{\beta\in B} \sum_{\substack{1\leq c \leq C  \\ (c ,hk)=1 }}     \frac{\mu(c)}{2(2\pi i)^3 } \int_{(\epsilon)}  \int_{(1+\epsilon)}\int_{(\epsilon/2)} X^{s_1+\frac{1}{2}-\beta-z}Q^{w} c^{ -w} \\
 &\hspace{.25in}\times \widetilde{V}(s_1)\widetilde{V}(\tfrac{1}{2}-\beta-z) \widetilde{W}(w)  \mathcal{H}(z,w-1) \frac{e^{\delta z} - e^{-\delta z}}{2\delta z}\\
&\hspace{.25in}\times \zeta(2-w)  \prod_{\alpha\in A} \zeta(-\tfrac{1}{2}+\alpha+s_1+w-z)\prod_{\hat{\beta}\neq \beta} \zeta(1+\hat{\beta}-\beta )\\
&\hspace{.25in}\times h^{1-w+z}k^{-z} \mathcal{P}(s_1,\tfrac{1}{2}-\beta-z,w,z) \,dz  \,dw  \,ds_1.
\end{split}
\end{equation}

To bound $I_{12}$, we move the $w$-line in \eqref{eqn: U2I12def} to the right by a distance of at most $\epsilon/2$, and then move the $z$-line to the right by a distance of at most $\epsilon/2$. We do this in such a way as to maintain the inequality $1+\frac{\epsilon}{2} \leq \re(w-z) \leq 1+\epsilon$, so as to not traverse any pole of $\mathcal{H}(z,w-1)$. We repeat this process until the $w$-line is at $\re(w)=\frac{3}{2}-\epsilon$ and the $z$-line is at $\frac{1}{2}-\frac{3\epsilon}{2}$. This leaves no residues because we do not cross any poles of the integrand. We then bound the resulting integral by applying \eqref{eqn: zetaalphaalphabound}, \eqref{eqn: mellinrapiddecay}, \eqref{eqn: Hbound}, and Lemma~\ref{lem: U2eulerbound}. We arrive at
\begin{equation}\label{eqn: U2I12bound}
I_{12} \ll X^{\varepsilon} Q^{\frac{3}{2}} h^{\varepsilon}k^{\varepsilon}.
\end{equation}

To estimate the integral $I_{11}$ defined by \eqref{eqn: U2I11def}, our first task is to extend the $c$-sum in \eqref{eqn: U2I11def} to infinity. To do this, we need to bound the sum
\begin{equation}\label{eqn: U2E11def}
\begin{split}
\sum_{\substack{\alpha\in A \\ \beta\in B}} \sum_{\substack{c>C  \\ (c ,hk)=1 }}&     \frac{\mu(c)}{2(2\pi i)^2 }  \int_{(1+\epsilon)}\int_{(\epsilon/2)} X^{2-\alpha-\beta-w}Q^{w} c^{ -w} \\
&\hspace{.25in} \times \widetilde{V}(\tfrac{3}{2}-\alpha-w+z)\widetilde{V}(\tfrac{1}{2}-\beta-z) \widetilde{W}(w)  \mathcal{H}(z,w-1) \frac{e^{\delta z} - e^{-\delta z}}{2\delta z}\\
&\hspace{.25in}\times \zeta(2-w)  \prod_{\hat{\alpha}\neq \alpha} \zeta(1+\hat{\alpha}-\alpha)\prod_{\hat{\beta}\neq \beta} \zeta(1+\hat{\beta}-\beta )\\
&\hspace{.25in}\times h^{1-w+z}k^{-z} \mathcal{P}(\tfrac{3}{2}-\alpha-w+z,\tfrac{1}{2}-\beta-z,w,z) \,dz  \,dw.
\end{split}
\end{equation}
We first move the $w$-line in \eqref{eqn: U2E11def} to $\re(w)=\frac{3}{2}$, crossing no poles. Then, we move the $z$-line to $\re(z)=\frac{1}{2}-\epsilon$, again traversing no poles. Afterward, we further move the $w$-line to $\re(w)=2-2\epsilon$. This does not cross any poles since now $\re(z)=\frac{1}{2}-\epsilon$. We bound the new integral that has $\re(w)=2-2\epsilon$ and $\re(z)=\frac{1}{2}-\epsilon$ using \eqref{eqn: zetaalphaalphabound}, \eqref{eqn: mellinrapiddecay}, \eqref{eqn: Hbound}, and Lemma~\ref{lem: U2eulerbound}, and deduce that \eqref{eqn: U2E11def} is at most
\begin{equation*}
\ll \frac{ (XChk)^{\varepsilon} Q^{2} (h,k) }{C\sqrt{hk}}.
\end{equation*}
It follows from this and \eqref{eqn: U2I11def} that
\begin{equation}\label{eqn: I11toR0}
I_{11} = R_0 + O\bigg(\frac{ (XChk)^{\varepsilon} Q^{2} (h,k) }{C\sqrt{hk}}\bigg) ,
\end{equation}
where $R_0$ is defined by
\begin{equation}\label{eqn: R0def}
\begin{split}
R_0 &:= \sum_{\substack{\alpha\in A \\ \beta\in B}} \sum_{\substack{c\geq 1  \\ (c ,hk)=1 }}     \frac{\mu(c)}{2(2\pi i)^2 }  \int_{(1+\epsilon)}\int_{(\epsilon/2)} X^{2-\alpha-\beta-w}Q^{w} c^{ -w} \\
 &\hspace{.25in}\times \widetilde{V}(\tfrac{3}{2}-\alpha-w+z)\widetilde{V}(\tfrac{1}{2}-\beta-z) \widetilde{W}(w)  \mathcal{H}(z,w-1) \frac{e^{\delta z} - e^{-\delta z}}{2\delta z}\\
&\hspace{.25in}\times \zeta(2-w)  \prod_{\hat{\alpha}\neq \alpha} \zeta(1+\hat{\alpha}-\alpha)\prod_{\hat{\beta}\neq \beta} \zeta(1+\hat{\beta}-\beta )\\
&\hspace{.25in}\times h^{1-w+z}k^{-z} \mathcal{P}(\tfrac{3}{2}-\alpha-w+z,\tfrac{1}{2}-\beta-z,w,z) \,dz  \,dw.
\end{split}
\end{equation}
We may evaluate the sum of $\mu(c)c^{ -w} \mathcal{P}(\tfrac{3}{2}-\alpha-w+z,\tfrac{1}{2}-\beta-z,w,z)$ over all $c\geq 1$ with $(c ,hk)=1$ by using the definition \eqref{eqn: U2eulerPdef} of $\mathcal{P}$ and Lemma~\ref{lem: sumstoEulerproducts}, with absolute convergence ensured by Lemma~\ref{lem: U2eulerbound}. This and \eqref{eqn: R0def} lead to
\begin{equation}\label{eqn: R0factored}
\begin{split}
R_0 &= \sum_{\substack{\alpha\in A \\ \beta\in B}}  \frac{1}{2(2\pi i)^2 }  \int_{(1+\epsilon)}\int_{(\epsilon/2)} X^{2-\alpha-\beta-w}Q^{w} \widetilde{V}(\tfrac{3}{2}-\alpha-w+z)\widetilde{V}(\tfrac{1}{2}-\beta-z) \widetilde{W}(w)  \mathcal{H}(z,w-1)\\
&\hspace{.25in}\times\frac{e^{\delta z} - e^{-\delta z}}{2\delta z}\zeta(2-w)  \prod_{\hat{\alpha}\neq \alpha} \zeta(1+\hat{\alpha}-\alpha)\prod_{\hat{\beta}\neq \beta} \zeta(1+\hat{\beta}-\beta )\\
&\hspace{.25in}\times h^{1-w+z}k^{-z}  \prod_{ p|hk } \Bigg\{  \prod_{\hat{\alpha}\in A}\left(1-\frac{1}{p^{1+\hat{\alpha}-\alpha}} \right) \prod_{\hat{\beta}\in B} \left(1-\frac{1}{p^{1+\hat{\beta}-\beta}} \right) \\
&\hspace{.5in}\times \Bigg( \sum_{\substack{0\leq m,n<\infty \\ m+\ordp(h)= n + \ordp(k) } }\frac{\tau_A(p^{m}) \tau_B(p^{n}) \left( 1+\frac{p^w}{p^{2}(p-1)}-\frac{1}{p-1} \right)}{ p^{m(1-\alpha)} p^{n(1-\beta)} p^{(1-w) \min\{m+\ordp(h), n + \ordp(k) \} }}   \\
&\hspace{.5in}+ \sum_{\substack{0\leq m,n<\infty \\ m+\ordp(h)\neq  n + \ordp(k) } } \frac{\tau_A(p^{m}) \tau_B(p^{n}) \left( 1-\frac{p^w}{p^{2}}  \right)}{ p^{m(1-\alpha)} p^{n(1-\beta)} p^{(1-w) \min\{m+\ordp(h), n + \ordp(k) \} }} \Bigg)\Bigg\} \\
&\hspace{.25in}\times \prod_{  p\nmid hk } \Bigg\{  \prod_{\hat{\alpha}\in A}\left(1-\frac{1}{p^{1+\hat{\alpha}-\alpha}} \right) \prod_{\hat{\beta}\in B} \left(1-\frac{1}{p^{1+\hat{\beta}-\beta}} \right)  \\ 
&\hspace{.5in}\times \Bigg( \left(1-\frac{1}{p^w}\right)\left( 1+ \frac{p^{w-1}-1}{p(p-1)} \right)  + \sum_{m=1}^{\infty} \frac{\tau_A(p^{m}) \tau_B(p^{m}) \left( 1+\frac{p^w}{p^{2}(p-1)}-\frac{1}{p-1} \right)}{ p^{m(3-\alpha-\beta-w  )} }   \\
&\hspace{.5in}+ \sum_{ 0\leq m<n<\infty  } \frac{\tau_A(p^{m}) \tau_B(p^{n}) \left( 1-\frac{p^w}{p^{2}}  \right)}{ p^{m(2-\alpha-w)} p^{n(1-\beta)} } + \sum_{ 0\leq n<m <\infty   } \frac{\tau_A(p^{m}) \tau_B(p^{n}) \left( 1-\frac{p^w}{p^{2}}  \right)}{ p^{m(1-\alpha)} p^{n(2-\beta-w)}  } \Bigg)\Bigg\}\,dz  \,dw .
\end{split}
\end{equation}
From \eqref{eqn: U2I1split}, \eqref{eqn: U2I12bound}, \eqref{eqn: I11toR0}, we deduce that
\begin{equation}\label{eqn: I1toR0}
I_1 = R_0 + O\bigg(\frac{ (XChk)^{\varepsilon} Q^{2} (h,k) }{C\sqrt{hk}}\bigg) + O\Big( X^{\varepsilon} Q^{\frac{3}{2}} h^{\varepsilon}k^{\varepsilon} \Big),
\end{equation}
where $R_0$ is expressed as a finite sum of contour integrals in \eqref{eqn: R0factored}.

To be able to shift the contours and evaluate residues, we analytically continue the integrand in \eqref{eqn: R0factored} by multiplying it by
\begin{equation}\label{eqn: U2I11factoroutzetas}
\prod_{\substack{ \hat{\alpha}\neq \alpha \\ \hat{\beta}\neq \beta}} \zeta( 3+\hat{\alpha}+\hat{\beta} -\alpha-\beta-w)
\end{equation}
and dividing it by the Euler product of \eqref{eqn: U2I11factoroutzetas}. The result is
\begin{align}
R_0 &= \sum_{\substack{\alpha\in A \\ \beta\in B}}  \frac{1}{2(2\pi i)^2 }  \int_{(1+\epsilon)}\int_{(\epsilon/2)} X^{2-\alpha-\beta-w}Q^{w} \notag\\
&\hspace{.25in} \times \widetilde{V}(\tfrac{3}{2}-\alpha-w+z)\widetilde{V}(\tfrac{1}{2}-\beta-z) \widetilde{W}(w)  \mathcal{H}(z,w-1) \frac{e^{\delta z} - e^{-\delta z}}{2\delta z} \notag\\
&\hspace{.25in}\times \zeta(2-w)  \prod_{\substack{ \hat{\alpha}\neq \alpha \\ \hat{\beta}\neq \beta}} \zeta( 3+\hat{\alpha}+\hat{\beta} -\alpha-\beta-w) \prod_{\hat{\alpha}\neq \alpha} \zeta(1+\hat{\alpha}-\alpha)\prod_{\hat{\beta}\neq \beta} \zeta(1+\hat{\beta}-\beta ) \notag\\
&\hspace{.25in}\times h^{1-w+z}k^{-z}  \mathcal{G} (w,\alpha,\beta)\,dz  \,dw, \label{eqn: R0def2}
\end{align}
where $\mathcal{G} (w,\alpha,\beta)$ is defined by
\begin{equation}\label{eqn: R0eulerGdef}
\begin{split}
\mathcal{G}(w
& ,\alpha,\beta) = \mathcal{G}(w,\alpha,\beta; A,B,h,k)\\
&:= \prod_{ p|hk } \Bigg\{ \prod_{\substack{ \hat{\alpha}\neq \alpha \\ \hat{\beta}\neq \beta}}\left(1-\frac{1}{p^{3+\hat{\alpha}+\hat{\beta} -\alpha-\beta-w}} \right) \prod_{\hat{\alpha}\in A}\left(1-\frac{1}{p^{1+\hat{\alpha}-\alpha}} \right) \prod_{\hat{\beta}\in B} \left(1-\frac{1}{p^{1+\hat{\beta}-\beta}} \right) \\
&\hspace{.75in}\times \Bigg( \sum_{\substack{0\leq m,n<\infty \\ m+\ordp(h)= n + \ordp(k) } }\frac{\tau_A(p^{m}) \tau_B(p^{n}) \left( 1+\frac{p^w}{p^{2}(p-1)}-\frac{1}{p-1} \right)}{ p^{m(1-\alpha)} p^{n(1-\beta)} p^{(1-w) \min\{m+\ordp(h), n + \ordp(k) \} }}   \\
&\hspace{.75in}+ \sum_{\substack{0\leq m,n<\infty \\ m+\ordp(h)\neq  n + \ordp(k) } } \frac{\tau_A(p^{m}) \tau_B(p^{n}) \left( 1-\frac{p^w}{p^{2}}  \right)}{ p^{m(1-\alpha)} p^{n(1-\beta)} p^{(1-w) \min\{m+\ordp(h), n + \ordp(k) \} }} \Bigg)\Bigg\} \\
&\hspace{.5in}\times \prod_{ p\nmid hk } \Bigg\{ \prod_{\substack{ \hat{\alpha}\neq \alpha \\ \hat{\beta}\neq \beta}}\left(1-\frac{1}{p^{3+\hat{\alpha}+\hat{\beta} -\alpha-\beta-w}} \right) \prod_{\hat{\alpha}\in A}\left(1-\frac{1}{p^{1+\hat{\alpha}-\alpha}} \right) \prod_{\hat{\beta}\in B} \left(1-\frac{1}{p^{1+\hat{\beta}-\beta}} \right)  \\ 
&\hspace{.75in}\times \Bigg(  \left( 1-\frac{1}{p^w}\right)\left(1+ \frac{p^{w-1}-1}{p(p-1)}\right)   + \sum_{m=1}^{\infty} \frac{\tau_A(p^{m}) \tau_B(p^{m}) \left( 1+\frac{p^w}{p^{2}(p-1)}-\frac{1}{p-1} \right)}{ p^{m(3-\alpha-\beta-w  )} }   \\
&\hspace{.75in}+ \sum_{ 0\leq m<n<\infty  } \frac{\tau_A(p^{m}) \tau_B(p^{n}) \left( 1-\frac{p^w}{p^{2}}  \right)}{ p^{m(2-\alpha-w)} p^{n(1-\beta)} } + \sum_{ 0\leq n<m <\infty   } \frac{\tau_A(p^{m}) \tau_B(p^{n}) \left( 1-\frac{p^w}{p^{2}}  \right)}{ p^{m(1-\alpha)} p^{n(2-\beta-w)}  } \Bigg)\Bigg\}.
\end{split}
\end{equation}

We next prove the following lemma, which we will use to justify moving the lines of integration and bound some of the integrals that remain after applying the residue theorem.
\begin{lemma}\label{lem: R0eulerbound}
Suppose that $\epsilon>0$ is arbitrarily small. Let $\alpha\in A$ and $\beta\in B$, and let $h$ and $k$ be positive integers. If $w$ is a complex number such that
$$
1+\epsilon \leq \re(w) \leq \frac{5}{2}-\epsilon,
$$
then the product \eqref{eqn: R0eulerGdef} defining $\mathcal{G}(w,\alpha,\beta; A,B,h,k)$ converges absolutely and we have
$$
\mathcal{G}(w,\alpha,\beta; A,B,h,k) \ll_{\varepsilon} h^{\frac{1}{2}+\varepsilon} k^{\frac{1}{2}+\varepsilon} (h,k)^{\frac{1}{2}+\varepsilon}.
$$
\end{lemma}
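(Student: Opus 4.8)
The plan is to estimate the local factor of the Euler product \eqref{eqn: R0eulerGdef} at each prime $p$, treating separately the ``good'' primes $p\nmid hk$ and the ``bad'' primes $p\mid hk$, and then to multiply the resulting bounds over all $p$. Throughout I would use the bounds $\tau_A(p^m)\ll_{\varepsilon}p^{m\varepsilon}$ and $\tau_B(p^n)\ll_{\varepsilon}p^{n\varepsilon}$, which follow from \eqref{eqn: divisorbound} and the hypothesis $\alpha,\beta\ll 1/\log Q$. I would also record that, for $Q$ large, the hypothesis $1+\epsilon\leq\re(w)\leq\frac{5}{2}-\epsilon$ guarantees $p^{-w}\ll p^{-1-\epsilon}$, $p^{w-2}\ll p^{\frac12-\epsilon}$, $p^{w-3}\ll p^{-\frac12-\epsilon}$, and that every exponent of the form $3+\hat\alpha+\hat\beta-\alpha-\beta-w$, $3+\hat\alpha-\alpha-w$, or $3+\hat\beta-\beta-w$ has real part at least $\frac12+\epsilon/2$.

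For a good prime $p\nmid hk$, I would expand the three finite products $\prod_{\hat\alpha\neq\alpha,\hat\beta\neq\beta}(1-p^{-(3+\hat\alpha+\hat\beta-\alpha-\beta-w)})$, $\prod_{\hat\alpha\in A}(1-p^{-(1+\hat\alpha-\alpha)})$, $\prod_{\hat\beta\in B}(1-p^{-(1+\hat\beta-\beta)})$ to first order and expand the bracket, using the $\tau$-bounds to discard all tails. One finds, writing $\tau_A(p)=\sum_{\hat\alpha\in A}p^{-\hat\alpha}$, $\tau_B(p)=\sum_{\hat\beta\in B}p^{-\hat\beta}$ and using $(1-p^{-w})(1+\tfrac{p^{w-1}-1}{p(p-1)})=1+p^{w-3}+O(p^{-1-\epsilon})$, that the bracket equals $1+p^{w-3}+\sum_{\hat\alpha\in A,\hat\beta\in B}p^{-(3+\hat\alpha+\hat\beta-\alpha-\beta-w)}+\sum_{\hat\alpha\in A}p^{-(1+\hat\alpha-\alpha)}-\sum_{\hat\alpha\in A}p^{-(3+\hat\alpha-\alpha-w)}+\sum_{\hat\beta\in B}p^{-(1+\hat\beta-\beta)}-\sum_{\hat\beta\in B}p^{-(3+\hat\beta-\beta-w)}+O(p^{-1-\epsilon})$. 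Multiplying by the first-order expansion of the prefactor and noting that all cross terms are $O(p^{-1-\epsilon})$, a short bookkeeping shows that every surviving term of size $\asymp p^{-1}$ or $\ll p^{-1/2}$ cancels identically. (This cancellation is exactly what is encoded by the zeta factors $\zeta(3+\hat\alpha+\hat\beta-\alpha-\beta-w)$, $\zeta(1+\hat\alpha-\alpha)$, $\zeta(1+\hat\beta-\beta)$ appearing in \eqref{eqn: R0def2}, and it parallels the cancellations in the CFKRS recipe and in the proof of Lemma~\ref{lem: 1swapeulerbound}.) Hence the local factor at a good prime is $1+O(p^{-1-\epsilon})$, so $\prod_{p\nmid hk}(\text{local factor})$ converges absolutely and is $\ll_{\varepsilon}1$.

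For a bad prime $p\mid hk$, write $h_p:=\ordp(h)$ and $k_p:=\ordp(k)$. The finite prefactor products are $O(1)$ here, so the local factor is bounded by the modulus of the remaining bracket, which I would bound by splitting it into the diagonal part $m+h_p=n+k_p$ and the two off-diagonal parts $m+h_p\lessgtr n+k_p$, much as in the proof of Lemma~\ref{lem: U2eulerbound}. The essential new feature is that $p^{-(1-w)\min\{m+h_p,\,n+k_p\}}$ now contributes growth as large as $p^{(3/2)\min\{m+h_p,\,n+k_p\}}$, and the conditions (v),(vi) of Lemma~\ref{lem: U2eulerbound} that previously tamed it no longer hold throughout $1+\epsilon\leq\re(w)\leq\frac52-\epsilon$. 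Nevertheless, the exponents of $p^m$ and $p^n$ in the denominators are $1-\alpha$ and $1-\beta$, so the decay $p^{-m(1-\alpha)-n(1-\beta)}$ (with $\re(\alpha)+\re(\beta)\ll 1/\log Q\ll\frac12$) makes every sum converge, and a case analysis — splitting the off-diagonal sums further according to whether $m$ lies below or above $|h_p-k_p|$, and combining the factors $p^{w-2}$ and $p^{-(1-w)(m+h_p)}$ rather than bounding them separately — yields, for each bad prime, the bound $\ll_{\varepsilon}p^{\varepsilon(h_p+k_p)+\min\{h_p,k_p\}+\frac12\max\{h_p,k_p\}}$. Multiplying over $p\mid hk$ and using $(h,k)=\prod_{p\mid hk}p^{\min\{h_p,k_p\}}$ and $hk/(h,k)=\prod_{p\mid hk}p^{\max\{h_p,k_p\}}$ gives $\prod_{p\mid hk}(\text{local factor})\ll_{\varepsilon}(hk)^{\varepsilon}\,(h,k)\,(hk/(h,k))^{1/2}=(hk)^{\varepsilon}(h,k)^{1/2}(hk)^{1/2}\ll_{\varepsilon}h^{\frac12+\varepsilon}k^{\frac12+\varepsilon}(h,k)^{\frac12+\varepsilon}$; combining with the good-prime bound finishes the proof.

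\textbf{The main obstacle.} I expect the bad-prime estimate to be the delicate point: no single step is hard, but one must carry out the $(m,n)$ case analysis with the weaker-than-usual inputs and track the exact exponents of $p$ so as to land precisely on $\min\{h_p,k_p\}+\tfrac12\max\{h_p,k_p\}$ rather than on something larger; in particular the apparent extra factor $p^{1/2}$ coming from the crude bound $|1-p^{w-2}|\ll p^{1/2}$ must be seen to be exactly absorbed by the other factors. The good-prime cancellation, although it looks formidable, is a mechanical consequence of the same combinatorial structure that underlies the recipe.
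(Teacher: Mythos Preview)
Your proposal is correct and follows essentially the same approach as the paper: for good primes you expand the prefactor and bracket to first order and verify the same cancellation (the paper records this as equations \eqref{eqn: R0eulerbound1}--\eqref{eqn: R0eulerbound5}), and for bad primes you split the off-diagonal $(m,n)$-sum at $|h_p-k_p|$ exactly as the paper does with its $\Sigma_1,\Sigma_2$ decomposition, arriving at the local bound $p^{(1/2+\varepsilon)(h_p+k_p+\min\{h_p,k_p\})}$, which is the same as your $p^{\min\{h_p,k_p\}+\frac12\max\{h_p,k_p\}+\varepsilon(h_p+k_p)}$. Your identification of the bad-prime exponent tracking as the delicate point is accurate; the paper carries this out explicitly.
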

\begin{proof}
In this proof, we will repeatedly apply without mention the bounds $\tau_A(m)\ll m^{\varepsilon}$, and $\tau_B(n)\ll n^{\varepsilon}$, which follow from \eqref{eqn: divisorbound} and the assumption that $\alpha,\beta \ll 1/\log Q$ for all $\alpha \in A$ and $\beta \in B$. Since $\re(w)\leq \frac{5}{2}-\epsilon$, we have
$$
\frac{1}{p^{3+\hat{\alpha}+\hat{\beta} -\alpha-\beta-w}} \ll \frac{1}{p^{\frac{1}{2}+\varepsilon}}
$$
for all $\hat{\alpha}\in A$ and $\hat{\beta}\in B$. Also, it holds that
$$
\frac{1}{p^{1+\hat{\alpha}-\alpha}} \ll \frac{1}{p^{1-\varepsilon}}
$$
and
$$
\frac{1}{p^{1+\hat{\beta}-\beta}} \ll \frac{1}{p^{1-\varepsilon}}
$$
for all $\hat{\alpha}\in A$ and $\hat{\beta}\in B$. Hence, multiplying out the product and applying the definition \eqref{eqn: taudef} gives
\begin{equation}\label{eqn: R0eulerbound1}
\begin{split}
\prod_{\substack{ \hat{\alpha}\neq \alpha \\ \hat{\beta}\neq \beta}}&\left(1-\frac{1}{p^{3+\hat{\alpha}+\hat{\beta} -\alpha-\beta-w}} \right) \prod_{\hat{\alpha}\in A}\left(1-\frac{1}{p^{1+\hat{\alpha}-\alpha}} \right) \prod_{\hat{\beta}\in B} \left(1-\frac{1}{p^{1+\hat{\beta}-\beta}} \right) \\
&= 1 -\sum_{\substack{ \hat{\alpha}\neq \alpha \\ \hat{\beta}\neq \beta}} \frac{1}{p^{3+\hat{\alpha}+\hat{\beta} -\alpha-\beta-w}} - \sum_{\hat{\alpha}\in A} \frac{1}{p^{1+\hat{\alpha}-\alpha}}  - \sum_{\hat{\beta}\in B} \frac{1}{p^{1+\hat{\beta}-\beta}} + O\left( \frac{1}{p^{1+\varepsilon}}\right) \\
&= 1 - \frac{\tau_A(p)\tau_B(p)}{p^{3-\alpha-\beta-w}} + \frac{\tau_A(p)}{p^{3-\alpha-w}} + \frac{\tau_B(p)}{p^{3-\beta-w}} - \frac{1}{p^{3-w}} - \frac{ \tau_A(p)}{p^{1-\alpha}} - \frac{ \tau_B(p)}{p^{1-\beta}} + O\left( \frac{1}{p^{1+\varepsilon}}\right).
\end{split}
\end{equation}
Since $1+\epsilon \leq \re(w)\leq \frac{5}{2}-\epsilon$, we have
\begin{equation}\label{eqn: R0eulerbound2}
\left( 1-\frac{1}{p^w}\right)\left(1+ \frac{p^{w-1}-1}{p(p-1)}\right) = 1+\frac{1}{p^{3-w}} + O\left( \frac{1}{p^{1+\varepsilon}}\right).
\end{equation}
The assumption $\re(w)\leq \frac{5}{2}-\epsilon$ also implies $\re(3-\alpha-\beta-w)\geq \frac{1}{2}+\varepsilon$ and thus
\begin{equation}\label{eqn: R0eulerbound3}
\sum_{m=1}^{\infty} \frac{\tau_A(p^{m}) \tau_B(p^{m}) \left( 1+\frac{p^w}{p^{2}(p-1)}-\frac{1}{p-1} \right)}{ p^{m(3-\alpha-\beta-w  )} }  = \frac{\tau_A(p)\tau_B(p)}{p^{3-\alpha-\beta-w}} + O\left( \frac{1}{p^{1+\varepsilon}}\right).
\end{equation}
Next, since $p^w/p^2 \ll p^{\frac{1}{2}-\epsilon}$, the terms with $m\geq 1$ in the sum
$$
\sum_{ 0\leq m<n<\infty  } \frac{\tau_A(p^{m}) \tau_B(p^{n}) \left( 1-\frac{p^w}{p^{2}}  \right)}{ p^{m(2-\alpha-w)} p^{n(1-\beta)} }
$$
add up to at most
\begin{equation*}
\ll \sum_{ 1\leq m<n<\infty  } \frac{p^{\frac{1}{2}-\epsilon}}{p^{m(-\frac{1}{2}-\varepsilon+\epsilon)} p^{n(1-\varepsilon)} } \ll \sum_{m=1}^{\infty} \frac{p^{\frac{1}{2}-\epsilon} }{p^{m(-\frac{1}{2}-\varepsilon+\epsilon)}p^{(m+1)(1-\varepsilon)}} \ll \frac{1}{p^{1+\varepsilon}},
\end{equation*}
while the terms with $m=0$ and $n\geq 2$ add up to at most
\begin{equation*}
\ll \sum_{ n=2 }^{\infty} \frac{p^{\frac{1}{2}-\epsilon}}{p^{n(1-\varepsilon)} } \ll \frac{1}{p^{\frac{3}{2}+\varepsilon}}.
\end{equation*}
Hence
\begin{equation}\label{eqn: R0eulerbound4}
\sum_{ 0\leq m<n<\infty  } \frac{\tau_A(p^{m}) \tau_B(p^{n}) \left( 1-\frac{p^w}{p^{2}}  \right)}{ p^{m(2-\alpha-w)} p^{n(1-\beta)} } = \frac{  \tau_B(p )  }{   p^{ 1-\beta } } - \frac{  \tau_B(p ) }{   p^{ 3-\beta-w } } + O\left( \frac{1}{p^{1+\varepsilon}}\right).
\end{equation}
Similarly, or by symmetry, we have
\begin{equation}\label{eqn: R0eulerbound5}
\sum_{ 0\leq n<m <\infty   } \frac{\tau_A(p^{m}) \tau_B(p^{n}) \left( 1-\frac{p^w}{p^{2}}  \right)}{ p^{m(1-\alpha)} p^{n(2-\beta-w)}  } = \frac{\tau_A(p )}{ p^{ 1-\alpha } } - \frac{\tau_A(p ) }{ p^{ 3-\alpha-w } }+ O\left( \frac{1}{p^{1+\varepsilon}}\right).
\end{equation}
We next bound the sum
\begin{equation}\label{eqn: R0eulerboundfinite1}
\sum_{\substack{0\leq m,n<\infty \\ m+\ordp(h)= n + \ordp(k) } }\frac{\tau_A(p^{m}) \tau_B(p^{n}) \left( 1+\frac{p^w}{p^{2}(p-1)}-\frac{1}{p-1} \right)}{ p^{m(1-\alpha)} p^{n(1-\beta)} p^{(1-w) \min\{m+\ordp(h), n + \ordp(k) \} }}.
\end{equation}
For brevity, we denote $h_p:=\ordp(h)$ and $k_p:=\ordp(k)$ for the rest of this proof. We make the change of variable $m\mapsto \nu +k_p$ in \eqref{eqn: R0eulerboundfinite1}, so that $n=\nu+h_p$, to see that \eqref{eqn: R0eulerboundfinite1} equals
\begin{equation*}
p^{k_p(w+\alpha-2)+h_p(w+\beta-2)} \sum_{\nu=-\min\{h_p,k_p\} }^{\infty} \frac{\tau_A(p^{\nu+k_p}) \tau_B(p^{\nu+h_p}) \left( 1+\frac{p^w}{p^{2}(p-1)}-\frac{1}{p-1} \right)}{ p^{\nu(3-\alpha-\beta-w)} }.
\end{equation*}
This and the inequality $(k_p+h_p -\min\{h_p,k_p\})\re(w)\leq  \frac{5}{2}(k_p+h_p -\min\{h_p,k_p\})$ imply
\begin{equation}\label{eqn: R0eulerbound6}
\begin{split}
\sum_{\substack{0\leq m,n<\infty \\ m+\ordp(h)= n + \ordp(k) } }&\frac{\tau_A(p^{m}) \tau_B(p^{n}) \left( 1+\frac{p^w}{p^{2}(p-1)}-\frac{1}{p-1} \right)}{ p^{m(1-\alpha)} p^{n(1-\beta)} p^{(1-w) \min\{m+\ordp(h), n + \ordp(k) \} }} \\
&\hspace{.25in}\ll p^{k_p(\re(w)-2+\varepsilon)+h_p(\re(w)-2+\varepsilon) + \min\{h_p,k_p\} \re( 3-w) }\\
&\hspace{.25in}\ll p^{(\frac{1}{2}+\varepsilon)( h_p +k_p + \min\{h_p,k_p\}) } .
\end{split}
\end{equation}
Next, to bound the sum
\begin{equation*}
\sum_{\substack{0\leq m,n<\infty \\ m+\ordp(h) <   n + \ordp(k) } } \frac{\tau_A(p^{m}) \tau_B(p^{n}) \left( 1-\frac{p^w}{p^{2}}  \right)}{ p^{m(1-\alpha)} p^{n(1-\beta)} p^{(1-w) \min\{m+\ordp(h), n + \ordp(k) \} }},
\end{equation*}
we split it into the part with $m<k_p-h_p$ and the part with $m\geq k_p-h_p$ to deduce that
\begin{equation}\label{eqn: R0eulerboundfinite2}
\sum_{\substack{0\leq m,n<\infty \\ m+\ordp(h) <   n + \ordp(k) } } \frac{\tau_A(p^{m}) \tau_B(p^{n}) \left( 1-\frac{p^w}{p^{2}}  \right)}{ p^{m(1-\alpha)} p^{n(1-\beta)} p^{(1-w) \min\{m+\ordp(h), n + \ordp(k) \} }} =\Sigma_1 + \Sigma_2,
\end{equation}
where
\begin{equation}\label{eqn: R0eulerboundSigma1def}
\Sigma_1 : = p^{(w-1)  h_p  } \sum_{m=0 }^{k_p-h_p-1}\sum_{n=0}^{\infty} \frac{\tau_A(p^{m}) \tau_B(p^{n}) \left( 1-\frac{p^w}{p^{2}}  \right)}{ p^{m(2-\alpha-w)} p^{n(1-\beta)} }
\end{equation}
and
\begin{equation}\label{eqn: R0eulerboundSigma2def}
\Sigma_2 : = p^{(w-1)  h_p  } \sum_{m=  \max\{0,k_p-h_p\} }^{\infty}\sum_{n=m+h_p-k_p+1}^{\infty} \frac{\tau_A(p^{m}) \tau_B(p^{n}) \left( 1-\frac{p^w}{p^{2}}  \right)}{ p^{m(2-\alpha-w)} p^{n(1-\beta)} }.
\end{equation}
We apply $\re(w)\leq \frac{5}{2}-\epsilon$ and bound the $n$-sums in \eqref{eqn: R0eulerboundSigma1def} and \eqref{eqn: R0eulerboundSigma2def} to deduce that
\begin{equation}\label{eqn: R0eulerboundSigma1bound1}
\Sigma_1 \ll p^{(\frac{3}{2}-\epsilon)  h_p  } \sum_{m=0 }^{k_p-h_p-1} \frac{p^{\frac{1}{2}-\epsilon}}{ p^{m(-\frac{1}{2}-\varepsilon+\epsilon)}}
\end{equation}
and
\begin{equation}\label{eqn: R0eulerboundSigma2bound1}
\Sigma_2 \ll p^{(\frac{3}{2}-\epsilon)  h_p  } \sum_{m=  \max\{0,k_p-h_p\} }^{\infty} \frac{ p^{\frac{1}{2}-\epsilon} }{ p^{m( \frac{1}{2}-\varepsilon+\epsilon)} p^{( h_p-k_p+1)(1-\varepsilon)} }.
\end{equation}
The right-hand side of \eqref{eqn: R0eulerboundSigma1bound1} is zero if $h_p\geq k_p$, and otherwise it is $ \ll p^{(\frac{3}{2}-\epsilon)  h_p  + (k_p-h_p) (\frac{1}{2} +\varepsilon)}$. In either case, we have
\begin{equation}\label{eqn: R0eulerboundSigma1bound2}
\Sigma_1 \ll p^{(\frac{1}{2}+\varepsilon)( h_p +k_p + \min\{h_p,k_p\}) }.
\end{equation}
If $h_p\geq k_p$, then the $m$-sum in \eqref{eqn: R0eulerboundSigma2bound1} starts at $m=0$ and thus
\begin{equation*}
\Sigma_2 \ll \frac{ p^{(\frac{3}{2}-\epsilon)  h_p  + \frac{1}{2}-\epsilon}  }{ p^{( h_p-k_p+1)(1-\varepsilon)}  } \ll p^{(\frac{1}{2}+\varepsilon)  h_p  +k_p - \frac{1}{2} + \varepsilon}.
\end{equation*}
On the other hand, if $h_p< k_p$, then the $m$-sum in \eqref{eqn: R0eulerboundSigma2bound1} starts at $m=k_p-h_p$ and hence
\begin{equation*}
\Sigma_2 \ll \frac{ p^{(\frac{3}{2}-\epsilon)  h_p  + \frac{1}{2}-\epsilon}  }{ p^{(k_p-h_p) (\frac{1}{2}-\varepsilon+\epsilon) } p^{( h_p-k_p+1)(1-\varepsilon)}  }  \ll p^{ (1+\varepsilon) h_p + (\frac{1}{2}+\varepsilon) k_p -\frac{1}{2}+\varepsilon }.
\end{equation*}
In either case, we have
\begin{equation*}
\Sigma_2 \ll  p^{(\frac{1}{2}+\varepsilon)( h_p +k_p + \min\{h_p,k_p\}) -\frac{1}{2}+\varepsilon} . 
\end{equation*}
From this, \eqref{eqn: R0eulerboundSigma1bound2}, and \eqref{eqn: R0eulerboundfinite2}, we arrive at
\begin{equation}\label{eqn: R0eulerbound7}
\sum_{\substack{0\leq m,n<\infty \\ m+\ordp(h) <   n + \ordp(k) } } \frac{\tau_A(p^{m}) \tau_B(p^{n}) \left( 1-\frac{p^w}{p^{2}}  \right)}{ p^{m(1-\alpha)} p^{n(1-\beta)} p^{(1-w) \min\{m+\ordp(h), n + \ordp(k) \} }} \ll  p^{(\frac{1}{2}+\varepsilon)( h_p +k_p + \min\{h_p,k_p\}) }.
\end{equation}
Similarly, or by symmetry, it holds that
\begin{equation}\label{eqn: R0eulerbound8}
\sum_{\substack{0\leq m,n<\infty \\ m+\ordp(h) >   n + \ordp(k) } } \frac{\tau_A(p^{m}) \tau_B(p^{n}) \left( 1-\frac{p^w}{p^{2}}  \right)}{ p^{m(1-\alpha)} p^{n(1-\beta)} p^{(1-w) \min\{m+\ordp(h), n + \ordp(k) \} }} \ll  p^{(\frac{1}{2}+\varepsilon)( h_p +k_p + \min\{h_p,k_p\}) }.
\end{equation}
From \eqref{eqn: R0eulerbound1}, \eqref{eqn: R0eulerbound2}, \eqref{eqn: R0eulerbound3}, \eqref{eqn: R0eulerbound4}, and \eqref{eqn: R0eulerbound5}, we deduce that if $p\nmid hk$ then the local factor in \eqref{eqn: R0eulerGdef} corresponding to $p$ is $1+O(p^{-1-\varepsilon})$. To bound the local factors corresponding to the primes $p|hk$, observe that \eqref{eqn: R0eulerbound1} is $O(1)$ because $\re(w)\leq \frac{5}{2}-\epsilon$. This, \eqref{eqn: R0eulerbound6}, \eqref{eqn: R0eulerbound7}, and \eqref{eqn: R0eulerbound8} imply that if $p|hk$ then the local factor corresponding to $p$ is
$$
\ll p^{(\frac{1}{2}+\varepsilon)( h_p +k_p + \min\{h_p,k_p\}) }.
$$
We conclude that the right-hand side of \eqref{eqn: R0eulerGdef} converges absolutely, and is
$$
\ll h^{\frac{1}{2}+\varepsilon} k^{\frac{1}{2}+\varepsilon} (h,k)^{\frac{1}{2}+\varepsilon}
$$
because $hk(h,k)=\prod_{p|hk} p^{  h_p+k_p+ \min\{h_p,k_p\}}$ and $\prod_{p|\nu}O(1) \ll {\nu}^{\varepsilon}$ for any positive integer $\nu$.
\end{proof}

We now move the $w$-line in \eqref{eqn: R0def2} rightward to $\re(w)= \frac{5}{2}-\epsilon$ to deduce that
\begin{equation}\label{eqn: R0split}
R_0 = R_1 + R_2 + R_3 + R_4,
\end{equation}
where $R_1$ is the integral of the residue at $w=2$, $R_2$ is the integral of the residue at $w=\frac{3}{2}-\alpha+z$, $R_3$ is the integral of the residues at the poles of \eqref{eqn:  U2I11factoroutzetas}, and $R_4$ is the new integral with $\re(w)= \frac{5}{2}-\epsilon$.

We first bound $R_4$, which is defined by
\begin{equation*}
\begin{split}
R_4 &:= \sum_{\substack{\alpha\in A \\ \beta\in B}}  \frac{1}{2(2\pi i)^2 }  \int_{(\frac{5}{2}-\epsilon)}\int_{(\epsilon/2)} X^{2-\alpha-\beta-w}Q^{w} \\
&\hspace{.25in} \times \widetilde{V}(\tfrac{3}{2}-\alpha-w+z)\widetilde{V}(\tfrac{1}{2}-\beta-z) \widetilde{W}(w)  \mathcal{H}(z,w-1) \frac{e^{\delta z} - e^{-\delta z}}{2\delta z}\\
&\hspace{.25in}\times \zeta(2-w)  \prod_{\substack{ \hat{\alpha}\neq \alpha \\ \hat{\beta}\neq \beta}} \zeta( 3+\hat{\alpha}+\hat{\beta} -\alpha-\beta-w) \prod_{\hat{\alpha}\neq \alpha} \zeta(1+\hat{\alpha}-\alpha)\prod_{\hat{\beta}\neq \beta} \zeta(1+\hat{\beta}-\beta )\\
&\hspace{.25in}\times h^{1-w+z}k^{-z}  \mathcal{G} (w,\alpha,\beta)\,dz  \,dw.
\end{split}
\end{equation*}
We move the $z$-line to $\re(z)=\frac{1}{2}-\epsilon$, traversing no poles in the process. We then bound the resulting integral using \eqref{eqn: zetaalphaalphabound}, \eqref{eqn: mellinrapiddecay}, \eqref{eqn: Hbound}, and Lemma~\ref{lem: R0eulerbound}. The result is
\begin{equation}\label{eqn: R4bound}
R_4\ll X^{-\frac{1}{2}+\varepsilon} Q^{\frac{5}{2}} (hk)^{\varepsilon}.
\end{equation}

We next evaluate the integral $R_1$ defined in \eqref{eqn: R0split}. To do this, observe that the winding number in the application of the residue theorem in \eqref{eqn: R0split} is $-1$. Also, the definition \eqref{eqn: Hdef} implies that
\begin{equation*}
\underset{w=2}{\text{Res}}\ \mathcal{H}(z,w-1) = -2
\end{equation*}
because Res$_{s=0}\Gamma(s)=1$ and $\Gamma(1/2)=\sqrt{\pi}$. Furthermore, $\zeta(0)=-1/2$. Hence
\begin{equation}\label{eqn: R1def}
\begin{split}
R_1 &= -\sum_{\substack{\alpha\in A \\ \beta\in B}}  \frac{1}{ 4\pi i  }  \int_{(\epsilon/2)} X^{-\alpha-\beta}Q^{2} \widetilde{V}(-\tfrac{1}{2}-\alpha+z)\widetilde{V}(\tfrac{1}{2}-\beta-z) \widetilde{W}(2) \frac{e^{\delta z} - e^{-\delta z}}{2\delta z}\\
&\hspace{.25in}\times  \prod_{\substack{ \hat{\alpha}\neq \alpha \\ \hat{\beta}\neq \beta}} \zeta( 1+\hat{\alpha}+\hat{\beta} -\alpha-\beta) \prod_{\hat{\alpha}\neq \alpha} \zeta(1+\hat{\alpha}-\alpha)\prod_{\hat{\beta}\neq \beta} \zeta(1+\hat{\beta}-\beta )\\
&\hspace{.25in}\times h^{-1+z}k^{-z}  \mathcal{G} (2,\alpha,\beta)\,dz .
\end{split}
\end{equation}
Some factors here do not depend on $z$, and we only need to evaluate
\begin{equation*}
\int_{(\epsilon/2)} \widetilde{V}(-\tfrac{1}{2}-\alpha+z)\widetilde{V}(\tfrac{1}{2}-\beta-z) \frac{e^{\delta z} - e^{-\delta z}}{2\delta z} h^{z}k^{-z}\,dz.
\end{equation*}
The part of this with $|\text{Im}z|\geq 1/\delta$ is negligible because of \eqref{eqn: mellinrapiddecay} and the definition \eqref{eqn: deltadef} of $\delta$. In the complementary part with $|\text{Im}z|\leq 1/\delta$, we have
\begin{equation}\label{eqn: exppowerseriesapprox}
\frac{e^{\delta z} - e^{-\delta z}}{2\delta z} = 1 +O(\delta|z|).
\end{equation}
Thus
\begin{equation*}
\begin{split}
\int_{(\epsilon/2)} \widetilde{V}(-\tfrac{1}{2}-\alpha+z)&\widetilde{V}(\tfrac{1}{2}-\beta-z) \frac{e^{\delta z} - e^{-\delta z}}{2\delta z} h^{z}k^{-z}\,dz \\
&\hspace{.25in}= \int_{\frac{\epsilon}{2}-\frac{i}{\delta}}^{\frac{\epsilon}{2}+\frac{i}{\delta}} \widetilde{V}(-\tfrac{1}{2}-\alpha+z)\widetilde{V}(\tfrac{1}{2}-\beta-z)h^{z}k^{-z}\,dz +O\big((hk)^{\varepsilon} \delta \big).
\end{split}
\end{equation*}
By \eqref{eqn: deltadef} and \eqref{eqn: mellinrapiddecay}, we may extend the range of Im$(z)$ in the latter integral to $(-\infty,\infty)$ by adding a negligible quantity. We then make the change of variables $s\mapsto \frac{1}{2}-\beta-z$, and afterward move the line of integration to $\re(s)=\epsilon$. We traverse no poles in doing so, and we arrive at
\begin{equation*}
\begin{split}
\int_{(\epsilon/2)} \widetilde{V}(-\tfrac{1}{2}-\alpha+z)&\widetilde{V}(\tfrac{1}{2}-\beta-z) \frac{e^{\delta z} - e^{-\delta z}}{2\delta z} h^{z}k^{-z}\,dz \\
&\hspace{.25in}= \int_{(\epsilon)} \widetilde{V}(-\alpha-\beta-s)\widetilde{V}(s)\left( \frac{h}{k}\right)^{\frac{1}{2}-\beta-s}\,ds +O\big((hk)^{\varepsilon} \delta \big).
\end{split}
\end{equation*}
From this and \eqref{eqn: R1def}, we deduce that
\begin{equation}\label{eqn: R1evaluated}
\begin{split}
R_1 &= -\sum_{\substack{\alpha\in A \\ \beta\in B}}  \frac{1}{ 4\pi i  }  \int_{(\epsilon)} X^{-\alpha-\beta}Q^{2} \widetilde{V}(-\alpha-\beta-s)\widetilde{V}(s) \widetilde{W}(2)\\
&\hspace{.25in}\times  \prod_{\substack{ \hat{\alpha}\neq \alpha \\ \hat{\beta}\neq \beta}} \zeta( 1+\hat{\alpha}+\hat{\beta} -\alpha-\beta) \prod_{\hat{\alpha}\neq \alpha} \zeta(1+\hat{\alpha}-\alpha)\prod_{\hat{\beta}\neq \beta} \zeta(1+\hat{\beta}-\beta )\\
&\hspace{.25in}\times h^{-\frac{1}{2}-\beta-s} k^{-\frac{1}{2}+\beta+s} \mathcal{G} (2,\alpha,\beta)\,ds + O\big( (Xhk)^{\varepsilon}k^{1/2} Q^{-96}\big),
\end{split}
\end{equation}
where we have applied \eqref{eqn: deltadef}, \eqref{eqn: zetaalphaalphabound}, and Lemma~\ref{lem: R0eulerbound} to bound the error term.

Having evaluated $R_1$, we next turn our attention to the integral $R_2$ defined in \eqref{eqn: R0split}. By \eqref{eqn: mellinVresidue}, the residue of $\widetilde{V}(\frac{3}{2}-\alpha-w+z)$ at $w=\frac{3}{2}-\alpha+z$ is $-1$. From this and the fact that the winding number in the application of the residue theorem in \eqref{eqn: R0split} is $-1$, we deduce that
\begin{equation*}
\begin{split}
R_2 &= \sum_{\substack{\alpha\in A \\ \beta\in B}}  \frac{1}{4\pi i } \int_{(\epsilon/2)} X^{\frac{1}{2}-\beta-z}Q^{\frac{3}{2}-\alpha+z} \\
 &\hspace{.25in}\times \widetilde{V}(\tfrac{1}{2}-\beta-z) \widetilde{W}(\tfrac{3}{2}-\alpha+z)  \mathcal{H}(z,\tfrac{1}{2}-\alpha+z) \frac{e^{\delta z} - e^{-\delta z}}{2\delta z}\\
&\hspace{.25in}\times \zeta(\tfrac{1}{2}+\alpha-z)  \prod_{\substack{ \hat{\alpha}\neq \alpha \\ \hat{\beta}\neq \beta}} \zeta( \tfrac{3}{2}+\hat{\alpha}+\hat{\beta} -\beta-z) \prod_{\hat{\alpha}\neq \alpha} \zeta(1+\hat{\alpha}-\alpha)\prod_{\hat{\beta}\neq \beta} \zeta(1+\hat{\beta}-\beta )\\
&\hspace{.25in}\times h^{-\frac{1}{2}+\alpha}k^{-z}  \mathcal{G} (\tfrac{3}{2}-\alpha+z,\alpha,\beta)\,dz.
\end{split}
\end{equation*}
We move the line of integration to $\re(z)=1-2\epsilon$ to deduce that
\begin{equation}\label{eqn: R2split}
R_2 = R_{21} + R_{22} + R_{23} + R_{24},
\end{equation}
where $R_{21}$ is the residue at $z=\frac{1}{2}-\beta$, $R_{22}$ is the residue at $z=\frac{1}{2}+\alpha$, $R_{23}$ is the sum of the residues at the poles $z=\frac{1}{2}+\alpha'+\beta'-\beta$, where $\alpha'$ runs through the elements of $A\smallsetminus \{\alpha\}$ and $\beta'$ runs through the elements of $B\smallsetminus \{\beta\}$, and $R_{24}$ is the new integral with $\re(z)=1-2\epsilon$. To bound $R_{24}$, we apply \eqref{eqn: zetaalphaalphabound}, \eqref{eqn: mellinrapiddecay}, \eqref{eqn: Hbound}, and Lemma~\ref{lem: R0eulerbound}. The result is
\begin{equation}\label{eqn: R24bound}
R_{24} \ll X^{-\frac{1}{2}+\varepsilon} Q^{\frac{5}{2}} (hk)^{\varepsilon}.
\end{equation}

We next estimate the residue $R_{21}$ defined in \eqref{eqn: R2split}. By \eqref{eqn: mellinVresidue}, the residue of $\widetilde{V}(\frac{1}{2}-\beta-z)$ at $z=\frac{1}{2}-\beta$ is $-1$. From this and the fact that the winding number in the application of the residue theorem in \eqref{eqn: R2split} is $-1$, we have
\begin{equation}\label{eqn: R21explicit}
\begin{split}
R_{21} &= \frac{1}{2 }\sum_{\substack{\alpha\in A \\ \beta\in B}}   Q^{2-\alpha-\beta} \widetilde{W}(2-\alpha-\beta)  \mathcal{H}(\tfrac{1}{2}-\beta,1-\alpha-\beta) \frac{e^{\delta (\frac{1}{2}-\beta)} - e^{-\delta (\frac{1}{2}-\beta)}}{\delta (1-2\beta)}\\
&\hspace{.25in}\times \zeta(\alpha+\beta)  \prod_{\substack{ \hat{\alpha}\neq \alpha \\ \hat{\beta}\neq \beta}} \zeta( 1+\hat{\alpha}+\hat{\beta} ) \prod_{\hat{\alpha}\neq \alpha} \zeta(1+\hat{\alpha}-\alpha)\prod_{\hat{\beta}\neq \beta} \zeta(1+\hat{\beta}-\beta )\\
&\hspace{.25in}\times h^{-\frac{1}{2}+\alpha}k^{-\frac{1}{2}+\beta}  \mathcal{G} (2-\alpha-\beta,\alpha,\beta).
\end{split}
\end{equation}
Now \eqref{eqn: Hintermsofchifactor} gives
\begin{equation*}
\mathcal{H}(\tfrac{1}{2}-\beta,1-\alpha-\beta)=\mathscr{X}(1-\alpha-\beta)\mathscr{X}(\tfrac{1}{2}+\beta)\mathscr{X}(\tfrac{1}{2}+\alpha),
\end{equation*}
and thus the functional equation of $\zeta(s)$ implies
$$
\zeta(\alpha+\beta)\mathcal{H}(\tfrac{1}{2}-\beta,1-\alpha-\beta) = \zeta(1-\alpha-\beta) \mathscr{X}(\tfrac{1}{2}+\beta)\mathscr{X}(\tfrac{1}{2}+\alpha).
$$
It follows from this and \eqref{eqn: R21explicit} that
\begin{equation}\label{eqn: R21explicit2}
\begin{split}
R_{21} &= \frac{1}{2 }\sum_{\substack{\alpha\in A \\ \beta\in B}}   Q^{2-\alpha-\beta} \widetilde{W}(2-\alpha-\beta)  \mathscr{X}(\tfrac{1}{2}+\alpha)\mathscr{X}(\tfrac{1}{2}+\beta) \frac{e^{\delta (\frac{1}{2}-\beta)} - e^{-\delta (\frac{1}{2}-\beta)}}{\delta (1-2\beta)}\\
&\hspace{.25in}\times \zeta(1-\alpha-\beta)  \prod_{\substack{ \hat{\alpha}\neq \alpha \\ \hat{\beta}\neq \beta}} \zeta( 1+\hat{\alpha}+\hat{\beta} ) \prod_{\hat{\alpha}\neq \alpha} \zeta(1+\hat{\alpha}-\alpha)\prod_{\hat{\beta}\neq \beta} \zeta(1+\hat{\beta}-\beta )\\
&\hspace{.25in}\times h^{-\frac{1}{2}+\alpha}k^{-\frac{1}{2}+\beta}  \mathcal{G} (2-\alpha-\beta,\alpha,\beta).
\end{split}
\end{equation}
By \eqref{eqn: deltadef} and the assumption that $\alpha,\beta \ll 1/\log Q$ for all $\alpha \in A$ and $\beta \in B$, we have
\begin{equation*}
\frac{e^{\delta (\frac{1}{2}-\beta)} - e^{-\delta (\frac{1}{2}-\beta)}}{\delta (1-2\beta)} = 1 + O\big( Q^{-99}\big).
\end{equation*}
We insert this into \eqref{eqn: R21explicit2} and apply Lemma~\ref{lem: R0eulerbound} and \eqref{eqn: zetaalphaalphabound} to bound the contribution of the error term. The result is
\begin{equation}\label{eqn: R21evaluated}
\begin{split}
R_{21} &= \frac{1}{2 }\sum_{\substack{\alpha\in A \\ \beta\in B}}   Q^{2-\alpha-\beta} \widetilde{W}(2-\alpha-\beta)  \mathscr{X}(\tfrac{1}{2}+\alpha)\mathscr{X}(\tfrac{1}{2}+\beta) \\
&\hspace{.25in}\times \zeta(1-\alpha-\beta)  \prod_{\substack{ \hat{\alpha}\neq \alpha \\ \hat{\beta}\neq \beta}} \zeta( 1+\hat{\alpha}+\hat{\beta} ) \prod_{\hat{\alpha}\neq \alpha} \zeta(1+\hat{\alpha}-\alpha)\prod_{\hat{\beta}\neq \beta} \zeta(1+\hat{\beta}-\beta )\\
&\hspace{.25in}\times h^{-\frac{1}{2}+\alpha}k^{-\frac{1}{2}+\beta}  \mathcal{G} (2-\alpha-\beta,\alpha,\beta) + O\big((hk)^{\varepsilon}(h,k)^{1/2}Q^{-96}\big).
\end{split}
\end{equation}

Our next task is to evaluate the residue $R_{22}$ defined in \eqref{eqn: R2split}. To do this, observe that the winding number in the application of the residue theorem in \eqref{eqn: R2split} is $-1$. Moreover, the definition \eqref{eqn: Hdef} implies that
\begin{equation*}
\underset{z=\frac{1}{2}+\alpha}{\text{Res}}\ \mathcal{H}(z,\tfrac{1}{2}-\alpha+z) = -2
\end{equation*}
because Res$_{s=0}\Gamma(s)=1$ and $\Gamma(1/2)=\sqrt{\pi}$. Furthermore, $\zeta(0)=-1/2$. Hence
\begin{equation}\label{eqn: R22explicit}
\begin{split}
R_{22} &= -\frac{1}{2}\sum_{\substack{\alpha\in A \\ \beta\in B}}   X^{-\alpha-\beta}Q^{2}\widetilde{V}(-\alpha-\beta) \widetilde{W}(2) \frac{e^{\delta (\frac{1}{2}+\alpha)} - e^{-\delta (\frac{1}{2}+\alpha) }}{ \delta (1+2\alpha)}\\
&\hspace{.25in}\times \prod_{\substack{ \hat{\alpha}\neq \alpha \\ \hat{\beta}\neq \beta}} \zeta( 1+\hat{\alpha}+\hat{\beta} -\alpha-\beta) \prod_{\hat{\alpha}\neq \alpha} \zeta(1+\hat{\alpha}-\alpha)\prod_{\hat{\beta}\neq \beta} \zeta(1+\hat{\beta}-\beta )\\
&\hspace{.25in}\times h^{-\frac{1}{2}+\alpha}k^{-\frac{1}{2}-\alpha}  \mathcal{G} (2,\alpha,\beta).
\end{split}
\end{equation}
By \eqref{eqn: deltadef} and the assumption that $\alpha,\beta \ll 1/\log Q$ for all $\alpha \in A$ and $\beta \in B$, we have
\begin{equation*}
\frac{e^{\delta (\frac{1}{2}+\alpha)} - e^{-\delta (\frac{1}{2}+\alpha)}}{\delta (1+2\alpha)} = 1 + O\big( Q^{-99}\big).
\end{equation*}
We insert this into \eqref{eqn: R22explicit} and apply Lemma~\ref{lem: R0eulerbound} and \eqref{eqn: zetaalphaalphabound} to bound the contribution of the error term. The result is
\begin{equation}\label{eqn: R22evaluated}
\begin{split}
R_{22} &= -\frac{1}{2}\sum_{\substack{\alpha\in A \\ \beta\in B}}   X^{-\alpha-\beta}Q^{2} \widetilde{V}(-\alpha-\beta) \widetilde{W}(2) \\
&\hspace{.25in}\times \prod_{\substack{ \hat{\alpha}\neq \alpha \\ \hat{\beta}\neq \beta}} \zeta( 1+\hat{\alpha}+\hat{\beta} -\alpha-\beta) \prod_{\hat{\alpha}\neq \alpha} \zeta(1+\hat{\alpha}-\alpha)\prod_{\hat{\beta}\neq \beta} \zeta(1+\hat{\beta}-\beta )\\
&\hspace{.25in}\times h^{-\frac{1}{2}+\alpha}k^{-\frac{1}{2}-\alpha}  \mathcal{G} (2,\alpha,\beta) + O\big( (Xhk)^{\varepsilon}(h,k)^{1/2} Q^{-96}\big).
\end{split}
\end{equation}

We next estimate the sum $R_{23}$ defined in \eqref{eqn: R2split}. Since the winding number in the application of the residue theorem in \eqref{eqn: R2split} is $-1$ and
\begin{equation*}
\underset{z=\frac{1}{2}+\alpha'+\beta'-\beta}{\text{Res}}\ \zeta( \tfrac{3}{2}+\alpha'+\beta' -\beta-z)=-1,
\end{equation*}
it follows that
\begin{align}
R_{23} &= \sum_{\substack{\alpha\in A \\ \beta\in B}} \sum_{\substack{\alpha'\neq \alpha \\ \beta'\neq \beta }} \frac{1}{2 } X^{-\alpha'-\beta'}Q^{2-\alpha-\beta+\alpha'+\beta'} \notag \\
 &\hspace{.25in}\times \widetilde{V}(-\alpha'-\beta') \widetilde{W}(2-\alpha-\beta+\alpha'+\beta')  \mathcal{H}(\tfrac{1}{2}+\alpha'+\beta'-\beta,1-\alpha-\beta+\alpha'+\beta') \notag \\
 &\hspace{.25in}\times \frac{e^{\delta (\frac{1}{2}+\alpha'+\beta'-\beta) } - e^{-\delta (\frac{1}{2}+\alpha'+\beta'-\beta)}}{2\delta (\frac{1}{2}+\alpha'+\beta'-\beta)}\zeta(\alpha+\beta-\alpha'-\beta') \notag\\
 &\hspace{.25in}\times\prod_{\substack{ \hat{\alpha} \neq \alpha \\ \hat{\beta}\neq \beta \\ (\hat{\alpha}, \hat{\beta})\neq (\alpha',\beta') }} \zeta( 1+\hat{\alpha}+\hat{\beta} -\alpha'-\beta') \prod_{\hat{\alpha}\neq \alpha} \zeta(1+\hat{\alpha}-\alpha)\prod_{\hat{\beta}\neq \beta} \zeta(1+\hat{\beta}-\beta ) \notag\\
&\hspace{.25in}\times h^{-\frac{1}{2}+\alpha}k^{-\frac{1}{2}-\alpha'-\beta'+\beta}  \mathcal{G} (2-\alpha-\beta+\alpha'+\beta',\alpha,\beta). \label{eqn: R23explicit}
\end{align}
Now \eqref{eqn: Hintermsofchifactor} gives
\begin{equation*}
\mathcal{H}(\tfrac{1}{2}+\alpha'+\beta'-\beta,1-\alpha-\beta+\alpha'+\beta')=\mathscr{X}(1-\alpha-\beta+\alpha'+\beta')\mathscr{X}(\tfrac{1}{2}+\beta-\alpha'-\beta')\mathscr{X}(\tfrac{1}{2}+\alpha),
\end{equation*}
and thus the functional equation of $\zeta(s)$ implies
\begin{equation*}
\begin{split}
&\zeta(\alpha+\beta-\alpha'-\beta')\mathcal{H}(\tfrac{1}{2}+\alpha'+\beta'-\beta,1-\alpha-\beta+\alpha'+\beta') \\
&\hspace{.25in}= \zeta(1-\alpha-\beta+\alpha'+\beta') \mathscr{X}(\tfrac{1}{2}+\beta-\alpha'-\beta')\mathscr{X}(\tfrac{1}{2}+\alpha).
\end{split}
\end{equation*}
It follows from this and \eqref{eqn: R23explicit} that
\begin{equation}\label{eqn: R23explicit2}
\begin{split}
R_{23} &= \sum_{\substack{\alpha\in A \\ \beta\in B}} \sum_{\substack{\alpha'\neq \alpha \\ \beta'\neq \beta }} \frac{1}{2 } X^{-\alpha'-\beta'}Q^{2-\alpha-\beta+\alpha'+\beta'} \\
&\hspace{.25in} \times \widetilde{V}(-\alpha'-\beta') \widetilde{W}(2-\alpha-\beta+\alpha'+\beta') \mathscr{X}(\tfrac{1}{2}+\beta-\alpha'-\beta')\mathscr{X}(\tfrac{1}{2}+\alpha)  \\
&\hspace{.25in} \times \frac{e^{\delta (\frac{1}{2}+\alpha'+\beta'-\beta) } - e^{-\delta (\frac{1}{2}+\alpha'+\beta'-\beta)}}{2\delta (\frac{1}{2}+\alpha'+\beta'-\beta)}\zeta(1-\alpha-\beta+\alpha'+\beta') \\
&\hspace{.25in}\times\prod_{\substack{ \hat{\alpha} \neq \alpha \\ \hat{\beta}\neq \beta \\ (\hat{\alpha}, \hat{\beta})\neq (\alpha',\beta') }} \zeta( 1+\hat{\alpha}+\hat{\beta} -\alpha'-\beta') \prod_{\hat{\alpha}\neq \alpha} \zeta(1+\hat{\alpha}-\alpha)\prod_{\hat{\beta}\neq \beta} \zeta(1+\hat{\beta}-\beta )\\
&\hspace{.25in}\times h^{-\frac{1}{2}+\alpha}k^{-\frac{1}{2}-\alpha'-\beta'+\beta}  \mathcal{G} (2-\alpha-\beta+\alpha'+\beta',\alpha,\beta).
\end{split}
\end{equation}
By \eqref{eqn: deltadef} and the assumption that $\alpha,\beta \ll 1/\log Q$ for all $\alpha \in A$ and $\beta \in B$, we have
\begin{equation*}
\frac{e^{\delta (\frac{1}{2}+\alpha'+\beta'-\beta) } - e^{-\delta (\frac{1}{2}+\alpha'+\beta'-\beta)}}{2\delta (\frac{1}{2}+\alpha'+\beta'-\beta)} = 1 + O\big( Q^{-99}\big).
\end{equation*}
We insert this into \eqref{eqn: R23explicit2} and apply Lemma~\ref{lem: R0eulerbound} and \eqref{eqn: zetaalphaalphabound} to bound the contribution of the error term. The result is
\begin{equation}\label{eqn: R23evaluated}
\begin{split}
R_{23} &= \sum_{\substack{\alpha\in A \\ \beta\in B}} \sum_{\substack{\alpha'\neq \alpha \\ \beta'\neq \beta }} \frac{1}{2 } X^{-\alpha'-\beta'}Q^{2-\alpha-\beta+\alpha'+\beta'} \\
 &\hspace{.25in}\times \widetilde{V}(-\alpha'-\beta') \widetilde{W}(2-\alpha-\beta+\alpha'+\beta') \mathscr{X}(\tfrac{1}{2}+\beta-\alpha'-\beta')\mathscr{X}(\tfrac{1}{2}+\alpha)  \\
&\hspace{.25in}\times \zeta(1-\alpha-\beta+\alpha'+\beta')  \prod_{\substack{ \hat{\alpha} \neq \alpha \\ \hat{\beta}\neq \beta \\ (\hat{\alpha}, \hat{\beta})\neq (\alpha',\beta') }} \zeta( 1+\hat{\alpha}+\hat{\beta} -\alpha'-\beta') \prod_{\hat{\alpha}\neq \alpha} \zeta(1+\hat{\alpha}-\alpha)\\
&\hspace{.25in}\prod_{\hat{\beta}\neq \beta} \zeta(1+\hat{\beta}-\beta ) h^{-\frac{1}{2}+\alpha}k^{-\frac{1}{2}-\alpha'-\beta'+\beta}  \mathcal{G} (2-\alpha-\beta+\alpha'+\beta',\alpha,\beta) \\
&\hspace{.25in}+ O\big( (Xhk)^{\varepsilon}(h,k)^{1/2} Q^{-96}\big).
\end{split}
\end{equation}
This, \eqref{eqn: R2split}, \eqref{eqn: R24bound}, \eqref{eqn: R21evaluated}, and \eqref{eqn: R22evaluated} complete our evaluation of $R_2$.

Having estimated $R_2$, we next turn our attention to the integral $R_3$ defined in \eqref{eqn: R0split}. Since
\begin{equation*}
\underset{w=2-\alpha-\beta+\alpha'+\beta'}{\text{Res}}\ \zeta( 3+\alpha'+\beta' -\alpha-\beta-w)=-1
\end{equation*}
and the winding number in the application of the residue theorem in \eqref{eqn: R0split} is $-1$, we may write
\begin{equation}\label{eqn: R3def}
\begin{split}
R_3 &= \sum_{\substack{\alpha\in A \\ \beta\in B}} \sum_{\substack{\alpha'\neq \alpha \\ \beta'\neq \beta}} \frac{1}{4\pi i}  \int_{(\epsilon/2)} X^{-\alpha'-\beta'}Q^{2-\alpha-\beta+\alpha'+\beta'}  \widetilde{V}(-\tfrac{1}{2}+\beta-\alpha'-\beta'+z)\widetilde{V}(\tfrac{1}{2}-\beta-z)\\
&\hspace{.25in}\widetilde{W}(2-\alpha-\beta+\alpha'+\beta')  \mathcal{H}(z,1-\alpha-\beta+\alpha'+\beta') \frac{e^{\delta z} - e^{-\delta z}}{2\delta z}\\
&\hspace{.25in}\times \zeta(\alpha+\beta-\alpha'-\beta')  \prod_{\substack{ \hat{\alpha}\neq \alpha \\ \hat{\beta}\neq \beta \\ (\hat{\alpha},\hat{\beta})\neq (\alpha',\beta')}} \zeta( 1+\hat{\alpha}+\hat{\beta} -\alpha'-\beta') \prod_{\hat{\alpha}\neq \alpha} \zeta(1+\hat{\alpha}-\alpha)\\
&\hspace{.25in}\times\prod_{\hat{\beta}\neq \beta} \zeta(1+\hat{\beta}-\beta )h^{-1+\alpha+\beta-\alpha'-\beta'+z}k^{-z}  \mathcal{G} (2-\alpha-\beta+\alpha'+\beta',\alpha,\beta)\,dz .
\end{split}
\end{equation}
By \eqref{eqn: Hintermsofchifactor}, we have
\begin{equation*}
\mathcal{H}(z,1-\alpha-\beta+\alpha'+\beta')=\mathscr{X}(1-\alpha-\beta+\alpha'+\beta')\mathscr{X}(1-z)\mathscr{X}(\alpha+\beta-\alpha'-\beta'+z),
\end{equation*}
and thus the functional equation of $\zeta(s)$ implies
$$
\zeta(\alpha+\beta-\alpha'-\beta')\mathcal{H}(z,1-\alpha-\beta+\alpha'+\beta') = \zeta(1-\alpha-\beta+\alpha'+\beta') \mathscr{X}(1-z)\mathscr{X}(\alpha+\beta-\alpha'-\beta'+z).
$$
It follows from this and \eqref{eqn: R3def} that
\begin{equation}\label{eqn: R3def2}
\begin{split}
R_3 &= \sum_{\substack{\alpha\in A \\ \beta\in B}} \sum_{\substack{\alpha'\neq \alpha \\ \beta'\neq \beta}} \frac{1}{4\pi i}  \int_{(\epsilon/2)} X^{-\alpha'-\beta'}Q^{2-\alpha-\beta+\alpha'+\beta'}  \widetilde{V}(-\tfrac{1}{2}+\beta-\alpha'-\beta'+z)\widetilde{V}(\tfrac{1}{2}-\beta-z)\\
&\hspace{.25in}\times\widetilde{W}(2-\alpha-\beta+\alpha'+\beta') \mathscr{X}(1-z)\mathscr{X}(\alpha+\beta-\alpha'-\beta'+z) \frac{e^{\delta z} - e^{-\delta z}}{2\delta z}\\
&\hspace{.25in}\times \zeta(1-\alpha-\beta+\alpha'+\beta') \prod_{\substack{ \hat{\alpha}\neq \alpha \\ \hat{\beta}\neq \beta \\ (\hat{\alpha},\hat{\beta})\neq (\alpha',\beta')}} \zeta( 1+\hat{\alpha}+\hat{\beta} -\alpha'-\beta') \prod_{\hat{\alpha}\neq \alpha} \zeta(1+\hat{\alpha}-\alpha)\\
&\hspace{.25in}\times\prod_{\hat{\beta}\neq \beta} \zeta(1+\hat{\beta}-\beta )h^{-1+\alpha+\beta-\alpha'-\beta'+z}k^{-z}  \mathcal{G} (2-\alpha-\beta+\alpha'+\beta',\alpha,\beta)\,dz .
\end{split}
\end{equation}
Some factors here do not depend on $z$, and we only need to evaluate
$$
\int_{(\epsilon/2)} \widetilde{V}(-\tfrac{1}{2}+\beta-\alpha'-\beta'+z)\widetilde{V}(\tfrac{1}{2}-\beta-z)  \mathscr{X}(1-z)\mathscr{X}(\alpha+\beta-\alpha'-\beta'+z)\frac{e^{\delta z} - e^{-\delta z}}{2\delta z} h^{z}k^{-z}\,dz.
$$
The part of this with $|\text{Im}z|\geq 1/\delta$ is negligible because of \eqref{eqn: mellinrapiddecay}, \eqref{eqn: Stirlingchi}, and the definition \eqref{eqn: deltadef} of $\delta$. In the complementary part with $|\text{Im}z|\leq 1/\delta$, we have \eqref{eqn: exppowerseriesapprox} and thus
\begin{equation*}
\begin{split}
\int_{(\epsilon/2)}& \widetilde{V}(-\tfrac{1}{2}+\beta-\alpha'-\beta'+z)\widetilde{V}(\tfrac{1}{2}-\beta-z) \\
&\hspace{.25in}\times\mathscr{X}(1-z)\mathscr{X}(\alpha+\beta-\alpha'-\beta'+z)\frac{e^{\delta z} - e^{-\delta z}}{2\delta z} h^{z}k^{-z}\,dz \\
&= \int_{\frac{\epsilon}{2}-\frac{i}{\delta}}^{\frac{\epsilon}{2}+\frac{i}{\delta}} \widetilde{V}(-\tfrac{1}{2}+\beta-\alpha'-\beta'+z)\widetilde{V}(\tfrac{1}{2}-\beta-z) \\ &\hspace{.25in}\times \mathscr{X}(1-z)\mathscr{X}(\alpha+\beta-\alpha'-\beta'+z) h^{z}k^{-z}\,dz+O\big((hk)^{\varepsilon} \delta \big).
\end{split}
\end{equation*}
By \eqref{eqn: deltadef} and \eqref{eqn: mellinrapiddecay}, we may extend the range of Im$(z)$ in the latter integral to $(-\infty,\infty)$ by adding a negligible quantity. We then make the change of variables
$$
s\longmapsto -\frac{1}{2}+\beta-\alpha'-\beta'+z,
$$
and afterward move the line of integration to $\re(s)=-\epsilon$. We traverse no poles in doing so, and we arrive at
\begin{equation*}
\begin{split}
\int_{(\epsilon/2)}& \widetilde{V}(-\tfrac{1}{2}+\beta-\alpha'-\beta'+z)\widetilde{V}(\tfrac{1}{2}-\beta-z)\\
&\hspace{.25in}\times\mathscr{X}(1-z)\mathscr{X}(\alpha+\beta-\alpha'-\beta'+z)\frac{e^{\delta z} - e^{-\delta z}}{2\delta z} h^{z}k^{-z}\,dz \\
&= \int_{(-\epsilon)} \widetilde{V}(s) \widetilde{V}(-\alpha'-\beta'-s)\\
&\hspace{.25in}\times\mathscr{X}(\tfrac{1}{2}+\beta-\alpha'-\beta'-s)  \mathscr{X}(\tfrac{1}{2}+\alpha+s)\left( \frac{h}{k}\right)^{\frac{1}{2}-\beta+\alpha'+\beta'+s}\,ds +O\big((hk)^{\varepsilon} \delta \big).
\end{split}
\end{equation*}
From this and \eqref{eqn: R3def2}, we deduce that
\begin{equation}\label{eqn: R3evaluated}
\begin{split}
R_3 &= \sum_{\substack{\alpha\in A \\ \beta\in B}} \sum_{\substack{\alpha'\neq \alpha \\ \beta'\neq \beta}} \frac{1}{4\pi i}  \int_{(-\epsilon)} X^{-\alpha'-\beta'}Q^{2-\alpha-\beta+\alpha'+\beta'} \\
 &\hspace{.25in}\times \widetilde{V}(s) \widetilde{V}(-\alpha'-\beta'-s) \widetilde{W}(2-\alpha-\beta+\alpha'+\beta')  \\
&\hspace{.25in}\times \mathscr{X}(\tfrac{1}{2}+\beta-\alpha'-\beta'-s)  \mathscr{X}(\tfrac{1}{2}+\alpha+s)\zeta(1-\alpha-\beta+\alpha'+\beta')\\
&\hspace{.25in}\times\prod_{\substack{ \hat{\alpha}\neq \alpha \\ \hat{\beta}\neq \beta \\ (\hat{\alpha},\hat{\beta})\neq (\alpha',\beta')}} \zeta( 1+\hat{\alpha}+\hat{\beta} -\alpha'-\beta') \prod_{\hat{\alpha}\neq \alpha} \zeta(1+\hat{\alpha}-\alpha)\prod_{\hat{\beta}\neq \beta} \zeta(1+\hat{\beta}-\beta )\\
&\hspace{.25in}\times h^{-\frac{1}{2}+\alpha+ s}k^{-\frac{1}{2}+\beta-\alpha'-\beta'- s}  \mathcal{G} (2-\alpha-\beta+\alpha'+\beta',\alpha,\beta)\,ds  \\
&\hspace{.25in}+ O\big( (Xhk)^{\varepsilon}k^{1/2} Q^{-96}\big),
\end{split}
\end{equation}
where we have applied \eqref{eqn: deltadef}, \eqref{eqn: zetaalphaalphabound}, \eqref{eqn: mellinrapiddecay}, \eqref{eqn: Hbound}, and Lemma~\ref{lem: R0eulerbound} to bound the error term.

Putting together our calculations, we see from \eqref{eqn: U2split}, \eqref{eqn: U2I2bound}, and \eqref{eqn: I1toR0} that
\begin{equation*}
\mathcal{U}^2(h,k) = R_0 + O \bigg( \bigg( Q^{1+\varepsilon} + \frac{Q^2}{C^{1-\varepsilon}}\bigg) \frac{(Xhk)^{\varepsilon}(h,k)}{\sqrt{hk}}  \bigg) + O\Big( X^{\varepsilon} Q^{\frac{3}{2}} h^{\varepsilon}k^{\varepsilon} + (XChk)^{\varepsilon} k X^2 Q^{-97}\Big).
\end{equation*}
From this, \eqref{eqn: R0split}, \eqref{eqn: R4bound}, \eqref{eqn: R2split}, and \eqref{eqn: R24bound}, we arrive at
\begin{equation}\label{eqn: U2residues}
\begin{split}
\mathcal{U}^2(h,k) &= R_1+R_{21}+R_{22}+R_{23}+R_{3} + O \bigg( \bigg( Q^{1+\varepsilon} + \frac{Q^2}{C^{1-\varepsilon}}\bigg) \frac{(Xhk)^{\varepsilon}(h,k)}{\sqrt{hk}}  \bigg)\\
&\hspace{.25in} + O\Big( X^{\varepsilon} Q^{\frac{3}{2}} h^{\varepsilon}k^{\varepsilon} + X^{-\frac{1}{2}+\varepsilon} Q^{\frac{5}{2}} (hk)^{\varepsilon} + (XChk)^{\varepsilon} k X^2 Q^{-97}\Big),
\end{split}
\end{equation}
where we have evaluated the residue $R_1$ in \eqref{eqn: R1evaluated}, $R_{21}$ in \eqref{eqn: R21evaluated}, $R_{22}$ in \eqref{eqn: R22evaluated}, $R_{23}$ in \eqref{eqn: R23evaluated}, and $R_{3}$ in \eqref{eqn: R3evaluated}. In the next subsection, we will match these five residues with the five residues on the right-hand side of \eqref{eqn: 1swapsready} in such a way that corresponding residues are equal, thus showing that $\mathcal{U}^2(h,k)$ is equal to $\mathcal{I}_1^*(h,k)$ up to an admissible error term.

\subsection{Matching the residues: Euler product evaluations}\label{sec: matchresidues}

To be able to show that each of the residues on the right-hand side of \eqref{eqn: U2residues} is equal to some term on the right-hand side of in \eqref{eqn: 1swapsready}, we will prove the following identity involving the Euler products $\mathcal{G}$ and $\mathcal{K}$.

\begin{lemma}\label{lem: GKeuleridentity}
Let $\alpha\in A$ and $\beta\in B$. Suppose that $h$ and $k$ are positive integers. If $\mathcal{G}$ is defined by \eqref{eqn: R0eulerGdef} and $\mathcal{K}$ by \eqref{eqn: 1swapeulerKdef}, then
\begin{equation}\label{eqn: GKeuleriden}
h^{-\frac{1}{2}+\alpha} k^{-\frac{1}{2}+\beta} \mathcal{G}(2-\alpha-\beta,\alpha,\beta;A,B,h,k) = \mathcal{K}(0,0,2-\alpha-\beta;A,B,\alpha,\beta,h,k).
\end{equation}
\end{lemma}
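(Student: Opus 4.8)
The plan is to reduce \eqref{eqn: GKeuleriden} to a prime-by-prime comparison of Euler factors. Both sides are absolutely convergent Euler products: the right-hand side by Lemma~\ref{lem: 1swapeulerbound} (the point $(s_1,s_2,w)=(0,0,2-\alpha-\beta)$ satisfies conditions (i)--(iii) there, since the shifts are $\ll 1/\log Q$), and $\mathcal{G}(2-\alpha-\beta,\alpha,\beta)$ by Lemma~\ref{lem: R0eulerbound} (since $1+\epsilon\le\re(2-\alpha-\beta)\le\frac52-\epsilon$). Writing $h^{-\frac12+\alpha}k^{-\frac12+\beta}=\prod_{p\mid hk}p^{(-\frac12+\alpha)\ordp(h)+(-\frac12+\beta)\ordp(k)}$, it therefore suffices to show, for every prime $p$, that the Euler factor at $p$ of $h^{-\frac12+\alpha}k^{-\frac12+\beta}\mathcal{G}(2-\alpha-\beta,\alpha,\beta)$ equals the Euler factor at $p$ of $\mathcal{K}(0,0,2-\alpha-\beta)$. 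I would split this into the cases $p\nmid hk$ and $p\mid hk$.

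For $p\nmid hk$ the extra power of $p$ is absent, and both Euler factors are rational functions of $p$ and of the quantities $p^{-\hat\alpha}$ $(\hat\alpha\in A)$, $p^{-\hat\beta}$ $(\hat\beta\in B)$. First I would substitute $w=2-\alpha-\beta$ throughout \eqref{eqn: R0eulerGdef}, which collapses the exponents: $3+\hat\alpha+\hat\beta-\alpha-\beta-w=1+\hat\alpha+\hat\beta$, $3-\alpha-\beta-w=1$, $2-\alpha-w=\beta$, $2-\beta-w=\alpha$, and $p^w/p^2=p^{-\alpha-\beta}$. On the $\mathcal{K}$ side one uses the elementary simplification $(1-\tfrac1p)\bigl(1+\tfrac{p-2}{p^2}+\tfrac{p-1}{p^3}\bigr)=(1-p^{-2})^2$ and checks that the product $\prod_{\gamma,\delta}(1-p^{-1-\gamma-\delta})$ over $\gamma\in A\smallsetminus\{\alpha\}\cup\{-\beta\}$, $\delta\in B\smallsetminus\{\beta\}\cup\{-\alpha\}$ in \eqref{eqn: 1swapeulerKdef} differs from the product $\prod_{\hat\alpha\ne\alpha,\hat\beta\ne\beta}(1-p^{-1-\hat\alpha-\hat\beta})\prod_{\hat\alpha\in A}(1-p^{-1-\hat\alpha+\alpha})\prod_{\hat\beta\in B}(1-p^{-1-\hat\beta+\beta})$ in \eqref{eqn: R0eulerGdef} by a single explicit factor. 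The heart of the matter is then to match the remaining ``sum parts'': the $\mathcal{G}$-side has three sums (one over $m=n$-type indices with weight $\bigl(1+\tfrac{p^w}{p^2(p-1)}-\tfrac1{p-1}\bigr)$, and two over $m<n$ and $n<m$ with weight $(1-p^w/p^2)$), built from $\tau_A(p^m)$ and $\tau_B(p^n)$, whereas the $\mathcal{K}$-side has essentially a single diagonal sum built from $\tau_{A\smallsetminus\{\alpha\}\cup\{-\beta\}}$ and $\tau_{B\smallsetminus\{\beta\}\cup\{-\alpha\}}$. To bridge them I would rewrite $\tau_A(p^m)=\sum_{j\le m}\tau_{A\smallsetminus\{\alpha\}}(p^j)p^{-\alpha(m-j)}$ and $\tau_{A\smallsetminus\{\alpha\}\cup\{-\beta\}}(p^m)=\sum_{j\le m}\tau_{A\smallsetminus\{\alpha\}}(p^j)p^{\beta(m-j)}$ (with the analogous formulas for $B$), expressing every sum through $\tau_{A\smallsetminus\{\alpha\}}$ and $\tau_{B\smallsetminus\{\beta\}}$ convolved with geometric progressions, and then apply the ``swap'' identity of Conrey--Keating (Lemma~\ref{lem: CK3identity}) termwise: it is precisely the algebraic relation that collapses the off-diagonal ($m<n$ and $n<m$) contributions of $\mathcal{G}$ into the diagonal-constrained contribution of $\mathcal{K}$, after which both Euler factors reduce to the same rational function.

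For $p\mid hk$ I would run the same argument, now keeping the offsets $h_p:=\ordp(h)$ and $k_p:=\ordp(k)$. Substituting $w=2-\alpha-\beta$ and making the change of index $m=\nu+\max\{0,k_p-h_p\}$ (as in the proof of Lemma~\ref{lem: U2eulerbound}) exposes the common shape of the two Euler factors, and the diagonal constraint $m+\ordp(h)=n+\ordp(k)$ in $\mathcal{K}$ is exactly the one governing the $p^{(1-w)\min\{m+\ordp(h),\,n+\ordp(k)\}}$ weight in $\mathcal{G}$. The extra prime power $p^{(-\frac12+\alpha)h_p+(-\frac12+\beta)k_p}$ is precisely what converts the $p^{m/2}p^{n/2}$ normalization of $\mathcal{K}$ into the $p^{m(1-\alpha)}p^{n(1-\beta)}$ normalization of $\mathcal{G}$, once one uses $m+h_p=n+k_p$ together with \eqref{eqn: taufactoringidentity} to shuffle the exponents; with that accounting in place, the same application of Lemma~\ref{lem: CK3identity} finishes the case.

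The step I expect to be the main obstacle is the $p\mid hk$ bookkeeping: keeping careful track of how the divisibility constraint $m+\ordp(h)=n+\ordp(k)$, the $\min\{m+\ordp(h),\,n+\ordp(k)\}$ in the exponents, and the global factor $h^{-\frac12+\alpha}k^{-\frac12+\beta}$ distribute over the primes, and verifying that the re-indexed sums still match termwise under Lemma~\ref{lem: CK3identity}. By contrast, the $p\nmid hk$ case, although it requires a somewhat lengthy rational-function identity, is essentially mechanical once the ``swap'' identity is invoked.
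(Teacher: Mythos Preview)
Your proposal is correct and follows essentially the same route as the paper: reduce to a prime-by-prime comparison of Euler factors (using Lemmas~\ref{lem: 1swapeulerbound} and \ref{lem: R0eulerbound} for absolute convergence), split into $p\nmid hk$ and $p\mid hk$, and use the Conrey--Keating identity (Lemma~\ref{lem: CK3identity}) as the algebraic engine that converts the $\tau_A,\tau_B$ structure on the $\mathcal{G}$ side into the $\tau_{A\smallsetminus\{\alpha\}\cup\{-\beta\}},\tau_{B\smallsetminus\{\beta\}\cup\{-\alpha\}}$ structure on the $\mathcal{K}$ side.

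The only organizational difference is in how the intermediate algebra is packaged. Where you propose to expand $\tau_A(p^m)$ and $\tau_{A\smallsetminus\{\alpha\}\cup\{-\beta\}}(p^m)$ as convolutions of $\tau_{A\smallsetminus\{\alpha\}}$ with geometric progressions and work directly, the paper isolates two auxiliary identities: Lemma~\ref{lem: tauseriesidentity}, which converts each off-diagonal sum $\sum_{m<n}$ (resp.\ $\sum_{n<m}$) into a diagonal sum involving $\tau_{A\cup\{-\beta\}}$ (resp.\ $\tau_{B\cup\{-\alpha\}}$), and Lemma~\ref{lem: taulongidentity}, which rewrites the resulting four-term combinations in terms of $\tau_{A\smallsetminus\{\alpha\}\cup\{-\beta\}}\tau_{B\smallsetminus\{\beta\}}$ and $\tau_{A\smallsetminus\{\alpha\}}\tau_{B\smallsetminus\{\beta\}\cup\{-\alpha\}}$. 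After this repackaging, the paper splits the summand into $D_{1,m}+D_{2,m}+D_{3,m}$ (and $D_{1,m,n}+D_{2,m,n}+D_{3,m,n}$ for $p\mid hk$), identifies each piece via these lemmas, and only then invokes Lemma~\ref{lem: CK3identity}. Your convolution expansions would reproduce the content of Lemmas~\ref{lem: taulongidentity} and \ref{lem: tauseriesidentity} in situ, so the two arguments are equivalent; the paper's version just makes the structure of the cancellation more transparent and reusable.
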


Our proof of Lemma~\ref{lem: GKeuleridentity} will depend on the following three lemmas. The first is a slight generalization of an identity due to Conrey and Keating~\cite{CK3}

\begin{lemma}\label{lem: CK3identity}
Let $\alpha\in A$ and $\beta\in B$. Suppose that $j$ and $\ell$ are nonnegative integers and $p$ is a prime. Then
\begin{equation*}
\begin{split}
\tau_{A\smallsetminus\{\alpha\}\cup \{-\beta\}} (p^j) \tau_{B\smallsetminus\{\beta\}} (p^{\ell}) + \tau_{A\smallsetminus\{\alpha\}} (p^j) \tau_{B\smallsetminus\{\beta\}\cup \{-\alpha\}} (p^{\ell}) -  \tau_{A\smallsetminus\{\alpha\}} (p^j) \tau_{B\smallsetminus\{\beta\}} (p^{\ell}) \\
= \tau_{A\smallsetminus\{\alpha\}\cup \{-\beta\}} (p^j) \tau_{B\smallsetminus\{\beta\} \cup\{-\alpha\}} (p^{\ell}) -p^{\alpha+\beta} \tau_{A\smallsetminus\{\alpha\}\cup \{-\beta\}}(p^{j-1}) \tau_{B\smallsetminus\{\beta\} \cup\{-\alpha\}} (p^{\ell-1}),
\end{split}
\end{equation*}
where $\tau_E(p^{-1})$ is defined to be zero for any multiset $E$.
\end{lemma}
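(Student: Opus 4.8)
The plan is to reduce everything to generating functions in a single variable. For a finite multiset $E$ of complex numbers and $|x|$ small, write $F_E(x):=\sum_{j\geq 0}\tau_E(p^j)x^j=\prod_{\xi\in E}(1-p^{-\xi}x)^{-1}$, which is the local Euler factor of $\prod_{\xi\in E}\zeta(s+\xi)$ with $x=p^{-s}$. The claimed identity is bilinear in the two variables $x$ (tracking the $p^j$-entry) and $y$ (tracking the $p^\ell$-entry), so it suffices to prove the corresponding identity of power series in $x$ and $y$, namely
\begin{equation*}
F_{A\smallsetminus\{\alpha\}\cup\{-\beta\}}(x)F_{B\smallsetminus\{\beta\}}(y)+F_{A\smallsetminus\{\alpha\}}(x)F_{B\smallsetminus\{\beta\}\cup\{-\alpha\}}(y)-F_{A\smallsetminus\{\alpha\}}(x)F_{B\smallsetminus\{\beta\}}(y)=(1-p^{\alpha+\beta}xy)F_{A\smallsetminus\{\alpha\}\cup\{-\beta\}}(x)F_{B\smallsetminus\{\beta\}\cup\{-\alpha\}}(y),
\end{equation*}
and then read off the coefficient of $x^jy^\ell$ on both sides (the term $-p^{\alpha+\beta}\tau_{A\smallsetminus\{\alpha\}\cup\{-\beta\}}(p^{j-1})\tau_{B\smallsetminus\{\beta\}\cup\{-\alpha\}}(p^{\ell-1})$ is exactly the coefficient of $x^jy^\ell$ in $-p^{\alpha+\beta}xy\,F_{A\smallsetminus\{\alpha\}\cup\{-\beta\}}(x)F_{B\smallsetminus\{\beta\}\cup\{-\alpha\}}(y)$, and the convention $\tau_E(p^{-1})=0$ makes the edge cases $j=0$ or $\ell=0$ automatic).

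To verify this power-series identity, factor out the common quantity $C:=F_{A\smallsetminus\{\alpha\}}(x)F_{B\smallsetminus\{\beta\}}(y)$ from every term. Using $F_{A\smallsetminus\{\alpha\}\cup\{-\beta\}}(x)=\frac{1}{1-p^{\beta}x}F_{A\smallsetminus\{\alpha\}}(x)$ and $F_{B\smallsetminus\{\beta\}\cup\{-\alpha\}}(y)=\frac{1}{1-p^{\alpha}y}F_{B\smallsetminus\{\beta\}}(y)$, the identity becomes, after dividing by $C$,
\begin{equation*}
\frac{1}{1-p^{\beta}x}+\frac{1}{1-p^{\alpha}y}-1=\frac{1-p^{\alpha+\beta}xy}{(1-p^{\beta}x)(1-p^{\alpha}y)}.
\end{equation*}
Clearing denominators, the left side becomes $\frac{(1-p^{\alpha}y)+(1-p^{\beta}x)-(1-p^{\beta}x)(1-p^{\alpha}y)}{(1-p^{\beta}x)(1-p^{\alpha}y)}$, and a one-line expansion of the numerator gives $1-p^{\alpha+\beta}xy$, which matches the right side. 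This is an elementary algebraic check valid as an identity of rational functions (hence of formal power series upon expansion near $x=y=0$), with no convergence concerns, so the lemma follows by comparing coefficients.

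I do not anticipate a genuine obstacle here: the only things to be careful about are (i) correctly handling the boundary cases $j=0$ and $\ell=0$ via the $\tau_E(p^{-1})=0$ convention, which the generating-function formulation absorbs cleanly, and (ii) noting that $A\smallsetminus\{\alpha\}$ and $B\smallsetminus\{\beta\}$ may themselves be multisets (if $\alpha$ or $\beta$ has multiplicity), but this plays no role since the argument only uses the product formula $F_E(x)=\prod_{\xi\in E}(1-p^{-\xi}x)^{-1}$, which is valid for multisets. The statement is called a "slight generalization of an identity of Conrey and Keating" precisely because their version had all shifts related in a specific way; here $\alpha,\beta$ and the elements of $A\smallsetminus\{\alpha\}$, $B\smallsetminus\{\beta\}$ are unconstrained, but the proof via the two-variable generating function is unchanged.
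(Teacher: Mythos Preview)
Your proof is correct and takes a genuinely different route from the paper's. The paper argues directly at the level of coefficients: it repeatedly applies the recursion $\tau_E(p^m)=\tau_{E\smallsetminus\{\gamma\}}(p^m)+p^{-\gamma}\tau_E(p^{m-1})$ to expand the left-hand side, then adds and subtracts the cross term $p^{\alpha+\beta}\tau_{A\smallsetminus\{\alpha\}\cup\{-\beta\}}(p^{j-1})\tau_{B\smallsetminus\{\beta\}\cup\{-\alpha\}}(p^{\ell-1})$ so that what remains factors as a product of two bracketed expressions, each of which collapses via the same recursion. Your approach instead passes to the two-variable generating function, where the identity becomes the trivial rational-function check $\frac{1}{1-p^{\beta}x}+\frac{1}{1-p^{\alpha}y}-1=\frac{1-p^{\alpha+\beta}xy}{(1-p^{\beta}x)(1-p^{\alpha}y)}$. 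Your argument is shorter and more transparent---it makes clear that the identity is really about the single extra Euler factors attached to $-\beta$ and $-\alpha$, with the ambient multisets $A\smallsetminus\{\alpha\}$ and $B\smallsetminus\{\beta\}$ playing no role---while the paper's coefficient-level proof has the virtue of being self-contained and not requiring the reader to translate back and forth between sequences and series. Both handle the boundary cases $j=0$ or $\ell=0$ automatically.
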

\begin{proof}
We argue as in \cite{CK3}. Observe that the definition \eqref{eqn: taudef} implies that if $m$ is any nonnegative integer, $E$ is any finite multiset, and $\gamma\in E$, then
\begin{equation}\label{eqn: tauremoveelement}
\tau_E(p^m)=\tau_{E\smallsetminus \{\gamma\}}(p^m) + p^{-\gamma}\tau_E(p^{m-1}).
\end{equation}
We apply this, multiply out the resulting products, and then cancel one $\tau_{A\smallsetminus\{\alpha\}} (p^j) \tau_{B\smallsetminus\{\beta\}} (p^{\ell})$ with its negative to deduce that
\begin{equation*}
\begin{split}
&\tau_{A\smallsetminus\{\alpha\}\cup \{-\beta\}} (p^j) \tau_{B\smallsetminus\{\beta\}} (p^{\ell}) + \tau_{A\smallsetminus\{\alpha\}} (p^j) \tau_{B\smallsetminus\{\beta\}\cup \{-\alpha\}} (p^{\ell}) -  \tau_{A\smallsetminus\{\alpha\}} (p^j) \tau_{B\smallsetminus\{\beta\}} (p^{\ell}) \\
&= \Big( \tau_{A\smallsetminus\{\alpha\}} (p^j) + p^{\beta}\tau_{A\smallsetminus\{\alpha\}\cup \{-\beta\}} (p^{j-1}) \Big) \tau_{B\smallsetminus\{\beta\}} (p^{\ell}) \\
&\hspace{.5in}+ \tau_{A\smallsetminus\{\alpha\}} (p^j) \Big( \tau_{B\smallsetminus\{\beta\} } (p^{\ell}) + p^{\alpha} \tau_{B\smallsetminus\{\beta\}\cup \{-\alpha\}} (p^{\ell-1}) \Big)-  \tau_{A\smallsetminus\{\alpha\}} (p^j) \tau_{B\smallsetminus\{\beta\}} (p^{\ell}) \\
&= \tau_{A\smallsetminus\{\alpha\}} (p^j)\tau_{B\smallsetminus\{\beta\}} (p^{\ell}) + p^{\beta}\tau_{A\smallsetminus\{\alpha\}\cup \{-\beta\}} (p^{j-1})\tau_{B\smallsetminus\{\beta\}} (p^{\ell}) + p^{\alpha}\tau_{A\smallsetminus\{\alpha\}}(p^{j }) \tau_{B\smallsetminus\{\beta\}\cup \{-\alpha\}} (p^{\ell-1}).
\end{split}
\end{equation*}
We add and subtract $p^{\alpha+\beta}\tau_{A\smallsetminus\{\alpha\}\cup\{-\beta\}} (p^{j-1}) \tau_{B\smallsetminus\{\beta\}\cup\{-\alpha\}} (p^{\ell-1})$, and then factor part of the resulting expression to arrive at
\begin{equation*}
\begin{split}
\tau_{A\smallsetminus\{\alpha\}\cup \{-\beta\}} (p^j) \tau_{B\smallsetminus\{\beta\}} (p^{\ell}) + \tau_{A\smallsetminus\{\alpha\}} (p^j) \tau_{B\smallsetminus\{\beta\}\cup \{-\alpha\}} (p^{\ell}) -  \tau_{A\smallsetminus\{\alpha\}} (p^j) \tau_{B\smallsetminus\{\beta\}} (p^{\ell}) \\
= \Big( \tau_{A\smallsetminus\{\alpha\}} (p^j) + p^{\beta}\tau_{A\smallsetminus\{\alpha\}\cup \{-\beta\}} (p^{j-1})\Big) \Big( \tau_{B\smallsetminus\{\beta\}} (p^{\ell}) + p^{\alpha} \tau_{B\smallsetminus\{\beta\}\cup \{-\alpha\}} (p^{\ell-1})\Big) \\
- p^{\alpha+\beta}\tau_{A\smallsetminus\{\alpha\}\cup\{-\beta\}} (p^{j-1}) \tau_{B\smallsetminus\{\beta\}\cup\{-\alpha\}} (p^{\ell-1}).
\end{split}
\end{equation*}
The lemma now follows from this and \eqref{eqn: tauremoveelement}.
\end{proof}

\begin{lemma}\label{lem: taulongidentity}
Let $\alpha\in A$ and $\beta\in B$. Suppose that $j$ and $\ell$ are nonnegative integers and $p$ is a prime. Then
\begin{equation}\label{eqn: taulongiden}
\begin{split}
\tau_{A\smallsetminus\{\alpha\}\cup \{-\beta\}} (p^j) \tau_{B\smallsetminus\{\beta\}} (p^{\ell})&=
(1-p^{-\alpha-\beta}) \tau_{A\cup\{-\beta\}} (p^j)\tau_{B  } (p^{\ell})+p^{-\alpha-\beta} \tau_A(p^j)\tau_{B  } (p^{\ell}) \\
&-p^{-\beta} \tau_A(p^j) \tau_{B  } (p^{\ell-1})-(1-p^{-\alpha-\beta}) \tau_{A\cup\{-\beta\}} (p^{j-1})\tau_{B  } (p^{\ell-1})
\end{split}
\end{equation}
where $\tau_E(p^{-1})$ is defined to be zero for any multiset $E$.
\end{lemma}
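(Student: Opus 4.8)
The plan is to reduce everything to repeated use of the elementary recursion
$$
\tau_E(p^m)=\tau_{E\smallsetminus\{\gamma\}}(p^m)+p^{-\gamma}\tau_E(p^{m-1})\qquad(\gamma\in E),
$$
which is \eqref{eqn: tauremoveelement} from the proof of Lemma~\ref{lem: CK3identity}, with the convention $\tau_E(p^{-1})=0$ handling the boundary cases $j=0$ and $\ell=0$. First I would expand the left-hand side of \eqref{eqn: taulongiden}. Applying the recursion with $E=A\cup\{-\beta\}$ and $\gamma=\alpha$ (legitimate since $\alpha\in A\subseteq A\cup\{-\beta\}$, and $(A\cup\{-\beta\})\smallsetminus\{\alpha\}=A\smallsetminus\{\alpha\}\cup\{-\beta\}$ as multisets) gives
$$
\tau_{A\smallsetminus\{\alpha\}\cup\{-\beta\}}(p^j)=\tau_{A\cup\{-\beta\}}(p^j)-p^{-\alpha}\tau_{A\cup\{-\beta\}}(p^{j-1}),
$$
and applying it with $E=B$, $\gamma=\beta$ gives $\tau_{B\smallsetminus\{\beta\}}(p^\ell)=\tau_B(p^\ell)-p^{-\beta}\tau_B(p^{\ell-1})$. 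Multiplying these two expansions writes the left-hand side of \eqref{eqn: taulongiden} as a four-term combination of the products $\tau_{A\cup\{-\beta\}}(p^{j})\tau_B(p^{\ell})$, $\tau_{A\cup\{-\beta\}}(p^{j})\tau_B(p^{\ell-1})$, $\tau_{A\cup\{-\beta\}}(p^{j-1})\tau_B(p^{\ell})$, $\tau_{A\cup\{-\beta\}}(p^{j-1})\tau_B(p^{\ell-1})$, with coefficients $1$, $-p^{-\beta}$, $-p^{-\alpha}$, $p^{-\alpha-\beta}$ respectively.

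Next I would expand the right-hand side into the same basis. The only nontrivial input is the single relation obtained by applying the recursion once more with $E=A\cup\{-\beta\}$ and $\gamma=-\beta$, namely
$$
\tau_A(p^m)=\tau_{A\cup\{-\beta\}}(p^m)-p^{\beta}\tau_{A\cup\{-\beta\}}(p^{m-1}).
$$
Substituting this for $\tau_A(p^j)$ in the two $\tau_A$-terms on the right-hand side of \eqref{eqn: taulongiden} eliminates $\tau_A$ entirely, and after collecting terms the right-hand side becomes exactly the four-term expression found above. This is a finite, purely algebraic verification with no analytic content, so it can simply be written out; I would organize it by computing both sides in the basis $\{\tau_{A\cup\{-\beta\}}(p^{j-i})\tau_B(p^{\ell-i'})\}_{i,i'\in\{0,1\}}$ and matching coefficients, mirroring the bookkeeping in the proof of Lemma~\ref{lem: CK3identity}.

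There is no genuine obstacle here: the whole content is the identity \eqref{eqn: tauremoveelement}, used three times. The only point requiring a moment's care is the multiset bookkeeping—verifying that the various $\smallsetminus$ and $\cup$ operations commute in the way used above, and that each application of the recursion is to an element that genuinely lies in the relevant multiset. Because $\alpha\in A$ has positive multiplicity in $A$ (hence in $A\cup\{-\beta\}$), removing one copy of $\alpha$ and adjoining $-\beta$ commute even in the degenerate case $\alpha=-\beta$, so this causes no trouble; the ``hard part'' is merely to arrange the expansion so the cancellation is transparent. I expect the proof to be about the same length as that of Lemma~\ref{lem: CK3identity}.
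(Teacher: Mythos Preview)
Your proposal is correct and uses essentially the same approach as the paper: both proofs rely solely on repeated applications of the recursion \eqref{eqn: tauremoveelement}. The only difference is organizational---the paper manipulates the right-hand side until it factors as $\big(\tau_A(p^j)+(p^{\beta}-p^{-\alpha})\tau_{A\cup\{-\beta\}}(p^{j-1})\big)\big(\tau_B(p^{\ell})-p^{-\beta}\tau_B(p^{\ell-1})\big)$ and then identifies each factor via \eqref{eqn: tauremoveelement}, whereas you expand both sides in the basis $\{\tau_{A\cup\{-\beta\}}(p^{j-i})\tau_B(p^{\ell-i'})\}_{i,i'\in\{0,1\}}$ and match coefficients; either presentation works.
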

\begin{proof}
We apply \eqref{eqn: tauremoveelement} and multiply out the resulting expression to deduce that
\begin{equation*}
\begin{split}
(1-p^{-\alpha-\beta}) \tau_{A\cup\{-\beta\}} (p^j)\tau_{B  } (p^{\ell})
&= (1-p^{-\alpha-\beta})\Big(  \tau_{A} (p^j)+p^{\beta} \tau_{A\cup\{-\beta\}} (p^{j-1}) \Big)\tau_{B  } (p^{\ell}) \\
&= \tau_{A} (p^j)\tau_{B  } (p^{\ell}) -p^{-\alpha-\beta} \tau_{A} (p^j)\tau_{B  } (p^{\ell})\\
&\hspace{.5in}+ (p^{\beta}-p^{-\alpha }) \tau_{A\cup\{-\beta\}} (p^{j-1}) \tau_{B  } (p^{\ell})
\end{split}
\end{equation*}
The term $-p^{-\alpha-\beta} \tau_{A} (p^j)\tau_{B  } (p^{\ell})$ cancels with its negative on the left-hand side of \eqref{eqn: taulongiden}, and it follows that
\begin{equation*}
\begin{split}
(1-p^{-\alpha-\beta}) \tau_{A\cup\{-\beta\}} (p^j)\tau_{B  } (p^{\ell}) &+p^{-\alpha-\beta} \tau_A(p^j)\tau_{B  } (p^{\ell}) \\
&-p^{-\beta} \tau_A(p^j) \tau_{B  } (p^{\ell-1}) -(1-p^{-\alpha-\beta}) \tau_{A\cup\{-\beta\}} (p^{j-1})\tau_{B  } (p^{\ell-1}) \\
&\hspace{-.5in}= \tau_{A} (p^j)\tau_{B  } (p^{\ell}) + (p^{\beta}-p^{-\alpha }) \tau_{A\cup\{-\beta\}} (p^{j-1}) \tau_{B  } (p^{\ell})  \\
&\hspace{.25in}-p^{-\beta} \tau_A(p^j) \tau_{B  } (p^{\ell-1}) -(1-p^{-\alpha-\beta}) \tau_{A\cup\{-\beta\}} (p^{j-1})\tau_{B  } (p^{\ell-1}).
\end{split}
\end{equation*}
The right-hand side factors as
\begin{equation*}
\begin{split}
\Big( \tau_{A} (p^j) + (p^{\beta}-p^{-\alpha }) \tau_{A\cup\{-\beta\}} (p^{j-1}) \Big)\Big( \tau_{B  } (p^{\ell})-p^{-\beta} \tau_{B  } (p^{\ell-1})\Big),
\end{split}
\end{equation*}
which, by \eqref{eqn: tauremoveelement}, equals $\tau_{A\smallsetminus\{\alpha\}\cup \{-\beta\}} (p^j) \tau_{B\smallsetminus\{\beta\}} (p^{\ell})$.
\end{proof}

\begin{lemma}\label{lem: tauseriesidentity}
Let $\beta\in B$. Suppose that $j$ and $\ell$ are nonnegative integers and $p$ is a prime. Then
\begin{equation*}
p^{(\frac{1}{2}-\beta)(j-\ell)} \sum_{ \substack{ 0\leq m,n<\infty \\ m+ j< n+\ell}  } \frac{\tau_A(p^{m}) \tau_B(p^{n}) }{ p^{m\beta } p^{n(1-\beta)} }  = \sum_{\substack{ 0\leq m,n<\infty \\ m+j =n+\ell}} \frac{ \tau_{A\cup\{-\beta\}}(p^m) \tau_B(p^n) - \tau_{A}(p^m) \tau_B(p^n) }{p^{\frac{m}{2}+\frac{n}{2}}}
\end{equation*}
\end{lemma}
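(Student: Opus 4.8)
The plan is to rewrite both sides as one and the same triple sum, indexed by nonnegative integers $m,n$ and an ``overhang'' parameter $i\ge 1$ chosen so that the linear constraint reads $m+i+j=n+\ell$, and then to read off the equality term by term. All the series occurring below converge absolutely under the standing assumption $\alpha,\beta\ll 1/\log Q$ together with the divisor bound \eqref{eqn: divisorbound}: on the left-hand side, for each fixed $n$ only finitely many $m$ satisfy $m+j<n+\ell$, and the remaining sum over $n$ converges; on the right-hand side, the $m+j=n+\ell$ constraint makes it a single geometric-type sum. Hence the rearrangements that follow are legitimate, and it suffices to match the two expressions formally.

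First I would handle the right-hand side. Since $\tau_{\{-\beta\}}(p^i)=p^{i\beta}$, the definition \eqref{eqn: taudef} (equivalently, iterating \eqref{eqn: tauremoveelement} with $\gamma=-\beta$) gives the convolution identity
\[
\tau_{A\cup\{-\beta\}}(p^m)=\sum_{i=0}^{m}p^{i\beta}\,\tau_A(p^{m-i}),
\qquad\text{so that}\qquad
\tau_{A\cup\{-\beta\}}(p^m)-\tau_A(p^m)=\sum_{i=1}^{m}p^{i\beta}\,\tau_A(p^{m-i}).
\]
Substituting this into the right-hand side of the asserted identity and then reindexing via $m\mapsto m+i$ (with the new $m$ ranging over $\mathbb{Z}_{\ge 0}$ and $i\ge 1$, and noting $p^{m/2}=p^{(m+i)/2}$ after the shift) turns it into
\[
\sum_{i\ge 1}\ \sum_{\substack{m,n\ge 0\\ m+i+j=n+\ell}}\tau_A(p^m)\,\tau_B(p^n)\,p^{-m/2-n/2-i/2+i\beta}.
\]

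Next I would treat the left-hand side. Parametrize the region $m+j<n+\ell$ by setting $i=(n+\ell)-(m+j)\ge 1$, so that $j-\ell=n-m-i$ and the prefactor becomes $p^{(\frac12-\beta)(n-m-i)}$. Multiplying this by the factor $p^{-m\beta-n(1-\beta)}$ already present, the exponent of $p$ is $(\tfrac12-\beta)(n-m-i)-m\beta-n(1-\beta)$; expanding, the $\beta$-terms attached to $m$ cancel ($-\tfrac m2+\beta m-m\beta=-\tfrac m2$) and likewise those attached to $n$ ($\tfrac n2-\beta n-n+n\beta=-\tfrac n2$), leaving exactly $-m/2-n/2-i/2+i\beta$. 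Thus the left-hand side equals the same triple sum displayed above, which proves the lemma. There is no genuine obstacle here; the only points demanding care are the absolute-convergence justification for the rearrangement and the exponent bookkeeping in the last step, both routine.
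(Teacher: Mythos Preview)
Your proposal is correct and follows essentially the same approach as the paper: both expand $\tau_{A\cup\{-\beta\}}(p^m)-\tau_A(p^m)$ via the convolution identity $\sum_{i\ge 1}p^{i\beta}\tau_A(p^{m-i})$ and then match exponents with the left-hand side. The only cosmetic difference is that you introduce the overhang parameter $i$ explicitly and reduce both sides to a common triple sum, whereas the paper substitutes $m=n+\ell-j$ directly on the right-hand side and simplifies to the left-hand side; the underlying computation is identical.
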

\begin{proof}
The definition \eqref{eqn: taudef} of $\tau_E$ implies that if $D$ and $E$ are finite multisets, then the Dirichlet convolution $\tau_D*\tau_E$ of $\tau_D$ and $\tau_E$ is $\tau_{D\cup E}$. It follows from this and the definition of Dirichlet convolution that, for each nonnegative integer $m$,
\begin{equation*}
\tau_{A\cup\{-\beta\}} (p^{m-1}) = (\tau_A*\tau_{\{-\beta\}})(p^{m-1}) = \sum_{\nu=0}^{m-1} \tau_A(p^{\nu}) \tau_{\{-\beta\}} (p^{m-1-\nu}) = \sum_{\nu=0}^{m-1} \tau_A(p^{\nu}) p^{\beta(m-1-\nu)}.
\end{equation*}
This and the identity \eqref{eqn: tauremoveelement} imply
\begin{equation*}
 \tau_{A\cup\{-\beta\}}(p^m) - \tau_{A}(p^m) = p^{\beta} \tau_{A\cup\{-\beta\}}(p^{m-1})= \sum_{\nu=0}^{m-1} \tau_A(p^{\nu}) p^{\beta(m-\nu)}.
\end{equation*}
Therefore
\begin{equation*}
\sum_{\substack{ 0\leq m,n<\infty \\ m+j =n+\ell}} \frac{ \tau_{A\cup\{-\beta\}}(p^m) \tau_B(p^n) - \tau_{A}(p^m) \tau_B(p^n) }{p^{\frac{m}{2}+\frac{n}{2}}} = \sum_{\substack{ 0\leq m,n<\infty \\ m+j =n+\ell}} \frac{ \tau_B(p^n)  }{p^{\frac{m}{2}+\frac{n}{2}}}\sum_{\nu=0}^{m-1} \tau_A(p^{\nu}) p^{\beta(m-\nu)}.
\end{equation*}
In the latter sum, we may replace $m$ with $n+\ell-j$ to write the sum as
\begin{equation*}
\sum_{n=0}^{\infty} \frac{ \tau_B(p^n)  }{p^{n + \frac{1}{2}(\ell-j)} }\sum_{\nu=0}^{n+\ell-j-1} \tau_A(p^{\nu}) p^{\beta(n+\ell-j-\nu)}= p^{(\frac{1}{2}-\beta)(j-\ell)} \sum_{ \substack{ 0\leq \nu,n<\infty \\ \nu+ j< n+\ell}  } \frac{\tau_A(p^{\nu}) \tau_B(p^{n}) }{ p^{\nu\beta } p^{n(1-\beta)} }.
\end{equation*}
\end{proof}

\begin{proof}[Proof of Lemma~\ref{lem: GKeuleridentity}]
We may write each side of \eqref{eqn: GKeuleriden} as an Euler product by the definitions \eqref{eqn: 1swapeulerKdef} of $\mathcal{K}$ and  \eqref{eqn: R0eulerGdef} of $\mathcal{G}$. The Euler products converge absolutely by Lemmas~\ref{lem: 1swapeulerbound} and \ref{lem: R0eulerbound}. To prove Lemma~\ref{lem: GKeuleridentity}, it suffices to show for each $p$ that the local factors corresponding to $p$ in these Euler products agree.

We first examine the local factors corresponding to a given prime $p\nmid hk$. For brevity, let $\mathfrak{F}_p$ denote the local factor corresponding to this $p$ in the Euler product expression for the left-hand side of \eqref{eqn: GKeuleriden}. Thus, from the definition \eqref{eqn: R0eulerGdef} of $\mathcal{G}$, we see that $\mathfrak{F}_p$ is defined by
\begin{equation}\label{eqn: mathfrakFdef}
\begin{split}
\mathfrak{F}_p &:= \prod_{\substack{ \hat{\alpha}\neq \alpha \\ \hat{\beta}\neq \beta}}\left(1-\frac{1}{p^{1+\hat{\alpha}+\hat{\beta}}} \right) \prod_{\hat{\alpha}\in A}\left(1-\frac{1}{p^{1+\hat{\alpha}-\alpha}} \right) \prod_{\hat{\beta}\in B} \left(1-\frac{1}{p^{1+\hat{\beta}-\beta}} \right)  \\ 
&\hspace{.25in}\times \Bigg(  \left( 1-\frac{1}{p^{2-\alpha-\beta}}\right)\left(1+ \frac{p^{1-\alpha-\beta}-1}{p(p-1)}\right)   + \left( 1+\frac{p^{-\alpha-\beta}}{p-1}-\frac{1}{p-1} \right) \sum_{m=1}^{\infty} \frac{\tau_A(p^{m}) \tau_B(p^{m}) }{ p^{m} }   \\
&\hspace{.25in}+ ( 1-p^{-\alpha-\beta} )\sum_{ 0\leq m<n<\infty  } \frac{\tau_A(p^{m}) \tau_B(p^{n}) }{ p^{m\beta} p^{n(1-\beta)} } + ( 1-p^{-\alpha-\beta} ) \sum_{ 0\leq n<m <\infty   } \frac{\tau_A(p^{m}) \tau_B(p^{n}) }{ p^{m(1-\alpha)} p^{n\alpha}  } \Bigg).
\end{split}
\end{equation}
Lemma~\ref{lem: tauseriesidentity} with $j=\ell=0$ implies
\begin{equation}\label{eqn: tauseriesidenappl1}
\sum_{ 0\leq m<n<\infty  } \frac{\tau_A(p^{m}) \tau_B(p^{n}) }{ p^{m\beta} p^{n(1-\beta)} } = \sum_{m=0}^{\infty} \frac{ \tau_{A\cup\{-\beta\}}(p^m) \tau_B(p^m) }{p^{m}} - \sum_{m=0}^{\infty} \frac{\tau_{A}(p^m) \tau_B(p^m) }{p^{m}}.
\end{equation}
Similarly, Lemma~\ref{lem: tauseriesidentity} with $A$ and $B$ interchanged, $\beta$ replaced by $\alpha$, and $j=\ell=0$ implies
\begin{equation}\label{eqn: tauseriesidenappl2}
\sum_{ 0\leq n<m <\infty   } \frac{\tau_A(p^{m}) \tau_B(p^{n}) }{ p^{m(1-\alpha)} p^{n\alpha}  } = \sum_{m=0}^{\infty} \frac{ \tau_{A}(p^m) \tau_{B\cup\{-\alpha\}}(p^m) }{p^{m}} - \sum_{m=0}^{\infty} \frac{\tau_{A}(p^m) \tau_B(p^m) }{p^{m}}.
\end{equation}
We complete the first $m$-sum in \eqref{eqn: mathfrakFdef} by adding and subtracting its $m=0$ term, and then insert \eqref{eqn: tauseriesidenappl1} and \eqref{eqn: tauseriesidenappl2} to deduce that
\begin{align}
\mathfrak{F}_p &= \prod_{\substack{ \hat{\alpha}\neq \alpha \\ \hat{\beta}\neq \beta}}\left(1-\frac{1}{p^{1+\hat{\alpha}+\hat{\beta}}} \right) \prod_{\hat{\alpha}\in A}\left(1-\frac{1}{p^{1+\hat{\alpha}-\alpha}} \right) \prod_{\hat{\beta}\in B} \left(1-\frac{1}{p^{1+\hat{\beta}-\beta}} \right)  \notag\\ 
&\hspace{.25in}\times \Bigg(  \left( 1-\frac{1}{p^{2-\alpha-\beta}}\right)\left(1+ \frac{p^{1-\alpha-\beta}-1}{p(p-1)}\right) - \left( 1+\frac{p^{-\alpha-\beta}}{p-1}-\frac{1}{p-1} \right)  \notag\\
&\hspace{.5in}+ \left( 2p^{-\alpha-\beta}-1+\frac{p^{-\alpha-\beta}}{p-1}-\frac{1}{p-1} \right) \sum_{m=0}^{\infty} \frac{\tau_A(p^{m}) \tau_B(p^{m}) }{ p^{m} }   \notag\\
&\hspace{.5in}+ ( 1-p^{-\alpha-\beta} )\sum_{m=0}^{\infty} \frac{ \tau_{A\cup\{-\beta\}}(p^m) \tau_B(p^m) }{p^{m}} + ( 1-p^{-\alpha-\beta} ) \sum_{m=0}^{\infty} \frac{ \tau_{A}(p^m) \tau_{B\cup\{-\alpha\}}(p^m) }{p^{m}} \Bigg). \label{eqn: mathfrakF2}
\end{align}
Observe that there is the factor $(1-1/p)^2$ in \eqref{eqn: mathfrakF2}. This factor is the product of the factor corresponding to $\hat{\alpha}=\alpha$ in the product over $\hat{\alpha}\in A$ and the factor corresponding to $\hat{\beta}=\beta$ in the product over $\hat{\beta}\in B$. We distribute $(1-1/p)$ among the terms in \eqref{eqn: mathfrakF2} and arrive at
\begin{equation}\label{eqn: mathfrakFtoSigma0}
\begin{split}
\mathfrak{F}_p &= \left( 1-\frac{1}{p}\right)\prod_{\substack{ \hat{\alpha}\neq \alpha \\ \hat{\beta}\neq \beta}}\left(1-\frac{1}{p^{1+\hat{\alpha}+\hat{\beta}}} \right) \prod_{\hat{\alpha}\neq \alpha }\left(1-\frac{1}{p^{1+\hat{\alpha}-\alpha}} \right) \prod_{\hat{\beta}\neq \beta} \left(1-\frac{1}{p^{1+\hat{\beta}-\beta}} \right)  \\ 
&\hspace{.25in}\times \Bigg(  \left( 1-\frac{1}{p^{2-\alpha-\beta}}\right)\left(1-\frac{1}{p}+ \frac{1}{p^{1+\alpha+\beta}}-\frac{1}{p^2}\right) - \left( 1-\frac{2}{p}+\frac{1}{p^{1+\alpha+\beta}}\right)  +\Sigma_0\Bigg),
\end{split}
\end{equation}
where $\Sigma_0$ is defined by
\begin{equation*}
\begin{split}
\Sigma_0 := & \left( 2p^{-\alpha-\beta}-1-\frac{p^{-\alpha-\beta}}{p} \right) \sum_{m=0}^{\infty} \frac{\tau_A(p^{m}) \tau_B(p^{m}) }{ p^{m} } \\
& + \bigg( 1-p^{-\alpha-\beta} -\frac{1}{p} +\frac{p^{-\alpha-\beta}}{p} \bigg)\Bigg( \sum_{m=0}^{\infty} \frac{ \tau_{A\cup\{-\beta\}}(p^m) \tau_B(p^m) }{p^{m}} + \sum_{m=0}^{\infty} \frac{ \tau_{A}(p^m) \tau_{B\cup\{-\alpha\}}(p^m) }{p^{m}}\Bigg).
\end{split}
\end{equation*}
Multiply out the products in the latter expression and rearrange the terms to write
\begin{equation*}
\begin{split}
\Sigma_0 &= \sum_{m=0}^{\infty} \frac{(1- p^{-\alpha-\beta})\tau_{A\cup\{-\beta\}}(p^m) \tau_B(p^m) +p^{-\alpha-\beta} \tau_A(p^m) \tau_B(p^m) }{p^m} \\
&\hspace{.25in}+ \sum_{m=0}^{\infty} \frac{(1- p^{-\alpha-\beta})\tau_{A}(p^m) \tau_{B\cup\{-\alpha\}}(p^m) +p^{-\alpha-\beta} \tau_A(p^m) \tau_B(p^m) - \tau_A(p^m)\tau_B(p^m) }{p^m} \\
&\hspace{.25in}- \sum_{m=0}^{\infty} \frac{(1- p^{-\alpha-\beta})\big( \tau_{A\cup\{-\beta\}}(p^m) \tau_B(p^m) + \tau_{A}(p^m) \tau_{B\cup\{-\alpha\}}(p^m)\big) + p^{-\alpha-\beta} \tau_A(p^m)\tau_B(p^m)}{p^{m+1}}.
\end{split}
\end{equation*}
We make a change of variables in the last $m$-sum on the right-hand side by replacing each instance of $m$ with $m-1$. To the resulting expression for $\Sigma_0$, we add
\begin{equation*}
\begin{split}
0 &= \sum_{m=0}^{\infty} \frac{p^{-\alpha} \tau_A(p^{m-1})\tau_B(p^m) + p^{-\beta} \tau_A(p^{m})\tau_B(p^{m-1})  }{p^m} \\
&\hspace{.25in}- \sum_{m=0}^{\infty} \frac{p^{-\alpha} \tau_A(p^{m-1})\tau_B(p^m) + p^{-\beta} \tau_A(p^{m})\tau_B(p^{m-1})  }{p^m}
\end{split}
\end{equation*}
and rearrange the terms to deduce that
\begin{equation}\label{eqn: Sigma0split}
\Sigma_0 = \sum_{m=0}^{\infty} \Big( D_{1,m}+D_{2,m} + D_{3,m}\Big) \frac{1}{p^m},
\end{equation}
where $D_{1,m}$, $D_{2,m}$, and $D_{3,m}$ are defined by
\begin{equation*}
\begin{split}
D_{1,m}&:= (1-p^{-\alpha-\beta}) \tau_{A\cup\{-\beta\}} (p^m)\tau_{B  } (p^m) +p^{-\alpha-\beta} \tau_A(p^m)\tau_{B  } (p^m) \\
&\hspace{.25in}-p^{-\beta} \tau_A(p^m) \tau_{B  } (p^{m-1}) -(1-p^{-\alpha-\beta}) \tau_{A\cup\{-\beta\}} (p^{m-1})\tau_{B  } (p^{m-1}),
\end{split}
\end{equation*}
\begin{equation*}
\begin{split}
D_{2,m} &:= (1-p^{-\alpha-\beta}) \tau_A (p^m) \tau_{B  \cup \{-\alpha \}} (p^m)  +p^{-\alpha-\beta} \tau_A (p^m)\tau_B(p^m)\\
&\hspace{.25in}-p^{-\alpha} \tau_A (p^{m-1})  \tau_B(p^m) -(1-p^{-\alpha-\beta}) \tau_A (p^{m-1}) \tau_{B  \cup \{-\alpha \}} (p^{m-1}),
\end{split}
\end{equation*}
and
\begin{equation*}
\begin{split}
D_{3,m} &:= -\tau_A (p^m)\tau_{B  } (p^m) + p^{-\alpha}\tau_A (p^{m-1})\tau_{B  } (p^m) \\
&\hspace{.25in}+ p^{-\beta}\tau_A (p^m)\tau_{B  } (p^{m-1}) -p^{-\alpha-\beta} \tau_A (p^{m-1})\tau_{B  } (p^{m-1}),
\end{split}
\end{equation*}
where we recall that $\tau_E(p^{-1})$ is defined to be zero for any multiset $E$. Now Lemma~\ref{lem: taulongidentity} with $j=\ell=m$ implies
\begin{equation}\label{eqn: D1msimplify}
D_{1,m} = \tau_{A\smallsetminus\{\alpha\}\cup \{-\beta\}} (p^m) \tau_{B\smallsetminus\{\beta\}} (p^{m}).
\end{equation}
Moreover, Lemma~\ref{lem: taulongidentity} with $A$ and $B$ interchanged and $j=\ell=m$ implies
\begin{equation}\label{eqn: D2msimplify}
D_{2,m} = \tau_{A\smallsetminus\{\alpha\}} (p^{m})\tau_{B\smallsetminus\{\beta\}\cup \{-\alpha\}} (p^m) .
\end{equation}
As for $D_{3,m}$, we may factor it and apply \eqref{eqn: tauremoveelement} to deduce that
\begin{equation*}
\begin{split}
D_{3,m}
& = -\big(\tau_A(p^m) -p^{-\alpha}\tau_A(p^{m-1}) \big) \big( \tau_B(p^m) -p^{-\beta}\tau_B(p^{m-1}) \big) \\
& = -\tau_{A\smallsetminus\{\alpha\}}(p^m) \tau_{B\smallsetminus\{\beta\}}(p^m).
\end{split}
\end{equation*}
From this, \eqref{eqn: D1msimplify}, \eqref{eqn: D2msimplify}, and Lemma~\ref{lem: CK3identity} with $j=\ell=m$, we arrive at
\begin{equation*}
\begin{split}
D_{1,m} &+ D_{2,m} + D_{3,m} \\
&= \tau_{A\smallsetminus\{\alpha\}\cup \{-\beta\}} (p^m) \tau_{B\smallsetminus\{\beta\} \cup\{-\alpha\}} (p^{m}) -p^{\alpha+\beta} \tau_{A\smallsetminus\{\alpha\}\cup \{-\beta\}}(p^{m-1}) \tau_{B\smallsetminus\{\beta\} \cup\{-\alpha\}} (p^{m-1}).
\end{split}
\end{equation*}
This and \eqref{eqn: Sigma0split} imply
\begin{equation*}
\begin{split}
\Sigma_0 &= \sum_{m=0}^{\infty} \frac{\tau_{A\smallsetminus\{\alpha\}\cup \{-\beta\}} (p^m) \tau_{B\smallsetminus\{\beta\} \cup\{-\alpha\}} (p^{m})}{p^m} \\
&\hspace{.25in}-p^{\alpha+\beta} \sum_{m=0}^{\infty} \frac{\tau_{A\smallsetminus\{\alpha\}\cup \{-\beta\}} (p^{m-1}) \tau_{B\smallsetminus\{\beta\} \cup\{-\alpha\}} (p^{m-1})}{p^m}.
\end{split}
\end{equation*}
We make a change of variables in the latter $m$-sum by replacing each instance of $m$ with $m+1$. The result is
\begin{equation*}
\begin{split}
\Sigma_0 = \bigg( 1-\frac{1}{p^{1-\alpha-\beta}}\bigg)\sum_{m=0}^{\infty} \frac{\tau_{A\smallsetminus\{\alpha\}\cup \{-\beta\}} (p^m) \tau_{B\smallsetminus\{\beta\} \cup\{-\alpha\}} (p^{m})}{p^m}.
\end{split}
\end{equation*}
We insert this into \eqref{eqn: mathfrakFtoSigma0} and arrive at
\begin{equation}\label{eqn: Sigma0evaluated}
\begin{split}
\mathfrak{F}_p &= \left( 1-\frac{1}{p}\right)\prod_{\substack{ \hat{\alpha}\neq \alpha \\ \hat{\beta}\neq \beta}}\left(1-\frac{1}{p^{1+\hat{\alpha}+\hat{\beta}}} \right) \prod_{\hat{\alpha}\neq \alpha }\left(1-\frac{1}{p^{1+\hat{\alpha}-\alpha}} \right) \prod_{\hat{\beta}\neq \beta} \left(1-\frac{1}{p^{1+\hat{\beta}-\beta}} \right)  \\ 
&\hspace{.25in}\times \Bigg(  \left( 1-\frac{1}{p^{2-\alpha-\beta}}\right)\left(1-\frac{1}{p}+ \frac{1}{p^{1+\alpha+\beta}}-\frac{1}{p^2}\right) - \left( 1-\frac{2}{p}+\frac{1}{p^{1+\alpha+\beta}}\right)  \\
&\hspace{.25in}+ \bigg( 1-\frac{1}{p^{1-\alpha-\beta}}\bigg) + \bigg( 1-\frac{1}{p^{1-\alpha-\beta}}\bigg)\sum_{m=1}^{\infty} \frac{\tau_{A\smallsetminus\{\alpha\}\cup \{-\beta\}} (p^m) \tau_{B\smallsetminus\{\beta\} \cup\{-\alpha\}} (p^{m})}{p^m}\Bigg),
\end{split}
\end{equation}
where we have separated the $m=0$ term from the $m$-sum. A direct calculation gives
\begin{equation*}
\begin{split}
\left( 1-\frac{1}{p^{2-\alpha-\beta}}\right)\left(1-\frac{1}{p}+ \frac{1}{p^{1+\alpha+\beta}}-\frac{1}{p^2}\right) - \left( 1-\frac{2}{p}+\frac{1}{p^{1+\alpha+\beta}}\right) + \bigg( 1-\frac{1}{p^{1-\alpha-\beta}}\bigg) \\
= \left( 1- \frac{1}{p^{1-\alpha-\beta}}\right)\left(1 +\frac{1}{p} \right)\left(1-\frac{1}{p^2} \right).
\end{split}
\end{equation*}
We insert this into \eqref{eqn: Sigma0evaluated} and then factor out $(1-p^{-1+\alpha+\beta})$ to deduce that
\begin{equation*}
\begin{split}
\mathfrak{F}_p = \left( 1-\frac{1}{p}\right)\bigg( 1-\frac{1}{p^{1-\alpha-\beta}}\bigg)\prod_{\substack{ \hat{\alpha}\neq \alpha \\ \hat{\beta}\neq \beta}}\left(1-\frac{1}{p^{1+\hat{\alpha}+\hat{\beta}}} \right) \prod_{\hat{\alpha}\neq \alpha }\left(1-\frac{1}{p^{1+\hat{\alpha}-\alpha}} \right) \prod_{\hat{\beta}\neq \beta} \left(1-\frac{1}{p^{1+\hat{\beta}-\beta}} \right)  \\ 
\times \Bigg( \left(1 +\frac{1}{p} \right)\left(1-\frac{1}{p^2} \right) + \sum_{m=1}^{\infty} \frac{\tau_{A\smallsetminus\{\alpha\}\cup \{-\beta\}} (p^m) \tau_{B\smallsetminus\{\beta\} \cup\{-\alpha\}} (p^{m})}{p^m}\Bigg).
\end{split}
\end{equation*}
The right-hand side is exactly the local factor corresponding to $p$ in the Euler product expression for $\mathcal{K}(0,0,2-\alpha-\beta)$ by the definition \eqref{eqn: 1swapeulerKdef}, because we are assuming that $p\nmid hk$.

We have now shown for each $p\nmid hk$ that the local factors corresponding to $p$ in the Euler product expressions of both sides of \eqref{eqn: GKeuleriden} agree. Our next task is to do the same for each $p|hk$. To this end, let $p|hk$ be given, and let $\mathfrak{G}_p$ denote the local factor corresponding to this $p$ in the Euler product expression for the left-hand side of \eqref{eqn: GKeuleriden}. Also, for brevity, for the rest of this proof we denote $h_p:=\ordp(h)$ and $k_p:=\ordp(k)$. With these notations, we see from the definition \eqref{eqn: R0eulerGdef} of $\mathcal{G}$ that $\mathfrak{G}_p$ is defined by
\begin{equation}\label{eqn: mathfrakGdef}
\begin{split}
\mathfrak{G}_p := p^{-(\frac{1}{2}-\alpha)h_p -(\frac{1}{2}-\beta)k_p }\prod_{\substack{ \hat{\alpha}\neq \alpha \\ \hat{\beta}\neq \beta}}\left(1-\frac{1}{p^{1+\hat{\alpha}+\hat{\beta} }} \right) \prod_{\hat{\alpha}\in A}\left(1-\frac{1}{p^{1+\hat{\alpha}-\alpha}} \right) \prod_{\hat{\beta}\in B} \left(1-\frac{1}{p^{1+\hat{\beta}-\beta}} \right) \\
\times \Bigg( \left( 1+\frac{p^{-\alpha-\beta}}{p-1}-\frac{1}{p-1} \right) \sum_{\substack{0\leq m,n<\infty \\ m+h_p= n + k_p } }\frac{\tau_A(p^{m}) \tau_B(p^{n}) }{ p^{m(1-\alpha)} p^{n(1-\beta)} p^{(-1+\alpha+\beta) \min\{m+h_p, n + k_p \} }}   \\
+ \left( 1-p^{-\alpha-\beta}  \right) \sum_{\substack{0\leq m,n<\infty \\ m+h_p\neq  n + k_p } } \frac{\tau_A(p^{m}) \tau_B(p^{n}) }{ p^{m(1-\alpha)} p^{n(1-\beta)} p^{(-1+\alpha+\beta) \min\{m+h_p, n + k_p \} }} \Bigg).
\end{split}
\end{equation}
If $m+h_p=n+k_p$, then
$$
(-1+\alpha+\beta) \min\{m+h_p,n+k_p \} = \left(-\frac{1}{2}+\alpha\right) (m+h_p) +\left( -\frac{1}{2}+\beta\right)(n+k_p),
$$
and so
\begin{equation}\label{eqn: tauhkseriessimplify}
\begin{split}
p^{-(\frac{1}{2}-\alpha)h_p -(\frac{1}{2}-\beta)k_p }
& \sum_{\substack{0\leq m,n<\infty \\ m+h_p= n + k_p } }\frac{\tau_A(p^{m}) \tau_B(p^{n}) }{ p^{m(1-\alpha)} p^{n(1-\beta)} p^{(-1+\alpha+\beta) \min\{m+h_p, n + k_p \} }}  \\
& = \sum_{\substack{0\leq m,n<\infty \\ m+h_p= n + k_p } }\frac{\tau_A(p^{m}) \tau_B(p^{n}) }{ p^{\frac{m}{2}+\frac{n}{2}}}.
\end{split}
\end{equation}
If $m+h_p<n+k_p$, then $\min\{m+h_p,n+k_p \}=m+h_p$ and it follows from Lemma~\ref{lem: tauseriesidentity} with $j=h_p$ and $\ell=k_p$ that
\begin{equation}\label{eqn: tauseriesidenappl3}
\begin{split}
p^{-(\frac{1}{2}-\alpha)h_p -(\frac{1}{2}-\beta)k_p } \sum_{\substack{0\leq m,n<\infty \\ m+h_p< n + k_p } }\frac{\tau_A(p^{m}) \tau_B(p^{n}) }{ p^{m(1-\alpha)} p^{n(1-\beta)} p^{(-1+\alpha+\beta) \min\{m+h_p, n + k_p \} }} \\
= \sum_{\substack{ 0\leq m,n<\infty \\ m+h_p =n+k_p}} \frac{ \tau_{A\cup\{-\beta\}}(p^m) \tau_B(p^n) - \tau_{A}(p^m) \tau_B(p^n) }{p^{\frac{m}{2}+\frac{n}{2}}}.
\end{split}
\end{equation}
Similarly, Lemma~\ref{lem: tauseriesidentity} with $A$ and $B$ interchanged, $\beta$ replaced by $\alpha$, $j=k_p$, and $\ell=h_p$ implies
\begin{equation*}
\begin{split}
p^{-(\frac{1}{2}-\alpha)h_p -(\frac{1}{2}-\beta)k_p } \sum_{\substack{0\leq m,n<\infty \\ m+h_p > n + k_p } }\frac{\tau_A(p^{m}) \tau_B(p^{n}) }{ p^{m(1-\alpha)} p^{n(1-\beta)} p^{(-1+\alpha+\beta) \min\{m+h_p, n + k_p \} }}\\
= \sum_{\substack{ 0\leq m,n<\infty \\ m+h_p =n+k_p}} \frac{ \tau_A(p^m) \tau_{B\cup\{-\alpha\}}(p^n)  - \tau_{A}(p^m) \tau_B(p^n) }{p^{\frac{m}{2}+\frac{n}{2}}}.
\end{split}
\end{equation*}
It follows from this, \eqref{eqn: mathfrakGdef}, \eqref{eqn: tauhkseriessimplify}, and \eqref{eqn: tauseriesidenappl3} that
\begin{equation}\label{eqn: mathfrakG2}
\begin{split}
\mathfrak{G}_p &= \prod_{\substack{ \hat{\alpha}\neq \alpha \\ \hat{\beta}\neq \beta}}\left(1-\frac{1}{p^{1+\hat{\alpha}+\hat{\beta} }} \right) \prod_{\hat{\alpha}\in A}\left(1-\frac{1}{p^{1+\hat{\alpha}-\alpha}} \right) \prod_{\hat{\beta}\in B} \left(1-\frac{1}{p^{1+\hat{\beta}-\beta}} \right) \\
&\hspace{.25in}\times \Bigg( \left( 2p^{-\alpha-\beta}- 1+\frac{p^{-\alpha-\beta}}{p-1}-\frac{1}{p-1} \right) \sum_{\substack{0\leq m,n<\infty \\ m+h_p= n + k_p } }\frac{\tau_A(p^{m}) \tau_B(p^{n}) }{ p^{\frac{m}{2} + \frac{n}{2} }}   \\
&\hspace{.25in}+ \left( 1-p^{-\alpha-\beta}  \right)  \sum_{\substack{ 0\leq m,n<\infty \\ m+h_p =n+k_p}} \frac{ \tau_{A\cup\{-\beta\}}(p^m) \tau_B(p^n) + \tau_{A}(p^m) \tau_{B\cup\{-\alpha\}}(p^n) }{p^{\frac{m}{2}+\frac{n}{2}}} \Bigg).
\end{split}
\end{equation}
There is the factor $(1-1/p)^2$ in \eqref{eqn: mathfrakG2} by the same reason mentioned below \eqref{eqn: mathfrakF2}. We distribute $(1-1/p)$ among the terms in \eqref{eqn: mathfrakG2} and deduce that
\begin{equation}\label{eqn: mathfrakGtoSigma1}
\mathfrak{G}_p = \Sigma_1\times \left( 1-\frac{1}{p}\right)\prod_{\substack{ \hat{\alpha}\neq \alpha \\ \hat{\beta}\neq \beta}}\left(1-\frac{1}{p^{1+\hat{\alpha}+\hat{\beta} }} \right) \prod_{\hat{\alpha}\neq \alpha}\left(1-\frac{1}{p^{1+\hat{\alpha}-\alpha}} \right) \prod_{\hat{\beta}\neq \beta} \left(1-\frac{1}{p^{1+\hat{\beta}-\beta}} \right),
\end{equation}
where $\Sigma_1$ is defined by
\begin{equation*}
\begin{split}
\Sigma_1 &:= \left( 2p^{-\alpha-\beta}- 1-\frac{p^{-\alpha-\beta}}{p} \right) \sum_{\substack{0\leq m,n<\infty \\ m+h_p= n + k_p } }\frac{\tau_A(p^{m}) \tau_B(p^{n}) }{ p^{\frac{m}{2} + \frac{n}{2} }}   \\
&\hspace{.25in}+ \left( 1-p^{-\alpha-\beta}  -\frac{1}{p} + \frac{ p^{-\alpha-\beta} }{p} \right)  \sum_{\substack{ 0\leq m,n<\infty \\ m+h_p =n+k_p}} \frac{ \tau_{A\cup\{-\beta\}}(p^m) \tau_B(p^n) + \tau_{A}(p^m) \tau_{B\cup\{-\alpha\}}(p^n) }{p^{\frac{m}{2}+\frac{n}{2}}}.
\end{split}
\end{equation*}
Multiply out the products and rearrange the terms to write $\Sigma_1$ as
\begin{equation*}
\begin{split}
&\Sigma_1 = \sum_{\substack{0\leq m,n<\infty \\ m+h_p= n + k_p } } \frac{(1- p^{-\alpha-\beta})\tau_{A\cup\{-\beta\}}(p^m) \tau_B(p^n) +p^{-\alpha-\beta} \tau_A(p^m) \tau_B(p^n) }{p^{\frac{m}{2}+\frac{n}{2} }} \\
&+ \sum_{\substack{0\leq m,n<\infty \\ m+h_p= n + k_p } } \frac{(1- p^{-\alpha-\beta})\tau_{A}(p^m) \tau_{B\cup\{-\alpha\}}(p^n) +p^{-\alpha-\beta} \tau_A(p^m) \tau_B(p^n) - \tau_A(p^m)\tau_B(p^n) }{p^{\frac{m}{2}+\frac{n}{2} }} \\
&- \sum_{\substack{0\leq m,n<\infty \\ m+h_p= n + k_p } } \frac{(1- p^{-\alpha-\beta})\big( \tau_{A\cup\{-\beta\}}(p^m) \tau_B(p^n) + \tau_{A}(p^m) \tau_{B\cup\{-\alpha\}}(p^n)\big) + p^{-\alpha-\beta} \tau_A(p^m)\tau_B(p^n)}{p^{1 + \frac{m}{2}+\frac{n}{2} }}.
\end{split}
\end{equation*}
We make changes of variables in the last $m,n$-sum on the right-hand side by replacing each instance of $m$ with $m-1$ and each instance of $n$ with $n-1$. To the resulting expression for $\Sigma_1$, we add
\begin{equation*}
\begin{split}
0 &= \sum_{\substack{0\leq m,n<\infty \\ m+h_p= n + k_p } } \frac{p^{-\alpha} \tau_A(p^{m-1})\tau_B(p^n) + p^{-\beta} \tau_A(p^{m})\tau_B(p^{n-1})  }{p^{\frac{m}{2}+\frac{n}{2}}} \\
&\hspace{.25in}- \sum_{\substack{0\leq m,n<\infty \\ m+h_p= n + k_p } } \frac{p^{-\alpha} \tau_A(p^{m-1})\tau_B(p^n) + p^{-\beta} \tau_A(p^{m})\tau_B(p^{n-1})  }{p^{\frac{m}{2}+\frac{n}{2}}}
\end{split}
\end{equation*}
and rearrange the terms to deduce that
\begin{equation}\label{eqn: Sigma1split}
\Sigma_1 = \sum_{\substack{0\leq m,n<\infty \\ m+h_p= n + k_p } } \Big( D_{1,m,n}+D_{2,m,n} + D_{3,m,n}\Big) \frac{1}{p^{\frac{m}{2}+\frac{n}{2}}},
\end{equation}
where $D_{1,m,n}$, $D_{2,m,n}$, and $D_{3,m,n}$ are defined by
\begin{equation*}
\begin{split}
D_{1,m,n}&:= (1-p^{-\alpha-\beta}) \tau_{A\cup\{-\beta\}} (p^m)\tau_{B  } (p^n) +p^{-\alpha-\beta} \tau_A(p^m)\tau_{B  } (p^n) \\
&\hspace{.25in}-p^{-\beta} \tau_A(p^m) \tau_{B  } (p^{n-1}) -(1-p^{-\alpha-\beta}) \tau_{A\cup\{-\beta\}} (p^{m-1})\tau_{B  } (p^{n-1}),
\end{split}
\end{equation*}
\begin{equation*}
\begin{split}
D_{2,m,n} &:= (1-p^{-\alpha-\beta}) \tau_A (p^m) \tau_{B  \cup \{-\alpha \}} (p^n)  +p^{-\alpha-\beta} \tau_A (p^m)\tau_B(p^n)\\
&\hspace{.25in}-p^{-\alpha} \tau_A (p^{m-1})  \tau_B(p^n) -(1-p^{-\alpha-\beta}) \tau_A (p^{m-1}) \tau_{B  \cup \{-\alpha \}} (p^{n-1}),
\end{split}
\end{equation*}
and
\begin{equation*}
\begin{split}
D_{3,m,n} &:= -\tau_A (p^m)\tau_{B  } (p^n) + p^{-\alpha}\tau_A (p^{m-1})\tau_{B  } (p^n) \\
&\hspace{.25in}+ p^{-\beta}\tau_A (p^m)\tau_{B  } (p^{n-1}) -p^{-\alpha-\beta} \tau_A (p^{m-1})\tau_{B  } (p^{n-1}),
\end{split}
\end{equation*}
where we recall that $\tau_E(p^{-1})$ is defined to be zero for any multiset $E$. Now Lemma~\ref{lem: taulongidentity} with $j=m$ and $\ell=n$ implies
\begin{equation}\label{eqn: D1mnsimplify}
D_{1,m,n} = \tau_{A\smallsetminus \{\alpha\}\cup\{-\beta\}} (p^m) \tau_{B\smallsetminus\{\beta\}} (p^n).
\end{equation}
Moreover, Lemma~\ref{lem: taulongidentity} with $A$ and $B$ interchanged, $j=n$, and $\ell=m$ implies
\begin{equation}\label{eqn: D2mnsimplify}
D_{2,m,n} = \tau_{A\smallsetminus\{\alpha\}} (p^m) \tau_{B\smallsetminus \{\beta\}\cup\{-\alpha\}} (p^n).
\end{equation}
As for $D_{3,m,n}$, we may factor it and apply \eqref{eqn: tauremoveelement} to deduce that
\begin{equation*}
\begin{split}
D_{3,m,n}
& = -\big(\tau_A(p^m) -p^{-\alpha}\tau_A(p^{m-1}) \big) \big( \tau_B(p^n) -p^{-\beta}\tau_B(p^{n-1}) \big) \\
& = -\tau_{A\smallsetminus\{\alpha\}}(p^m) \tau_{B\smallsetminus\{\beta\}}(p^n).
\end{split}
\end{equation*}
From this, \eqref{eqn: D1mnsimplify}, \eqref{eqn: D2mnsimplify}, and Lemma~\ref{lem: CK3identity} with $j=m$ and $\ell=n$, we arrive at
\begin{equation*}
\begin{split}
D_{1,m,n} &+ D_{2,m,n} + D_{3,m,n} \\
&= \tau_{A\smallsetminus\{\alpha\}\cup \{-\beta\}} (p^m) \tau_{B\smallsetminus\{\beta\} \cup\{-\alpha\}} (p^{n}) -p^{\alpha+\beta} \tau_{A\smallsetminus\{\alpha\}\cup \{-\beta\}}(p^{m-1}) \tau_{B\smallsetminus\{\beta\} \cup\{-\alpha\}} (p^{n-1}).
\end{split}
\end{equation*}
This and \eqref{eqn: Sigma1split} imply
\begin{equation*}
\begin{split}
\Sigma_1 &= \sum_{\substack{0\leq m,n<\infty \\ m+h_p= n + k_p } } \frac{\tau_{A\smallsetminus\{\alpha\}\cup \{-\beta\}} (p^m) \tau_{B\smallsetminus\{\beta\} \cup\{-\alpha\}} (p^{n})}{p^{\frac{m}{2}+\frac{n}{2}}} \\
&\hspace{.25in}- p^{\alpha+\beta} \sum_{\substack{0\leq m,n<\infty \\ m+h_p= n + k_p } } \frac{\tau_{A\smallsetminus\{\alpha\}\cup \{-\beta\}}(p^{m-1}) \tau_{B\smallsetminus\{\beta\} \cup\{-\alpha\}} (p^{n-1}) }{p^{\frac{m}{2}+\frac{n}{2}}}.
\end{split}
\end{equation*}
We make a change of variables in the latter $m,n$-sum by replacing each instance of $m$ with $m+1$ and each instance of $n$ with $n+1$. The result is
\begin{equation*}
\Sigma_1 = \bigg( 1-\frac{1}{p^{1-\alpha-\beta}}\bigg)\sum_{\substack{0\leq m,n<\infty \\ m+h_p= n + k_p } } \frac{\tau_{A\smallsetminus\{\alpha\}\cup \{-\beta\}} (p^m) \tau_{B\smallsetminus\{\beta\} \cup\{-\alpha\}} (p^{n})}{p^{\frac{m}{2}+\frac{n}{2}}}.
\end{equation*}
We insert this into \eqref{eqn: mathfrakGtoSigma1} and arrive at
\begin{equation*}
\begin{split}
\mathfrak{G}_p &= \left( 1-\frac{1}{p}\right)\bigg( 1-\frac{1}{p^{1-\alpha-\beta}}\bigg) \prod_{\substack{ \hat{\alpha}\neq \alpha \\ \hat{\beta}\neq \beta}}\left(1-\frac{1}{p^{1+\hat{\alpha}+\hat{\beta} }} \right) \prod_{\hat{\alpha}\neq \alpha}\left(1-\frac{1}{p^{1+\hat{\alpha}-\alpha}} \right) \prod_{\hat{\beta}\neq \beta} \left(1-\frac{1}{p^{1+\hat{\beta}-\beta}} \right) \\
&\hspace{.25in}\times \sum_{\substack{0\leq m,n<\infty \\ m+h_p= n + k_p } } \frac{\tau_{A\smallsetminus\{\alpha\}\cup \{-\beta\}} (p^m) \tau_{B\smallsetminus\{\beta\} \cup\{-\alpha\}} (p^{n})}{p^{\frac{m}{2}+\frac{n}{2}}}.
\end{split}
\end{equation*}
The right-hand side is exactly the local factor corresponding to $p$ in the Euler product expression for $\mathcal{K}(0,0,2-\alpha-\beta)$ by the definition \eqref{eqn: 1swapeulerKdef}, because we are assuming that $p|hk$.

We have now shown for each $p$ that the local factors corresponding to $p$ in the Euler product expressions of both sides of \eqref{eqn: GKeuleriden} agree. This completes the proof of Lemma~\ref{lem: GKeuleridentity}.
\end{proof}

We will also use the following variant and consequence of Lemma~\ref{lem: GKeuleridentity}.

\begin{lemma}\label{lem: GKeuleridentity2}
Let $\alpha,\alpha^*\in A$ and $\beta,\beta^*\in B$. Suppose that $h$ and $k$ are positive integers. If $\mathcal{G}$ is defined by \eqref{eqn: R0eulerGdef} and $\mathcal{K}$ by \eqref{eqn: 1swapeulerKdef}, then
\begin{equation*}
\begin{split}
h^{-\frac{1}{2}+\alpha} k^{-\frac{1}{2}+\beta-\alpha^*-\beta^*} &\mathcal{G}(2-\alpha-\beta+\alpha^*+\beta^*,\alpha,\beta;A,B,h,k) \\
&\hspace{.25in}= \mathcal{K}(0,-\alpha^*-\beta^*,2-\alpha-\beta+\alpha^*+\beta^*;A,B,\alpha,\beta,h,k).
\end{split}
\end{equation*}
\end{lemma}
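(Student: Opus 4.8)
The plan is to deduce Lemma~\ref{lem: GKeuleridentity2} from Lemma~\ref{lem: GKeuleridentity} by a uniform shift of the multiset $B$. Write $\sigma:=\alpha^*+\beta^*$. The key preliminary observation is that $\mathcal{K}$ and $\mathcal{G}$ each satisfy a simple invariance under such a shift, which one reads off directly from the definitions \eqref{eqn: 1swapeulerKdef} and \eqref{eqn: R0eulerGdef}. For $\mathcal{K}$: in $\mathcal{K}(0,s_2,w;A,B,\alpha,\beta,h,k)$ the only $A$- and $B$-dependent objects are the multisets $A\smallsetminus\{\alpha\}\cup\{-\beta-s_2\}$ and $B_{s_2}\smallsetminus\{\beta+s_2\}\cup\{-\alpha\}$, which occur verbatim also in $\mathcal{K}(0,0,w;A,B_{s_2},\alpha,\beta+s_2,h,k)$, while every remaining exponent involves $s_2$ only through $w+\alpha+\beta+s_2$, which is unchanged under $(B,\beta,s_2)\mapsto(B_{s_2},\beta+s_2,0)$; hence
\[
\mathcal{K}(0,s_2,w;A,B,\alpha,\beta,h,k)=\mathcal{K}(0,0,w;A,B_{s_2},\alpha,\beta+s_2,h,k)
\]
whenever both products converge absolutely. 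For $\mathcal{G}$: using \eqref{eqn: taufactoringidentity} in the form $\tau_{B_{-\sigma}}(p^n)=p^{n\sigma}\tau_B(p^n)$, one checks that $B$ and $\beta$ enter $\mathcal{G}(w,\alpha,\beta;A,B,h,k)$ only through the differences $\hat\beta-\beta$ and through the $B$-dependent $\tau$-series, each term of which can be written as the shift-invariant combination $p^{m\beta}\tau_B(p^m)$ (or $p^{n\beta}\tau_B(p^n)$) times a factor free of $B$ and $\beta$; hence
\[
\mathcal{G}(w,\alpha,\beta-\sigma;A,B_{-\sigma},h,k)=\mathcal{G}(w,\alpha,\beta;A,B,h,k).
\]

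Granting these two identities, the lemma follows in three lines. Apply the first identity with $s_2=-\sigma$ and $w=2-\alpha-\beta+\sigma$; since $\beta-\sigma\in B_{-\sigma}$ and $2-\alpha-\beta+\sigma=2-\alpha-(\beta-\sigma)$, Lemma~\ref{lem: GKeuleridentity} applied with $B$ replaced by $B_{-\sigma}$ and $\beta$ by $\beta-\sigma$ gives
\[
\mathcal{K}(0,-\sigma,2-\alpha-\beta+\sigma;A,B,\alpha,\beta,h,k)=h^{-\frac12+\alpha}k^{-\frac12+\beta-\sigma}\,\mathcal{G}(2-\alpha-\beta+\sigma,\alpha,\beta-\sigma;A,B_{-\sigma},h,k),
\]
and the second identity rewrites the right-hand $\mathcal{G}$ as $\mathcal{G}(2-\alpha-\beta+\sigma,\alpha,\beta;A,B,h,k)$. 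Recalling $\sigma=\alpha^*+\beta^*$ and $-\frac12+\beta-\sigma=-\frac12+\beta-\alpha^*-\beta^*$, this is precisely the claimed equation.

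The only real work is the two bookkeeping verifications in the first paragraph — following every occurrence of $s_2$ (respectively of $B$ and $\beta$) through the various Euler factors, finite and infinite $\tau$-sums, and geometric-series terms in \eqref{eqn: 1swapeulerKdef} (respectively \eqref{eqn: R0eulerGdef}) — together with a routine check that the shifted parameters still satisfy the hypotheses of Lemmas~\ref{lem: 1swapeulerbound} and \ref{lem: R0eulerbound}, so that all the products invoked converge absolutely; this holds because every element of $B_{-\sigma}$ is still $\ll 1/\log Q$ and $2-\alpha-\beta+\sigma$ stays well inside the admissible ranges for $w$. I do not expect a genuine obstacle here: the substantive matching of $\mathcal{G}$ and $\mathcal{K}$ was already carried out in Lemma~\ref{lem: GKeuleridentity}, and the present statement only transports it along the shift $B\mapsto B_{-\sigma}$.
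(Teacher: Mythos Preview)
Your proposal is correct and takes essentially the same approach as the paper: both proofs reduce the statement to Lemma~\ref{lem: GKeuleridentity} applied to the shifted data $(A,B_{-\sigma},\alpha,\beta-\sigma)$, using the two invariances $\mathcal{G}(w,\alpha,\beta-\sigma;A,B_{-\sigma},h,k)=\mathcal{G}(w,\alpha,\beta;A,B,h,k)$ and $\mathcal{K}(0,0,w;A,B_{-\sigma},\alpha,\beta-\sigma,h,k)=\mathcal{K}(0,-\sigma,w;A,B,\alpha,\beta,h,k)$. The paper carries out these same verifications explicitly (equations \eqref{eqn: GKeuleriden2b} and the multiset comparison following \eqref{eqn: GKeuleriden2a}), in a slightly different order, so there is no substantive difference.
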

\begin{proof}
The definition \eqref{eqn: R0eulerGdef} implies
\begin{align}
\mathcal{G}(2
& -\alpha-\beta+\alpha^*+\beta^*,\alpha,\beta;A,B,h,k) \notag\\
& = \prod_{ p|hk } \Bigg\{ \prod_{\substack{ \hat{\alpha}\neq \alpha \\ \hat{\beta}\neq \beta}}\left(1-\frac{1}{p^{1+\hat{\alpha}+\hat{\beta} -\alpha^*-\beta^*}} \right) \prod_{\hat{\alpha}\in A}\left(1-\frac{1}{p^{1+\hat{\alpha}-\alpha}} \right) \prod_{\hat{\beta}\in B} \left(1-\frac{1}{p^{1+\hat{\beta}-\beta}} \right) \notag\\
&\hspace{.25in} \times \Bigg( \sum_{\substack{0\leq m,n<\infty \\ m+\ordp(h)= n + \ordp(k) } }\frac{\tau_A(p^{m}) \tau_B(p^{n}) \left( 1+\frac{p^{-\alpha-\beta+\alpha^*+\beta^*}}{p-1}-\frac{1}{p-1} \right)}{ p^{m(1-\alpha)} p^{n(1-\beta)} p^{(-1+\alpha+\beta-\alpha^*-\beta^*) \min\{m+\ordp(h), n + \ordp(k) \} }}   \notag\\
&\hspace{.25in} + \sum_{\substack{0\leq m,n<\infty \\ m+\ordp(h)\neq  n + \ordp(k) } } \frac{\tau_A(p^{m}) \tau_B(p^{n}) \big( 1-p^{-\alpha-\beta+\alpha^*+\beta^*}  \big)}{ p^{m(1-\alpha)} p^{n(1-\beta)} p^{(-1+\alpha+\beta-\alpha^*-\beta^*) \min\{m+\ordp(h), n + \ordp(k) \} }} \Bigg)\Bigg\} \notag\\
& \times \prod_{ p\nmid hk } \Bigg\{ \prod_{\substack{ \hat{\alpha}\neq \alpha \\ \hat{\beta}\neq \beta}}\left(1-\frac{1}{p^{1+\hat{\alpha}+\hat{\beta} -\alpha^*-\beta^*}} \right) \prod_{\hat{\alpha}\in A}\left(1-\frac{1}{p^{1+\hat{\alpha}-\alpha}} \right) \prod_{\hat{\beta}\in B} \left(1-\frac{1}{p^{1+\hat{\beta}-\beta}} \right)  \notag\\ 
&\hspace{.25in} \times \Bigg(  \left( 1-\frac{1}{p^{2-\alpha-\beta+\alpha^*+\beta^*}}\right)\left(1+ \frac{p^{1-\alpha-\beta+\alpha^*+\beta^*}-1}{(p-1)}\right)  \notag\\
&\hspace{.5in} + \sum_{m=1}^{\infty} \frac{\tau_A(p^{m}) \tau_B(p^{m}) \left( 1+\frac{p^{-\alpha-\beta+\alpha^*+\beta^*}}{ p-1 }-\frac{1}{p-1} \right)}{ p^{m(1-\alpha^*-\beta^*  )} }   \notag\\
&\hspace{.5in} + \sum_{ 0\leq m<n<\infty  } \frac{\tau_A(p^{m}) \tau_B(p^{n}) \big( 1-p^{-\alpha-\beta+\alpha^*+\beta^*}  \big)}{ p^{m(\beta-\alpha^*-\beta^*)} p^{n(1-\beta)} } \notag\\
&\hspace{.5in} + \sum_{ 0\leq n<m <\infty   } \frac{\tau_A(p^{m}) \tau_B(p^{n}) \big( 1-p^{-\alpha-\beta+\alpha^*+\beta^*}  \big)}{ p^{m(1-\alpha)} p^{n(\alpha-\alpha^*-\beta^*)}  } \Bigg)\Bigg\}, \label{eqn: G2aba'b'}
\end{align}
with the product absolutely convergent by Lemma~\ref{lem: R0eulerbound}. Now
\begin{equation*}
\prod_{\hat{\beta}\in B } \left(1-\frac{1}{p^{1+\hat{\beta}-\beta}} \right) = \prod_{\gamma\in B_{-\alpha^*-\beta^*}} \left(1-\frac{1}{p^{1+\gamma-\beta+\alpha^*+\beta^*}} \right),
\end{equation*}
while \eqref{eqn: taufactoringidentity} implies
\begin{equation*}
\frac{ \tau_B(p^{n}) }{ p^{n(1-\beta)} } = \frac{ \tau_{B_{-\alpha^*-\beta^*}}(p^{n}) }{ p^{n(1-\beta+\alpha^*+\beta^*)} }
\end{equation*}
and
\begin{equation*}
\frac{ \tau_B(p^{m}) }{ p^{m(1-\alpha^*-\beta^*)} } = \frac{ \tau_{B_{-\alpha^*-\beta^*}}(p^{m}) }{ p^{m}}.
\end{equation*}
It follows from these, \eqref{eqn: G2aba'b'}, and the definition \eqref{eqn: R0eulerGdef} of $\mathcal{G}$ that
\begin{equation}\label{eqn: GKeuleriden2b}
\begin{split}
\mathcal{G}&(2-\alpha-\beta+\alpha^*+\beta^*,\alpha,\beta;A,B,h,k) \\
&\hspace{.25in}= \mathcal{G}(2-\alpha-\beta+\alpha^*+\beta^*,\alpha,\beta-\alpha^*-\beta^*;A,B_{-\alpha^*-\beta^*},h,k).
\end{split}
\end{equation}
Lemma~\ref{lem: GKeuleridentity} with $B$ replaced by $B_{-\alpha^*-\beta^*}$ and $\beta$ replaced by $\beta-\alpha^*-\beta^*$ implies
\begin{equation}\label{eqn: GKeuleriden2a}
\begin{split}
h^{-\frac{1}{2}+\alpha} k^{-\frac{1}{2}+\beta-\alpha^*-\beta^*} \mathcal{G}(2-\alpha-\beta+\alpha^*+\beta^*,\alpha,\beta-\alpha^*-\beta^*;A,B_{-\alpha^*-\beta^*},h,k) \\
= \mathcal{K}(0,0,2-\alpha-\beta+\alpha^*+\beta^*;A,B_{-\alpha^*-\beta^*},\alpha,\beta-\alpha^*-\beta^*,h,k).
\end{split}
\end{equation}
To see that the right-hand side is the same as
$$
\mathcal{K}(0,-\alpha^*-\beta^*,2-\alpha-\beta+\alpha^*+\beta^*;A,B,\alpha,\beta,h,k),
$$
we make the following observations. If $w=2-\alpha-\beta+\alpha^*+\beta^*$ and $s_1=s_2=0$, then
$$
w-1+\alpha+s_1+(\beta-\alpha^*-\beta^*)+s_2= 1,
$$
$$
A_{s_1} \smallsetminus\{\alpha+s_1\} \cup\{-\beta+\alpha^*+\beta^*-s_2\} = A  \smallsetminus\{\alpha \} \cup\{-\beta+\alpha^*+\beta^* \},
$$
and
$$
\big(B_{-\alpha^*-\beta^*}\big)_{s_2} \smallsetminus \{\beta-\alpha^*-\beta^*+s_2\} \cup\{-\alpha-s_1\} = B_{-\alpha^*-\beta^*}\smallsetminus \{\beta-\alpha^*-\beta^*\} \cup\{-\alpha\}.
$$
On the other hand, if $w=2-\alpha-\beta+\alpha^*+\beta^*$, $s_1=0$, and $s_2=-\alpha^*-\beta^*$, then
$$
w-1+\alpha+s_1+\beta+s_2= 1,
$$
$$
A_{s_1} \smallsetminus\{\alpha+s_1\} \cup\{-\beta-s_2\} = A  \smallsetminus\{\alpha \} \cup\{-\beta+\alpha^*+\beta^* \},
$$
and
$$
B_{s_2} \smallsetminus \{\beta+s_2\} \cup\{-\alpha-s_1\} = B_{-\alpha^*-\beta^*}\smallsetminus \{\beta-\alpha^*-\beta^*\} \cup\{-\alpha\}.
$$
These observations and the definition \eqref{eqn: 1swapeulerKdef} of $\mathcal{K}$ imply that
\begin{equation*}
\begin{split}
\mathcal{K}(0,0,2-\alpha-\beta+\alpha^*+\beta^*;A,B_{-\alpha^*-\beta^*},\alpha,\beta-\alpha^*-\beta^*,h,k) \\
= \mathcal{K}(0,-\alpha^*-\beta^*,2-\alpha-\beta+\alpha^*+\beta^*;A,B,\alpha,\beta,h,k).
\end{split}
\end{equation*}
From this, \eqref{eqn: GKeuleriden2b}, and \eqref{eqn: GKeuleriden2a}, we arrive at Lemma~\ref{lem: GKeuleridentity2}.
\end{proof}

The special case of Lemma~\ref{lem: GKeuleridentity2} with $\alpha^*=\alpha$ and $\beta^*=\beta$ implies that
\begin{equation}\label{eqn: GKeuleriden3}
\begin{split}
h^{-\frac{1}{2}+\alpha} k^{-\frac{1}{2}-\alpha} \mathcal{G}(2,\alpha,\beta) 
= \mathcal{K}(0,-\alpha -\beta ,2).
\end{split}
\end{equation}
As a side note, we mention that \eqref{eqn: GKeuleriden3} may be proved directly from the definitions \eqref{eqn: R0eulerGdef} of $\mathcal{G}$ and \eqref{eqn: 1swapeulerKdef} of $\mathcal{K}$ by using the identity
$$
\frac{\tau_B(p^n)}{p^{n(-\alpha-\beta)}} = \tau_{B_{-\alpha-\beta}}(p^n),
$$
which follows from \eqref{eqn: taufactoringidentity}, and observing that if $m+\ordp(h)=n+\ordp(k)$ then
\begin{equation*}
\begin{split}
p^{m(1-\alpha)}p^{n(1-\beta)}p^{-\min\{m+\ordp(h),n+\ordp(k)\}}
& = p^{m(1-\alpha)}p^{n(1-\beta)}p^{-\frac{1}{2}(m+\ordp(h))-\frac{1}{2}(n+\ordp(k))} \\
& = p^{-(\frac{1}{2}-\alpha)\ordp(h) -(\frac{1}{2}+\alpha)\ordp(k)+\frac{m}{2}+n(\frac{1}{2}-\alpha-\beta)}
\end{split}
\end{equation*}
because $\alpha(\ordp(h)-\ordp(k))=\alpha(n-m)$.

We are now ready to match each residue on the right-hand side of \eqref{eqn: U2residues} with a residue on the right-hand side of \eqref{eqn: 1swapsready} in such a way that corresponding residues are equal. The identity \eqref{eqn: GKeuleriden3} implies that
\begin{equation*}
\begin{split}
h^{-\frac{1}{2}-\beta-s} k^{-\frac{1}{2}+\beta+s} \mathcal{G}(2,\alpha,\beta)
&= \left( \frac{h}{k}\right)^{-s-\alpha-\beta}h^{-\frac{1}{2}+\alpha} k^{-\frac{1}{2}-\alpha} \mathcal{G}(2,\alpha,\beta)\\
& = \left( \frac{h}{k}\right)^{-s-\alpha-\beta}\mathcal{K}(0,-\alpha -\beta ,2).
\end{split}
\end{equation*}
From this, \eqref{eqn: J23evaluated}, and \eqref{eqn: R1evaluated}, we deduce that
\begin{equation}\label{eqn: R1andJ23}
R_1 = J_{23}+ O\big( (Xhk)^{\varepsilon}k^{1/2} Q^{-96}\big).
\end{equation}
Now from \eqref{eqn: J11evaluated}, \eqref{eqn: R21evaluated}, and Lemma~\ref{lem: GKeuleridentity}, we immediately see that
\begin{equation}\label{eqn: R21andJ11}
R_{21} = J_{11}+ O\big( (hk)^{\varepsilon}(h,k)^{1/2} Q^{-96}\big).
\end{equation}
Next, \eqref{eqn: J21evaluated}, \eqref{eqn: R22evaluated}, and \eqref{eqn: GKeuleriden3} imply
\begin{equation}\label{eqn: R22andJ21}
R_{22} = J_{21}+ O\big( (Xhk)^{\varepsilon}(h,k)^{1/2} Q^{-96}\big).
\end{equation}
From \eqref{eqn: J31evaluated}, \eqref{eqn: R23evaluated}, and Lemma~\ref{lem: GKeuleridentity2} with $\alpha^*=\alpha'$ and $\beta^*=\beta'$, we deduce that
\begin{equation}\label{eqn: R23andJ31}
R_{23} = J_{31}+ O\big( (Xhk)^{\varepsilon}(h,k)^{1/2} Q^{-96}\big).
\end{equation}
Finally, \eqref{eqn: J33evaluated}, \eqref{eqn: R3evaluated}, and Lemma~\ref{lem: GKeuleridentity2} with $\alpha^*=\alpha'$ and $\beta^*=\beta'$ imply
\begin{equation*}
R_{3} = J_{33}+ O\big( (Xhk)^{\varepsilon}k^{1/2} Q^{-96}\big).
\end{equation*}
From this, \eqref{eqn: R1andJ23}, \eqref{eqn: R21andJ11}, \eqref{eqn: R22andJ21}, \eqref{eqn: R23andJ31}, and \eqref{eqn: U2residues}, we arrive at
\begin{equation*}
\begin{split}
\mathcal{U}^2(h,k) &= J_{23}+J_{11}+J_{21}+J_{31}+J_{33} + O \bigg( \bigg( Q^{1+\varepsilon} + \frac{Q^2}{C^{1-\varepsilon}}\bigg) \frac{(Xhk)^{\varepsilon}(h,k)}{\sqrt{hk}}  \bigg)  \\
&\hspace{.25in}+ O\Big( X^{\varepsilon} Q^{\frac{3}{2}} h^{\varepsilon}k^{\varepsilon} + X^{-\frac{1}{2}+\varepsilon} Q^{\frac{5}{2}} (hk)^{\varepsilon} + (XChk)^{\varepsilon} hk X^2 Q^{-96}\Big).
\end{split}
\end{equation*}
From this and \eqref{eqn: 1swapsready}, we conclude that
\begin{equation}\label{eqn: U2is1swap}
\begin{split}
\mathcal{U}^2(h,k) &= \mathcal{I}_1^*(h,k) + O \bigg( \bigg( Q^{1+\varepsilon} + \frac{Q^2}{C^{1-\varepsilon}}\bigg) \frac{(Xhk)^{\varepsilon}(h,k)}{\sqrt{hk}}  \bigg)   \\
&\hspace{.25in}+ O\big( X^{-\frac{1}{2}+\varepsilon} Q^{\frac{5}{2}} (hk)^{\varepsilon} + X^{\varepsilon} Q^{\frac{3}{2}+\varepsilon} (hk)^{\varepsilon} + (XChk)^{\varepsilon} hk X^2 Q^{-96}\big).
\end{split}
\end{equation}

\section{The error term \texorpdfstring{$\mathcal{U}^r(h,k)$}{Ur(h,k)}}\label{sec: Ur}
Recall that $\lambda_1,\lambda_2,\dots$ are arbitrary complex numbers such that $\lambda_h \ll_{\varepsilon} h^{\varepsilon}$ for all $\varepsilon>0$. In this section, we bound the sum 
$$
\sum_{h,k\leq Q^{\vartheta}} \frac{\lambda_h \overline{\lambda_k}}{\sqrt{hk}}\mathcal{U}^r(h,k),
$$
where $\mathcal{U}^r(h,k)$ is defined by \eqref{eqn: Ur}. The majority of the work that follows consists of preparing the above sum for an eventual application of the large sieve.

We begin by showing that the terms in \eqref{eqn: Ur} that have sufficiently large $a\ell$ are zero. Since the support of $W$ is compact and contained in $(0,\infty)$, the summand in the definition \eqref{eqn: Ur} of $\mathcal{U}^r(h,k)$ is zero unless $|mh\pm nk|\asymp  g\ell Q/c$, which implies that either $mh \gg g\ell Q/c$ or $nk \gg g\ell Q/c$. Since $a|g$, $c\leq C$, and $h,k\leq Q^{\vartheta}$, this means that the summand in \eqref{eqn: Ur} is zero unless
$$
m \gg \frac{Qg \ell}{hc} \geq \frac{Qa\ell}{CQ^{\vartheta}} \ \ \ \ \text{or} \ \ \ \ n  \gg \frac{Qg\ell}{kc}\geq \frac{Qa\ell}{CQ^{\vartheta}} .
$$
Now $V(m/X)V(n/X)=0$ except if $m,n\ll X$. Thus the summand in \eqref{eqn: Ur} is zero unless
\begin{equation}\label{eqn: largeal}
X \gg \frac{Qa\ell}{CQ^{\vartheta}}.
\end{equation}
In other words, the terms in the definition \eqref{eqn: Ur} of $\mathcal{U}^r(h,k)$ are zero unless $a\ell \ll  XCQ^{\vartheta-1}$.

We next show that the terms in \eqref{eqn: Ur} that have sufficiently large $ae\ell$ are negligible. We first consider the terms that have $mh/{g} \equiv \mp {nk}/{g}$ (mod~$ae\ell$). In this case, $|mh \pm nk|/g$ is a multiple of $ae\ell$ that is not zero because $mh\neq nk$. Thus $|mh \pm nk|/g \geq ae\ell$, and the triangle inequality implies that either $mh/g \geq ae\ell/2$ or $nk/g \geq ae\ell/2$. Since $h,k\leq Q^{\vartheta}$ and $g=(mh,nk)\geq 1$, these lower bounds imply that either $ae\ell \ll m Q^{\vartheta}$ or $ae\ell \ll n Q^{\vartheta}$. Hence, using the support of $V$ in the same way we deduced \eqref{eqn: largeal}, we see that the terms in \eqref{eqn: Ur} that have $mh/{g} \equiv \mp {nk}/{g}$ (mod~$ae\ell$) are zero unless
$ae\ell \ll XQ^{\vartheta}$.

Next, we consider the terms in \eqref{eqn: Ur} that have $mh/{g} \not\equiv \mp {nk}/{g}$ (mod~$ae\ell$) and $ae\ell \gg Y$, where $Y$ is a large parameter that we will choose later (in Section~\ref{sec: proofofthm}). For these terms, the orthogonality of Dirichlet characters implies that the $\psi$-sum in \eqref{eqn: Ur} is $O(1)$. Moreover, we have shown that these terms are zero unless \eqref{eqn: largeal} holds, and thus we may assume that $e\gg YQ^{\vartheta-1}/(XC)$. It follows from these and \eqref{eqn: divisorbound} that the sum of the terms in \eqref{eqn: Ur} that have $mh/{g} \not\equiv \mp {nk}/{g}$ (mod~$ae\ell$) and $ae\ell \gg Y$ is bounded by
\begin{equation}\label{eqn: aelggYterms}
\ll \sum_{1\leq c\leq C} \sum_{1\leq m,n\ll X} \frac{(mn)^{\varepsilon}}{\sqrt{mn}} \sum_{ \frac{Y Q^{1-\vartheta}}{XC} \ll e<\infty} \frac{1}{e} \sum_{a|g}  \sum_{\substack{1\leq \ell <\infty \\ a\ell \ll XCQ^{\vartheta -1} } } \frac{(ae\ell)^{\varepsilon}}{ae\ell}\cdot \frac{|mh\pm nk|}{g \ell} W\left( \frac{c|mh\pm nk|}{g\ell Q}\right).
\end{equation}
Since the support of $W$ is compact and contained in $(0,\infty)$, we have $|mh\pm nk|/(g\ell) \ll Q/c$ in \eqref{eqn: aelggYterms}, and so \eqref{eqn: aelggYterms} is
\begin{align*}
\ll & (XCQ)^{\varepsilon} Q \sum_{1\leq c\leq C}\frac{1}{c} \sum_{1\leq m,n\ll X} \frac{(mn)^{\varepsilon}}{\sqrt{mn}} \left(\frac{Y Q^{1-\vartheta}}{XC} \right)^{-1+\varepsilon} \ll   (XCQY)^{\varepsilon} \frac{X^2 CQ^{\vartheta}}{Y}.
\end{align*}
This bound is small if $Y$ is, say, a large power of $Q$. We have thus shown that the terms in \eqref{eqn: Ur} that have $mh/{g} \not\equiv \mp {nk}/{g}$ (mod~$ae\ell$) and $ae\ell \gg Y$ are negligible for large enough $Y$.

From all these observations, we deduce for $h,k\leq Q^{\vartheta}$ and $Y\geq XQ^{\vartheta}$ that the total contribution of the terms in the definition \eqref{eqn: Ur} of $\mathcal{U}^r(h,k)$ that have $a\ell \gg  XCQ^{\vartheta-1}$ or $ae\ell \gg Y$ is
$$
\ll (XCQY)^{\varepsilon} \frac{X^2 CQ^{\vartheta}}{Y}.
$$
Thus
\begin{align*}
 \mathcal{U}^r(h,k)&= \frac{1}{2} \sum_{\substack{1\leq c \leq C  \\ (c ,hk)=1 }} \mu(c)   \sum_{\substack{1\leq m,n<\infty \\ (mn,c )=1\\ mh\neq nk}} \frac{\tau_A(m) \tau_B(n) }{\sqrt{mn}} V\left( \frac{m}{X} \right)  V\left( \frac{n}{X} \right)  \sum_{\substack{ 1\leq e <\infty \\ ( e ,g)=1 }}\frac{\mu(e)}{e}\notag\\
 &\hspace{.5in} \times \sum_{a|g}  \mu(a)\sum_{\substack{ 1\leq  \ell <\infty \\ (ea\ell, \frac{mh}{g}\cdot\frac{nk}{g})=1  \\  a\ell \ll  XCQ^{\vartheta-1} \\ ae\ell \ll Y }} \frac{1}{\phi(ea \ell)}\sum_{\substack{\psi \bmod ae\ell\\ \psi\neq \psi_0}} \psi \left( \frac{mh}{g} \right) \overline{\psi}\left( \mp \frac{nk}{g} \right) \notag\\
 &\hspace{.75in} \times\frac{|mh\pm nk|}{g \ell} W\left( \frac{c|mh\pm nk|}{g\ell Q}\right) \ + \ O\left( (XCQY)^{\varepsilon} \frac{X^2 CQ^{\vartheta}}{Y}\right).
\end{align*}
We multiply both sides by $\lambda_h\overline{\lambda_k} (hk)^{-1/2}$ and then sum over all $h,k\leq Q^{\vartheta}$ to arrive at
\begin{equation}\label{eqn: Urshort}
\begin{split}
&\sum_{h,k\leq Q^{\vartheta}} \frac{ \lambda_h \overline{\lambda_k}}{\sqrt{hk}}\mathcal{U}^r(h,k)= \frac{1}{2}\sum_{1\leq c \leq C} \mu(c)  \sum_{1\leq e<\infty} \frac{\mu(e)}{e} \sum_{1\leq \ell<\infty} \sum_{\substack{1\leq a<\infty \\ a\ell \ll XCQ^{\vartheta-1} \\ ae\ell \ll Y }} \frac{\mu(a)}{\phi(ae\ell) \ell}\\
&\hspace{.25in}\times \sum_{\substack{\psi \bmod ae\ell\\ \psi\neq \psi_0}} \Big\{ \mathcal{U}^+ (c,a,e,\ell,\psi) + \mathcal{U}^- (c,a,e,\ell,\psi)\Big\} 
\ + \ O\left( (XCQY)^{\varepsilon} \frac{X^2 CQ^{2\vartheta}}{Y}\right),
\end{split}
\end{equation}
where $\mathcal{U}^{\pm} (c,a,e,\ell,\psi)$ is defined by
\begin{equation}\label{eqn: Upmdef}
\begin{split}
\mathcal{U}^{\pm} (c,a,e,\ell,\psi) &:= \sum_{\substack{h,k\leq Q^{\vartheta} \\ (c,hk)=1}}  \frac{ \lambda_h \overline{\lambda_k}}{\sqrt{hk}} \sideset{}{'}\sum_{m,n} \frac{\tau_A(m)\tau_B(n)}{\sqrt{mn}} V\left( \frac{m}{X}\right)V\left( \frac{n}{X}\right)\\
&\hspace{.25in}\times\psi \left( \frac{mh}{g} \right) \overline{\psi}\left( \mp \frac{nk}{g} \right) \frac{|mh\pm nk|}{g  } W\left( \frac{c|mh\pm nk|}{g\ell Q}\right),
\end{split}
\end{equation}
with the symbol $\sum'$ denoting summation over all positive integers $m,n$ such that $(mn,c)=1$, $mh\neq nk$, $(e,g)=1$, $a|g$, and $(ea\ell, mhnk/g^2)=1$, where $g=(mh,nk)$. We split the $a,e,\ell$-sum in \eqref{eqn: Urshort} into dyadic blocks and deduce that
\begin{equation}\label{eqn: Urshortbound}
\begin{split}
& \sum_{h,k\leq Q^{\vartheta}} \frac{ \lambda_h \overline{\lambda_k}}{\sqrt{hk}}\mathcal{U}^r(h,k) \ll 
\sum_{1\leq c \leq C} \sum_{\substack{A,E,L \\  AL \ll XCQ^{\vartheta-1} \\ AEL \ll Y }} \sum_{A<a\leq 2A} \sum_{E<e\leq 2E} \sum_{L<\ell\leq 2L} \frac{(ae\ell)^{\varepsilon}}{ ae^2\ell^2} \\
& \hspace{.25in} \times \sum_{\substack{\psi \bmod ae\ell\\ \psi\neq \psi_0}} \Big\{ |\mathcal{U}^+ (c,a,e,\ell,\psi)| + |\mathcal{U}^- (c,a,e,\ell,\psi)|\Big\}  \ + \  (XCQY)^{\varepsilon} \frac{X^2 CQ^{2\vartheta}}{Y},
\end{split}
\end{equation}
where each of the summation variables $A,E,L$ runs through the set $\{2^{\nu}: \nu\in \mathbb{Z}, \nu \geq -1\}$. Note that we are abusing notation here and using the symbol $A$ to denote both the summation variable in \eqref{eqn: Urshortbound} and the set in $\tau_A$ in \eqref{eqn: Upmdef}. However, this will not cause confusion.

To remove the interdependence of the summation variables in \eqref{eqn: Upmdef}, we let $g_1=(h,k)$, $g_2=(m,n)$, $g_3=(m/g_2,k/g_1)$, and $g_4=(n/g_2,h/g_1)$, and make the change of variables $h=g_1g_4H$, $k=g_1g_3K$, $m=g_2g_3M$, and $n=g_2g_4N$. Recalling the definition $g=(mh,nk)$, we note that $g=g_1g_2g_3g_4$. By their definitions, the new variables satisfy the coprimality conditions $(g_3,g_4)=1$, $(H,g_3)=1$, $(K,g_4)=1$, $(H,K)=1$, $(M,g_4)=1$, $(N,g_3)=1$, $(M,N)=1$, $(M,K)=1$, and $(N,H)=1$. Furthermore, the properties of $m,h,n,k$ in \eqref{eqn: Upmdef} are equivalent to $(c,MNHKg_1g_2g_3g_4)=1$, $MH\neq NK$, $(e,g_1g_2g_3g_4)=1$, $a|g_1g_2g_3g_4$, $(ea\ell, MNHK)=1$, $g_1g_4 H\leq Q^{\vartheta}$, $g_1g_3 K\leq Q^{\vartheta}$, and $1\leq M,N<\infty$. Since $V$ has compact support, we may assume that $m,n\ll X$ in \eqref{eqn: Upmdef} and hence $g_2\ll X$. Thus, the result of this change of variables is
\begin{equation}\label{eqn: Upm}
\begin{split}
\mathcal{U}^{\pm} (c,a,e,\ell,\psi) = \sum_{\substack{g_1,g_2,g_3,g_4 \\ M, N, H,K}}^*  \frac{ \lambda_{g_1g_4H} \overline{\lambda_{g_1g_3K}}}{g_1\sqrt{g_3g_4HK}} \frac{\tau_A(g_2g_3M)\tau_B(g_2g_4N)}{g_2\sqrt{g_3g_4MN}} V\left( \frac{g_2g_3M}{X}\right)V\left( \frac{g_2g_4N}{X}\right)\\
\times \psi (MH) \overline{\psi}( \mp NK) |MH\pm NK| W\left( \frac{c|MH\pm NK|}{\ell Q}\right),
\end{split}
\end{equation}
where $*$ denotes the conditions for $g_1,g_2,g_3,g_4, M, N, H,K$ listed above.

Our next task is to write \eqref{eqn: Upm} in terms of an Euler product. To this end, we apply Mellin inversion twice to write
\begin{equation}\label{eqn: mellintwice}
\begin{split}
V\left( \frac{g_2g_3x}{X}\right)
& V\left( \frac{g_2g_4y}{X}\right) |xH\pm yK| W\left( \frac{c|xH\pm yK|}{\ell Q}\right) \\
& = \frac{1}{(2\pi i)^2}  \int_{(\frac{1}{2}+\varepsilon)} \int_{(\frac{1}{2}+\varepsilon)} (xH)^{-s_1}(yK)^{-s_2}   \int_0^{\infty}\int_0^{\infty} u^{s_1-1} v^{s_2-1}\\
&\hspace{.25in}\times V\left( \frac{g_2g_3u}{HX}\right)V\left( \frac{g_2g_4v}{KX}\right)|u\pm v| W\left( \frac{c|u\pm v|}{\ell Q}\right) \,dv \,du\,ds_2\,ds_1.
\end{split}
\end{equation}
We have chosen the lines of integration to be at $\re(s_1)=\re(s_2)=\frac{1}{2}+\varepsilon$ to facilitate later estimations. We let $\Psi:[0,\infty)\rightarrow \mathbb{R}$ be a smooth nonnegative function of compact support such that $\Psi(\xi)=1$ for all $\xi$ in the support of $V$. Then
\begin{align*}
V\left( \frac{g_2g_3u}{HX}\right) = \Psi\Big( \frac{ u}{XQ^{\vartheta}}\Big)V\left( \frac{g_2g_3u}{HX}\right)
\end{align*}
for all $u\geq 0$, and applying Mellin inversion on the right-hand side gives
\begin{align*}
V\left( \frac{g_2g_3u}{HX}\right) = \frac{1}{2\pi i } \Psi\Big( \frac{ u}{XQ^{\vartheta}}\Big) \int_{(\varepsilon)} \left(\frac{XH}{g_2g_3 u} \right)^{s_3} \widetilde{V}(s_3) \,ds_3.
\end{align*}
Similarly,
\begin{align*}
V\left( \frac{g_2g_4v}{KX}\right) =\frac{1}{2\pi i } \Psi\Big( \frac{ v}{XQ^{\vartheta}}\Big)\int_{(\varepsilon)} \left(\frac{XK}{g_2g_4 v} \right)^{s_4} \widetilde{V}(s_4) \,ds_4.
\end{align*}
It follows from these and \eqref{eqn: mellintwice} that
\begin{align*}
 V\left( \frac{g_2g_3x}{X}\right)&V\left( \frac{g_2g_4y}{X}\right) |xH\pm yK| W\left( \frac{c|xH\pm yK|}{\ell Q}\right) \\
& = \frac{1}{(2\pi i)^4} \int_{(\frac{1}{2}+\varepsilon)} \int_{(\frac{1}{2}+\varepsilon)} \int_{(\varepsilon)}  \int_{(\varepsilon)} H^{-s_1+s_3}K^{-s_2+s_4} \left(\frac{X}{g_2g_3} \right)^{s_3} \left(\frac{X }{g_2g_4} \right)^{s_4} x^{-s_1}y^{-s_2} \\
&\hspace{.25in} \times\int_0^{\infty} \int_0^{\infty}  u^{s_1-s_3-1} v^{s_2-s_4-1}  \Psi\Big( \frac{ u}{XQ^{\vartheta}}\Big)\Psi\Big( \frac{ v}{XQ^{\vartheta}}\Big)\widetilde{V}(s_3)\widetilde{V}(s_4) |u\pm v|\\
&\hspace{.5in} \times W\left( \frac{c|u\pm v|}{\ell Q}\right)  \,dv \,du\,ds_4\,ds_3\,ds_2\,ds_1.
\end{align*}
Now take $x=M$ and $y=N$, and insert the result into \eqref{eqn: Upm} to deduce that
\begin{equation}\label{eqn: UpmMN}
\begin{split}
\mathcal{U}^{\pm} (c,a,e,\ell,\psi) &= \sum_{\substack{g_1,g_2,g_3,g_4 \\ M, N, H,K}}^*  \frac{ \lambda_{g_1g_4H} \overline{\lambda_{g_1g_3K}}}{g_1\sqrt{g_3g_4HK}} \frac{\tau_A(g_2g_3M)\tau_B(g_2g_4N)}{g_2\sqrt{g_3g_4MN}} \psi (MH) \overline{\psi}( \mp NK)\\
&\hspace{.25in}\times \frac{1}{(2\pi i)^4} \int_{(\frac{1}{2}+\varepsilon)} \int_{(\frac{1}{2}+\varepsilon)} \int_{(\epsilon)}  \int_{(\epsilon)} H^{-s_1+s_3}K^{-s_2+s_4} \left(\frac{X}{g_2g_3} \right)^{s_3} \left(\frac{X }{g_2g_4} \right)^{s_4}\\
&\hspace{.5in}\times M^{-s_1}N^{-s_2}\mathcal{V}(s_1,s_2,s_3,s_4 )\,ds_4\,ds_3\,ds_2\,ds_1,
\end{split}
\end{equation}
where $\mathcal{V}(s_1,s_2,s_3,s_4 )$ is defined by
\begin{equation}\label{eqn: Vs}
\begin{split}
\mathcal{V}(s_1,s_2,s_3,s_4 )
&= \mathcal{V}(s_1,s_2,s_3,s_4;X,Q, \vartheta,c,\ell) \\
& : = \widetilde{V}(s_3)\widetilde{V}(s_4)\int_0^{\infty} \int_0^{\infty}  u^{s_1-s_3-1} v^{s_2-s_4-1}  \Psi\Big( \frac{ u}{XQ^{\vartheta}}\Big)\Psi\Big( \frac{ v}{XQ^{\vartheta}}\Big)\\
&\hspace{.5in}\times |u\pm v| W\left( \frac{c|u\pm v|}{\ell Q}\right)  \,dv \,du.
\end{split}
\end{equation}
The following lemma gives a bound for $\mathcal{V}(s_1,s_2,s_3,s_4 )$, and is analogous to \eqref{eqn: mellinrapiddecay}.

\begin{lemma}\label{lem: uvintegralbound}
If $j_1,j_2$ are nonnegative integers and $s_1,s_2,s_3,s_4$ are complex numbers such that $j_1+j_2\geq 1$, $\re(s_1-s_3)>0$, and $\re(s_2-s_4)>0$, then
\begin{align*}
\int_0^{\infty} \int_0^{\infty}  u^{s_1-s_3-1} v^{s_2-s_4-1}  \Psi\Big( \frac{ u}{XQ^{\vartheta}}\Big)\Psi\Big( \frac{ v}{XQ^{\vartheta}}\Big) |u\pm v| W\left( \frac{c|u\pm v|}{\ell Q}\right)  \,dv \,du \\
\ll   \frac{ (X Q^{\vartheta } )^{\re(s_1+s_2-s_3-s_4)  } }{|s_1-s_3|^{j_1} |s_2-s_4|^{j_2}} \left( \frac{ \ell Q}{c}\right) \bigg(1+\frac{ XcQ^{\vartheta-1}}{\ell} \bigg)^{j_1+j_2-1},
\end{align*}
where the implied constant may depend only on $\Psi$, $W$, $\re(s_1-s_3)$, $\re(s_2-s_4)$, $j_1$, or $j_2$.
\end{lemma}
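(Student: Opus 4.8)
The plan is to integrate by parts $j_1$ times in $u$ and $j_2$ times in $v$, exploiting the factors $u^{s_1-s_3-1}$ and $v^{s_2-s_4-1}$, and then bound the resulting integral crudely using the compact supports of $\Psi$ and $W$. First I would write $\mathcal{I}$ for the double integral on the left-hand side, and observe that $\int_0^{\infty} u^{s_1-s_3-1} (\cdot)\,du = \frac{1}{s_1-s_3}\int_0^{\infty} u^{s_1-s_3}\, \big(-\tfrac{d}{du}\big)(\cdot)\,du$, with no boundary terms because $\re(s_1-s_3)>0$ kills the contribution at $u=0$ and compact support kills it at $u=\infty$. Iterating this $j_1$ times in $u$ and $j_2$ times in $v$ produces
\[
\mathcal{I} = \frac{1}{(s_1-s_3)^{j_1}(s_2-s_4)^{j_2}} \int_0^{\infty}\int_0^{\infty} u^{s_1-s_3+j_1-1}\, v^{s_2-s_4+j_2-1}\, \frac{\partial^{j_1+j_2}}{\partial u^{j_1}\partial v^{j_2}}\!\left[ \Psi\Big(\tfrac{u}{XQ^{\vartheta}}\Big)\Psi\Big(\tfrac{v}{XQ^{\vartheta}}\Big) |u\pm v| W\Big(\tfrac{c|u\pm v|}{\ell Q}\Big)\right]\,dv\,du.
\]

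The second step is to bound the mixed partial derivative. Each $u$-derivative landing on $\Psi(u/(XQ^{\vartheta}))$ costs a factor $\ll (XQ^{\vartheta})^{-1}$, each $u$-derivative landing on $W(c|u\pm v|/(\ell Q))$ costs $\ll c/(\ell Q)$, and at most one derivative can land on $|u\pm v|$ (a further derivative gives zero, away from $u=\pm v$; the function $f(t)=tW(ct/(\ell Q))$ appearing in the proof of Lemma~\ref{lem: smoothing} is smooth since $W$ is supported away from $0$, so there is no distributional issue). Since $\Psi$ forces $u,v\ll XQ^{\vartheta}$ and $W$ forces $|u\pm v|\asymp \ell Q/c$, the largest of the scales $(XQ^{\vartheta})^{-1}$ and $c/(\ell Q)$ is $\ll (XQ^{\vartheta})^{-1}(1 + XcQ^{\vartheta-1}/\ell)$; distributing the $j_1+j_2$ derivatives (with one possibly absorbed by $|u\pm v|\asymp \ell Q/c$, which is why the exponent is $j_1+j_2-1$ rather than $j_1+j_2$) gives a pointwise bound
\[
\ll \frac{\ell Q}{c}\,(XQ^{\vartheta})^{-(j_1+j_2)}\Big(1+\tfrac{XcQ^{\vartheta-1}}{\ell}\Big)^{j_1+j_2-1}
\]
on the integrand's derivative factor, valid on the region $u,v\ll XQ^{\vartheta}$ and zero outside.

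The third step is to estimate the remaining integral $\int\int u^{\re(s_1-s_3)+j_1-1} v^{\re(s_2-s_4)+j_2-1}\,dv\,du$ over the box $u,v\ll XQ^{\vartheta}$, which is $\ll (XQ^{\vartheta})^{\re(s_1-s_3)+j_1}(XQ^{\vartheta})^{\re(s_2-s_4)+j_2}$ since the exponents have positive real part by hypothesis. Multiplying the three pieces together, the powers of $(XQ^{\vartheta})^{j_1+j_2}$ cancel and we are left with exactly
\[
\mathcal{I} \ll \frac{(XQ^{\vartheta})^{\re(s_1+s_2-s_3-s_4)}}{|s_1-s_3|^{j_1}|s_2-s_4|^{j_2}}\cdot\frac{\ell Q}{c}\Big(1+\tfrac{XcQ^{\vartheta-1}}{\ell}\Big)^{j_1+j_2-1},
\]
which is the claimed bound. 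The only mild subtlety — and the one place to be careful — is the derivative of $|u\pm v|$: one must note that, relative to the scale $\ell Q/c$ on which the $W$-factor is supported, the factor $|u\pm v|$ contributes size $\ell Q/c$ while at most one of its derivatives is nonzero and contributes size $O(1)$, so exactly one of the $j_1+j_2$ derivatives is "free," accounting for the shift from $j_1+j_2$ to $j_1+j_2-1$ in the final exponent; the case $j_1+j_2=1$ (where no derivative hits $|u\pm v|$ beyond possibly the single one, and the exponent is $0$) is consistent with this and is covered by the hypothesis $j_1+j_2\geq 1$. I do not expect any real obstacle here; this is a routine integration-by-parts estimate of the same flavor as \eqref{eqn: mellinrapiddecay}, just with two variables and the extra bookkeeping forced by the non-separable factor $|u\pm v|W(c|u\pm v|/(\ell Q))$.
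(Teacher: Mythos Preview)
Your overall strategy (integrate by parts $j_1$ times in $u$ and $j_2$ times in $v$, then bound what remains using the supports of $\Psi$ and $W$) is exactly the paper's approach, and your formula after integration by parts is correct. However, the pointwise derivative bound you claim in step~2 is \emph{not} correct, and your step~3 does not compensate for this; the two errors happen to cancel in the final line, but the argument as written does not prove the lemma.

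Concretely: write $W_0(t)=tW(t)$ so that $|u\pm v|\,W(c|u\pm v|/(\ell Q)) = (\ell Q/c)\,W_0(c|u\pm v|/(\ell Q))$. In the product-rule expansion of $\partial_u^{j_1}\partial_v^{j_2}\big[(\ell Q/c)W_0(\cdot)\,\Psi(u/XQ^\vartheta)\,\Psi(v/XQ^\vartheta)\big]$ there is a term in which \emph{all} $j_1+j_2$ derivatives land on $W_0$; that term has size $\asymp (\ell Q/c)\cdot (c/(\ell Q))^{j_1+j_2}$ on the support. When $x:=XcQ^{\vartheta-1}/\ell$ is large this exceeds your claimed bound $\tfrac{\ell Q}{c}(XQ^\vartheta)^{-(j_1+j_2)}(1+x)^{j_1+j_2-1}$ by a factor $\asymp x$. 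The ``one derivative is free'' heuristic fails precisely because it is not mandatory that any derivative land on the linear factor $|u\pm v|$: in the product-rule expansion there are terms where none do. The correct pointwise bound has exponent $j_1+j_2$, not $j_1+j_2-1$.

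The missing saving of one power of $(1+x)$ does not come from the derivative bound at all; it comes from the domain of integration. The $W$-factor forces $|u\pm v|\asymp \ell Q/c$, so for each fixed $u$ the variable $v$ ranges over an interval of length $\ll\min\{\ell Q/c,\,XQ^\vartheta\}$, not the full box side $XQ^\vartheta$. Using this (together with, say WLOG, $j_2\geq 1$ so that $v^{\re(s_2-s_4)+j_2-1}$ is bounded on the box) recovers exactly the missing factor $\min\{1,1/x\}\asymp (1+x)^{-1}$. This is what the paper does: after the cosmetic change of variables $u\mapsto u\ell Q/c$, $v\mapsto v\ell Q/c$, it bounds the derivative with exponent $j_1+j_2$ and then integrates over the thin strip $|u\pm v|\asymp 1$ to gain the extra $(1+x)^{-1}$.
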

\begin{proof}
For brevity, let $\mathcal{D}$ denote the double integral in question, and let $W_0$ denote the function $W_0(\xi):=\xi W(\xi)$. Make the change of variables $u\mapsto {u\ell Q}/{c}$ and $v\mapsto {v\ell Q}/{c}$, then integrate by parts with respect to $u$ $j_1$ times and with respect to $v$ $j_2$ times to deduce that
\begin{align*}
\mathcal{D} &= (-1)^{j_1+j_2} \left( \frac{ \ell Q}{c}\right)^{s_1+s_2-s_3-s_4+1}\int_0^{\infty} \int_0^{\infty}   \frac{u^{s_1-s_3+j_1-1}}{(s_1-s_3)\cdots (s_1-s_3+j_1-1)}\\
&\times \frac{v^{s_2-s_4+j_2-1}}{(s_2-s_4)\cdots (s_2-s_4+j_2-1)}\frac{\partial^{j_1}}{\partial u^{j_1}}\frac{\partial^{j_2}}{\partial v^{j_2}} \bigg\{ \Psi\Big( \frac{ u \ell }{XcQ^{\vartheta-1}}\Big)\Psi\Big( \frac{ v \ell }{XcQ^{\vartheta-1}}\Big)   W_0(|u\pm v|)  \bigg\} \,dv \,du.
\end{align*}
We may use the product rule and chain rule to bound the derivatives in the integrand. We also observe that the integrand is zero unless $u,v\ll XcQ^{\vartheta-1}/\ell$ and $|u\pm v|\asymp 1$, because $\Psi$ is supported on a compact subset of $[0,\infty)$ and $W$ is supported on a compact subset of $(0,\infty)$. Thus
\begin{equation}\label{eqn: uvintegralboundD}
\begin{split}
\mathcal{D}  \ll  \left( \frac{ \ell Q}{c}\right)^{\re(s_1+s_2-s_3-s_4)+1}\bigg(1+ \frac{ \ell }{XcQ^{\vartheta-1}} \bigg)^{j_1+j_2}\frac{1}{|s_1-s_3|^{j_1} |s_2-s_4|^{j_2}} \\
\times  \mathop{\iint}_{\substack{0\leq u,v\ll { XcQ^{\vartheta-1}}/{\ell} \\ |u \pm v| \asymp 1 }}    u^{\re(s_1-s_3)+j_1-1} v^{\re(s_2-s_4)+j_2-1}  \,dv \,du.
\end{split}
\end{equation}
Since $j_1,j_2$ are nonnegative integers with $j_1+j_2\geq 1$, it holds that either $j_1\geq 1$ or $j_2\geq 1$. By renaming the variables $u$ and $v$ if necessary, we may suppose, without loss of generality, that $j_2\geq 1$. Then $v^{j_2-1}\ll (XcQ^{\vartheta-1}/{\ell})^{j_2-1}$. Moreover, for each $u$, the $v$-integral is over an interval of length $\ll \min\{1,XcQ^{\vartheta-1}/{\ell}\}$. Hence the $u,v$-integral in \eqref{eqn: uvintegralboundD} is at most
\begin{equation*}
\ll \bigg( \frac{ XcQ^{\vartheta-1}}{\ell} \bigg)^{\re(s_1+s_2-s_3-s_4) + j_1+j_2-1} \min\left\{ 1, \frac{ XcQ^{\vartheta-1}}{\ell}\right\}.
\end{equation*}
Since $\min\{1,1/x\}\asymp 1/(1+x)$ for $x>0$, this proves the lemma.
\end{proof}

Now \eqref{eqn: mellinrapiddecay}, \eqref{eqn: Vs}, and Lemma~\ref{lem: uvintegralbound} imply that
\begin{equation}\label{eqn: Vbound}
\begin{split}
 \mathcal{V}(s_1,s_2,s_3,s_4 ) & \ll_{\varepsilon,j_1,j_2,j_3,j_4} \frac{(X Q^{\vartheta } )^{\re(s_1+s_2-s_3-s_4)  }}{|s_1-s_3|^{j_1}|s_2-s_4|^{j_2}|s_3|^{j_3} |s_4|^{j_4}} \left( \frac{ \ell Q}{c}\right)  \bigg(1+\frac{ XcQ^{\vartheta-1}}{\ell} \bigg)^{j_1+j_2-1}  
 \end{split}
\end{equation}
for any nonnegative integers $j_1,j_2,j_3,j_4$ with $j_1+j_2\geq 1$ and any complex numbers $s_1,s_2,s_3,s_4$ such that each of Re$(s_1-s_3)$, Re$(s_2-s_4)$, Re$(s_3)$, and Re$(s_4)$ is $\geq \varepsilon$. It follows that \eqref{eqn: UpmMN} is absolutely convergent, and we may interchange the order of summation to deduce that, recalling the conditions indicated by $*$ and listed before \eqref{eqn: Upm}, we have
\begin{equation}\label{eqn: Upm2}
\begin{split}
\mathcal{U}^{\pm} (c,a,e,\ell,\psi) =
& \frac{1}{(2\pi i)^4} \int_{(\frac{1}{2}+\varepsilon)} \int_{(\frac{1}{2}+\varepsilon)} \int_{(\epsilon)}  \int_{(\epsilon)} X^{s_3+s_4}\\
&\times \sum_{\substack{1\leq g_1,g_2,g_3,g_4, H,K \ll \max\{Q^{\vartheta},X\}\\ g_1g_4H \leq Q^{\vartheta}, \ g_1g_3K \leq Q^{\vartheta}, \ g_2\ll X \\ (g_3,g_4)=(H,g_3)=(K,g_4)=(H,K)=1 \\ (ec,g_1g_2g_3g_4)=(cae\ell,HK)=1 \\ a|g_1g_2g_3g_4  }}  \frac{ \lambda_{g_1g_4H} \overline{\lambda_{g_1g_3K}}   \,  \psi ( H) \overline{\psi}( \mp   K) }{g_1 g_2^{1+s_3+s_4} g_3^{1+s_3}g_4^{1+s_4} H^{\frac{1}{2}+s_1-s_3}K^{\frac{1}{2}+s_2-s_4}}    \\
& \times \sum_{\substack{1\leq M,N<\infty \\ (M,g_4)=(N,g_3)=(M,K)=(N,H)=1 \\ (M,N)=(MN,cae\ell)=1 \\ MH\neq NK }} \frac{\tau_A(g_2g_3M)\tau_B(g_2g_4N)\psi (M ) \overline{\psi}( N) }{M^{\frac{1}{2}+s_1}N^{\frac{1}{2}+s_2}}\\
&\times \mathcal{V}(s_1,s_2,s_3,s_4 )\,ds_4\,ds_3\,ds_2\,ds_1.
\end{split}
\end{equation}

We next write the $M,N$-sum in terms of an Euler product. To do this, we first add and subtract the terms with $MH=NK$ and write
\begin{align}\label{eqn: MNdifference}
\sum_{\substack{1\leq M,N<\infty \\ (M,g_4)=(N,g_3)=(M,K)=(N,H)=1 \\ (M,N)=(MN,cae\ell)=1 \\ MH\neq NK }} \frac{\tau_A(g_2g_3M)\tau_B(g_2g_4N)\psi (M ) \overline{\psi}( N) }{M^{\frac{1}{2}+s_1}N^{\frac{1}{2}+s_2}} =\mathscr{P}_1 - \mathscr{P}_2,
\end{align}
where $\mathscr{P}_1$ is the sum on the left-hand side, except without the condition $MH\neq NK$, and $\mathscr{P}_2$ is the sum with the condition $MH =NK$ instead of $MH\neq NK$. To evaluate $\mathscr{P}_2$, observe that the conditions $(H,K)=1$ and $(M,N)=1$ imply that $MH=NK$ if and only if $M=K$ and $N=H$. Since $(M,K)=(N,H)=1$, this is only possible if $M=N=H=K=1$. Thus
\begin{equation}\label{eqn: P2evaluated}
\mathscr{P}_2=\tau_A(g_2g_3)\tau_B(g_2g_4).
\end{equation}
Next, we express the sum $\mathscr{P}_1$ defined in \eqref{eqn: MNdifference} as an Euler product and write
\begin{equation}\label{eqn: P1euler}
\mathscr{P}_1 = \prod_{\alpha\in A} L(\tfrac{1}{2}+s_1+\alpha,\psi)  \prod_{\beta\in B} L(\tfrac{1}{2}+s_2+\beta,\overline{\psi}) \mathcal{R}(s_1,s_2),
\end{equation}
where $\mathcal{R}(s_1,s_2)$ is defined by
\begin{equation}\label{eqn: mathcalRdef}
\begin{split}
\mathcal{R}(s_1,s_2) =
& \mathcal{R}(s_1,s_2; g_2,g_3,g_4,H,K,cae\ell) \\
:= & \prod_p \Bigg\{ \prod_{\alpha\in A}\left( 1-\frac{\psi(p)}{p^{\frac{1}{2}+s_1+\alpha}} \right) \prod_{\beta\in B}\left( 1-\frac{\overline{\psi}(p)}{p^{\frac{1}{2}+s_2+\beta}} \right)\\
& \times \sum_{\substack{0\leq m,n<\infty \\ \min\{m,\ordp(g_4K) \} =  \min\{n,\ordp(g_3H) \} =0  \\ \min\{m,n\}=\min\{mn,\ordp(cae\ell) \}=0}}  \frac{\tau_A(p^{m+\ordp(g_2g_3) }) \tau_B(p^{n+\ordp(g_2g_4) })\psi(p^m)\overline{\psi}(p^n)}{p^{m(\frac{1}{2}+s_1)+n(\frac{1}{2}+s_2)}}\Bigg\}.
\end{split}
\end{equation}
If $\re(s_1),\re(s_2)\geq \varepsilon$ and $p | g_2g_3g_4 HKcae\ell$, then the local factor in \eqref{eqn: mathcalRdef} corresponding to $p$ is $O(p^{\varepsilon \ordp(g_2g_3g_4)})$ by \eqref{eqn: divisorbound}. Moreover, if $\re(s_1),\re(s_2)\geq \varepsilon$ and $p\nmid g_2g_3g_4 HKcae\ell$, then it follows from \eqref{eqn: taudef} and \eqref{eqn: divisorbound} that the local factor in \eqref{eqn: mathcalRdef} corresponding to $p$ is
\begin{align*}
\prod_{\alpha\in A}
& \left( 1-\frac{\psi(p)}{p^{\frac{1}{2}+s_1+\alpha}} \right) \prod_{\beta\in B}\left( 1-\frac{\overline{\psi}(p)}{p^{\frac{1}{2}+s_2+\beta}} \right) \sum_{\substack{0\leq m,n<\infty \\ \min\{m,n\}=0}}  \frac{\tau_A(p^m) \tau_B(p^n)\psi(p^m)\overline{\psi}(p^n)}{p^{m(\frac{1}{2}+s_1)+n(\frac{1}{2}+s_2)}} \\
& = \left( 1-\frac{\tau_A(p)\psi(p)}{p^{\frac{1}{2}+s_1}} + O\left( \frac{1}{p^{1+\varepsilon}}\right)\right) \left( 1-\frac{\tau_B(p)\overline{\psi}(p)}{p^{\frac{1}{2}+s_2}} + O\left( \frac{1}{p^{1+\varepsilon}}\right)\right)\\
& \hspace{.25in} \times \left( 1+\frac{\tau_A(p)\psi(p)}{p^{\frac{1}{2}+s_1}} + \frac{\tau_B(p)\overline{\psi}(p)}{p^{\frac{1}{2}+s_2}} + O\left( \frac{1}{p^{1+\varepsilon}}\right)\right) \\
& = 1 + O\left( \frac{1}{p^{1+\varepsilon}}\right).
\end{align*}
Thus, if $\re(s_1),\re(s_2)\geq \varepsilon$, then the product in \eqref{eqn: mathcalRdef} converges absolutely and we have
\begin{equation}\label{eqn: mathcalRbound}
\mathcal{R}(s_1,s_2) \ll (g_2g_3g_4HKcae\ell)^{\varepsilon}
\end{equation}
because $\prod_{p|\nu}O(1) \ll \nu^{\varepsilon}$ for any positive integer $\nu$. Hence, \eqref{eqn: MNdifference} with \eqref{eqn: P2evaluated} and \eqref{eqn: P1euler} gives an analytic continuation of the $M,N$-sum in \eqref{eqn: Upm2} to the region with $\re(s_1),\re(s_2)\geq \varepsilon$. If $\psi$ is non-principal, then this analytic continuation has no poles in the region, and \eqref{eqn: divisorbound} and \eqref{eqn: mathcalRbound} imply that it is bounded by
$$
\ll (g_2g_3g_4HKcae\ell)^{\varepsilon}\Bigg\{ 1+ \prod_{\alpha\in A} |L(\tfrac{1}{2}+s_1+\alpha,\psi) | \prod_{\beta\in B} |L(\tfrac{1}{2}+s_2+\beta,\overline{\psi})| \Bigg\}
$$
for $\re(s_1),\re(s_2)\geq \varepsilon$. This fact together with \eqref{eqn: Vbound} implies that if $\psi$ is non-principal, then we may move the $s_1$- and $s_2$-lines in \eqref{eqn: Upm2} to $\re(s_1)=\re(s_2)=2\epsilon$ and deduce that
\begin{equation}\label{eqn: Upm3}
\begin{split}
\mathcal{U}^{\pm} (c,a,e,\ell,\psi) \ll
& (XQcae\ell)^{\varepsilon}\int_{(2\epsilon)}  \int_{(2\epsilon)} \int_{(\epsilon)}  \int_{(\epsilon)}\\
&\times\Bigg| \sum_{\substack{1\leq g_1,g_2,g_3,g_4, H,K \ll \max\{Q^{\vartheta},X\}\\ g_1g_4H \leq Q^{\vartheta}, \ g_1g_3K \leq Q^{\vartheta}, \ g_2\ll X \\ (g_3,g_4)=(H,g_3)=(K,g_4)=(H,K)=1 \\ (ec,g_1g_2g_3g_4)=(cae\ell,HK)=1 \\ a|g_1g_2g_3g_4  }}  \frac{ \lambda_{g_1g_4H} \overline{\lambda_{g_1g_3K}}   \,  \psi ( H) \overline{\psi}( \mp   K) }{g_1 g_2^{1+s_3+s_4} g_3^{1+s_3}g_4^{1+s_4} H^{\frac{1}{2}+s_1-s_3}K^{\frac{1}{2}+s_2-s_4}}  \Bigg|  \\
& \times \Bigg\{ 1+\prod_{\alpha\in A} |L(\tfrac{1}{2}+s_1+\alpha,\psi)|  \prod_{\beta\in B} |L(\tfrac{1}{2}+s_2+\beta,\overline{\psi})|\Bigg\}\\ 
&\times |\mathcal{V}(s_1,s_2,s_3,s_4 )|\,|ds_4\,ds_3\,ds_2\,ds_1|.
\end{split}
\end{equation}

We apply M\"{o}bius inversion to remove the interdependence of the variables $H$ and $K$ and write
\begin{align*}
& \sum_{\substack{ 1\leq H,K \ll \max\{Q^{\vartheta},X\}\\ g_1g_4H \leq Q^{\vartheta}, \ g_1g_3K \leq Q^{\vartheta} \\ (H,g_3)=(K,g_4)=(H,K)=1 \\  (cae\ell,HK)=1   }}  \frac{ \lambda_{g_1g_4H} \overline{\lambda_{g_1g_3K}}   \,  \psi ( H) \overline{\psi}( \mp   K) }{ H^{\frac{1}{2}+s_1-s_3}K^{\frac{1}{2}+s_2-s_4}}\\
&\hspace{.5in} = \sum_{\substack{ 1\leq H,K \ll \max\{Q^{\vartheta},X\} \\ g_1g_4H \leq Q^{\vartheta}, \ g_1g_3K \leq Q^{\vartheta}  \\ (H,g_3)=(K,g_4)= 1 \\  (cae\ell,HK)=1   }} \sum_{\substack{d|H \\ d|K}} \mu(d) \frac{ \lambda_{g_1g_4H} \overline{\lambda_{g_1g_3K}}   \,  \psi ( H) \overline{\psi}( \mp   K) }{ H^{\frac{1}{2}+s_1-s_3}K^{\frac{1}{2}+s_2-s_4}} \\
&\hspace{.5in} = \sum_{\substack{d\leq Q^{\vartheta} \\ (d,g_3g_4cae\ell)=1}} \frac{\mu(d)|\psi(d)|^2}{d^{1+s_1+s_2-s_3-s_4} } \sum_{\substack{ H\leq Q^{\vartheta}/(dg_1g_4)  \\ (H,g_3cae\ell) = 1  }}   \frac{ \lambda_{dg_1g_4H}   \,  \psi ( H)   }{ H^{\frac{1}{2}+s_1-s_3} } \sum_{\substack{ K\leq Q^{\vartheta}/(dg_1g_3) \\  (K,g_4cae\ell)= 1   }}   \frac{  \overline{\lambda_{dg_1g_3K}}   \,   \overline{\psi}( \mp   K) }{  K^{\frac{1}{2}+s_2-s_4}},
\end{align*}
where in the last line we have made the change of variables $H\mapsto dH$ and $K\mapsto dK$. From this, \eqref{eqn: Upm3}, the triangle inequality, and the fact that $\psi(\mp K) = \psi(\mp 1) \psi(K)$, we deduce that
\begin{align*}
\mathcal{U}^{\pm} (c,a,e,\ell,\psi) \ll
& (XQcae\ell)^{\varepsilon}\sum_{\substack{1\leq g_1,g_2,g_3,g_4 \ll \max\{Q^{\vartheta},X\}\\ g_1g_4  \leq Q^{\vartheta}, \ g_1g_3  \leq Q^{\vartheta}, \ g_2\ll X \\ (g_3,g_4)=(ec,g_1 g_2 g_3 g_4)=1 \\ a|g_1g_2g_3g_4  }} \frac{1}{ g_1 g_2^{1+\varepsilon} g_3^{1+\varepsilon}g_4^{1+\varepsilon} } \sum_{\substack{d\leq Q^{\vartheta} \\ (d,g_3g_4cae\ell)=1}} \frac{1}{d^{1+\varepsilon}}\\
& \times \int_{(2\epsilon)}  \int_{(2\epsilon)} \int_{(\epsilon)}  \int_{(\epsilon)} \Bigg| \sum_{\substack{ H \leq Q^{\vartheta}/(dg_1g_4)  \\ (H,g_3cae\ell) = 1 }}   \frac{ \lambda_{dg_1g_4H}   \,  \psi ( H)   }{ H^{\frac{1}{2}+s_1-s_3} } \Bigg|\Bigg| \sum_{\substack{ K\leq Q^{\vartheta}/(dg_1g_3)  \\  (K,g_4cae\ell)= 1     }}   \frac{  \overline{\lambda_{dg_1g_3K}}   \,   \overline{\psi}(K) }{  K^{\frac{1}{2}+s_2-s_4}} \Bigg|  \\
&  \times  \Bigg\{ 1+\prod_{\alpha\in A} |L(\tfrac{1}{2}+s_1+\alpha,\psi)|  \prod_{\beta\in B} |L(\tfrac{1}{2}+s_2+\beta,\overline{\psi})|\Bigg\}\\
& \times |\mathcal{V}(s_1,s_2,s_3,s_4 )| \,|ds_4\,ds_3\,ds_2\,ds_1|.
\end{align*}
From this and \eqref{eqn: Urshortbound}, we arrive at
\begin{equation}\label{eqn: Urbound}
\begin{split}
\sum_{h,k\leq Q^{\vartheta}} \frac{ \lambda_h \overline{\lambda_k}}{\sqrt{hk}}\mathcal{U}^r(h,k) &\ll \sum_{1\leq c \leq C} \sum_{\substack{A,E,L \\  AL \ll XCQ^{\vartheta-1} \\ AEL \ll Y }} \sum_{A<a\leq 2A} \sum_{E<e\leq 2E}  \frac{( CXQY)^{\varepsilon}}{ AE^2L^2}\\
& \times \sum_{\substack{1\leq g_1,g_2,g_3,g_4\ll \max\{Q^{\vartheta},X\}\\ g_1g_4  \leq Q^{\vartheta}, \ g_1g_3  \leq Q^{\vartheta}, \ g_2\ll X \\ (g_3,g_4)=(ec,g_1 g_2 g_3 g_4)=1 \\ a|g_1g_2g_3g_4  }} \frac{1}{ g_1 g_2^{1+\varepsilon} g_3^{1+\varepsilon}g_4^{1+\varepsilon} }  \sum_{\substack{d\leq Q^{\vartheta} \\ (d,g_3g_4cae )=1}} \frac{1}{d^{1+\varepsilon}}\\
& \times   \Sigma_{c,a,e,d,g_1,g_2,g_3,g_4} \ + \ (XCQY)^{\varepsilon} \frac{X^2 CQ^{2\vartheta}}{Y},
\end{split}
\end{equation}
where $\Sigma_{c,a,e,d,g_1,g_2,g_3,g_4}$ is defined by
\begin{align*}
\Sigma_{c,a,e,d,g_1,g_2,g_3,g_4}
 :=&  \sum_{\substack{L<\ell\leq 2L \\ (d,\ell)=1}}\sum_{\substack{\psi \bmod ae\ell\\ \psi\neq \psi_0}} \int_{(2\epsilon)}  \int_{(2\epsilon)} \int_{(\epsilon)}  \int_{(\epsilon)}\\
&\Bigg| \sum_{\substack{ H \leq Q^{\vartheta}/(dg_1g_4)  \\ (H,g_3cae\ell) = 1  }}   \frac{ \lambda_{dg_1g_4H}   \,  \psi ( H)   }{ H^{\frac{1}{2}+s_1-s_3} } \Bigg|\Bigg| \sum_{\substack{ K \leq Q^{\vartheta}/(dg_1g_3) \\  (K,g_4cae\ell)= 1     }}   \frac{  \overline{\lambda_{dg_1g_3K}}   \,   \overline{\psi}(K) }{  K^{\frac{1}{2}+s_2-s_4}} \Bigg| \\
&\Bigg\{ 1+\prod_{\alpha\in A} |L(\tfrac{1}{2}+s_1+\alpha,\psi)|  \prod_{\beta\in B} |L(\tfrac{1}{2}+s_2+\beta,\overline{\psi})|\Bigg\}\\
&\times |\mathcal{V}(s_1,s_2,s_3,s_4 )|  \,|ds_4\,ds_3\,ds_2\,ds_1|.
\end{align*}
We interchange the order of integration and then make the change of variables $s_5=s_1-s_3$ and $s_6=s_2-s_4$ to write
\begin{equation}\label{eqn: ellpsisum2}
\begin{split}
\Sigma_{c,a,e,d,g_1,g_2,g_3,g_4} = &  \sum_{\substack{L<\ell\leq 2L \\ (d,\ell)=1}}\sum_{\substack{\psi \bmod ae\ell\\ \psi\neq \psi_0}} \int_{(\epsilon)}  \int_{(\epsilon)} \int_{(\epsilon)}  \int_{(\epsilon)} |\mathcal{V}(s_3+s_5,s_4+s_6,s_3,s_4)|\\
& \times    \Bigg| \sum_{\substack{ H \leq Q^{\vartheta}/(dg_1g_4) \\ (H,g_3cae\ell) = 1   }}   \frac{ \lambda_{dg_1g_4H}   \,  \psi ( H)   }{ H^{\frac{1}{2}+s_5} } \Bigg| \Bigg| \sum_{\substack{ K\leq Q^{\vartheta}/(dg_1g_3) \\  (K,g_4cae\ell)= 1    }}   \frac{  \overline{\lambda_{dg_1g_3K}}   \,   \overline{\psi}(   K) }{  K^{\frac{1}{2}+s_6}} \Bigg| \\
& \times \Bigg\{ 1 + \prod_{\alpha\in A} |L(\tfrac{1}{2}+s_3+s_5+\alpha,\psi)| \prod_{\beta\in B} |L(\tfrac{1}{2}+s_4+s_6+\beta,\overline{\psi})|\Bigg\}\\
&\times\,|ds_6\,ds_5\,ds_4\,ds_3|.
\end{split}
\end{equation}
Now GLH and the Phragm\'{e}n-Lindel\"{o}f principle together imply that if $\varepsilon>0$ then
$$
L(s,\psi) \ll_{\varepsilon} (q(1+|t|))^{\varepsilon}
$$
for all $s=\sigma+it$ with $\frac{1}{2}\leq \sigma\leq 1$ and real $t$ and all non-principal Dirichlet characters $\psi$ modulo $q$, where the implied constant depends only on $\varepsilon$. It follows from this and \eqref{eqn: ellpsisum2} that
\begin{equation}\label{eqn: ellpsisum3}
\begin{split}
\Sigma_{c,a,e,d,g_1,g_2,g_3,g_4} \ll  &  \sum_{\substack{L<\ell\leq 2L \\ (d,\ell)=1}} (ae\ell)^{\varepsilon} \sum_{\substack{\psi \bmod ae\ell\\ \psi\neq \psi_0}} \int_{(\epsilon)}  \int_{(\epsilon)} \int_{(\epsilon)}  \int_{(\epsilon)} |\mathcal{V}(s_3+s_5,s_4+s_6,s_3,s_4)| \\
&\times |s_3s_4s_5s_6|^{ \varepsilon} \Bigg| \sum_{\substack{ H \leq Q^{\vartheta}/(dg_1g_4) \\ (H,g_3cae\ell) = 1  }}   \frac{ \lambda_{dg_1g_4H}   \,  \psi ( H)   }{ H^{\frac{1}{2}+s_5} } \Bigg| \Bigg| \sum_{\substack{ K\leq Q^{\vartheta}/(dg_1g_3) \\  (K,g_4cae\ell)= 1     }}   \frac{  \overline{\lambda_{dg_1g_3K}}   \,   \overline{\psi}(   K) }{  K^{\frac{1}{2}+s_6}} \Bigg|\\
&\times \,|ds_6\,ds_5\,ds_4\,ds_3|.
\end{split}
\end{equation}

Our next task is to apply the bound \eqref{eqn: Vbound} for $\mathcal{V}$. We will facilitate later estimations by choosing particular values of $j_1,j_2,j_3,j_4$ in \eqref{eqn: Vbound} for specific ranges of $s_5$ and $s_6$. To this end, we split the range of integration of the $s_5$- and $s_6$-integrals in \eqref{eqn: ellpsisum3} into dyadic segments to write
\begin{equation}\label{eqn: ellpsisum4}
\begin{split}
&\Sigma_{c,a,e,d,g_1,g_2,g_3,g_4}\\
&\hspace{.25in}\ll   \sum_{\substack{L<\ell\leq 2L \\ (d,\ell)=1}} (ae\ell)^{\varepsilon} \sum_{\substack{\psi \bmod ae\ell\\ \psi\neq \psi_0}} \sum_{S_5} \sum_{S_6} \mathop{\int}_{\substack{S_5\le |s_5| \le 2S_5\\ \re(s_5)=\epsilon} }\mathop{\int}_{\substack{S_6\le |s_6| \le 2S_6\\ \re(s_6)=\epsilon}}\\
&\hspace{.25in}\times   \int_{(\epsilon)}  \int_{(\epsilon)} |s_3 s_4 s_5 s_6|^{ \varepsilon} |\mathcal{V}(s_3+s_5,s_4+s_6,s_3,s_4)|   \\
&\hspace{.25in} \times \Bigg| \sum_{\substack{ H\leq Q^{\vartheta}/(dg_1g_4)  \\ (H,g_3cae\ell) = 1  }}   \frac{ \lambda_{dg_1g_4H}   \,  \psi ( H)   }{ H^{\frac{1}{2}+s_5} } \Bigg| \Bigg| \sum_{\substack{ K\leq Q^{\vartheta}/(dg_1g_3) \\  (K,g_4cae\ell)= 1    }}   \frac{  \overline{\lambda_{dg_1g_3K}}   \,   \overline{\psi}(   K) }{  K^{\frac{1}{2}+s_6}} \Bigg| \,|ds_4\,ds_3\,ds_6\,ds_5|,
\end{split}
\end{equation}
where each of $S_5$ and $S_6$ runs through the set $\{0\} \cup \{2^\nu : \nu\in\mathbb{Z}, \nu\geq 0\}$. Here, we make an abuse of notation and interpret the condition $S_5\le |s_5| \le 2S_5$ to mean $\epsilon\le |s_5|\le 1$ when $S_5=0$, and similarly for $S_6$. We now apply \eqref{eqn: Vbound}. We choose $j_3=j_4=2$ in every situation, while we choose $j_1$ and $j_2$ depending on $S_5$ and $S_6$, as specified in the following table.

\begin{center}
\begin{longtable}{| C | C || C| C |}
\hline
\multicolumn{2}{|c||}{conditions on} & \multicolumn{2}{c|}{\,\,\,\,\,choices of\,\,\,\,\,} \\
\multicolumn{1}{|c}{$S_5$}  &  \multicolumn{1}{c||}{$S_6$}  &   \multicolumn{1}{c} {\,\,\,\,\,$j_1$\,\,\,\,\,} & \multicolumn{1}{c|} {$j_2$} \\
\Xcline{1-4}{5\arrayrulewidth}

\endfirsthead

\multicolumn{4}{l} {{(table continued from previous page)}} \\
\hline
\multicolumn{2}{|c||}{conditions on} & \multicolumn{2}{c|}{\,\,\,\,\,choices of\,\,\,\,\,} \\
\multicolumn{1}{|c}{$S_5$}  &  \multicolumn{1}{c||}{$S_6$}  &   \multicolumn{1}{c} {\,\,\,\,\,$j_1$\,\,\,\,\,} & \multicolumn{1}{c|} {$j_2$} \\
\Xcline{1-4}{5\arrayrulewidth} 
\endhead

\hline \multicolumn{4}{r}{{(table continued on next page)}} \\ 
\endfoot

\endlastfoot

    S_5 = 0     &       \vphantom{ \displaystyle\sum_{1}^{2}}S_6 = 0     &       1           &       0\\
\hline
    S_5 = 0     &       \vphantom{ \displaystyle\sum_{1}^{2}}0 < S_6\leq 1+\displaystyle  \frac{XcQ^{\vartheta-1}}{L} &      0       &       1\\
\hline 
    S_5 = 0     &       \vphantom{ \displaystyle\sum_{1}^{2}} S_6>1+\displaystyle \frac{XcQ^{\vartheta-1}}{L}         &       0       &       3\\
\hline
    \vphantom{ \displaystyle\sum_{1}^{2}} 0<S_5\leq 1+\displaystyle \frac{XcQ^{\vartheta-1}}{L} &         S_6=0       & 1         &       0\\
\hline
   \vphantom{ \displaystyle\sum_{1}^{2}} S_5> 1+\displaystyle\frac{XcQ^{\vartheta-1}}{L}         &       S_6=0       & 3         &       0\\
\hline
    0<S_5< S_6    &      \vphantom{ \displaystyle\sum_{1}^{2}} 0<S_6\leq 1+\displaystyle\frac{XcQ^{\vartheta-1}}{L} &   0 &     1\\
\hline
    0<S_5< S_6    &      \vphantom{ \displaystyle\sum_{1}^{2}} S_6> 1+\displaystyle\frac{XcQ^{\vartheta-1}}{L} &  0 &   3\\
\hline
   \vphantom{ \displaystyle\sum_{1}^{2}} 0<S_5\leq 1+\displaystyle\frac{XcQ^{\vartheta-1}}{L}     &       0<S_6\leq S_5       &     1   &     0\\
\hline
  \vphantom{ \displaystyle\sum_{1}^{2}}  S_5> 1+\displaystyle\displaystyle\frac{XcQ^{\vartheta-1}}{L} &  0<S_6 \leq S_5  & 3 &0\\
\hline
\caption{\label{tab: S5S6j1j2}Our choices of the values of $j_1$ and $j_2$ depend on the ranges of the variables of integration $s_5$ and $s_6$. }
\end{longtable}
\end{center}

We arrive at
\begin{equation}\label{eqn: beforelargesieve}
\begin{split}
& \Sigma_{c,a,e,d,g_1,g_2,g_3,g_4} \ll (XQ)^{\varepsilon}\left( \frac{ L Q}{c}\right) \sum_{S_5>0} \sum_{S_6>0} \frac{1}{ S_5^{j_1-\varepsilon} S_6^{j_2-\varepsilon} } \bigg(1+\frac{ XcQ^{\vartheta-1}}{L} \bigg)^{j_1+j_2-1} \sum_{\substack{L<\ell\leq 2L \\ (d,\ell)=1}} (ae\ell)^{\varepsilon}  \\
& \times \sum_{\substack{\psi \bmod ae\ell\\ \psi\neq \psi_0}} \int_{\epsilon-i2S_5}^{\epsilon+i2S_5}  \int_{\epsilon-i2S_6}^{\epsilon+i2S_6} \Bigg| \sum_{\substack{ H \leq Q^{\vartheta}/(dg_1g_4) \\ (H,g_3cae\ell) = 1    }}   \frac{ \lambda_{dg_1g_4H}   \,  \psi ( H)   }{ H^{\frac{1}{2}+s_5} } \Bigg| \Bigg| \sum_{\substack{ K\leq Q^{\vartheta}/(dg_1g_3) \\  (K,g_4cae\ell)= 1     }}   \frac{  \overline{\lambda_{dg_1g_3K}}   \,   \overline{\psi}(   K) }{  K^{\frac{1}{2}+s_6}} \Bigg| \,|ds_6\,ds_5|,
\end{split}
\end{equation}
where the values of $j_1$ and $j_2$ depend on $S_5$ and $S_6$ as described in Table~\ref{tab: S5S6j1j2}. Note that, for conciseness, we have bounded the term with $S_5=S_6=0$ in \eqref{eqn: ellpsisum4} by the term with $S_5=S_6=1$ in \eqref{eqn: beforelargesieve}. We may do this because both terms have the same value of $j_1+j_2$ by Table~\ref{tab: S5S6j1j2}. Similarly, we have bounded the sum of the terms with $S_5=0$ and $S_6>0$ in \eqref{eqn: ellpsisum4} by the sum of the terms with $S_5=1$ and $S_6>0$ in \eqref{eqn: beforelargesieve}, and we have bounded the sum of the terms with $S_5>0$ and $S_6=0$ in \eqref{eqn: ellpsisum4} by the sum of the terms with $S_5>0$ and $S_6=1$ in \eqref{eqn: beforelargesieve}.

In order to be able to apply the large sieve inequality, we use the Cauchy-Schwarz inequality to deduce from \eqref{eqn: beforelargesieve} that
\begin{align}
\Sigma_{c,a,e,d,g_1,g_2,g_3,g_4} \ll 
& (X Q )^{\varepsilon} \left( \frac{ L Q}{c}\right)  \sum_{S_5>0} \sum_{S_6>0} \frac{(aeLS_5S_6)^{\varepsilon}}{ S_5^{j_1 } S_6^{j_2 } } \bigg(1+\frac{ XcQ^{\vartheta-1}}{L} \bigg)^{j_1+j_2-1}  \notag\\
& \times \Bigg( \sum_{\substack{L<\ell\leq 2L \\ (d,\ell)=1}}\sum_{\substack{\psi \bmod ae\ell\\ \psi\neq \psi_0}} \Bigg\{ \int_{\varepsilon-i2S_5}^{\varepsilon+i2S_5}   \Bigg|  \sum_{\substack{ H\leq Q^{\vartheta}/(dg_1g_4)  \\ (H,g_3cae\ell) = 1   }}   \frac{ \lambda_{dg_1g_4H}   \,  \psi ( H)   }{ H^{\frac{1}{2}+s_5} } \Bigg| \,|ds_5|\Bigg\}^2 \Bigg)^{1/2}\notag\\
& \times \Bigg( \sum_{\substack{L<\ell\leq 2L \\ (d,\ell)=1}}\sum_{\substack{\psi \bmod ae\ell\\ \psi\neq \psi_0}} \Bigg\{ \int_{\varepsilon-i2S_6}^{\varepsilon+i2S_6} \Bigg| \sum_{\substack{ K\leq Q^{\vartheta}/(dg_1g_3) \\  (K,g_4cae\ell)= 1      }}   \frac{  \overline{\lambda_{dg_1g_3K}}   \,   \overline{\psi}(   K) }{  K^{\frac{1}{2}+s_6}} \Bigg| \,|ds_6| \Bigg\}^2 \Bigg)^{1/2} \label{eqn: SigmaCauchySchwarz}.
\end{align}
We now apply the hybrid large sieve inequality in the form of the following lemma.
\begin{lemma}\label{hybridlargesieve}
Let $R,T,N,\sigma$ be real numbers with $T\geq 3$, $R,N\geq 1$, and $\sigma\geq 1/2$, and let $j$ be a positive integer. If $\{a_n\}$ is any sequence of complex numbers, then
$$
\sum_{q\leq R} \sum_{\substack{\chi \bmod{qj} \\ \chi\neq \chi_0}} \Bigg( \int_{-T}^T \Bigg|\sum_{n\leq N} \frac{a_n \chi(n)}{n^{\sigma+it}} \Bigg| \,dt \Bigg)^2 \ll_{\varepsilon}  (jRNT)^{\varepsilon} (RNT+jR^2T^2 ) \sum_{ n\leq N } \frac{|a_n|^2}{n^{2\sigma}},
$$
where the $\chi$-sum is over all non-principal Dirichlet characters $\chi$ mod~$qj$.
\end{lemma}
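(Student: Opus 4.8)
The plan is to prove Lemma~\ref{hybridlargesieve}, the hybrid large sieve inequality, by a standard argument combining the multiplicative large sieve inequality (over characters of a fixed modulus weighted appropriately) with the additive large sieve / mean-value bounds over the $t$-aspect, then amplifying via duality and Cauchy--Schwarz to account for the extra integration. First I would recall the classical hybrid inequality of Montgomery (or Gallagher): for any sequence $\{b_n\}$,
\begin{equation*}
\sum_{q\leq R}\,\frac{q}{\phi(q)}\sideset{}{^\flat}\sum_{\chi\bmod q}\int_{-T}^{T}\Bigg|\sum_{n\leq N}b_n\chi(n)n^{-it}\Bigg|^2\,dt \ll (R^2T+N)\sum_{n\leq N}|b_n|^2,
\end{equation*}
where $\flat$ denotes primitive characters. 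To get characters modulo $qj$ (not necessarily primitive) as in the statement, I would write each $\chi\bmod qj$ in terms of the primitive character inducing it, absorbing the conductor into a divisor sum over $qj$; the divisibility $u\mid qj$ contributes only a factor $(qj)^\varepsilon$ after summing, which is harmless. The factor $j$ in the bound $jR^2T^2$ arises precisely because the conductors of characters mod $qj$ can be as large as $qj$, so the primitive moduli range up to $jR$, and the large sieve over primitive moduli up to $jR$ gives $(jR)^2T$ in the relevant term; combined with a trivial factor $R$ from the outer $q$-sum one arrives at $jR^2$ — here one must be slightly careful, and I would instead organize the $q$-sum and divisor sum so that no wasteful factor appears beyond what is claimed.

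The second ingredient is to pass from the squared $L^2$-norm $\int_{-T}^T|\cdots|^2\,dt$ to the square of the $L^1$-norm $\big(\int_{-T}^T|\cdots|\,dt\big)^2$ that actually appears in the lemma. Naively Cauchy--Schwarz gives $\big(\int_{-T}^T|f|\,dt\big)^2\leq 2T\int_{-T}^T|f|^2\,dt$, which would introduce an extra factor of $T$ and is too lossy. The correct device is a Gallagher-type ``continuous--to--discrete'' or Sobolev smoothing step: one shows that
\begin{equation*}
\int_{-T}^{T}|f(t)|\,dt \ll \int_{-T-1/2}^{T+1/2}\bigg(\int_{u-1/2}^{u+1/2}|f(t)|^2\,dt\bigg)^{1/2}\,du
\end{equation*}
for $f$ a Dirichlet polynomial of length $N$, and then applies Cauchy--Schwarz in $u$ over an interval of length $O(T)$, reducing matters to $T\int_{-T-1}^{T+1}|f(t)|^2\,dt$ — still a factor $T$. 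To avoid this, I would instead apply the $L^1$ version of the large sieve directly: there is a well-known sharpened form (see Montgomery's \textit{Topics}, or Huxley) that bounds $\sum_q\sum_\chi^\flat\big(\int_{-T}^T|\sum_n a_n\chi(n)n^{-\sigma-it}|\,dt\big)^2$ by $(RNT+R^2T^2)^{1+\varepsilon}\sum_n|a_n|^2 n^{-2\sigma}$ without the spurious $T$, obtained by splitting $[-T,T]$ into unit intervals, applying the $L^2$ large sieve on each (which costs $R^2\cdot 1+N$ per interval rather than $R^2T+N$), and using Cauchy--Schwarz over the $O(T)$ intervals. Tracking this through with the modulus $qj$ and the weight $n^{-2\sigma}\leq n^{-1}$ (since $\sigma\geq 1/2$) yields exactly the stated bound.

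The key steps in order: (1) reduce to primitive characters via the inducing-character decomposition, paying a factor $(jRNT)^\varepsilon$; (2) split the $t$-integral $[-T,T]$ into $O(T)$ unit intervals and apply Cauchy--Schwarz to replace $\big(\int_{-T}^T|f|\big)^2$ by $T\sum_{|m|\leq T}\big(\int_{m}^{m+1}|f|\big)^2\ll T\sum_{|m|\leq T}\int_m^{m+1}|f|^2$ — but crucially, summing the per-interval large sieve bounds $R^\prime+N$ over $O(T)$ intervals and then reassembling gives $T(R^\prime T + N)$ only if done carelessly; instead sum \emph{first} over intervals inside the large sieve to get the genuine hybrid bound $R^\prime{}^2 T+N$, then Cauchy--Schwarz in the interval index costs just the outer $T$, and one checks the arithmetic gives $RNT+jR^2T^2$ after restoring non-primitive moduli; (3) apply the classical hybrid large sieve over primitive characters of modulus up to $jR$; (4) use $\sigma\geq 1/2$ to bound $n^{-2\sigma}$ and collect the $(jRNT)^\varepsilon$ losses. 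The main obstacle is step~(2): getting the $t$-aspect exponent right so that the final bound is $RNT+jR^2T^2$ rather than $RNT^2+jR^2T^3$, which requires invoking the hybrid (not merely the $L^2$-in-$\chi$) large sieve and being disciplined about the order of summation — summing the unit-interval contributions \emph{before} rather than \emph{after} applying Cauchy--Schwarz. Everything else is bookkeeping that the paper's standard references (e.g.\ \cite{Davenport}, \cite{Montgomery}) cover directly.
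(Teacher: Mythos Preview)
Your overall ingredients are right (reduce to primitive characters, then apply a hybrid large sieve), but you have misdiagnosed the main difficulty and consequently overcomplicated step~(2). The ``naive'' Cauchy--Schwarz bound $\big(\int_{-T}^T|f|\,dt\big)^2\leq 2T\int_{-T}^T|f|^2\,dt$ is \emph{not} too lossy here: the paper uses exactly this. The point is that after Cauchy--Schwarz one applies the full hybrid large sieve of Iwaniec--Kowalski (Theorem~9.12 of \cite{IK}), which already treats the $t$-integral and the primitive-character sum together and yields a bound of shape $N + kQ^2T$ (with $k$ a fixed auxiliary level and $Q$ the range of the coprime part of the conductor). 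Multiplying by the $T$ from Cauchy--Schwarz gives $NT + kQ^2T^2$, which after the conductor bookkeeping becomes $RNT + jR^2T^2$. Your Sobolev/unit-interval detours are unnecessary and your concern about getting $RNT^2+jR^2T^3$ is unfounded, because you are implicitly comparing against an $L^2$-in-$\chi$-only large sieve rather than the genuine hybrid one.

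The second point where your sketch is too vague is the reduction to primitive characters. Simply ``absorbing the conductor into a divisor sum over $qj$'' and paying $(qj)^\varepsilon$ does not immediately give the correct $j$-dependence. The paper's argument writes the conductor of the primitive character inducing $\chi$ as $\tilde{q}\tilde{\jmath}$ with $(\tilde{q},j)=1$ and $\tilde{\jmath}$ supported on the primes dividing $j$, then shows $q = d\tilde{q}\tilde{\jmath}/(j,\tilde{\jmath})$ for some positive integer $d$; this puts the sum in exactly the form required by Theorem~9.12 of \cite{IK} (with level $k=\tilde{\jmath}$ and $\tilde{q}$ ranging up to $R(j,\tilde{\jmath})/(d\tilde{\jmath})$). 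One then sums over $d$ and over $\tilde{\jmath}$, using $\sum_{\tilde{\jmath}} (j,\tilde{\jmath})/\tilde{\jmath} \ll j^{\varepsilon}$ and $\sum_{\tilde{\jmath}} (j,\tilde{\jmath})^2/\tilde{\jmath} \ll j^{1+\varepsilon}$; it is the second of these that produces the factor $j$ in front of $R^2T^2$. Without this factorization you would likely end up with $(jR)^2$ rather than $jR^2$ in that term.
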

\begin{proof}
The proof of the lemma is contained within the proof of Proposition~1 of \cite{CIS}. For full details, see Appendix \ref{hybridlargesievedetails}.
\end{proof}

From Lemma~\ref{hybridlargesieve} with $R=2L$, $T=2S_5$, $N=Q^{\vartheta}/dg_1g_4$, $\sigma=\frac{1}{2}+\varepsilon$, and $j=ae$, we deduce that
\begin{equation}\label{eqn: applyhybridlargesieve}
\begin{split}
\sum_{\substack{L<\ell\leq 2L \\ (d,\ell)=1}}
& \sum_{\substack{\psi \bmod ae\ell\\ \psi\neq \psi_0}} \Bigg\{ \int_{\varepsilon-i2S_5}^{\varepsilon+i2S_5}   \Bigg|  \sum_{\substack{ H  \leq Q^{\vartheta} / (dg_1g_4) \\ (H,g_3cae\ell) = 1  }}   \frac{ \lambda_{dg_1g_4H}   \,  \psi ( H)   }{ H^{\frac{1}{2}+s_5} } \Bigg| \,|ds_5|\Bigg\}^2 \\
& \ll  (aeLQS_5)^{\varepsilon} \bigg( \frac{Q^{\vartheta}LS_5}{dg_1g_4}  +aeL^2S_5^2 \bigg)\sum_{\substack{ H  \leq Q^{\vartheta} / (dg_1g_4) \\ (H,g_3cae\ell) = 1  }}   \frac{ |\lambda_{dg_1g_4H}|^2   }{ H^{1+\varepsilon} } \\
& \ll (dg_1g_4aeL QS_5)^{\varepsilon} (  Q^{\vartheta} LS_5 +aeL^2S_5^2 ),
\end{split}
\end{equation}
where the last line follows from the assumption $\lambda_h \ll_{\varepsilon} h^{\varepsilon}$. Note that, in using Lemma~\ref{hybridlargesieve} here, we may assume without loss of generality that $2S_5\geq 3$ since if not, then we may extend the interval of integration because the integrand is nonnegative. Similarly, Lemma~\ref{hybridlargesieve} implies
\begin{align*}
\sum_{\substack{L<\ell\leq 2L \\ (d,\ell)=1}}
& \sum_{\substack{\psi \bmod ae\ell\\ \psi\neq \psi_0}}\Bigg\{ \int_{\varepsilon-i2S_6}^{\varepsilon+i2S_6} \Bigg| \sum_{\substack{ K\leq Q^{\vartheta}/(dg_1g_3) \\  (K,g_4cae\ell)= 1   }}   \frac{  \overline{\lambda_{dg_1g_3K}}   \,   \overline{\psi}(   K) }{  K^{\frac{1}{2}+s_6}} \Bigg| \,|ds_6| \Bigg\}^2 \\
& \ll (dg_1g_3aeLQS_6)^{\varepsilon} ( Q^{\vartheta} L S_6 +aeL^2S_6^2 ).
\end{align*}
From this, \eqref{eqn: applyhybridlargesieve}, and \eqref{eqn: SigmaCauchySchwarz}, we arrive at
\begin{equation}\label{eqn: applyhybridlargesieve2}
\begin{split}
\Sigma_{c,a,e,d,g_1,g_2,g_3,g_4} \ll
& (X Q L aedg_1g_3g_4 )^{\varepsilon}\left( \frac{ L Q}{c}\right)  \sum_{S_5>0} \sum_{S_6>0} \frac{1}{ S_5^{j_1-\varepsilon} S_6^{j_2-\varepsilon} } \bigg(1+\frac{ XcQ^{\vartheta-1}}{L} \bigg)^{j_1+j_2-1}  \\
& \times \Big( Q^{\vartheta} LS_5 +aeL^2S_5^2  \Big)^{1/2} \Big( Q^{\vartheta} LS_6 +aeL^2S_6^2 \Big)^{1/2}.
\end{split}
\end{equation}
By our choices of the values of $j_1$ and $j_2$ described in Table~\ref{tab: S5S6j1j2}, if $M,N\in \{2^{\nu}: \nu\in \mathbb{Z},\nu\geq 0\}$ are given, then the term on the right-hand side of \eqref{eqn: applyhybridlargesieve2} that corresponds to the pair $(S_5,S_6)=(M,N)$ is equal to the term that corresponds to the pair $(S_5,S_6)=(N,M)$. Thus, the part of the right-hand side of \eqref{eqn: applyhybridlargesieve2} that has $S_6\leq S_5$ is a bound for the left-hand side. In that part, we have $j_2=0$ by Table~\ref{tab: S5S6j1j2}. Hence
\begin{equation}\label{eqn: applyhybridlargesieve3}
\begin{split}
\Sigma_{c,a,e,d,g_1,g_2,g_3,g_4} \ll (X Q L aedg_1g_3g_4 )^{\varepsilon}\left( \frac{ L Q}{c}\right)  \mathop{\sum_{S_5>0} \sum_{S_6>0}}_{S_6\leq S_5} \frac{S_6^{\varepsilon}}{ S_5^{j_1-\varepsilon}  } \bigg(1+\frac{ XcQ^{\vartheta-1}}{L} \bigg)^{j_1-1}  \\
\times \Big( Q^{\vartheta} LS_5 +aeL^2S_5^2  \Big).
\end{split}
\end{equation}
Recall that, as stated below \eqref{eqn: ellpsisum4}, the variables $S_5$ and $S_6$ in \eqref{eqn: applyhybridlargesieve3} each run through the set $\{2^{\nu}: \nu\in\mathbb{Z},\nu \geq 0\}$. Moreover, as described in Table~\ref{tab: S5S6j1j2}, we have $j_1=1$ for the terms in \eqref{eqn: applyhybridlargesieve3} that have $S_5 \leq 1+ XcQ^{\vartheta-1}/L$ and $j_1=3$ for the terms with $S_5 > 1+ XcQ^{\vartheta-1}/L$. We may thus evaluate the $S_5$- and $S_6$-sums in \eqref{eqn: applyhybridlargesieve3} by writing
\begin{equation}\label{eqn: geometricsum1}
\sum_{0<S_6 \leq S_5} S_6^{\varepsilon} \ll S_5^{\varepsilon}
\end{equation}
for each $S_5$,
\begin{equation}\label{eqn: geometricsum2}
\sum_{0<S_5 \leq 1+ XcQ^{\vartheta-1}/L} \frac{ Q^{\vartheta} LS_5 +aeL^2S_5^2 }{S_5^{1-\varepsilon}} \ll (XcQ)^{\varepsilon} \bigg(Q^{\vartheta} L + aeL^2 \bigg(1 + \frac{ XcQ^{\vartheta-1} }{L} \bigg) \bigg),
\end{equation}
and
\begin{align*}
\sum_{S_5 > 1+ XcQ^{\vartheta-1}/L}
& \frac{ Q^{\vartheta} LS_5 +aeL^2S_5^2 }{S_5^{3-\varepsilon}}\bigg(1 + \frac{ XcQ^{\vartheta-1} }{L} \bigg)^2 \\
& \ll (XcQ)^{\varepsilon} \bigg(Q^{\vartheta} L + aeL^2 \bigg(1 + \frac{ XcQ^{\vartheta-1} }{L} \bigg) \bigg).
\end{align*}
From this, \eqref{eqn: applyhybridlargesieve3}, \eqref{eqn: geometricsum1}, and \eqref{eqn: geometricsum2}, we deduce that
\begin{equation*}
\Sigma_{c,a,e,d,g_1,g_2,g_3,g_4} \ll (X QLcaedg_1g_3g_4)^{\varepsilon}\left( \frac{ L Q}{c}\right)\big( Q^{\vartheta}L + aeL^2 +aeL XcQ^{\vartheta-1} \big) .
\end{equation*}
From this and \eqref{eqn: Urbound}, we arrive at
\begin{align}
\sum_{h,k\leq Q^{\vartheta}} \frac{ \lambda_h \overline{\lambda_k}}{\sqrt{hk}}\mathcal{U}^r(h,k) \ll
& \sum_{1\leq c \leq C} \sum_{\substack{A,E,L \\  AL \ll XCQ^{\vartheta-1} \\ AEL \ll Y }} \sum_{A<a\leq 2A} \sum_{E<e\leq 2E}  \frac{( CXQY)^{\varepsilon}}{ AE^2L^2} \notag\\
& \times \sum_{\substack{1\leq g_1,g_2,g_3,g_4\ll \max\{Q^{\vartheta},X\}\\ g_1g_4  \leq Q^{\vartheta}, \ g_1g_3  \leq Q^{\vartheta}, \ g_2\ll X \\ (g_3,g_4)=(ec,g_1 g_2 g_3 g_4)=1 \\ a|g_1g_2g_3g_4  }} \frac{1}{ (g_1g_2g_3g_4)^{1-\varepsilon} }  \sum_{\substack{d\leq Q^{\vartheta} \\ (d,g_3g_4cae )=1}} \frac{1}{d^{1-\varepsilon}} \notag\\
& \times\left( \frac{ L Q}{c}\right)\big( Q^{\vartheta}L + AEL^2 +AEL XCQ^{\vartheta-1} \big) \ + \ (XCQY)^{\varepsilon} \frac{X^2 CQ^{2\vartheta}}{Y}.\label{eqn: Urbound2}
\end{align}

Our final task for this section is to evaluate the right-hand side of \eqref{eqn: Urbound2}. Observe that 
\begin{equation}\label{eqn: dsumbound}
\sum_{\substack{d\leq Q^{\vartheta} \\ (d,g_3g_4cae )=1}} \frac{1}{d^{1-\varepsilon}} \cdot d \ll Q^{\varepsilon}.
\end{equation}
To evaluate the $g_1,g_2,g_3,g_4$-sum in \eqref{eqn: Urbound2}, we group together terms with the same product $g_1 g_2 g_3 g_4$ and use the divisor bound to write
\begin{align*}
\sum_{\substack{1\leq g_1,g_2,g_3,g_4\ll \max\{Q^{\vartheta},X\}\\ g_1g_4  \leq Q^{\vartheta}, \ g_1g_3  \leq Q^{\vartheta}, \ g_2\ll X \\ (g_3,g_4)=(ec,g_1 g_2 g_3 g_4)=1 \\ a|g_1g_2g_3g_4  }} \frac{1}{ (g_1g_2g_3g_4)^{1-\varepsilon} } 
& \ll \sum_{\substack{\nu\ll XQ^{2\vartheta} \\ a|\nu }} \frac{1}{\nu^{1-\varepsilon}} \\
& \ll \frac{(XQa)^{\varepsilon}}{a} \ll \frac{(XQA)^{\varepsilon}}{A}.
\end{align*}
From this, \eqref{eqn: dsumbound}, and \eqref{eqn: Urbound2}, we deduce that
\begin{align*}
\sum_{h,k\leq Q^{\vartheta}} \frac{ \lambda_h \overline{\lambda_k}}{\sqrt{hk}}\mathcal{U}^r(h,k) &\ll Q\sum_{1\leq c \leq C}\frac{1}{c} \sum_{\substack{A,E,L \\  AL \ll XCQ^{\vartheta-1} \\ AEL \ll Y }}   \frac{(CXQY)^{\varepsilon}}{ A E L}\\
&\hspace{.25in}\times\big( Q^{\vartheta}L + AEL^2  +AEL XCQ^{\vartheta-1} \big) + \  (XCQY)^{\varepsilon} \frac{X^2 CQ^{2\vartheta}}{Y}.
\end{align*}
The condition $AL\ll XCQ^{\vartheta-1}$ implies that $AEL^2 \ll AEL XCQ^{\vartheta-1} $ because $A\gg 1$. Moreover, we have $\sum_{c\leq C} (1/c)\ll C^{\varepsilon}$. Hence
\begin{align}
\sum_{h,k\leq Q^{\vartheta}} \frac{ \lambda_h \overline{\lambda_k}}{\sqrt{hk}}\mathcal{U}^r(h,k)
\ll (CXQY)^{\varepsilon}  Q \sum_{\substack{A,E,L \\  AL \ll XCQ^{\vartheta-1} \\ AEL \ll Y }}   \frac{1}{ A E L }   \big( LQ^{\vartheta}  +AEL XCQ^{\vartheta-1} \big) \notag\\
+ \  (XCQY)^{\varepsilon} \frac{X^2 CQ^{2\vartheta}}{Y}. \label{eqn: Urbound3}
\end{align}
Recall that, as stated below \eqref{eqn: Urshortbound}, each of the summation variables $A,E,L$ in \eqref{eqn: Urbound3} runs through the set $\{2^{\nu}:\nu\in \mathbb{Z}, \nu\geq -1\}$. We may thus evaluate the $A,E,L$-sum in \eqref{eqn: Urbound3} by writing
\begin{equation*}
\sum_{\substack{A,E,L \\  AL \ll XCQ^{\vartheta-1} \\ AEL \ll Y }} \frac{Q^{\vartheta}}{ A E } \leq \sum_{\substack{A,E,L \\  AEL \ll Y }}   4Q^{\vartheta}  
\ll  Y^{\varepsilon} Q^{\vartheta} 
\end{equation*}
and
\begin{equation*}
\sum_{\substack{A,E,L \\  AL \ll XCQ^{\vartheta-1} \\ AEL \ll Y }} XCQ^{\vartheta-1} \leq XCQ^{\vartheta-1}\sum_{\substack{A,E,L \\ AEL \ll Y }} 1
\ll   Y^{\varepsilon} XCQ^{\vartheta-1}.
\end{equation*}
We conclude that
\begin{equation}\label{eqn: Urboundfinal}
\sum_{h,k\leq Q^{\vartheta}} \frac{ \lambda_h \overline{\lambda_k}}{\sqrt{hk}}\mathcal{U}^r(h,k)
\ll_{\varepsilon} (XCQY)^{\varepsilon}  Q ( Q^{\vartheta} +  XCQ^{\vartheta-1}) \ +  \  (XCQY)^{\varepsilon} \frac{X^2 CQ^{2\vartheta}}{Y}.
\end{equation}

\section{Finishing the proof of Theorem~\ref{thm: main}}\label{sec: proofofthm}
We put together our estimates and deduce from \eqref{eqn: IandIstar}, \eqref{eqn: Ssplit}, \eqref{eqn: Dis0swap}, \eqref{eqn: Lsplit}, \eqref{eqn: Uready}, and \eqref{eqn: U2is1swap} that
\begin{equation}\label{eqn: SI0I1E}
\mathcal{S}(h,k) = \mathcal{I}_0(h,k) + \mathcal{I}_1(h,k) + \mathcal{E}(h,k),
\end{equation}
where
\begin{equation}\label{eqn: mathcalEdef}
\begin{split}
\mathcal{E}(h,k) =  \mathcal{L}^r(h,k) + \mathcal{U}^r(h,k) +  O \bigg( \bigg( Q  + \frac{Q^2}{C}\bigg) \frac{(XCQhk)^{\varepsilon}(h,k)}{\sqrt{hk}}  \bigg) \\
+O\Big((XCQhk)^{\varepsilon} \big( XC+ X^{-\frac{1}{2}} Q^{\frac{5}{2}} + Q^{\frac{3}{2}} +X^2hk Q^{-96} \big) \Big).
\end{split}
\end{equation}
For any $\vartheta>0$, we have
\begin{equation*}
\sum_{h,k\leq Q^{\vartheta}} \frac{(hk)^{\varepsilon} (h,k)}{hk} =  \sum_{h,k\leq Q^{\vartheta}} \frac{(hk)^{\varepsilon} }{hk} \sum_{\substack{d|h \\ d|k}} \phi(d) = \sum_{d \leq Q^{\vartheta}} \frac{\phi(d)}{d^{2-\varepsilon}} \Bigg( \sum_{j\leq Q^{\vartheta}/d} \frac{1}{j^{1-\varepsilon}} \Bigg)^2 \ll Q^{\varepsilon},
\end{equation*}
\begin{equation*}
\sum_{h,k\leq Q^{\vartheta}} \frac{(hk)^{\varepsilon} }{\sqrt{hk}} \ll Q^{\vartheta+\varepsilon},
\end{equation*}
and
\begin{equation*}
\sum_{h,k\leq Q^{\vartheta}}\frac{(hk)^{\varepsilon} hk}{\sqrt{hk}} \ll Q^{3\vartheta+\varepsilon}.
\end{equation*}
From these bounds, \eqref{eqn: Lrbound}, \eqref{eqn: Urboundfinal}, and \eqref{eqn: mathcalEdef}, we deduce that if $\vartheta>0$ and $\{\lambda_h\}_{h=1}^{\infty}$ is any sequence of complex numbers such that $\lambda_h \ll_{\varepsilon} h^{\varepsilon}$ for all positive integers $h$, then
\begin{equation}\label{eqn: mathcalEbound}
\begin{split}
\sum_{h,k\leq Q^{\vartheta}} \frac{ \lambda_h \overline{\lambda_k}}{\sqrt{hk}}\mathcal{E}(h,k) \ll (XCQ)^{\varepsilon} \bigg(Q^{1+\vartheta} + \frac{Q^2}{C} \bigg) + (XCQY)^{\varepsilon}  ( Q^{1+\vartheta} +  XCQ^{\vartheta}) \\
+   (XCQY)^{\varepsilon} \frac{X^2 CQ^{2\vartheta}}{Y}  + (XCQ)^{\varepsilon} \big( XCQ^{\vartheta} + X^{-\frac{1}{2}} Q^{\frac{5}{2}+\vartheta} + Q^{\frac{3}{2}+\vartheta} + X^2Q^{-96+3\vartheta}\big).
\end{split}
\end{equation}
Recall our assumption that $X=Q^{\eta}$ with $1<\eta<2$. We optimize the upper bound \eqref{eqn: mathcalEbound} by choosing
$$
C = Q^{1-\frac{\vartheta}{2}-\frac{\eta}{2}},
$$
which implies $Q^2/C = XCQ^{\vartheta} = Q^{1+\frac{\vartheta}{2}+\frac{\eta}{2}}$. We impose the condition
\begin{equation*}
\vartheta<2-\eta
\end{equation*}
so that $C\gg Q^{\varepsilon}$. Note that $\vartheta<2-\eta$ implies $\vartheta<\eta$ since $\eta>1$. We also choose $Y$ to be a large power of $Q$, say $Y=Q^{99}$. With these choices for $C$ and $Y$ and the condition $\vartheta<2-\eta$, we deduce from \eqref{eqn: mathcalEbound} that
\begin{equation}\label{eqn: mathcalEbound2}
\sum_{h,k\leq Q^{\vartheta}} \frac{ \lambda_h \overline{\lambda_k}}{\sqrt{hk}}\mathcal{E}(h,k) \ll Q^{1+\frac{\vartheta}{2}+\frac{\eta}{2}+\varepsilon} + Q^{\frac{5}{2}-\frac{\eta}{2}+\vartheta+\varepsilon} .
\end{equation}

We have thus proved that the conclusion of Theorem~\ref{thm: main} holds under the additional assumption \eqref{eqn: orbitals}. To complete the proof of Theorem~\ref{thm: main}, it is left to show that \eqref{eqn: mathcalEbound2} holds for any multisets $A$ and $B$ of complex numbers with moduli $\leq C_1/\log Q$, where $C_1$ is an arbitrary fixed positive constant. We do this by showing for each $\ell=0,1$ that $\mathcal{I}_\ell(h,k)$ is holomorphic in each of the variables $\alpha\in A$ and $\beta\in B$ in the region where $|\alpha|,|\beta|\leq C_1/\log Q$ for all $\alpha\in A$ and $\beta\in B$ (or, more precisely, that the only singularities of $\mathcal{I}_\ell(h,k)$ in this region are removable singularities). The holomorphy of $\mathcal{I}_0(h,k)$ is immediate from \eqref{eqn: I_l(h,k)def2} with $\ell=0$: if $\ell=0$ then the integrand on the right-hand side of \eqref{eqn: I_l(h,k)def2} is holomorphic in each of the variables $\alpha\in A$ and $\beta\in B$ so long as $\alpha,\beta\ll \varepsilon$ for each $\alpha\in A$ and $\beta\in B$. To prove the holomorphy of $\mathcal{I}_1(h,k)$, define $I_{E,F}(n)$ for finite multisets $E,F$ of complex numbers by the Dirichlet series expression
\begin{equation*}
\frac{\prod_{\xi\in E}\zeta(\xi+s)}{\prod_{\rho\in F} \zeta(\rho+s)} = \sum_{n=1}^{\infty} \frac{I_{E,F}(n)}{n^s}.
\end{equation*}
This definition implies that if $\alpha\in A$ and $\re(s)$ is sufficiently large, then
\begin{equation*}
\sum_{n=1}^{\infty}\frac{I_{A  \cup \{-\beta \}, \{\alpha \} }(n)}{n^s} = \zeta(-\beta +s) \prod_{\hat{\alpha}\neq \alpha} \zeta(\hat{\alpha} +s).
\end{equation*}
From this and the uniqueness of Dirichlet coefficients, we deduce that if $\alpha\in A$, then
\begin{equation}\label{eqn: IABidentity1}
I_{A  \cup \{-\beta \}, \{\alpha \} }(n) = {\tau}_{A\smallsetminus \{\alpha \} \cup \{-\beta \} }(n)
\end{equation}
for every positive integer $n$. Similarly, if $\beta\in B$, then
\begin{equation}\label{eqn: IABidentity2}
I_{B  \cup \{-\alpha \}, \{\beta \}  } (n) =  {\tau}_{B\smallsetminus \{\beta \} \cup \{-\alpha\}  }(n)
\end{equation}
for every positive integer $n$. Now we claim that if $A$ and $B$ have no repeated elements and the elements of $A\cup B$ are distinct from each other and are $\ll 1/\log Q$, then
\begin{equation}\label{eqn: vandermonde1swap}
\begin{split}
\mathcal{I}_{1}(h,k) & = \sum_{\substack{q=1 \\ (q,hk)=1} }^{\infty} W\left( \frac{q}{Q}\right)\sideset{}{^\flat}\sum_{\chi \bmod q} \frac{1}{(2\pi i )^4} \int_{(\epsilon)} \int_{(\epsilon)} \oint_{|z|= \epsilon/4} \oint_{|y|=\epsilon/4} X^{s_1+s_2} \widetilde{V}(s_1) \widetilde{V}(s_2)   \\
&\hspace{.25in} \times \mathscr{X} (\tfrac{1}{2}-z +s_1 )\mathscr{X} (\tfrac{1}{2}-y+s_2 ) q^{z-s_1+y-s_2} \\
&\hspace{.25in} \times \frac{ \prod_{\substack{\alpha\in A \\ \beta\in B}} \zeta(1+\alpha+\beta+s_1+s_2) \prod_{\alpha\in A} \zeta(1+\alpha+z) \prod_{\beta\in B} \zeta(1+\beta+y)  }{ \prod_{\alpha\in A} \zeta(1+\alpha+s_1-y+s_2) \prod_{\beta\in B} \zeta(1-z+s_1+\beta+s_2) } \\
&\hspace{.25in} \times  \zeta(1+y+z-s_1-s_2) \zeta(1-y-z+s_1+s_2) \prod_{p|q} P_0 \\
&\hspace{.05in} \times \prod_{p|hk}\Bigg\{ P_0 \!\!\!\!\!\! \sum_{ \substack{0\leq m,n<\infty\\ m+\ordp(h) = n+\ordp(k)}}  \!\!\!\!\!\! \frac{I_{A_{s_1} \cup \{y-s_2\}, \{-z+s_1\} } (p^m) I_{B_{s_2} \cup \{z-s_1\}, \{-y+s_2\}  } (p^n)  }{p^{m/2}p^{n/2} }  \Bigg\}\\
&\hspace{.05in} \times \prod_{p\nmid qhk} \Bigg\{P_0 \  \sum_{m=0}^{\infty} \frac{I_{A_{s_1} \cup \{y-s_2\}, \{-z+s_1\} } (p^m) I_{B_{s_2} \cup \{z-s_1\}, \{-y+s_2\}  } (p^m)  }{p^{m} } \Bigg\}\\
&\hspace{.25in} \times \,dy\,dz\,ds_2\,ds_1,
\end{split}
\end{equation}
where $P_0$ is defined by
\begin{align*}
P_0 = P_0(z,y,s_1,s_2;A,B)
& := \left( 1-\frac{1}{p}\right)^{-2} \left( 1 - \frac{1}{p^{ 1+y+z-s_1-s_2 }}\right) \left( 1 - \frac{1}{p^{ 1-y-z+s_1+s_2 }}\right) \\
& \times \prod_{  \substack{\alpha\in A \\ \beta\in B}} \left( 1 - \frac{1}{p^{1+\alpha+\beta+s_1+s_2 }}\right) \prod_{\alpha\in A} \left( 1 - \frac{1}{p^{1+\alpha+z }}\right) \prod_{\beta\in B}\left( 1 - \frac{1}{p^{ 1+\beta+y }}\right) \\
& \times  \prod_{\alpha\in A} \left( 1 - \frac{1}{p^{1+\alpha+s_1-y+s_2 }}\right)^{-1} \prod_{\beta\in B} \left( 1 - \frac{1}{p^{ 1-z+s_1+\beta+s_2 }}\right)^{-1} .
\end{align*}
To see this, we use the residue theorem to evaluate the $z$- and $y$-integrals. The Euler product on the right-hand side of \eqref{eqn: vandermonde1swap} converges absolutely by an argument similar to the proof of Lemma~\ref{lem: 1swapeulerbound}. Thus the poles of the integrand that are enclosed by the circles $|z|=\epsilon/4$ and $|y|=\epsilon/4$ are precisely the poles of the factors
$$
\prod_{\alpha\in A}\zeta(1+\alpha+z)\prod_{\beta\in B} \zeta(1+\beta+y).
$$
After evaluating the $z$- and $y$-integrals using the residue theorem, we may simplify each residue by using \eqref{eqn: IABidentity1} and \eqref{eqn: IABidentity2} to see that the right-hand side of \eqref{eqn: vandermonde1swap} is equal to the right-hand side of \eqref{eqn: I_l(h,k)def2} with $\ell=1$. This proves our claim that \eqref{eqn: vandermonde1swap} holds if $A$ and $B$ have no repeated elements and the elements of $A\cup B$ are distinct from each other. Now the right-hand side of \eqref{eqn: vandermonde1swap} is holomorphic in each of the variables $\alpha\in A$ and $\beta\in B$ in any region with $\alpha,\beta\ll 1/\log Q$ for each $\alpha\in A$ and $\beta\in B$ because the Euler product in its integrand converges absolutely. Hence, by analytic continuation, it follows that $\mathcal{I}_1(h,k)$ is holomorphic in each of the variables $\alpha\in A$ and $\beta\in B$ in the region. As a side note, we remark that this argument can be generalized to show the holomorphy of $\mathcal{I}_{\ell}(h,k)$ for each $\ell $ with $0\leq \ell\leq \min\{|A|,|B|\}$.

We have now shown that $\mathcal{I}_0(h,k)$ and $\mathcal{I}_1(h,k)$ are each holomorphic in each of the variables $\alpha\in A$ and $\beta\in B$ in any given region such that $\alpha,\beta\ll 1/\log Q$ for each $\alpha\in A$ and $\beta\in B$. Now $\mathcal{S}(h,k)$ is holomorphic in the same region since its definition \eqref{eqn: S(h,k)def} has only finitely many nonzero terms by the assumption that $W$ and $V$ are compactly supported. It follows from these and \eqref{eqn: SI0I1E} that $\mathcal{E}(h,k)$ is also holomorphic in the same region. Thus, since \eqref{eqn: mathcalEbound} holds for $A,B$ satisfying the condition \eqref{eqn: orbitals}, the maximum modulus principle implies that \eqref{eqn: mathcalEbound} also holds for finite multisets $A,B$ satisfying $|\alpha|,|\beta|\leq C_0/\log Q$ for all $\alpha\in A$ and $\beta\in B$, where $C_0$ is the arbitrary positive constant in \eqref{eqn: orbitals}. This completes the proof of Theorem~\ref{thm: main}.

\appendix

\section{Proof of Lemma \ref{hybridlargesieve}}\label{hybridlargesievedetails}
In this section, we give the details of the proof of Lemma \ref{hybridlargesieve}, which is an analogue of Proposition~1 of \cite{CIS} and likewise a consequence of the hybrid large sieve in the form of Theorem~9.12 of \cite{IK}.

\begin{proof}[Proof of Lemma \ref{hybridlargesieve}]
To apply Theorem~9.12 of \cite{IK}, we need to express each $\chi$ mod~$qj$ in terms of a product of two characters, one with modulus $\tilde{q}$ and the other with modulus $\tilde{\jmath}$, where $\tilde{q}$ and $\tilde{\jmath}$ are factors of $qj$ such that $(\tilde{q}, \tilde{\jmath})=1 $. To this end, recall that each Dirichlet character $\chi$ mod~$qj$ is induced by a unique primitive Dirichlet character modulo some divisor of $qj$.  We may write this divisor uniquely as $\tilde{q}\tilde{\jmath}$, where $(\tilde{q},j)=1$ and $\tilde{\jmath}$ is composed only of primes that divide $j$. Note that if $\chi$ is non-principal, then $\tilde{q}\tilde{\jmath}>1$. Since $\tilde{q}\tilde{\jmath}$ is a divisor of $qj$, it holds that $qj=D\tilde{q}\tilde{\jmath}$ for some positive integer $D$, and dividing both sides by $(j,\tilde{\jmath})$ implies
$$
q\frac{j}{(j,\tilde{\jmath})} = D\tilde{q}\frac{\tilde{\jmath}}{(j,\tilde{\jmath})}.
$$
It follows that $j/(j,\tilde{\jmath})$ divides $D$ because $j/(j,\tilde{\jmath})$ is relatively prime to both $\tilde{q}$ and $\tilde{\jmath}/(j,\tilde{\jmath})$. Thus we may write $D=dj/(j,\tilde{\jmath})$ for some positive integer $d$. Hence $q= d\tilde{q}\tilde{\jmath}/(j,\tilde{\jmath})$. We have thus shown that for each non-principal $\chi$ mod $qj$, there is a unique quadruple $(\tilde{\jmath},d,\tilde{q},\tilde{\chi})$ such that $\tilde{\jmath}$ is a positive integer composed only of the primes dividing $j$, $\tilde{q}$ is a positive integer with $(\tilde{q},j)=1$ and $\tilde{q}\tilde{\jmath}>1$, $d$ is a positive integer such that $q= d\tilde{q}\tilde{\jmath}/(j,\tilde{\jmath})$, and $\tilde{\chi}$ is a primitive character modulo $\tilde{q}\tilde{\jmath}$ such that $\chi=\tilde{\chi}\chi_0$, where $\chi_0$ is the principal character modulo $qj$. Therefore we have
\begin{align*}
\sum_{q\leq R} \sum_{\substack{\chi \bmod{qj} \\ \chi\neq \chi_0}} &\Bigg( \int_{-T}^T \Bigg|\sum_{n\leq N} \frac{a_n \chi(n)}{n^{\sigma+it}} \Bigg| \,dt \Bigg)^2\\
&\leq \sum_{q\leq R} \sum_{\substack{1\leq \tilde{\jmath}<\infty \\ p|\tilde{\jmath}\Rightarrow p|j}} \sum_{\substack{ 1\leq d,\tilde{q} <\infty \\ (\tilde{q},j)=1 \\ \tilde{q}\tilde{j}>1 \\ q=d\tilde{q} {\tilde{\jmath}}/{(j,\tilde{\jmath})} }} \,\sideset{}{^*}\sum_{ \tilde{\chi} \bmod{\tilde{q}\tilde{\jmath}}  } \Bigg( \int_{-T}^T \Bigg|\sum_{n\leq N} \frac{a_n \tilde{\chi}(n)\chi_0(n) }{n^{\sigma+it}} \Bigg| \,dt \Bigg)^2
\end{align*}
because the summand is nonnegative, where the * notation indicates that the sum is over primitive characters. We substitute $q=d\tilde{q} \tilde{\jmath}/(j,\tilde{\jmath})$ to write
\begin{align*}
\sum_{q\leq R} \sum_{\substack{\chi \bmod{qj} \\ \chi\neq \chi_0}} &\Bigg( \int_{-T}^T \Bigg|\sum_{n\leq N} \frac{a_n \chi(n)}{n^{\sigma+it}} \Bigg| \,dt \Bigg)^2\\
&\leq \sum_{\substack{1\leq \tilde{\jmath}<\infty \\ p|\tilde{\jmath}\Rightarrow p|j}} \sum_{d \leq R \frac{(j,\tilde{\jmath})}{\tilde{\jmath}}} \sum_{ \substack{\tilde{q} \leq R  \frac{(j,\tilde{\jmath})}{d\tilde{\jmath}} \\ (\tilde{q},j)=1 \\ \tilde{q}\tilde{\jmath}>1 }}  \,\sideset{}{^*}\sum_{ \tilde{\chi} \bmod{\tilde{q}\tilde{\jmath}}  } \Bigg( \int_{-T}^T \Bigg|\sum_{n\leq N} \frac{a_n \tilde{\chi}(n)\chi_0(n) }{n^{\sigma+it}} \Bigg| \,dt \Bigg)^2,
\end{align*}
where $\chi_0$ denotes the principal character modulo $qj=d\tilde{q}j\tilde{\jmath} /(j,\tilde{\jmath}) $. Now we may replace the function $\chi_0$ on the right-hand side with the characteristic function of the condition $(n,dj)=1$. Indeed, if $(n,dj)>1$, then $n$ and $d\tilde{q}j\tilde{\jmath}/(j,\tilde{\jmath})$ are not relatively prime, and so $\chi_0(n)=0$. If $(n,dj)=1$ and $(n,\tilde{q})>1$, then $\tilde{\chi}(n)\chi_0(n)=\tilde{\chi}(n)$ because both quantities are zero. If $(n,dj)=1$ and $(n,\tilde{q})=1$, then $n$ and $d\tilde{q}j\tilde{\jmath}/(j,\tilde{\jmath})$ are relatively prime, and so $\chi_0(n)=1$. Hence
\begin{align*}
\sum_{q\leq R} \sum_{\substack{\chi \bmod{qj} \\ \chi\neq \chi_0}}& \Bigg( \int_{-T}^T \Bigg|\sum_{n\leq N} \frac{a_n \chi(n)}{n^{\sigma+it}} \Bigg| \,dt \Bigg)^2\\
&\leq \sum_{\substack{1\leq \tilde{\jmath}<\infty \\ p|\tilde{\jmath}\Rightarrow p|j}} \sum_{d \leq R \frac{(j,\tilde{\jmath})}{\tilde{\jmath}}} \sum_{ \substack{\tilde{q} \leq R \frac{(j,\tilde{\jmath})}{d\tilde{\jmath}} \\ (\tilde{q},j)=1 \\ \tilde{q}\tilde{\jmath}>1 }}  \,\sideset{}{^*}\sum_{ \tilde{\chi} \bmod{\tilde{q}\tilde{\jmath}}  } \Bigg( \int_{-T}^T \Bigg|\sum_{\substack{n\leq N\\ (n,dj)=1 }} \frac{a_n \tilde{\chi}(n) }{n^{\sigma+it}} \Bigg| \,dt \Bigg)^2.
\end{align*}
To bound the $\tilde{q},\tilde{\chi}$-sum, we apply the Cauchy-Schwarz inequality and then Theorem~9.12 of \cite{IK}. (There, take $k=\tilde{\jmath}$, $Q=R(j,\tilde{\jmath})/(d\tilde{\jmath})$, $T=T$, $N=N$, $a_n=a_n/n^{\sigma}$ if $(n,dj)=1$, and $a_n=0$ if $(n,dj)>1$. Note that we may apply the theorem because if $\tilde{\chi}$ is a primitive Dirichlet character modulo $\tilde{q}\tilde{\jmath}$, then $\tilde{\chi}$ equals the product of a primitive Dirichlet character modulo $\tilde{q}$ and a primitive Dirichlet character modulo $\tilde{\jmath}$ since $(\tilde{q},\tilde{\jmath})=1$.) This gives
\begin{align*}
\sum_{ \substack{\tilde{q} \leq R {(j,\tilde{\jmath})}/{(d\tilde{\jmath})} \\ (\tilde{q},j)=1 \\ \tilde{q}\tilde{\jmath}>1 }}  \,\sideset{}{^*}\sum_{ \tilde{\chi} \bmod{\tilde{q}\tilde{\jmath}}  } &\Bigg( \int_{-T}^T \Bigg|\sum_{\substack{n\leq N\\ (n,dj)=1 }} \frac{a_n \tilde{\chi}(n) }{n^{\sigma+it}} \Bigg| \,dt \Bigg)^2 \\
& \leq 2T \sum_{ \substack{\tilde{q} \leq R {(j,\tilde{\jmath})}/{(d\tilde{\jmath})} \\ (\tilde{q},j)=1 \\ \tilde{q}\tilde{\jmath}>1 }}  \,\sideset{}{^*}\sum_{ \tilde{\chi} \bmod{\tilde{q}\tilde{\jmath}}  }  \int_{-T}^T \Bigg|\sum_{\substack{n\leq N\\ (n,dj)=1 }} \frac{a_n \tilde{\chi}(n) }{n^{\sigma+it}} \Bigg|^2 \,dt  \\
& \ll  T(\log (jRTN))^3 \bigg(N+ \frac{(j,\tilde{\jmath})^2R^2T}{d^2\tilde{\jmath}}  \bigg) \sum_{\substack{n\leq N\\ (n,dj)=1 }} \frac{|a_n|^2}{n^{2\sigma}},
\end{align*}
where the implied constant is absolute. Therefore
\begin{align*}
& \sum_{q\leq R} \sum_{\substack{\chi \bmod{qj} \\ \chi\neq \chi_0}} \Bigg( \int_{-T}^T \Bigg|\sum_{n\leq N} \frac{a_n \chi(n)}{n^{\sigma+it}} \Bigg| \,dt \Bigg)^2\\
& \ll \sum_{\substack{1\leq \tilde{\jmath}<\infty \\ p|\tilde{\jmath}\Rightarrow p|j}} \sum_{d \leq R {(j,\tilde{\jmath})}/{\tilde{\jmath}}}T(\log (jRTN))^3 \bigg(N+  \frac{(j,\tilde{\jmath})^2R^2T}{d^2\tilde{\jmath}} \bigg) \sum_{\substack{n\leq N\\ (n,dj)=1 }} \frac{|a_n|^2}{n^{2\sigma}}.
\end{align*}
We may ignore the condition $(n,dj)=1$ and then evaluate the $d$-sum to deduce that
\begin{align*}
\sum_{q\leq R} \sum_{\substack{\chi \bmod{qj} \\ \chi\neq \chi_0}}& \Bigg( \int_{-T}^T \Bigg|\sum_{n\leq N} \frac{a_n \chi(n)}{n^{\sigma+it}} \Bigg| \,dt \Bigg)^2 \\
&\ll  T(\log (jRTN))^3\sum_{\substack{1\leq \tilde{\jmath}<\infty \\ p|\tilde{\jmath}\Rightarrow p|j}}   \bigg(\frac{(j,\tilde{\jmath})RN}{ \tilde{\jmath}}+  \frac{(j,\tilde{\jmath})^2R^2T}{\tilde{\jmath}} \bigg) \sum_{ n\leq N } \frac{|a_n|^2}{n^{2\sigma}}.
\end{align*}
Now let $j=\prod_{p|j}p^{j_p}$ be the prime factorization of $j$. Multiplicativity implies
\begin{align*}
\sum_{\substack{1\leq \tilde{\jmath}<\infty \\ p|\tilde{\jmath}\Rightarrow p|j}} \frac{(j,\tilde{\jmath})}{\tilde{\jmath}}
& =\prod_{p|j} \sum_{\nu=0}^{\infty} \frac{p^{\min\{j_p,\nu\}}}{p^{\nu}}
= \prod_{p|j} \bigg(j_p + \frac{1}{1-\frac{1}{p}} \bigg) \ll j^{\varepsilon}
\end{align*}
and
\begin{align*}
\sum_{\substack{1\leq \tilde{\jmath}<\infty \\ p|\tilde{\jmath}\Rightarrow p|j}} \frac{(j,\tilde{\jmath})^2}{\tilde{\jmath} }
& \leq j \sum_{\substack{1\leq \tilde{\jmath}<\infty \\ p|\tilde{\jmath}\Rightarrow p|j}} \frac{(j,\tilde{\jmath})}{\tilde{\jmath}} \ll j^{1+\varepsilon}.
\end{align*}
Hence
\begin{align*}
\sum_{q\leq R} \sum_{\substack{\chi \bmod{qj} \\ \chi\neq \chi_0}} \Bigg( \int_{-T}^T \Bigg|\sum_{n\leq N} \frac{a_n \chi(n)}{n^{\sigma+it}} \Bigg| \,dt \Bigg)^2
\ll  (jRNT)^{\varepsilon} (RNT+jR^2T^2 ) \sum_{ n\leq N } \frac{|a_n|^2}{n^{2\sigma}}.
\end{align*}
\end{proof}

\printbibliography

\end{document}